\documentclass[english]{article}
\usepackage[T1]{fontenc}
\usepackage[utf8]{inputenc}
\usepackage{babel}
\usepackage{bbold}
\usepackage{multicol}
\usepackage{mathtools}
\usepackage{amsthm}
\usepackage{stmaryrd}
\usepackage{amsmath}
\usepackage{enumitem}
\usepackage[mathscr]{euscript}
\usepackage{amssymb}
\usepackage{multicol}
\usepackage[a4paper,left=3.5cm,right=3.5cm,top=3cm,bottom=3cm]{geometry}
\usepackage{tcolorbox}
\usepackage{dsfont}
\usepackage{fancyheadings}
\pagestyle{fancy}
\newtheorem{theorem}{Theorem}[section]
\newtheorem{crl}[theorem]{Corollary}
\newtheorem{prop}[theorem]{Proposition}
\newtheorem{lm}[theorem]{Lemma}
\newtheorem{defi}[theorem]{Definition}
\newtheorem{rmq}[theorem]{Remark}
\newtheorem*{nota}{Notation}
\def\restriction#1#2{\mathchoice
	{\setbox1\hbox{${\displaystyle #1}_{\scriptstyle #2}$}
		\restrictionaux{#1}{#2}}
	{\setbox1\hbox{${\textstyle #1}_{\scriptstyle #2}$}
		\restrictionaux{#1}{#2}}
	{\setbox1\hbox{${\scriptstyle #1}_{\scriptscriptstyle #2}$}
		\restrictionaux{#1}{#2}}
	{\setbox1\hbox{${\scriptscriptstyle #1}_{\scriptscriptstyle #2}$}
		\restrictionaux{#1}{#2}}}
\def\restrictionaux#1#2{{#1\,\smash{\vrule height .8\ht1 depth .85\dp1}}_{\,#2}}

\newcommand{\rr}{\mathbb{R}}
\newcommand{\cc}{\mathbb{C}}

\newcommand{\spn}{\text{Span}}
\newcommand{\nn}{\mathbb{N}}
\newcommand{\zz}{\mathbb{Z}}
\newcommand{\dt}{\mathrm{d}t}
\newcommand{\dx}{\mathrm{d}x}
\newcommand{\ds}{\mathrm{d}s}
\newcommand{\ph}{\varphi}
\newcommand{\ep}{\varepsilon}
\newcommand{\B}{\mathcal{B}}
\newcommand{\ev}{\textsc{e}}
\newcommand{\dd}{\,\mathrm{d}}

\DeclareMathOperator{\Br}{Br}
\DeclareMathOperator{\ad}{ad}
\lhead{}
\chead{\textbf{\textsc{stlc of the schrödinger equation thanks to a quadratic term}}}
\rhead{}
\numberwithin{equation}{section}
\title{Small-Time Local Controllability of the multi-input bilinear Schrödinger equation thanks to a \\quadratic term}
\author{Théo Gherdaoui\thanks{Univ Rennes, CNRS, IRMAR - UMR 6625, F-35000 Rennes, France}}
\usepackage[pdfborder={0 0 0}]{hyperref}
	
\setlength\parindent{24pt}
\begin{document}
	\maketitle
	\begin{abstract}
		The goal of this article is to contribute to a better understanding of the relations between 
		the exact controllability of nonlinear PDEs and the control theory for ODEs based on Lie brackets,
		through a study of the Schrödinger PDE with bilinear control.
		
		We focus on the small-time local controllability (STLC) around an equilibrium, when the linearized system is not controllable.
		We study the second-order term in the Taylor expansion of the state, with respect to the control.
		
		For scalar-input ODEs, quadratic terms never recover controllability: they induce signed drifts in the dynamics (see \cite{beauchard2017quadratic}).
		Thus proving STLC requires to go at least to the third order. 
		Similar results were proved for the bilinear Schrödinger PDE with scalar-input controls in  \cite{bournissou2022smalltime}.
		
		In this article, we study the case of {multi-input systems}.
		We clarify among the quadratic Lie brackets, those that allow to recover STLC: they are bilinear with respect to two different controls.
		For ODEs, our result is a consequence of Sussmann's sufficient condition $S(\theta)$ (when focused on quadratic terms),
		but we propose a new proof, designed to prepare an easier transfer to PDEs.
		This proof relies on a representation formula of the state inspired by the Magnus formula.
		By adapting it, we prove a new STLC result for the bilinear Schrödinger PDE.\par
		\vspace{0.25 cm}
		\noindent\textbf{Keywords:} Bilinear Schrödinger equation, infinite-dimensional systems, small-time local exact controllability, power series expansion.
	\end{abstract}
	\tableofcontents
	\section{Introduction}
	\subsection{Model and problem}
	In this article, we study the following Schrödinger equation
	\begin{equation} \label{schr}
		\left\lbrace \begin{array}{lr}
			i \partial_t \psi(t,x)=- \partial_x^2\psi(t,x)-\left(u(t) \mu_1(x) + v(t) \mu_2(x)\right)\psi(t,x), &t\in (0,T),x\in(0,1), \\
			\psi(t,0)=\psi(t,1)=0,&t \in (0,T), \\
			\psi(0,x)=\psi_0(x),&x\in(0,1),
		\end{array}\right.
	\end{equation}
	where $\mu_1,\mu_2:(0,1) \to \rr$, $T>0$, $u,v:(0,T)\to \rr$, $\psi:(0,T)\times (0,1) \to \cc$ and $\|\psi_0\|_{L^2}=1$.
	When well-defined, the solution is denoted $\psi$. When required, we will write $\psi(\cdot;(u,v),\psi_0)$ to refer to this solution to emphasize its dependence on the different parameters.
	\medskip
	
	This equation describes the evolution of the wave function $\psi$ of a quantum particle
	in a $1$D infinite square potential well $(0,1)$, subjected to two electric fields with amplitudes $u(t)$ and $v(t)$.
	The functions $\mu_1$ and $\mu_2$, called "dipolar moments", model the interaction between the particle's wave function $\psi$ and the two electric fields $u,v$.
	\medskip
	
	This is a multi-input nonlinear control system: 
	\begin{itemize}\vspace{-0.2 cm}
		\item[-] the state is the wave function $\psi: (0,T) \to \mathcal{S}$, where $\mathcal{S}$ denotes the $L^2(0,1)$ sphere,  \vspace{-0.2 cm}
		\item[-] the controls are $u,v:(0,T) \to \rr$, they act bilinearly on the state, through the term $(u(t) \mu_1(x) + v(t) \mu_2(x))\psi(t,x)$,
		this is a reason why we speak of "bilinear Schrödinger equation".
	\end{itemize}
	\medskip
	The ground state is the particular trajectory $\psi_1(t,x):=\varphi_1(x) e^{-i \lambda_1 t}$, with $(u,v)=0$ where $\varphi_1(x):=\sqrt{2} \sin( \pi x)$ and $\lambda_1:=\pi^2$.
	We are interested in the local controllability of the nonlinear system \eqref{schr} around the ground state,
	i.e.\ realizing small motions around the ground state with small controls:
	given a maximal size $\varepsilon>0$ for the controls, 
	does there exist $T,\delta>0$ such that,
	for any target $\psi_f$ close enough to the ground state, i.e. $\|\psi_f - \psi_1(T)\| < \delta$,
	there exist controls $u,v:(0,T) \to \rr$ such that $\|(u,v)\| \leqslant\varepsilon$ and $\psi(T;(u,v),\varphi_1)=\psi_f$.
	The different norms used are of great importance and will be precised later.
	\medskip
	
	More precisely, we are interested in the \textbf{small-time} local controllability around the ground state
	i.e.\ realizing, in arbitrarily small times $T>0$, small motions around the ground state with small controls:
	given $T, \varepsilon>0$,
	does there exist $\delta>0$ such that,
	for any target $\psi_f$ such that $\|\psi_f - \psi_1(T)\| < \delta$,
	there exist controls $u,v:(0,T) \to \rr$ such that $\|(u,v)\|\leqslant \varepsilon$ and $\psi(T;(u,v),\varphi_1)=\psi_f$.
	This corresponds to the local surjectivity around $0$ of the nonlinear end-point map 
	$\Theta_T:(u,v) \mapsto \psi(T;(u,v),\varphi_1)$.
	
	Any positive answer to the STLC problem may be thought of as a nonlinear local open mapping theorem, 
	which underlines the deepness and intricacy of this problem, 
	when the inverse mapping theorem (or linear test, see  \cite[Section $3.1$]{coronbook}) cannot be used.
	
	Small-time controllability has particularly relevant physical implications, both from a fundamental viewpoint and for technological applications. 
	Indeed, quantum systems, once engineered, suffer of very short lifespan before decaying (\textit{e.g.}\ through spontaneous photon emissions) and losing their non-classical properties (such as superposition). Hence, the capability of controlling them in a minimal time is also an important challenge in physics.
	\subsection{The controllable linearized case: constrained functional framework}
	Under appropriate assumptions on the dipolar moments $\mu_1, \mu_2$ the small-time local controllability of \eqref{schr} can be proved by applying the linear test. 
	One then obtains the following statement, that can easily be proved by adapting with two controls the strategy developed in \cite{beauchard2010local,bournissou2021local} with one control.
	This statement requires the introduction of the following notations: we define the operator \begin{equation}\label{laplacien}A:=-\partial^2_x \text{ with  domain } D(A):=H^2(0,1)\cap H^1_0(0,1).\end{equation} Except explicit precision, we will work with complex valued functions. We consider the space, $L^2(0,1)$, endowed with the classical scalar product
	$$\forall f,g\in L^2(0,1), \quad \left\langle f,g\right\rangle:=\int_0^1f(x)\overline{g(x)}\dx.$$

	According to classical functional analysis, the eigenvalues and eigenvectors are
	\begin{equation}\label{vp}\lambda_j:=(j\pi)^2,\qquad\ph_j:=\sqrt{2}\sin(j\pi\cdot),\qquad j\geqslant1.\end{equation}
	
	The family $(\ph_j)_{j\geqslant1}$ is an orthonormal basis of $L^2(0,1)$. We define
	\begin{equation}\label{solutionlibre}\psi_j(t,x):=\ph_j(x)e^{-i\lambda_jt},\qquad (t,x)\in (0,T)\times (0,1), \qquad j\geqslant1.\end{equation}
	
	These functions are the solutions to the free (with controls $(u,v)\equiv 0$) Schrödinger equation \eqref{schr}, with initial data $\ph_j$ at time $t=0$. They are called eigenstates. When $j=1$, $\psi_1$ is called the fundamental state, or ground state. Finally, we define the spaces $H^s_{(0)}(0,1)=D(A^{s/2})$, equipped with the norm
	$$\left\|\ph\right\|_{H^s_{(0)}}:=\left(\sum_{j=1}^{+\infty}|j^s\langle\ph,\ph_j\rangle|^2\right)^{1/2}.$$
	We consider, for $k\in\nn$, $H^k((0,T),\rr)$, the real Sobolev space, equipped with the usual $H^k(0,T)$ norm and $H_0^k(0,T)$ the adherence of $\mathcal{C}^{\infty}_c(0,T)$ with this norm.
	\begin{theorem}\label{lintest}
		Let $p,m \in \nn$ and $\mu_1,\mu_2 \in H^{2(p+m)+3}((0,1),\rr)$ be such that $\mu_{\ell}^{(2k+1)}(0)=\mu_{\ell}^{(2k+1)}(1)=0$ for $k=0,\cdots,p-1$, $\ell\in\{1,2\}$ and
		\begin{equation} \label{hyp_linearisé_controlable}
			\exists C>0, \quad \forall j \in \nn^*, \quad
			|\langle \mu_1 \varphi_1 , \varphi_j \rangle| +  |\langle \mu_2 \varphi_1 , \varphi_j \rangle| \geqslant \frac{C}{j^{2p+3}}.
		\end{equation}
		Then, the bilinear Schrödinger equation \eqref{schr} is $H^m_0((0,T),\rr)$-small-time locally controllable in $H^{2(p+m)+3}_{(0)}(0,1)$:
		for every $T, \varepsilon>0$, there exists $\delta>0$ such that
		for every $\psi_f \in \mathcal{S} \cap H^{2(p+m)+3}_{(0)}(0,1)$ with $\|\psi_f - \psi_1(T)\|_{H^{2(p+m)+3}}<\delta$,
		there exist $u,v \in H^m_0((0,T),\rr)$ such that $\|(u,v)\|_{H^m}\leqslant\varepsilon$ and
		$\psi(T;(u,v),\varphi_1)=\psi_f$.
	\end{theorem}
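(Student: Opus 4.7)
The proof follows the classical \emph{linear test} strategy: apply the inverse mapping theorem to the end-point map $\Theta_T : (u,v) \mapsto \psi(T; (u,v), \varphi_1)$, viewed as a map from $H^m_0((0,T),\rr)^2$ into the sphere $\mathcal{S} \cap H^{2(p+m)+3}_{(0)}(0,1)$. The plan is to establish in order: (i) well-posedness and $C^1$ regularity of $\Theta_T$ in this functional setting, (ii) surjectivity of its differential $\mathrm{d}\Theta_T(0,0)$ onto the tangent space $T_{\psi_1(T)}\mathcal{S} \cap H^{2(p+m)+3}_{(0)}(0,1)$, and (iii) conclude by the inverse mapping theorem for $C^1$ maps between Banach manifolds.

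For step (i), following the strategy developed in \cite{beauchard2010local,bournissou2021local} for a single control, one extends the contraction-mapping argument to two controls. The regularity assumption $\mu_\ell \in H^{2(p+m)+3}(0,1)$ together with the boundary conditions $\mu_\ell^{(2k+1)}(0)=\mu_\ell^{(2k+1)}(1)=0$ for $k=0,\ldots,p-1$ are precisely what is needed so that multiplication by $\mu_\ell$ preserves $H^{2(p+m)+3}_{(0)}(0,1)$: iterated application of $A$ produces odd-order derivatives at the endpoints, and these compatibility conditions ensure that $\mu_\ell \psi$ stays in the domain of $A^{(2(p+m)+3)/2}$. The linearity of the nonlinearity in each of $u$ and $v$ makes the adaptation routine.

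For step (ii), one computes $\mathrm{d}\Theta_T(0,0)(u,v) = \Psi(T)$, where $\Psi$ solves
\begin{equation*}
	i \partial_t \Psi = -\partial_x^2 \Psi - \bigl(u(t) \mu_1(x) + v(t) \mu_2(x)\bigr) \psi_1, \qquad \Psi(0)=0.
\end{equation*}
Expanding in the eigenbasis $(\varphi_j)_{j\geq 1}$ and projecting, surjectivity reduces to the mixed trigonometric moment problem: given $d \in T_{\psi_1(T)}\mathcal{S} \cap H^{2(p+m)+3}_{(0)}$, find $(u,v) \in H^m_0((0,T),\rr)^2$ solving
\begin{equation*}
	\langle d, \varphi_j \rangle\, e^{i\lambda_j T} = -i \int_0^T \left( u(t) \langle \mu_1 \varphi_1, \varphi_j \rangle + v(t) \langle \mu_2 \varphi_1, \varphi_j \rangle \right) e^{i(\lambda_j - \lambda_1)t} \dt, \qquad j\geq 1,
\end{equation*}
with quantitative control of $\|(u,v)\|_{H^m}$ by $\|d\|_{H^{2(p+m)+3}}$. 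The multi-input structure resolves this by \emph{partitioning the indices}: choose a splitting $\nn^* = J_1 \sqcup J_2$ so that $|\langle \mu_1 \varphi_1, \varphi_j\rangle| \geq \frac{C}{2j^{2p+3}}$ on $J_1$ and the analogous bound holds for $\mu_2$ on $J_2$, then assign the corresponding Fourier targets to $u$ and $v$ respectively. Each resulting scalar moment problem is handled as in the one-input case, using the Riesz basis property of $(e^{i(\lambda_j-\lambda_1)t})_{j\geq 1}$ in a suitable closed subspace of $L^2(0,T)$ and performing $m$ time integrations by parts, which are licit since $(u,v)$ vanishes to order $m$ at $t=0,T$. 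The tangent-space constraint $\Re\langle d,\psi_1(T)\rangle=0$ is exactly what makes the $j=1$ equation compatible with the reality of $u,v$.

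The main technical obstacle is step (ii): one must verify that the loss inflicted by the lower bound $\frac{C}{j^{2p+3}}$ on the dipolar coefficients is exactly compensated by the decay $\frac{1}{j^{2(p+m)+3}}$ of $\langle d,\varphi_j\rangle$, and that the $m$ time-integrations by parts upgrade an $L^2$-resolution of the moment problem to an $H^m_0$-resolution. The bookkeeping between the Sobolev indices of the target, of the controls, of the dipolar moments, and of the moment problem is the delicate point; everything else is a routine multi-input transcription of \cite{beauchard2010local,bournissou2021local}.
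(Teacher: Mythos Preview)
Your overall strategy---linear test via the inverse mapping theorem applied to the end-point map $\Theta_T$---is exactly what the paper indicates (the paper does not give a detailed proof of Theorem~\ref{lintest}, only the remark that one adapts \cite{beauchard2010local,bournissou2021local} to two controls, together with the paragraph following the statement). The moment-problem resolution you sketch for step~(ii), including the partition $\nn^*=J_1\sqcup J_2$ according to which coefficient dominates, is a natural and correct way to handle the two-input structure.

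However, your justification of step~(i) contains a genuine error. You claim that the boundary conditions $\mu_\ell^{(2k+1)}(0)=\mu_\ell^{(2k+1)}(1)=0$ for $k\leqslant p-1$ are ``precisely what is needed so that multiplication by $\mu_\ell$ preserves $H^{2(p+m)+3}_{(0)}(0,1)$''. This is false, and the paper stresses the opposite point: assumption~\eqref{hyp_linearisé_controlable} together with the asymptotics~\eqref{riemann:lebesgue} \emph{forces} $\mu_\ell^{(2p+1)}$ not to vanish at the boundary, so by Lemma~\ref{bndope} multiplication by $\mu_\ell$ is \emph{not} a bounded operator on $H^{2p+3}_{(0)}(0,1)$ (already for $m=0$); see also Remark~\ref{stabilité}. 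This failure is exactly what allows the result to escape the Ball--Marsden--Slemrod obstruction of \cite{doi:10.1137/0320042}. The correct mechanism behind step~(i) is a \emph{regularizing effect}: although $\mu_\ell\psi(s)$ lies only in $H^{2p+3}\cap H^{2p+1}_{(0)}$, the Duhamel integral $\int_0^t e^{-iA(t-s)}\bigl(u(s)\mu_\ell\psi(s)\bigr)\,\mathrm{d}s$ recovers the missing boundary condition and lands in $H^{2p+3}_{(0)}$, with the final-time regularity $H^{2(p+m)+3}_{(0)}$ obtained from the $H^m_0$ time-regularity of the controls (cf.\ the well-posedness estimates \eqref{dep-continue}--\eqref{dep-continuebis}). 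This smoothing effect, not preservation of the domain by $\mu_\ell$, is the actual content of step~(i) and the reason the end-point map is $\mathcal{C}^1$ between the stated spaces.
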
 
	In this statement, there are two parameters:
	\begin{enumerate}\vspace{-0.2 cm}
		\item[-] $p$ is the number of odd derivatives of $\mu_{\ell}$ that vanish at the boundary, $\ell\in\{1,2\}$,\vspace{-0.2 cm}
		\item[-] $m$ corresponds to the regularity expected on the controls: $u,v \in H^m_0((0,T),\rr)$.
	\end{enumerate}
	The property
	\begin{equation}\label{munes}\forall j \in \nn^*, \qquad |\langle \mu_1 \varphi_1 , \varphi_j \rangle| +  |\langle \mu_2 \varphi_1 , \varphi_j \rangle| \neq 0\end{equation}
	is necessary for the controllability of the linearized system around the ground state.
	The stronger assumption (\ref{hyp_linearisé_controlable}) guarantees that the linearized system is $H^m_0((0,T),\rr)$-small-time controllable in
	$H^{2(p+m)+3}_{(0)}(0,1)$.
	Then, a smoothing effect allows to prove that the end-point map is of class $\mathcal{C}^1$ between the following spaces
	$$\Theta_T:(u,v)\in H^m_0((0,T),\rr)^2 \mapsto \psi(T;(u,v),\varphi_1) \in H^{2(m+p)+3}_{(0)}(0,1)\cap \mathcal{S}.$$
	Thus, by applying the inverse mapping theorem, we obtain the local controllability of the nonlinear system \eqref{schr}.
	
	The choice of functional spaces is determined by the controllability of the linearized system:
	once $p, m$ are fixed and \eqref{hyp_linearisé_controlable} is verified, then we must work with $\psi(T) \in H^{2(p+m)+3}_{(0)}(0,1)$.
	
	Note that the assumption \eqref{hyp_linearisé_controlable} prevents $\mu_{\ell}^{(2p+1)}$ from vanishing at the boundary because, if $\mu_{\ell}$ satisfies the hypotheses of Theorem \ref{lintest}, then integrations by parts give: for all $k,j\in\nn^*$, distinct, for all $\ell\in\{1,2\}$,
	\begin{equation}\label{riemann:lebesgue}		\langle\mu_{\ell}\ph_j,\ph_k\rangle=\frac{4(-1)^p(p+1)k}{j^{2p+3}\pi^{2p+2}}\left((-1)^{j+k}\mu_{\ell}^{(2p+1)}(1)-\mu_{\ell}^{(2p+1)}(0)\right)+\underset{j\to+\infty}{o}\left(\frac{1}{j^{2p+3}}\right).
	\end{equation}
	\subsection{Our case study and its interpretation in terms of Lie brackets}\label{dipolar}

	In this article, we study the case where the linearized system around the ground state is not controllable,
	so we consider an  integer $K \geqslant2$ and we assume 
	$$\mathbf{(H)_{lin,K,1}}: \langle\mu_1\ph_1,\ph_K\rangle=\langle\mu_2\ph_1,\ph_K\rangle=0.$$
	In this case, the linearized system is not controllable: it misses one complex direction $\langle \psi(t), \varphi_K \rangle \in \cc$. In other words, the first-order term in the Taylor expansion at $(0,0)$ of the map $(u,v) \mapsto \langle \psi(t;(u,v),\varphi_1),\varphi_K \rangle$ vanishes.
	We assume that this is the only direction lost, i.e. all other directions are controllable on the linearized system, with the usual functional framework, 
	so we consider integers $p, m \in \nn$ and we assume
	$$\mathbf{(H)_{reg}}: \mu_{\ell}\in H^{2(p+m)+3}((0,1),\rr), \text{ with } \restriction{\mu_{\ell}^{(2k+1)}}{\{0,1\}}=0\text{ for }0\leqslant k\leqslant p-1, \ \ell\in\{1,2\}.$$
	$$\mathbf{(H)_{lin,K,2}}: \text{there exists $C>0$ such that }\forall j\in\nn^*\setminus\left\lbrace K\right\rbrace, \left\|\left(\displaystyle\left\langle\mu_{\ell}\ph_1,\ph_j\right\rangle\right)_{1\leqslant \ell\leqslant 2}\right\|\geqslant\displaystyle\frac{C}{j^{2p+3}}.$$
	To control the complex direction $\langle \psi(t),\varphi_K \rangle$ of the nonlinear system \eqref{schr}, 
	we will use a power series expansion of the order $2$, i.e.\
	the second-order term in the Taylor expansion at $(0,0)$  of the map $(u,v) \mapsto \langle \psi(t;(u,v),\ph_1), \varphi_K \rangle$.
	At this step of the proof,
	we consider an integer $n \in \nn^*$ and we assume that the following assumptions hold:\\\\
	$\mathbf{(H)_{quad,K,1}}$: $\forall k\in\llbracket 1,\lfloor\frac{n+1}{2}\rfloor\rrbracket$, $$A_k^1:=(-1)^{k-1}\sum_{j=1}^{+\infty}\left(\lambda_j-\frac{\lambda_1+\lambda_K}{2}\right)(\lambda_K-\lambda_j)^{k-1}(\lambda_j-\lambda_1)^{k-1}c_j =0,$$
	$\mathbf{(H)_{quad,K,2}}$: $\forall k\in\llbracket 1,\lfloor\frac{n+1}{2}\rfloor\rrbracket$, $$A_k^2:=(-1)^{k-1}\sum_{j=1}^{+\infty}\left(\lambda_j-\frac{\lambda_1+\lambda_K}{2}\right)(\lambda_K-\lambda_j)^{k-1}(\lambda_j-\lambda_1)^{k-1}\tilde{c_j} =0,$$
	$\mathbf{(H)_{quad,K,3}}$: $\forall k\in\llbracket 1,n-1\rrbracket,$ $$\gamma_k:= \displaystyle\sum_{j=1}^{+\infty}\left((\lambda_K-\lambda_j)^{\lfloor\frac{k+1}{2}\rfloor}(\lambda_j-\lambda_1)^{\lfloor\frac{k}{2}\rfloor}d_j-(\lambda_K-\lambda_j)^{\lfloor\frac{k}{2}\rfloor}(\lambda_j-\lambda_1)^{\lfloor\frac{k+1}{2}\rfloor}\tilde{d_j}\right)=0,$$
	$\mathbf{(H)_{quad,K,4}}$:
	$$\gamma_n:= \displaystyle\sum_{j=1}^{+\infty}\left((\lambda_K-\lambda_j)^{\lfloor\frac{n+1}{2}\rfloor}(\lambda_j-\lambda_1)^{\lfloor\frac{n}{2}\rfloor}d_j-(\lambda_K-\lambda_j)^{\lfloor\frac{n}{2}\rfloor}(\lambda_j-\lambda_1)^{\lfloor\frac{n+1}{2}\rfloor}\tilde{d_j}\right)\neq0,$$
	with the notations: for all $j\geqslant 1$,
	\begin{enumerate}
		\begin{multicols}{2}
			\item[] $c_j:=\left\langle\mu_1\ph_1,\ph_j\right\rangle\left\langle \mu_1\ph_j,\ph_K\right\rangle,$
			\item[] $\tilde{c_j}:=\left\langle\mu_2\ph_1,\ph_j\right\rangle\left\langle\mu_2\ph_j,\ph_K\right\rangle,$ 
			\item[]$d_j:=\left\langle\mu_2\ph_1,\ph_j\right\rangle\left\langle\mu_1\ph_j,\ph_K\right\rangle,$ \item[]$\tilde{d_j}:=\left\langle\mu_1\ph_1,\ph_j\right\rangle\left\langle\mu_2\ph_j,\ph_K\right\rangle.$
		\end{multicols}
	\end{enumerate} 
	\begin{rmq}\label{asymptotique}
		The asymptotic behavior \eqref{riemann:lebesgue}  and the hypothesis $n\leqslant 2p+2$ ensure that all the series written in $\mathbf{(H)_{quad,K,1}}$, $\mathbf{(H)_{quad,K,2}}$, $\mathbf{(H)_{quad,K,3}}$ and $\mathbf{(H)_{quad,K,4}}$ converge absolutely. Indeed, for all $\eta\in\nn$, 
		$$\sum_{j=1}^{+\infty}|c_j|j^{4\eta},\sum_{j=1}^{+\infty}|\tilde{c_j}|j^{4\eta},\sum_{j=1}^{+\infty}|d_j|j^{4\eta},\sum_{j=1}^{+\infty}|\tilde{d_j}|j^{4\eta}\leqslant C\sum_{j=1}^{+\infty}\frac{1}{j^{4p+6-4\eta}},$$ which converges if $p+1\geqslant \eta$. In particular, if $\lfloor\frac{n}{2}\rfloor\leqslant p$,  all the series defined previously and  $A_{\lfloor\frac{n}{2}\rfloor+1}^1$, $A_{\lfloor\frac{n}{2}\rfloor+1}^2$, $\gamma_{n+1}$ converge also (technical assumption). 
	\end{rmq}
	The existence of such functions $\mu_1, \mu_2$ is proved in Appendix \ref{existe}.
	
	For $A$ and $B$ two operators, we define by induction on $k\in\nn$ the operator $\underline{\ad}^k_A(B)$ as: $\underline{\ad}_A^0(B)=B$ and $\underline{\ad}_A^{k+1}(B)=\underline{\ad}^k_A(B)A-A\underline{\ad}^k_A(B)$. Under appropriate relations between the parameter $n$ and $p$, 
	the assumptions $\mathbf{(H)_{quad,K,1}}$, $\mathbf{(H)_{quad,K,2}}$, $\mathbf{(H)_{quad,K,3}}$ and  $\mathbf{(H)_{quad,K,4}}$ can be interpreted in terms of Lie brackets. More precisely,
	\begin{equation}\label{sommecrochet1}\forall k= 1,\cdots,\left\lfloor\frac{n+1}{2}\right\rfloor, \qquad 2A_k^1=(-1)^k\left\langle[\underline{\ad}_{A}^{k-1}(B_1),\underline{\ad}_{A}^k(B_1)]\ph_1,\ph_K\right\rangle,\end{equation}
	\begin{equation}\label{sommecrochet2}\forall k= 1,\cdots,\left\lfloor\frac{n+1}{2}\right\rfloor, \qquad 2A_k^2=(-1)^k\left\langle[\underline{\ad}_{A}^{k-1}(B_2),\underline{\ad}_{A}^k(B_2)]\ph_1,\ph_K\right\rangle,\end{equation}
	\begin{equation}\label{sommecrochet3}\forall k= 0,\cdots,n, \qquad \gamma_k=(-1)^k\left\langle[\underline{\ad}_{A}^{\lfloor\frac{k+1}{2}\rfloor}(B_1),\underline{\ad}_{A}^{\lfloor\frac{k}{2}\rfloor}(B_2)]\ph_1,\ph_K\right\rangle,\end{equation} where $A$ is defined in \eqref{laplacien} and $B_{\ell}$ is the multiplication operator by $\mu_{\ell}$ in $L^2(0,1)$.
	We refer to Propositions  \ref{crochetdiminf1}, \ref{crochetdiminf2}, \ref{crochet} and \ref{crochet3} in appendix for a precise proof.
	
	For the single-input bilinear Schrödinger equation
	\begin{equation} \label{Schro_single}
		\left\lbrace \begin{array}{lr}
			i \partial_t \psi(t,x)=- \partial_x^2\psi(t,x) -u(t) \mu_{\ell}(x)\psi(t,x),  & (t,x) \in (0,T)\times(0,1), \\
			\psi(t,0)=\psi(t,1)=0, & t \in (0,T), \\
			\psi(0,x)=\psi_0(x), & x \in (0,1), 
		\end{array}\right.
	\end{equation}
	the Lie bracket $2 A_k^{\ell}$ is known to be an obstruction to small-time local controllability, in an appropriate functional setting (see \cite{bournissou2021quadratic}). The Lie brackets $(\gamma_k)_{1\leqslant k\leqslant n}$ are specific to multi-input system \eqref{schr}, our positive controllability
	result relies on $\gamma_n$. We need the following definition: 	for $T>0$ and $u\in L^1((0,T),\rr)$, one defines the iterated primitives $u_n$ vanishing at $t=0$ by induction as
	$$u_0=u, \qquad  \text{and} \qquad \forall n\in\nn, \ \forall t\in [0,T], \quad u_{n+1}(t)=\int_0^tu_n(s)\ds.$$
	
	Then, heuristically, it is as if the following terms were included in the quadratic development of the solution
	$$-i\sum_{k=1}^{\lfloor\frac{n+1}{2}\rfloor}A_{k}^1 \int_0^T u_k^2(t)\dt -i \sum_{k=1}^{\lfloor\frac{n+1}{2}\rfloor} A_{k}^2\int_0^Tv_k^2(t)\dt + \sum_{k=1}^{n} i^k\gamma_k \int_0^T u_{\lfloor\frac{k}{2}\rfloor+1}(t)v_{\lfloor\frac{k+1}{2}\rfloor}(t)\dt .$$
	The cancellation assumptions $\mathbf{(H)_{quad,K,1}}$, $\mathbf{(H)_{quad,K,2}}$, $\mathbf{(H)_{quad,K,3}}$ reduce this sum to the simpler expression
	\begin{equation} \label{DL_noyau}
		i^n\gamma_n \int_0^T u_{\lfloor\frac n2\rfloor+1}(t)v_{\lfloor\frac{n+1}{2}\rfloor}(t)\dt,
	\end{equation}
	and they allow use to take advantage of the term $\displaystyle\int_0^T u_{\lfloor\frac n2\rfloor+1}(t) v_{\lfloor\frac{n+1}{2}\rfloor}(t)\dt$. For this strategy to work, it is thus necessary to consider $K \geqslant 2$. Indeed, the assumption $\mathbf{(H)_{lin,1,2}}$ leads to $\max(A_k^1,A_k^2)>0$ for every $k$. Thus, at least one of the assumptions $\mathbf{(H)_{quad,1,1}}, \mathbf{(H)_{quad,1,2}}$ is not verified.
	
	\medskip
Unlike the single-input case \cite{bournissou2021quadratic}, the quadratic term alone can recover small-time local controllability in the multi-input case.
	\subsection{Main result and proof strategy}

	Our main result is the following one, i.e.\ the same conclusion as in Theorem \ref{lintest}, when replacing 
	the assumption \eqref{hyp_linearisé_controlable} related to the controllability of the linearized system,
	by the assumptions $\mathbf{(H)_{lin,K,2}}$,$\mathbf{(H)_{quad,K,1}}$,$\mathbf{(H)_{quad,K,2}}$,$\mathbf{(H)_{quad,K,3}}$ and $\mathbf{(H)_{quad,K,4}}$ related to a power series expansion of order two.
	\begin{theorem}\label{maintheorem}
		Let $p,m,K\in \nn$, $n\in\nn^*$ with $K \geqslant 2$ and $\lfloor \frac{n}{2} \rfloor \leqslant p$. Assume that
		$\mu_1, \mu_2$ satisfy $\mathbf{(H)_{reg}}$, $\mathbf{(H)_{lin,K,1}}$, $\mathbf{(H)_{lin,K,2}}$, $\mathbf{(H)_{quad,K,1}}$, $\mathbf{(H)_{quad,K,2}}$, $\mathbf{(H)_{quad,K,3}}$, $\mathbf{(H)_{quad,K,4}}$ and $\langle \mu_1 \varphi_1, \varphi_1 \rangle \neq 0$.
		Then the bilinear Schrödinger equation \eqref{schr} is $H^m_0((0,T),\rr)$-small-time locally controllable in $H^{2(p+m)+3}_{(0)}(0,1)$.
	\end{theorem}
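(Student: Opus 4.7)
The plan is to combine the partial linear controllability provided by $\mathbf{(H)_{lin,K,2}}$ (which covers every direction except $\varphi_K$) with a power-series expansion of order two that exploits the surviving Lie bracket $\gamma_n\neq 0$ to recover the missing direction. At the first level I would argue as in Theorem~\ref{lintest} on the codimension-one complex subspace orthogonal to $\varphi_K$: the differential $d\Theta_T(0,0)$ admits a continuous right inverse onto $T_{\psi_1(T)}\mathcal{S}\cap\varphi_K^{\perp}\cap H^{2(p+m)+3}_{(0)}$, thanks to $\mathbf{(H)_{lin,K,2}}$, $\mathbf{(H)_{reg}}$ and the hypothesis $\langle\mu_1\varphi_1,\varphi_1\rangle\neq 0$ (the latter granting control over the phase direction $\rr\cdot i\varphi_1$).

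Then comes the heart of the argument: a second-order Taylor expansion $\Theta_T(u,v)=\psi_1(T)+L(u,v)+Q(u,v)+R(u,v)$, with $R=o(\|(u,v)\|_{H^m}^2)$ in $H^{2(p+m)+3}_{(0)}$ via the smoothing effect of the free propagator. The quadratic operator $Q$ is obtained from a double Duhamel expansion; projecting $\langle Q(u,v),\ph_K e^{-i\lambda_K T}\rangle$ and performing iterated integrations by parts---legitimate because $u,v\in H_0^m$---generates precisely the kernels of the hypotheses, namely the $A_k^{\ell}$ and the $\gamma_k$. The cancellations $\mathbf{(H)_{quad,K,1}}$, $\mathbf{(H)_{quad,K,2}}$ and $\mathbf{(H)_{quad,K,3}}$ annihilate every $A_k^{\ell}$ and every $\gamma_k$ with $k<n$, so that the dominant quadratic contribution in the $\ph_K$ direction reduces, up to a controllable remainder, to the cross term \eqref{DL_noyau}. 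The Magnus-inspired representation formula announced in the abstract is the algebraic device that turns this heuristic reduction into a rigorous identity together with sharp error bounds, and is the place where the new ideas developed for ODEs are transferred to the PDE setting.

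Once this reduction is in hand, I would close the proof by a power-series ansatz: take $(u,v)=(\tau U+u',\tau V+v')$ where $(U,V)$ are fixed profiles chosen so that $\int_0^T U_{\lfloor n/2\rfloor+1}(t)\,V_{\lfloor(n+1)/2\rfloor}(t)\,\dt$ has the desired sign and amplitude, and $(u',v')$ are corrections delivered by the linear right inverse to kill the perturbation orthogonal to $\ph_K$. Matching the powers of $\tau$ produces a nonlinear equation on $(u',v')$ to which a Brouwer-type fixed-point argument applies, provided $R$ is strictly subordinate to the quadratic term. The hard part will be the interplay of two estimates: first, the remainder bound $R=o(\|(u,v)\|^2)$ in the strong $H^{2(p+m)+3}_{(0)}$ norm, which requires carefully tracking how the Schrödinger smoothing effect interacts with each integration by parts; second, exploiting the indefinite sign of the bilinear integral together with the freedom in $(U,V)$ to sweep the full complex direction $\cc\cdot\ph_K e^{-i\lambda_K T}$ rather than a single real ray---the multi-input analogue of the bracket-versus-drift dichotomy that dominated the single-input case \cite{bournissou2022smalltime}.
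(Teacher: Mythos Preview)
Your overall architecture matches the paper's---handle $\mathcal{H}$ by the linear test (Theorem~\ref{control-projection}), recover $\cc\varphi_K$ by the quadratic expansion, close with a Brouwer-type argument (Theorem~\ref{STCLvecteurtangent})---but the two points you flag as ``hard'' are precisely where the proof lives, and your sketch does not supply the mechanisms.

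First, the reduced term~\eqref{DL_noyau} is $i^n\gamma_n$ times a \emph{real} integral, so with real controls it reaches only $\rr\cdot i^n\varphi_K$, not $\cc\varphi_K$. ``Freedom in $(U,V)$'' is not a proof: you would need two profiles for which the full quadratic form $\mathcal{F}_T^1+\mathcal{F}_T^2+\mathcal{G}_T$ takes $\rr$-independent complex values, and nothing you wrote produces them. The paper gets the second direction $i^{n+1}\varphi_K$ by a separate device (Proposition~\ref{tangentphk}): concatenate two $i^n$-moves separated by a free evolution, so the second is rotated by $e^{2i\omega_KT}$, and choose $T<\pi/(2\omega_K)$. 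Second, with your additive ansatz $(u,v)=(\tau U+u',\tau V+v')$ the profile generates a spurious linear displacement of order~$\tau$ in $\mathcal{H}$, hence $\|(u',v')\|_{H^m}\sim\tau$, hence the quadratic contribution of $(u',v')$ and the cross terms to the $\varphi_K$ component are $O(\tau^2)$---the same size as the main term. The paper avoids this by concatenating rather than adding (quadratic move on $(0,T_1)$, linear correction on $(T_1,T)$), by choosing time-concentrated profiles with $u_i(T_1)=v_i(T_1)=0$ for $1\le i\le q+1$ so that the state at $T_1$ is small in the \emph{weak} space $H^{2(p-q-1)+3}_{(0)}$ (estimate~\eqref{point:size}), and by proving that the $\varphi_K$-projection of the quadratic form is continuous for the \emph{negative} Sobolev norm $H^{-(q+1)}$ (inequalities~\eqref{ineq1},~\eqref{ineq3} and the composition lemma of Section~4.2). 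Neither the vanishing-primitive structure of the profiles nor any weak-norm estimate appears in your proposal; the strong-norm bound $R=o(\|(u,v)\|_{H^m}^2)$ alone will not close the loop. A related symptom: your claim that the quadratic contribution ``reduces to~\eqref{DL_noyau}'' is only true after time concentration---for fixed profiles on $(0,T)$ the residuals $R_T^{q+1}$, $\rho_T^{q+1}$ and (when $n$ is even) the $A_{q+1}^\ell$ terms in~\eqref{equationclef} are of the same homogeneity as the main term.
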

	\begin{rmq}
		In particular, if $\mu_1$ and $\mu_2$ are $\mathcal{C}^{\infty}([0,1],\rr)$, then, for all $m\in\nn$, the bilinear Schrödinger equation \eqref{schr} is $H^m_0$-STLC, with targets in $H^{2(p+m)+3}_{(0)}(0,1)$, when the condition $\mathbf{(H)_{reg}}$ is satisfied.
	\end{rmq}
	The assumption $\mathbf{(H)_{lin,K,2}}$ and the linear test prove that 
	$\mathbb{P}_{\mathcal{H}} \psi(T)$ is $H^m_0((0,T),\rr)-$ small-time locally controllable in $H^{2(p+m)+3}_{(0)}(0,1)$
	(see Theorem \ref{control-projection}), where
	\begin{equation}\label{h}\mathcal{H}:=\overline{\spn_{\cc}\left(\ph_j, \ j\in\nn^*\setminus\left\lbrace K\right\rbrace\right)},\end{equation} and $\mathbb{P}_{\mathcal{H}}$ is  the orthogonal projection on $\mathcal{H}$, which is given by
	$$\mathbb{P}_{\mathcal{H}}:\begin{matrix} L^2(0,1)&\to&\mathcal{H}\\\psi&\mapsto&\psi-\langle\psi,\ph_K\rangle\ph_K\end{matrix}.$$
	Then, the proof of Theorem \ref{maintheorem} is divided in two stages: given a time $T>0$, a target $\psi_f$  and $z \in \cc$,
	\begin{enumerate}\vspace{-0.15 cm}
		\item[-] we find $T_1<T$ and controls $(u_z,v_z)$ such that $\langle \psi(T_1;(u_z,v_z),\varphi_1) , \psi_K(T_1) \rangle$ is close to $z$,\vspace{-0.2 cm}
		\item[-] we apply on $(T_1, T)$ controls $(\tilde{u}_z,\tilde{v}_z)$ in projection such that on has $$\mathbb{P}_{\mathcal{H}} \psi(T;(\tilde{u}_z,\tilde{v}_z),\psi(T_1;(u_z,v_z),\ph_1)) = \mathbb{P}_{\mathcal{H}} \psi_f.$$ Then, we prove that, despite the second step, $\langle \psi(T;(\tilde{u}_z,\tilde{v}_z),\psi(T_1;(u_z,v_z),\ph_1)) , \psi_K(T) \rangle$ stays close to $z$, 
		so that a fixed point argument allows to find $z=z(\psi_f)$ such that $\langle \psi(T;(\tilde{u},\tilde{v}),\psi(T_1;(u,v),\ph_1)) , \psi_K(T) \rangle = \langle \psi_f , \psi_K(T) \rangle.$
	\end{enumerate}
	Finally $\psi(T;(\tilde{u},\tilde{v}),\psi(T_1;(u,v),\ph_1))=\psi_f$. We recall that $\psi_K$ is defined in \eqref{solutionlibre}.
	\medskip
	
	In step $1$, we examine the quadratic expansion of $(u,v) \mapsto \langle \psi(T;(u,v),\varphi_1), \psi_K(t) \rangle$. It contains terms of the form (\ref{DL_noyau}).
	
	\subsection{State of the art}
	\subsubsection{Topological obstructions to exact controllability}
	In \cite{doi:10.1137/0320042}, Ball, Marsden and Slemrod proved obstructions to local exact controllability of linear PDEs with bilinear controls.
	For instance, if the multiplicative operators $\mu_{\ell}$ are bounded on $H^s_{(0)}(0,1)$, 
	then system \eqref{schr} is not exactly controllable in $\mathcal{S}\cap H^s_{(0)}(0,1)$, with controls 
	$u,v \in L^p_{\text{loc}}(\rr,\rr)$ and $p>1$.
	The fundamental reason behind is that, under these assumptions, the reachable set has empty interior in $H^{s}_{(0)}(0,1)$.
	The case of $L^1_{\text{loc}}$-controls ($p=1$) was incorporated in \cite{BOUSSAID2020108412} and extensions to nonlinear equations were proved in \cite{doi:10.1137/18M1215207,chambrion:hal-01901819}.
	After the important work \cite{doi:10.1137/0320042}, different notions of controllability were studied for system \eqref{Schro_single} such as
	\begin{itemize}\vspace{-0.2 cm}
		\item[-] \textit{exact controllability} in more regular spaces, on which  $\mu_{\ell}$ do not define bounded operators:
		note that in Theorem \ref{lintest}, for $m=0$, 
	 $\mu_{\ell}$ do not define bounded operators on $H^{2p+3}_{(0)}(0,1)$ because
		$\restriction{\mu_{\ell}^{(2p+1)}}{\{0,1\}} \neq 0$ (see \eqref{riemann:lebesgue}, because of assumption \eqref{hyp_linearisé_controlable}),\vspace{-0.2 cm}
		\item[-] \textit{approximate controllability}.
	\end{itemize}
	\subsubsection{Exact controllability in more regular spaces, by linear test} 
	For system \eqref{Schro_single}, local exact controllability was first proved in \cite{BEAUCHARD2005851,BEAUCHARD2006328} with Nash-Moser techniques, 
	to deal with an apparent derivative loss problem and then in \cite{beauchard2010local} with a classical inverse mapping theorem, 
	thanks to a regularizing effect. By grafting other ingredients onto this core strategy, global (resp.\ local) exact controllability in regular spaces was proved for different models
	in \cite{morancey2013globalexactcontrollability1d,article} (resp.\ \cite{bournissou2021local}). 
	\subsubsection{Single-input systems: quadratic obstructions to STLC} 
	In \cite{CORON2006103}, Coron denied the $L^{\infty}$-small-time local controllability result for system \eqref{Schro_single}, with a particular dipolar moment $\mu_{\ell}$, thanks to a drift. In \cite{beauchard2013local}, Beauchard and Morancey gave general assumptions on $\mu_{\ell}$ to deny $L^{\infty}-$STLC of \eqref{Schro_single}. In \cite{bournissou2021quadratic}, Bournissou proved that this drift also occurs with small control in $W^{-1,\infty}$. Quadratic terms have also been used to create obstructions to the controllability of other single-input systems, for example in \cite{beauchard2017quadratic} for ODEs, in \cite{Fr_d_ric_Marbach_2018} for the Burgers' equation, \cite{BEAUCHARD202022} for the heat equation, \cite{coron2020smalltime} for KdV and \cite{nguyen2023localcontrollabilitykortewegdevries} for KdV-Neumann. \\
	
	With the exception of a non-physical PDE designed for - see \cite[Section $5$ and $6$]{BEAUCHARD202022},
	for single-input systems, the quadratic terms generally do not recover small-time controllability.
	\subsubsection{Single-input systems: power series expansion of order 2 or 3} 
	Nevertheless, a power series expansion of order $2$ allows us to recover a direction lost \textbf{in large time}: this strategy is used in \cite{beauchard2013local} for \eqref{Schro_single}. This method is also used for other equations, such as KdV, in \cite{Cerpa2009}. If the order $2$ vanishes, a power series of order $3$ can be used to recover the STLC, for example in \cite{Coron2004}, for KdV. If the order $2$ doesn't cancel out, but the term of order $3$ is strong enough, this expansion can also give the STLC (see \cite{bournissou2022smalltime}, for equation \eqref{Schro_single}).
	\subsubsection{Large time and small-time approximate controllability} 
	The first results of global approximate controllability of bilinear Schrödinger equations 
	were obtained in large time (see \cite{Boscain_2012,Chambrion2008ControllabilityOT,ervedozapuel,nersesyan,Sigalotti2009GenericCP}).
	For particular systems, a large time is indeed necessary for the approximate controllability (see \cite{beauchard2014minimaltimebilinearcontrol,articletkc}).
	\\
	Small-time approximate controllability between eigenstates for Schrödinger equations on the torus is proved by Duca and Nersesyan in \cite{duca2021bilinearcontrolgrowthsobolev},
	by means of an infinite-dimensional geometric control approach (saturation argument).
	Related results have been subsequently established in \cite{boscain:hal-04496433,chambrion2022smalltime,duca2024smalltimecontrollabilitynonlinearschrodinger}.
	\subsubsection{Multi-input ODE: Lie brackets technics for a multi-input ODE} 
	In Section $2$, we propose a toy model: a finite-dimensional control-affine system. Thus, we give a new proof of a particular case of the Sussmann's ${S}(\theta)$ condition (see \cite{doi:10.1137/0325011}). This proof relies on a new representation formula of the state (inspired by the Magnus formula), in terms of Lie brackets. We use the same quadratic brackets as in \cite{RogerBrockett2013}. We will discuss these articles in more detail later in the paper.
	\subsection{Small-time continuously approximately reachable vector}
	In this section, we recall a result from \cite{bournissou2022smalltime} that will be a key tool in this article.
	\begin{defi}[Concatenation] Let $0<T_1<T_2$, $u:[0,T_1]\to\rr$ and $\tilde{u}:[0,T_2]\to\rr$. One defines the function $u\#\tilde{u}:[0,T_1+T_2]\to\rr$ as
		$$u\#\tilde{u}:=u\mathbb{1}_{(0,T_1)}+\tilde{u}(\cdot-T_1)\mathbb{1}_{(T_1,T_1+T_2)}.$$
	\end{defi}
	Let $(X,\left\|\cdot\right\|_X)$ be a Banach space over $\rr$. Let $(E_T,\left\|\cdot\right\|_{E_T})$ be a family of normed vector spaces of functions defined on $[0,T]$ for $T>0$. Assume that for all $T_1,T_2>0$, for all $u\in E_{T_1}$, $\tilde{u}\in E_{T_2}$, $u\#\tilde{u}\in E_{T_1+T_2}$ and the following inequality holds
	$$\left\|u\#\tilde{u}\right\|_{E_{T_1+T_2}}\leqslant\left\|u\right\|_{E_{T_1}}+\left\|\tilde{u}\right\|_{E_{T_2}}.$$ 
	
	Finally, let $(\mathscr{F}_T)_{T>0}$ be a family of functions from $X\times E_T$ to $X$ for $T>0$. The goal is to prove that the map $\mathscr{F}_T$ in locally onto. To do this, we use the following definition; this is an equivalent of the notion of tangent vector introduced by Kawski in \cite{kawski2}, but in infinite-dimensional case.
	\begin{defi}[Small-time continuously approximately reachable vector] A vector $\xi\in X$ is called a small-time continuously approximately reachable vector if there exists a continuous map $\Xi:[0,+\infty[\to X$ with $\Xi(0)=\xi$ such that for all $T>0$, there exist $C,\rho,s>0$ and a continuous map $z\in(-\rho,\rho)\mapsto u_z\in E_T$ such that
		$$\forall z\in(-\rho,\rho), \qquad \left\|\mathscr{F}_T(0,u_z)-z\Xi(T)\right\|_X\leqslant C|z|^{1+s} \qquad\text{with}\qquad \left\|u_z\right\|_{E_T}\leqslant C|z|^s.$$
	\end{defi}
	\begin{theorem}\label{STCLvecteurtangent} Assume the following hypotheses hold.
		\begin{enumerate}
			\item[$(A_1)$] For all $T>0$, $\mathscr{F}_T:X\times E_T\to X$ is of class $\mathcal{C}^2$ on a neighborhood of $(0,0)$ with $\mathscr{F}_T(0,0)=0$.
			\item[$(A_2)$] For all $x\in X$, $T\in\rr_+\mapsto\mathrm{d}\mathscr{F}_T(0,0)\cdot(x,0)\in X$ can be continuously extended at zero with $\mathrm{d}\mathscr{F}_0(0,0)\cdot(x,0)=x$.
			\item[$(A_3)$] For all $T_1,T_2>0$, for all $x\in X$, for all $u\in E_{T_1}$ and $v\in E_{T_2}$, 
			$$\mathscr{F}_{T_1+T_2}(x,u\#v)=\mathscr{F}_{T_2}(\mathscr{F}_{T_1}(x,u),v).$$
			\item[$(A_4)$] The space $H:=\text{Im}(\mathrm{d}\mathscr{F}_T(0,0)\cdot (0,\cdot))$ doesn't depend on time, is closed and of finite codimension $n$.
			\item[$(A_5)$] There exists $\mathcal{M}$, a supplementary of $H$ that admits a basis $(\xi_i)_{1\leqslant i\leqslant n}$ of small-time continuously approximately reachable vectors.
		\end{enumerate}
		Then, for all $T>0$, $\mathscr{F}_T$ is locally onto from zero: for all $\eta>0$, there exists $\delta>0$ such that for all $x_f\in X$ with $\left\|x_f\right\|_X<\delta$, there exists $u\in E_T$ with $\left\|u\right\|_{E_T}<\eta$ such that 
		$$\mathscr{F}_T(0,u)=x_f.$$
	\end{theorem}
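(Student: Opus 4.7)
The plan is to combine the linear test on the controllable subspace $H$ with a Brouwer fixed-point argument to recover the lost directions in $\mathcal{M}$. Given $T>0$, I would fix a small parameter $T_1 \in (0, T/n)$ (to be tuned at the end) and split $T = T_2 + nT_1$, keeping $[0,T_2]$ for a ``linear correction'' and $[T_2, T]$ for a ``reaching'' phase. Denote by $\pi_H$ and $\pi_{\mathcal{M}}$ the continuous projections associated to $X = H \oplus \mathcal{M}$, and write $S_T := \mathrm{d}\mathscr{F}_T(0,0)(\cdot,0)$, the ``free flow'' linearization; differentiating the semigroup identity $(A_3)$ at $(0,0)$ shows $H$ is $S_T$-invariant, and $(A_2)$ yields $S_{T_1} \to \mathrm{id}$ as $T_1 \to 0$.

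For the linear correction, I apply the implicit function theorem to the $C^2$ map (using $(A_1)$) $v \mapsto \pi_H \mathscr{F}_{T_2}(0,v)$: by $(A_4)$, its differential at $0$ has image $H$, and the resulting open bounded surjection between Banach spaces admits a continuous right inverse (Graves's theorem), producing a $C^2$ map $\mathcal{V}: H \to E_{T_2}$ near $0$ with $\mathcal{V}(0)=0$ and $\pi_H \mathscr{F}_{T_2}(0, \mathcal{V}(y)) = y$. For the reaching phase, $(A_5)$ gives for each basis vector $\xi_i$ a continuous map $z \mapsto u_z^{(i)} \in E_{T_1}$ with $\|\mathscr{F}_{T_1}(0,u_z^{(i)}) - z\Xi_i(T_1)\| \leq C|z|^{1+s}$ and $\|u_z^{(i)}\|_{E_{T_1}} \leq C|z|^{s}$. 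Concatenating, $U_z := u_{z_1}^{(1)}\#\cdots\#u_{z_n}^{(n)} \in E_{nT_1}$ for $z \in \mathbb{R}^n$.

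Given a target $x_f$, I set $\omega_z := \mathcal{V}(\pi_H x_f) \# U_z \in E_T$ and define $y(z,x_f) := \mathscr{F}_T(0, \omega_z)$, which by $(A_3)$ equals $\mathscr{F}_{nT_1}(\tilde{x}, U_z)$ with $\tilde{x} := \mathscr{F}_{T_2}(0, \mathcal{V}(\pi_H x_f))$ satisfying $\pi_H \tilde{x} = \pi_H x_f$ and $\|\tilde{x}\| = O(\|x_f\|)$. Iterating the second-order Taylor expansion $\mathscr{F}_{T_1}(y, u_{z_i}^{(i)}) = S_{T_1} y + z_i \Xi_i(T_1) + O(|z_i|^{1+s}) + O(\|y\|(\|y\|+|z_i|^s))$ along the $n$ reaching steps gives
\[
 y(z,x_f) \;=\; S_{T_1}^n \tilde{x} \;+\; \sum_{i=1}^n S_{T_1}^{n-i}\bigl(z_i \Xi_i(T_1)\bigr) \;+\; R(z, x_f, T_1),
\]
where $\|R\| \leq C\bigl(\|x_f\| + |z|\bigr)^{1+s}$. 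Using $(A_2)$ and continuity of the $\Xi_i$ with $\Xi_i(0)=\xi_i$, choosing $T_1$ small makes $S_{T_1}^{n-i}\Xi_i(T_1)$ arbitrarily close to $\xi_i$ and $S_{T_1}^n \tilde{x}$ close to $\tilde{x}$, so $y(z,x_f) = \tilde{x} + \sum_i z_i \xi_i + \varepsilon(z,x_f,T_1)$ with $\|\varepsilon\| = o(|z|+\|x_f\|)$ as $T_1 \to 0$.

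To close the argument, I would first perturb the linear correction: for $z$ fixed and small, by the implicit function theorem applied at $(v,z,x_f)=(0,0,0)$ to $v \mapsto \pi_H \mathscr{F}_T(0, v \# U_z)$ (whose differential at $v=0, z=0$ is $\pi_H \circ S_{nT_1} \circ \mathrm{d}_v \mathscr{F}_{T_2}(0,0)$, surjective onto $H$ for $T_1$ small by the $S_T$-invariance of $H$ and closeness of $S_{T_1}$ to identity), there exists a continuous $(z,x_f) \mapsto v(z,x_f) \in E_{T_2}$ with $\pi_H \mathscr{F}_T(0, v(z,x_f)\#U_z) = \pi_H x_f$. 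Identifying $\mathcal{M} \cong \mathbb{R}^n$ via $(\xi_i)$ and writing $\Psi(x_f) := \pi_{\mathcal{M}} \mathscr{F}_{T_2}(0, v(0, x_f))$ (a continuous $O(\|x_f\|^2)$ quantity), one checks from the expansion above that
\[
 \pi_{\mathcal{M}}\, \mathscr{F}_T\bigl(0, v(z,x_f)\#U_z\bigr) \;=\; \Psi(x_f) + \sum_{i=1}^n z_i \xi_i + o(|z|+\|x_f\|).
\]
Define $F_{x_f}(z) := z + \pi_{\mathcal{M}} x_f - \pi_{\mathcal{M}} \mathscr{F}_T(0, v(z,x_f)\#U_z)$. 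For $T_1$ chosen small enough and $\|x_f\| < \delta$ small enough, $F_{x_f}$ is continuous and maps a closed ball of radius $\sim \|x_f\|$ in $\mathbb{R}^n$ into itself; Brouwer's theorem yields a fixed point $z^*$ at which $\mathscr{F}_T(0, v(z^*,x_f)\#U_{z^*}) = x_f$, with control norm bounded by $C(\|x_f\| + |z^*|^s) \to 0$ as $\|x_f\| \to 0$. The main obstacle is precisely this coupling: the reaching step perturbs the $H$-projection by $o(|z|)$ and the linear correction perturbs the $\mathcal{M}$-projection by $O(\|x_f\|^2)$, so the two tasks must be solved simultaneously, which is what the joint ``implicit function for $v$, then Brouwer for $z$'' structure accomplishes, crucially using $(A_2)$ to decouple the two to leading order when $T_1$ is small.
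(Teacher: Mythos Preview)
The paper does not contain a proof of this theorem: it is stated in Section~1.6 as a tool ``recalled from \cite{bournissou2022smalltime}'', so there is no in-paper argument to compare against directly.

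Your overall architecture---controllability in projection on $H$ via a local inversion, then a Brouwer fixed point on the finite-dimensional complement $\mathcal{M}$ using the approximately reachable vectors, with a coupling estimate showing each step perturbs the other only at higher order---is the standard scheme, and it is exactly how the paper \emph{applies} the theorem in both its finite- and infinite-dimensional settings (see the proof strategy in Section~1.4 and its implementation in Sections~2.6, 4.3). The only structural difference is the order: the paper does the reaching step first on $[0,T_1]$ and the linear correction second on $[T_1,T]$, whereas you reverse this. Either order works, but the paper's choice has the mild advantage that the linear correction is applied last and hence exactly achieves $\pi_H x_f$, leaving only the $\mathcal{M}$-component to be fixed by Brouwer; your ordering forces you to re-solve for $v$ jointly with $z$, which you do handle.

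One technical gap deserves attention. To produce the local section $v(z,x_f)$ you invoke the implicit function theorem, but IFT in Banach spaces requires the relevant partial differential to be \emph{split} surjective (complemented kernel in $E_{T_2}$), not merely surjective onto $H$. Graves/Lyusternik gives openness, and Bartle--Graves gives a continuous (generally nonlinear) right inverse of the \emph{linearized} map, but neither yields the $C^2$ local right inverse of the nonlinear map that you claim. In the paper's applications $E_T=H^m_0(0,T)^2$ is Hilbert, so kernels are automatically complemented and IFT applies; in the abstract statement as written you should either add this hypothesis, or observe that a merely continuous section suffices for the Brouwer step and reorganize the estimate accordingly (your Taylor expansion of $\mathscr{F}_{T_1}(y,u_{z_i}^{(i)})$ only uses $C^2$-regularity in $(y,u)$, not smoothness of $v$ in $(z,x_f)$, so this is repairable).
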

	\begin{rmq} Using a change of function argument, this theorem is also true around another equilibrium than $(0,0)$.
	\end{rmq}
	\subsection{Plan of the paper}
	The paper is organized in the following way: in Section $2$, we present a proof of a theorem of STLC thanks to quadratic terms for control-affine system  in the finite-dimensional case, as a toy-model for the bilinear Schrödinger equation. In Section $3$, we present classical properties about the bilinear Schrödinger equation. Finally, in Section $4$, we give the proof of the main theorem of this article. Some elements of proof are developed in the appendix.
	\section{The finite-dimensional case}
	\subsection{Framework and notations}
	One considers the control-affine system
	\begin{equation}\label{affine-syst}x'(t)=f_0(x(t))+u(t)f_1(x(t))+v(t)f_2(x(t)),
	\end{equation}where the state $x(t)\in\rr^d$, the controls are scalar-input functions $u(t),v(t)\in\rr$ and $f_0,f_1$ and $f_2$ are analytic vector fields on a neighborhood of $0$ in $\rr^d$, such that $f_0(0)=0$. The last hypothesis ensures that $0$ is an equilibrium of the free control-affine system (i.e.\ with $(u,v)\equiv 0)$. \\\\\indent For each $0<T_1<T$, $u,v\in L^1((T_1,T),\rr)$, there exists a unique maximal mild solution to \eqref{affine-syst} with initial data $p\in\rr^d$ at time $T_1$, which we will denote by $x(\cdot;(u,v),p)$. We will sometimes note $x(\cdot;(u,v),p,T_1)$. As we are interested in small-time and small controls, the solution is well-defined up to time $T$. The following concepts were introduced by Beauchard and Marbach in \cite{beauchard2017quadratic}, in 2018. 
	\begin{defi}[$W^{m,\infty}_0$-STLC]
		Let $m\in\nn$. We say that system \eqref{affine-syst} is $W^{m,\infty}_0-$STLC when, for every $T,\varepsilon>0$, there exists $\delta>0$, such that, for every $x_f\in B(0,\delta)$, there exist $u,v\in W^{m,\infty}_0((0,T),\rr)$ with $\left\|(u,v)\right\|_{W^{m,\infty}}\leqslant \varepsilon$ and $x(T;(u,v),0)=x_f$.
	\end{defi}
	\begin{defi}[Smooth-STLC] The system \eqref{affine-syst} is smooth-STLC if it is $W^{m,\infty}_0-$STLC, for every $m\in\nn$.
	\end{defi}	
	
	We use the definitions and notations of Beauchard and Marbach in \cite{beauchard2024unified}.
	Let $X:= \{X_0,X_1, X_2\}$ be a set of three non-commutative indeterminates.
	\begin{defi}[Free algebra]
		\label{def:free-algebra}
		We consider $\mathcal{A}(X)$ the free algebra generated by $X$ over the field $\rr$, 
		i.e.\ the unital associative algebra of polynomials of the indeterminates $X_0$, $X_1$ and $X_2$.
	\end{defi}
	\begin{defi}[Free Lie algebra]
		For $a,b\in\mathcal{A}(X)$, one defines the Lie bracket of $a$ and $b$ as $[a,b]:= ab - ba$, also called $\text{ad}_a(b)$. One defines by induction on $n\in\nn$, $\text{ad}_a^{n+1}(b)=[a,\text{ad}_a^n(b)]$.
		This operation is anti-symmetric and satisfies the Jacobi identity: for all $a,b,c\in\mathcal{A}(X)$,
		$$[a,[b,c]]+[c,[a,b]]+[b,[c,a]]=0.$$
		Let $\mathcal{L}(X)$ be the free Lie algebra generated by $X$ over the field $\rr$, 
		i.e.\ the smallest linear subspace of $\mathcal{A}(X)$ containing $X$ and stable by the Lie bracket $[\cdot,\cdot]$.
	\end{defi}
	\begin{defi}[Iterated brackets]
		Let $\Br(X)$ be the free magma over $X$, i.e.\ the set of iterated brackets of elements of $X$, 
		defined by induction as: $X_0, X_1, X_2 \in \Br(X)$ and if $a, b \in \Br(X)$, then the ordered pair $(a,b)$ belongs to $\Br(X)$. 
		
		There is a natural evaluation mapping $\ev$ from $\Br(X)$ to $\mathcal{L}(X)$ defined by induction by 
		$\ev(X_i):= X_i$ for $i=0,1,2$ and $\ev((a, b)):= [\ev(a), \ev(b)]$. 
	\end{defi}
	\begin{defi}[Length and homogeneous layers within $\mathcal{L}(X)$]  For $b\in \Br(X)$, $|b|$ denotes the length of $b$. For $i\in\llbracket 0,2\rrbracket$, for $b \in \Br(X)$, $n_i(b)$ denotes the number of occurrences of the indeterminate $X_i$ in $b$. We will use the notation: $n(b)=n_1(b)+n_2(b)=|b|-n_0(b)$. For $i\in\nn$, $S_i(X)$ is the vector subspace of $\mathcal{L}(X)$ defined by
		$$S_i(X):= \spn\{\ev(b), \ b\in\Br(X), \ n(b)=i\}.$$
	\end{defi}
	For example, with $b=(((X_1,(X_0,X_2)),X_2),(X_1,X_2))$, $|b|=6$, $n_0(b)=1$, $n_1(b)=2$, $n_2(b)=3$ and $n(b)=5$.
		\begin{defi}[Left and right factors] For $b\in\Br(X)$ with 
		$|b|> 1$, $b$ can be written in a unique way as $b = (b_1, b_2)$, with $b_1, b_2\in\Br(X)$. We use the notations
		$\lambda(b) = b_1$ and $\mu(b) = b_2$, which define maps $\lambda,\mu :\Br(X)\setminus X\to\Br(X)$.
	\end{defi}
	For example, with $b=((X_1,X_2),((X_2,X_0),X_0))$, one has $\lambda(b)=(X_1,X_2)$ and $\mu(b)=((X_2,X_0),X_0)$.
	\begin{defi}[Bracket integration $b0^\nu$] \label{def:0nu}
		For $b \in \Br(X)$ and $\nu \in \nn$, we use the unconventional short-hand $b 0^\nu$ to denote the right-iterated bracket $(\dotsb(b, X_0), \dotsc, X_0)$, where $X_0$ appears $\nu$ times.
	\end{defi}
	For example, if $b=((X_1,X_2),(X_0,X_2))$, then $b0^2=((((X_1,X_2),(X_0,X_2)),X_0),X_0)$.
	\begin{defi}[Lie bracket of vector fields]
		Let $f,g:\Omega\to\rr^d$ be two smooth vector fields on an open subset $\Omega$ of $\rr^d$. One defines
		\begin{equation}
			[f,g]:x\in\Omega\mapsto \mathrm{D}g_x f(x) - \mathrm{D}f_x g(x).
		\end{equation}
	\end{defi}
	\begin{defi}[Evaluated Lie bracket]
		\label{Def:evaluated_Lie_bracket}
		Let $f_0, f_1,f_2:\Omega\to\rr^d$ be smooth vector fields on an open subset $\Omega$ of $\rr^d$ and  $f=\{f_0,f_1,f_2\}$.
		
		For $b \in \mathcal{L}(X)$, we define $f_b:=\Lambda(b)$, where $\Lambda:\mathcal{L}(X) \to \mathcal{C}^\infty(\Omega,\rr^d)$ is the unique homomorphism of Lie algebras such that $\Lambda(X_i)=f_i$, for $i\in\llbracket0,2\rrbracket$.
		
		We will write $f_b$ instead of $f_{\ev(b)}$ when $b \in \Br(X)$. Finally, for $\mathcal{N}\subset\Br(X)$, we use the notation
		\begin{equation}
			\mathcal{N}(f)(0):= \spn \{ f_b(0),\ b \in \mathcal{N} \} \subseteq \rr^d.
		\end{equation}
	\end{defi}
	For example, if $b=(((X_1,X_2),X_0),(X_1,X_0))$, one has $f_b:=[[[f_1,f_2],f_0],[f_1,f_0]]$.
	\subsection{An adapted basis of the free Lie algebra}
	\begin{defi}[Hall set]
		A {Hall set} is a subset $\B$ of $\Br(X)$ endowed with a total order $<$ such that
		\begin{itemize}
			\item $X \subset \B$,
			\item for all $b_1, b_2 \in \Br(X)$, $(b_1, b_2) \in \B$ iff $b_1, b_2 \in \B$, $b_1 < b_2$ and either $b_2 \in X$ or $\lambda(b_2) \leqslant b_1$, 
			\item for all $b_1, b_2 \in \B$ such that $(b_1,b_2) \in \B$ then $b_1 < (b_1,b_2)$.
		\end{itemize}
	\end{defi}
	\begin{theorem}[Viennot, \cite{Krob1987}]
		\label{thm:viennot}
		Let $\B \subset \Br(X)$ be a Hall set. 
		Then $\ev(\B)$ is a basis of $\mathcal{L}(X)$.
	\end{theorem}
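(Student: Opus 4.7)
The plan is to establish the two halves separately: (a) $\ev(\B)$ spans $\mathcal{L}(X)$, and (b) $\ev(\B)$ is linearly independent. This is a classical combinatorial result, so I would follow the strategy standardised in Bourbaki / Reutenauer rather than invent anything new.

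For spanning, the argument is by induction on the length $|b|$ of elements $b\in\Br(X)$, the base case $|b|=1$ being trivial since $X\subset \B$. For the induction step, suppose $b=(b_1,b_2)$ with $b_1,b_2\in\B$ but $b\notin\B$. Either $b_1\geqslant b_2$, in which case antisymmetry gives $\ev(b)=-\ev((b_2,b_1))$ and the right-hand side lies in the span of $\ev(\B)$ by induction (up to reordering, which again uses antisymmetry and is closer to Hall form); or $b_1<b_2$ but the Hall condition on $\lambda(b_2)$ fails, i.e.\ $b_2=(c_1,c_2)\in\B$ with $b_1<c_1$. Then Jacobi
\[
[\ev(b_1),[\ev(c_1),\ev(c_2)]]=[[\ev(b_1),\ev(c_1)],\ev(c_2)]+[\ev(c_1),[\ev(b_1),\ev(c_2)]]
\]
rewrites $\ev(b)$ in terms of brackets whose outermost right-factor is strictly smaller than $b_2$ in the total order on $\B$. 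A well-founded order on pairs (length, then lexicographic in $\B$) makes this rewriting terminate, so $\ev(b)\in\spn\ev(\B)$.

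For linear independence, I would pass to the universal enveloping algebra: the free algebra $\mathcal{A}(X)$ is exactly the universal enveloping algebra of $\mathcal{L}(X)$, and the Poincaré--Birkhoff--Witt theorem states that if $(e_i)_{i\in I}$ is any totally ordered basis of $\mathcal{L}(X)$, then the ordered monomials $e_{i_1}\cdots e_{i_k}$ with $i_1\leqslant\cdots\leqslant i_k$ form a basis of $\mathcal{A}(X)$. One then proves directly, by a rewriting algorithm on words of $\mathcal{A}(X)$, that every word admits a unique expression as a linear combination of decreasing products of Hall bracket evaluations (the "Hall monomials"). This shows Hall monomials span $\mathcal{A}(X)$, and comparing graded dimensions via Witt's formula (or equivalently the Chen--Fox--Lyndon factorisation and the bijection between Hall trees of length $n$ and Lyndon words of length $n$) shows the dimensions already match, so the Hall monomials form a basis of $\mathcal{A}(X)$ and, in particular, $\ev(\B)$ is linearly independent in $\mathcal{L}(X)\subset \mathcal{A}(X)$.

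The main obstacle is the rewriting/uniqueness step in part (b): showing the algorithm that expands a word of $\mathcal{A}(X)$ as a combination of Hall monomials terminates and is confluent is the combinatorial heart of the proof and is exactly the content of Viennot's theorem. Since this is a classical statement used only as a tool here, I would expect the paper to simply invoke \cite{Krob1987} rather than reproduce the argument; the proposal above is the strategy one would unfold if one chose to give a self-contained proof.
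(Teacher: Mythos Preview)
The paper gives no proof of this theorem; it simply states the result and cites \cite{Krob1987}, exactly as you anticipated in your final paragraph. Your sketch is the standard Bourbaki/Reutenauer argument and is essentially correct. One small point: in the spanning step, the induction should first reduce an arbitrary $b\in\Br(X)$ to the case $b=(b_1,b_2)$ with $b_1,b_2\in\B$ (by induction on length and bilinearity), and the well-founded order used in the Jacobi rewriting is typically lexicographic on $(|b|,|b_2|)$ or equivalent; as written this is slightly elliptical but not wrong.
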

	For a Hall set $\B$ and $A \subset \mathbb{N}$, we denote by $\B_A$:
	$$\B_A:=\{ b \in \B, \ n(b) \in A \}.$$ When $A$ is a singleton, we write $\B_N$ instead of $\B_{\{N\}}$. 
	\par The definition of a Hall set is also an algorithm for its construction.
	Indeed, the subsets $\B_N$ of a Hall set $\B$ can be constructed by induction on $N$. 
	One may start, for example,  with $\B_0=\{X_0\}$ and 
	$\B_1=\{ X_1 0^{\nu_1}, X_2 0^{\nu_2}, \ \nu_1, \nu_2 \in \mathbb{N}\}$ with the following order
	$$\forall k\in \mathbb{N}, \qquad	X_10^k<X_20^k<X_10^{k+1}<X_20^{k+1}<\cdots<X_0.$$
	which is compatible with the three axioms above. 
	For $N \geqslant 2$, to find all Hall elements $b \in \B_N$ given $\B_{\llbracket 1 , N-1 \rrbracket}$,
	one adds first all $(a, b)$ with $a \in \B_{N-1}$, $b \in X$ and $a < b$. 
	Then for each bracket $b = (b_1, b_2) \in \B_{\llbracket 1 , N-1 \rrbracket}$,
	one adds all the $(a, b)$ with $a \in \B_{N-n(b)}$ and $b_1 \leqslant a < b$. 
	Finally, one inserts the newly generated elements of $\B_N$ into an ordering, maintaining the condition that $a < (a, b)$.
	With this construction, one obtains the following statement.
	\begin{prop}\label{Prop:Bs} There exists a Hall set $\mathcal{B}$ such that $X_0$ is maximal, 
		\begin{equation}\label{base-s1}\mathcal{B}_1=\left\lbrace M_j^1:=X_10^j, \quad M_j^2:=X_20^j;\quad j\in\nn\right\rbrace,\end{equation}
		and $\mathcal{B}_2=\mathcal{B}_{2,good}\cup\mathcal{B}_{2,bad}$ with 
		\begin{equation}\label{base-s2bad}\mathcal{B}_{2,bad}=\left\lbrace W_{j,l}^1:=(M_{j-1}^1,M_j^1)0^l,\quad W_{j,l}^2:=(M_{j-1}^2,M_j^2)0^l;\quad j\in\nn^* ,l\in\nn \right\rbrace,\end{equation} and
		\begin{equation}\label{base-s2good}\mathcal{B}_{2,good}=\left\lbrace C_{j,l}:=(-1)^j\left(M_{\lfloor\frac{j+1}{2}\rfloor}^1,M_{\lfloor\frac{j}{2}\rfloor}^2\right)0^l;\quad j,l\in\nn\right\rbrace.\end{equation}
	\end{prop}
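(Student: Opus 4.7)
The plan is to run the explicit inductive Hall set algorithm recalled just above the statement, choosing the total order so that $X_0$ is placed at the maximum. Starting from $\mathcal{B}_0 = \{X_0\}$ and imposing the order $M_j^1 < M_j^2 < M_{j+1}^1 < M_{j+1}^2 < \cdots < X_0$ on the length-one integrations of $X_1$ and $X_2$, one reads off $\mathcal{B}_1 = \{X_1 0^j, X_2 0^j : j \in \nn\}$. Each $M_j^\ell = (M_{j-1}^\ell, X_0)$ is verified to be a Hall bracket by induction on $j$: one has $M_{j-1}^\ell < X_0$ and $X_0 \in X$, so the last Hall axiom is satisfied automatically.

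Next I enumerate $\mathcal{B}_2$. A Hall bracket $b = (b_1, b_2)$ with $n(b) = 2$ splits into two disjoint cases, since $\mathcal{B}_0 = \{X_0\}$: either $b_2 = X_0$, in which case $b = c\, 0$ for some $c \in \mathcal{B}_2$ of strictly smaller length (\emph{right-integration} by $X_0$, always legal because $X_0$ is maximal and belongs to $X$), or else $b_1, b_2 \in \mathcal{B}_1$, say $b_1 = M_a^{\ell_1}$ and $b_2 = M_b^{\ell_2}$. In the second case the Hall conditions read $b_1 < b_2$ together with, if $b \geqslant 1$, $\lambda(b_2) = M_{b-1}^{\ell_2} \leqslant b_1$. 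Inspecting the interval $[M_{b-1}^{\ell_2}, M_b^{\ell_2})$ inside $\mathcal{B}_1$, one finds exactly two admissible left factors: the ``same colour'' one $M_{b-1}^{\ell_2}$, giving the bad bracket $W_{b,0}^\ell$, and one ``mixed colour'' element. For $b = 0$, only $(M_0^1, M_0^2) = (X_1, X_2)$ survives.

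The mixed colour brackets produced this way form the sequence $(M_0^1, M_0^2), (M_0^2, M_1^1), (M_1^1, M_1^2), (M_1^2, M_2^1), (M_2^1, M_2^2), \ldots$; reindexing by $j \in \nn$ and using antisymmetry of the Lie bracket to always place a copy of $M^1$ on the left yields exactly $C_{j,0} = (-1)^j (M_{\lfloor(j+1)/2\rfloor}^1, M_{\lfloor j/2 \rfloor}^2)$. Right-integrating each of these length-2 Hall brackets $l$ times by $X_0$ then produces all the tails $0^l$, exhausting $\mathcal{B}_2 = \mathcal{B}_{2,bad} \cup \mathcal{B}_{2,good}$ without introducing any new element.

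The only delicate point is the combinatorial enumeration of $(b_1, b_2) \in \mathcal{B}_2$ with both factors in $\mathcal{B}_1$: one must verify that the interval $[\lambda(b_2), b_2)$ in $\mathcal{B}_1$ contains exactly the two candidates listed above and no others. This is pure book-keeping on the chain $X_1 < X_2 < X_1 0 < X_2 0 < X_1 0^2 < \cdots$, and once it is carried out the identification with $W_{j,0}^\ell$ and $C_{j,0}$, together with the sign convention built into the definition of $C_{j,l}$ in order to align the left factor, is immediate.
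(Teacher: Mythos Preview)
Your proof is correct and follows exactly the approach the paper sketches just before the proposition: run the Hall set construction algorithm with the declared order $X_10^k<X_20^k<X_10^{k+1}<\cdots<X_0$, and read off $\mathcal{B}_1$ and $\mathcal{B}_2$. The paper itself leaves the enumeration implicit (``With this construction, one obtains the following statement''), and you have simply filled in the book-keeping, correctly identifying the interval $[\lambda(b_2),b_2)$ in the chain and matching the mixed brackets to the $C_{j,0}$ via the paper's $(-1)^j$ swap convention in $\Br(X)$.
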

	We used the following notation.
	\begin{nota} For $b=(b_1,b_2)\in\Br(X)$ and $j\in\nn$, we use the notation $(-1)^jb$ as $(-1)^jb=(b_2,b_1)$ if $j$ is odd and $(-1)^jb=b$ if $j$ is even.
	\end{nota}
	\begin{rmq}
		When $l=0$, we will write $W_j^1,W_j^2,C_j$ instead of $W_{j,0}^1,W_{j,0}^2$ and $C_{j,0}$.
	\end{rmq}
	\subsection{A quadratic sufficient condition for STLC}
	The purpose of this section is to prove the following theorem.
	\begin{theorem}\label{Main_DF} Let $d,L\in\nn^*$, $f_0,f_1,f_2$ be analytic vector fields on a neighborhood of $0$ in $\rr^d$, such that $f_0(0)=0$. Assume that $r:=\dim\left(S_1(f)(0)\right)<d$ and that there exist brackets $b_{r+1},\cdots,b_d\in\mathcal{B}_{2,good}$ such that $\underset{i\in\llbracket r+1,d\rrbracket}\max|b_i|=L$ and \begin{equation}\label{LARC}S_1(f)(0)\oplus \spn(f_{b_{r+1}}(0),\cdots,f_{b_d}(0))=\rr^d.\end{equation}
			Let $j\in\llbracket r+1,d\rrbracket$. Assume that
			\begin{equation}\label{compensationb2badj}\text{for all }b\in\mathcal{B}_{2,bad}\text{ such that }|b|\leqslant |b_j|,\quad f_b(0)\in{S}_1(f)(0).\end{equation}
		 Then, for every $m\in\nn$, the vector $f_{b_j}(0)$ is a small-time $W^{m,\infty}_0-$continuously approximately reachable vector. Thus, if
		\begin{equation}\label{compensationb2bad}\text{for all }b\in\mathcal{B}_{2,bad}\text{ such that }|b|\leqslant L,\quad f_b(0)\in{S}_1(f)(0),\end{equation}the control-affine system \eqref{affine-syst} is smooth-STLC.
	\end{theorem}
	\begin{rmq}
		The case $r=d$ corresponds to the linear test, see for instance \cite{coronbook}, or \cite[Theorem $1$]{beauchard2017quadratic} for the smooth-STLC case.
	\end{rmq}
	A very simple system for which the theorem applies is the following one
	$$\left\lbrace\begin{array}{ccc}
		x_1'&=&u\\
		x_2'&=&x_1v
	\end{array}\right..$$ 
	Indeed, some computations on Lie brackets give: $f_{M_0^1}(0)=e_1$ and $f_{C_0}(0)=e_2$. Thus, the LARC \eqref{LARC} is verified. Moreover, $\{b\in\mathcal{B}_{2,bad}, \ |b|\leqslant |C_0|\}=\emptyset$. Then, the condition \eqref{compensationb2bad} is verified. Thus, the system is smooth-STLC.
	\\\par
	Theorem \ref{Main_DF} gives a sufficient condition for smooth-STLC; this condition is not necessary: indeed, consider the system 	$$\left\lbrace\begin{array}{ccc}
		x_1'&=&u\\
		x_2'&=&v\\
		x_3'&=&x_1^2+x_2^2+\alpha x_1x_2
	\end{array}\right.,$$ with $\alpha\in\rr^*$. One can prove that ${S}_1(f)(0)=\spn(e_1,e_2)$ and $f_{C_1}(0)=\alpha e_3$. Then, the LARC \eqref{LARC} is verified. Moreover, $f_{W_1^1}(0)=2 e_3\notin {S}_1(f)(0)$ and $|W_1^1|=|C_1|.$ Then, the hypothesis \eqref{compensationb2bad} is not verified. However, if $|\alpha|>2$, one can prove that the system is smooth-STLC.
	\\\par Theorem \ref{Main_DF} is a corollary of Sussmann's $S(\theta)$-condition (see Appendix \ref{sussman}). We propose another proof of Theorem \ref{Main_DF},
	relying on a representation formula of the state, presented in Propositions \ref{Prop:Magnus} and \ref{Prop:rep_form}.
	The advantage of this alternative proof strategy is that it can be adapted to the infinite dimensional framework of PDEs:
	the representation formula does not hold in its full generality for the state of the Schrödinger PDE, 
	but its leading terms can be extracted from an expansion of the solution (see Proposition \ref{quad-fini}),
	so that we can conclude with a similar proof strategy.
	\\\\\indent In \cite{RogerBrockett2013}, the author is interested in particular linear and quadratic ODEs systems of the form $\left\lbrace\begin{array}{cccc}x'&=&Ax+Bu,& \\w_i'&=&{}^txD_ix,& \ 1\leqslant i\leqslant r\end{array}\right.,$ with $D_i$, well-chosen symmetric matrices. He proves a small-time global controllability result. For that, Brockett shows that the matrices $D_i$ are associated with good brackets, see \cite[Theorem $3.4$]{RogerBrockett2013}. The good quadratic Lie brackets involved are elements shaped as $(M_{\nu}^1,X_2)$, $\nu\in\nn$.  They are linear combination of elements of our base of $\mathcal{B}_{2,good}$.
	\subsection{Some tools for the proof}
	\begin{defi}[Coordinates of the second kind]
		Let $\mathcal{B} \subset \Br(X)$ be a Hall set.
		The coordinates of the second kind associated with $\B$ is the unique family
		$(\xi_b)_{b\in\B}$ of functionals $\rr_+ \times L^1_{\mathrm{loc}}(\rr_+,\rr)^2 \to \rr$ defined by induction as: 
		for every $t>0$ and $u, v \in L^1((0,t),\rr)$,
		\begin{itemize}
			\item $\xi_{X_0}(t,(u,v)):= t$, \ $\xi_{X_1}(t,(u,v)): = u_1(t)$ \ and \ $\xi_{X_2}(t,(u,v)):= v_1(t)$,
			\item for $b \in \mathcal{B} \setminus X$, there exists  a unique couple $(b_1,b_2)$ of elements of $\mathcal{B}$ such that $b_1<b_2$ and a unique maximal integer $m\geqslant 1$ with $b=\ad_{b_1}^m (b_2)$ and then
			\begin{equation}
				\xi_{b}(t,(u,v)):=\frac{1}{m!} \int_0^t  \xi_{b_1}^m(s,(u,v)) \dot{\xi}_{b_2}(s,(u,v)) \dd s.
			\end{equation}
		\end{itemize}
	\end{defi}
	With this definition, one immediately obtains the following expressions.
	\begin{prop}[Coordinates of the second kind] The following equalities hold.
		\begin{enumerate}
			\item[1.] For every $j\in\nn$, for every $t>0$,
			\begin{equation}\label{egbase-s1}\xi_{M_j^1}(t,(u,v))=u_{j+1}(t),\qquad\xi_{M_j^2}(t,(u,v))=v_{j+1}(t).\end{equation}
			\item[2.] For every $j\in\nn^*$, $l\in\nn$, for every $t>0$,
			\begin{equation}\label{egbase-s2bad}\begin{gathered}\xi_{W_{j,l}^1}(t,(u,v))=\frac{1}{2}\int_0^t\frac{(t-s)^l}{l!}u_j^2(s)\ds,\\ \xi_{W_{j,l}^2}(t,(u,v))=\frac{1}{2}\int_0^t\frac{(t-s)^l}{l!}v_j^2(s)\ds.\end{gathered}\end{equation}
			\item[3.] For every $j,l\in\nn$, for every $t>0$,
			\begin{equation}\label{egbase-s2good}\xi_{C_{j,l}}(t,(u,v))=\int_0^t\frac{(t-s)^l}{l!}u_{\left\lfloor\frac{j}{2}\right\rfloor+1}(s)v_{\left\lfloor\frac{j+1}{2}\right\rfloor}(s)\ds.\end{equation}
		\end{enumerate}
	\end{prop}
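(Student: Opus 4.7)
The three formulas all follow directly from the inductive definition of $\xi_b$, provided that for each bracket one identifies the unique Hall-type decomposition $b = \ad_{b_1}^m(b_2)$ with $b_1 < b_2$ and $m$ maximal. The plan is to treat each of the three cases in turn by induction, with the only delicate point being the identification of $m$ in the quadratic bad case.

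\smallskip

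\noindent\textbf{Proof of (1).} I would argue by induction on $j\in\nn$. For $j=0$, the formula reduces to $\xi_{X_1}(t,u)=u_1(t)$ and $\xi_{X_2}(t,v)=v_1(t)$, which is the base definition. For $j\geqslant 1$, the only decomposition $M_j^1 = \ad_{b_1}^{m}(b_2)$ with $b_1<b_2$ and $m$ maximal is $b_1 = M_{j-1}^1$, $b_2 = X_0$, $m=1$: indeed $[M_{j-1}^1, X_0] = M_j^1$, while any $m\geqslant 2$ would require $[M_{j-1}^1, b_2] = X_0$ for some $b_2 \in\B$, which is impossible. The definition therefore gives $\xi_{M_j^1}(t)=\int_0^t \xi_{M_{j-1}^1}(s)\,\ds = \int_0^t u_j(s)\,\ds = u_{j+1}(t)$, and similarly for $M_j^2$.

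\smallskip

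\noindent\textbf{Proof of (2).} I would first treat $l=0$, then induct on $l$. For $l=0$, $W_{j,0}^1 = (M_{j-1}^1, M_j^1)$; the key observation is that the maximal admissible $m$ in the decomposition $W_{j,0}^1 = \ad_{b_1}^m(b_2)$ is not $1$ but $2$, since
\begin{equation*}
W_{j,0}^1 = [M_{j-1}^1, M_j^1] = [M_{j-1}^1,[M_{j-1}^1, X_0]] = \ad_{M_{j-1}^1}^{2}(X_0),
\end{equation*}
with $b_1 = M_{j-1}^1 < X_0 = b_2$. This yields the factor $1/m! = 1/2$ and the square, namely $\xi_{W_{j,0}^1}(t) = \tfrac{1}{2}\int_0^t \xi_{M_{j-1}^1}^{2}(s)\,\ds = \tfrac{1}{2}\int_0^t u_j^{2}(s)\,\ds$ by (1). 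For $l\geqslant 1$, the decomposition $W_{j,l}^1 = (W_{j,l-1}^1, X_0)$ has maximal $m=1$ (the same impossibility argument as in the proof of (1)), so $\xi_{W_{j,l}^1}(t) = \int_0^t \xi_{W_{j,l-1}^1}(s)\,\ds$. The induction hypothesis together with a Fubini argument then produces the kernel $(t-s)^l/l!$. The case of $W_{j,l}^2$ is identical.

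\smallskip

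\noindent\textbf{Proof of (3).} I would treat $l=0$ by a parity case split and then iterate on $l$ as in (2). For $j$ even, $C_{j,0} = (M_{j/2}^1, M_{j/2}^2)$ with $M_{j/2}^1 < M_{j/2}^2$ and maximal $m=1$ (here the inner letters are different, so one cannot push $m$ to $2$ as in the bad case); for $j$ odd, $C_{j,0} = (M_{(j-1)/2}^2, M_{(j+1)/2}^1)$, again with maximal $m=1$. In both cases applying the definition together with (1) and the relations $\dot{u}_{k+1} = u_k$, $\dot{v}_{k+1}=v_k$ gives exactly $\xi_{C_{j,0}}(t) = \int_0^t u_{\lfloor j/2\rfloor+1}(s)\,v_{\lfloor (j+1)/2\rfloor}(s)\,\ds$, the two parities reuniting into the single formula thanks to the floor notation. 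The passage from $l$ to $l+1$ is then identical to (2), producing the $(t-s)^l/l!$ kernel via Fubini.

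\smallskip

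\noindent\textbf{Main obstacle.} The only non-routine point is the identification of the maximal $m$ for $W_{j,0}^1$: one must recognize that $[M_{j-1}^1, M_j^1]$ is in fact the double adjoint action $\ad_{M_{j-1}^1}^{2}(X_0)$, which is what produces the factor $\tfrac{1}{2}$ and the square $u_j^{2}$ and distinguishes the bad brackets from the good ones. Everywhere else $m=1$, and the computation is purely mechanical.
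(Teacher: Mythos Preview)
Your proof is correct and follows essentially the same approach as the paper, namely a direct computation from the inductive definition of the coordinates of the second kind; the paper simply defers points 1 and 2 to \cite[Proposition~3.7]{beauchard2024unified} and states that point 3 follows from the induction and some computations, whereas you spell out the decompositions explicitly. Your identification of the maximal $m=2$ for $W_{j,0}^i$ via $W_{j,0}^i=\ad_{M_{j-1}^i}^{2}(X_0)$ is exactly the point that makes the $\tfrac{1}{2}u_j^2$ factor appear, and your parity split for $C_{j,0}$ together with the Fubini step for the $0^l$ iterations is the intended routine computation.
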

	\begin{proof}
		The first two points are proved by Beauchard and Marbach in \cite[Proposition $3.7$]{beauchard2024unified}. For the last one, the definition by induction on $\xi$ and some computations lead to the result.
	\end{proof}
	\begin{prop}[Magnus formula]\label{Prop:Magnus}
		Let $M\in\nn$, $\delta,T>0$, $f_0,f_1,f_2:B(0,5\delta)\to\rr^d$ be analytic vector fields with T$\left\|f_0\right\|_{\mathcal{C}^0}\leqslant \delta$. There exist $\gamma,C>0$ such that, for every $u,v\in L^1((0,T),\rr^d)$ with $\left\|u\right\|_{L^1},\left\|v\right\|_{L^1}\leqslant \gamma$, $p\in\mathcal{B}(0,\delta)$ and $t\in[0,\gamma]$, 
		\begin{equation}\label{x=e^Z2+O}
			\left\|x(t;(u,v),p,0)-e^{\mathcal{Z}_M(t;f,(u,v))}e^{tf_0}p\right\|\leqslant C\left\|(u,v)\right\|_{L^1(0,t)}^{M+1},
		\end{equation}
		with 
		$$\mathcal{Z}_M(t;f,(u,v))(0) =\sum_{b \in \B_{\llbracket 1 ,M \rrbracket}} \eta_b(t,(u,v)) f_b(0),$$
		where the series is absolutely convergent and the following explicit expressions hold for every $t>0$ and $u,v \in L^1(0,t)$:
		\begin{itemize}
			\item[1.] if $b\in\mathcal{B}_1$, then $\eta_b(t,(u,v))=\xi_b(t,(u,v))$,
			\item[2.] if $b \in \B_{2,good}$, then
			\begin{equation}\label{pollution}\eta_b(t,(u,v))=\xi_b(t,(u,v))+ \sum_{j+k=n_0(b)} \gamma^b_{j,k} u_{j+1}(t) v_{k+1}(t),
			\end{equation}
			where $\gamma^b_{j,k}\in \rr$.
		\end{itemize}
	\end{prop}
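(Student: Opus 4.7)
The plan is to reduce to a driftless time-varying system. Set $y(t):=e^{-tf_0}(x(t))$. A short computation (using that $f_0$ is invariant under its own flow, so that $D(e^{-tf_0})(x(t))\cdot f_0(x(t)) = f_0(y(t))$) shows that $y$ solves
\begin{equation*}
y'(t) = u(t)\,\tilde f_1(t,y(t)) + v(t)\,\tilde f_2(t,y(t)),\qquad \tilde f_i(t,\cdot) := e^{t\,\ad_{f_0}} f_i = \sum_{k\geqslant 0}\frac{t^k}{k!}\ad_{f_0}^k f_i.
\end{equation*}
Since $x(t) = e^{tf_0}(y(t))$, the estimate \eqref{x=e^Z2+O} will follow from a Magnus expansion $y(t) = e^{W_M(t)}(p) + O(\|(u,v)\|_{L^1}^{M+1})$ for a Lie polynomial $W_M$ in the $\tilde f_i$'s, together with the identity $\mathcal{Z}_M = (e^{tf_0})_* W_M$, which just encodes $e^{tf_0}\circ e^{W_M} = e^{\mathcal{Z}_M}\circ e^{tf_0}$. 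The Hall basis of Proposition~\ref{Prop:Bs} is engineered precisely for this: pushforward by $e^{tf_0}$ acts by $(-1)^\nu \ad_{f_0}^\nu$, and $f_{b 0^\nu} = (-1)^\nu \ad_{f_0}^\nu f_b$, so the ``extra $X_0$'s'' absorb the conjugation automatically.

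\textbf{Magnus expansion for the driftless system.}
For the time-varying driftless equation I would invoke the classical Chen--Fliess / Magnus expansion truncated at length $M$: write $W_M(t) = \sum_b z_b(t)\,\tilde f_b$ as a finite sum over Hall brackets in the two letters $X_1,X_2$ of length $\leqslant M$, with $z_b(t)$ an explicit iterated integral of $u,v$ against polynomials in $s$ (inherited from the series defining $\tilde f_i$). A standard Gr\"onwall argument controls the remainder at order $\|(u,v)\|_{L^1}^{M+1}$, provided $\|(u,v)\|_{L^1}$ is smaller than some $\gamma$ so that $y$ stays in a fixed neighborhood of $p$. The regularity hypotheses $f_0\in\mathcal C^{M^2+1}$, $f_1,f_2\in\mathcal C^{M^2}$ enter here because each $\ad_{f_0}^k$ consumes one derivative of $f_0$, and iterated brackets of length $\leqslant M$ of the $\tilde f_i$'s involve $\ad_{f_0}^k$ with $k$ up to $\sim M^2$ after reexpansion in the elementary fields.

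\textbf{Identification of coefficients and main obstacle.}
For $b = M_m^i \in \mathcal{B}_1$, the only length-$1$ contribution is $\sum_k \frac{1}{k!}\int_0^t s^k w_i(s)\,\dd s\cdot \ad_{f_0}^k f_i$; after pushforward by $e^{tf_0}$, the coefficient of $f_{M_m^i} = (-1)^m\ad_{f_0}^m f_i$ becomes
\begin{equation*}
\sum_{k=0}^m \frac{(-t)^{m-k}}{(m-k)!}\cdot\frac{(-1)^m}{k!}\int_0^t s^k w_i(s)\,\dd s = \frac{1}{m!}\int_0^t (t-s)^m w_i(s)\,\dd s = w_{i,m+1}(t),
\end{equation*}
using the binomial identity $\sum_k \binom{m}{k}(-t)^{m-k}s^k = (s-t)^m$ and Cauchy's formula for iterated primitives; this matches $\xi_{M_m^i}$. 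For $b\in\mathcal{B}_2$, the length-$2$ Magnus kernel $\tfrac12\int_0^t\!\int_0^s[\tilde f_\alpha(s),\tilde f_\beta(\sigma)]\,w_\alpha(s)w_\beta(\sigma)\,\mathrm{d}\sigma\,\mathrm{d} s$ produces, after the same distribution and reorganization in the Hall basis, a leading iterated integral that matches $\xi_b$ by induction on the number of trailing $X_0$'s, plus residual boundary terms from integrations by parts of the weights $s^a\sigma^b$. The hard step is to show that these boundary terms collapse to the product form $\sum_{j+k=n_0(b)}\gamma^b_{j,k}\,u_{j+1}(t)u_{k+1}(t)$ (with no surviving single integrals and no higher-order products). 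This is a combinatorial shuffle identity compatible with the right-normed structure of $\mathcal{B}$ at length $2$, to be verified by a careful bookkeeping of the integrations by parts at each occurrence of $X_0$. Propagating the $O(\|(u,v)\|_{L^1}^{M+1})$ remainder through the diffeomorphism $y\mapsto e^{tf_0}(y)$ is then routine.
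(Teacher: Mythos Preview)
Your interaction-picture strategy is sound and is in fact the analytic counterpart of the paper's algebraic factorization $y(t)=\exp(tX_0)\exp(\mathcal Z_\infty)$, but your execution diverges from the paper's route and leaves precisely the key combinatorial point unresolved.

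The paper works at the formal level in $\widehat{\mathcal A}(X)$: it uses the Chen--Sussmann infinite product $y(t)=\overset{\leftarrow}{\prod}_{b\in\mathcal B} e^{\xi_b b}$ (coordinates of the second kind), and because $X_0$ is \emph{maximal} in the chosen Hall order, the factor $e^{tX_0}$ splits off on the left automatically. One then has $\exp(\mathcal Z_\infty)=\overset{\leftarrow}{\prod}_{b\in\mathcal B\setminus\{X_0\}} e^{\xi_b b}$ and applies the multivariate CBHD formula to this product of exponentials. At length~$1$ this gives $\eta_b=\xi_b$ at once (no computation of the type you perform is needed); at length~$2$, CBHD produces exactly the correction $\tfrac12\sum\delta_{(b_1,b_2),b}\,\xi_{b_1}\xi_{b_2}$, summed over $b_1>b_2$ in $\mathcal B_1$ with $b$ in the Hall-basis support of $(b_1,b_2)$. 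Since every $\xi_{b'}$ for $b'\in\mathcal B_1$ is a primitive $u_{j+1}(t)$ or $v_{j+1}(t)$ by \eqref{egbase-s1}, the pollution form \eqref{pollution} is immediate --- no integration by parts, no shuffle identity. The error bound \eqref{x=e^Z2+O} and the absolute convergence are then quoted from \cite{Beauchard_2023}.

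Your ``hard step'' is precisely what the CBHD route renders trivial. By expanding the time-dependent fields $\tilde f_i=\sum_k \frac{t^k}{k!}\ad_{f_0}^k f_i$ and re-collecting into Hall brackets by hand, you are in effect redoing the length-$2$ CBHD identity by brute-force integration by parts; the collapse of your residual boundary terms into pure products of primitives \emph{is} that identity, rephrased. As written your argument is therefore incomplete at exactly the point the proposition is meant to establish, and your sketch gives no mechanism explaining why no single integrals survive. Replacing your third paragraph by the CBHD comparison of $\exp(\mathcal Z_\infty)$ with the ordered product over $\mathcal B\setminus\{X_0\}$ closes the gap in one line, and is what the paper does.
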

	From this proposition, one can deduce the following one, thanks to Lemma \ref{gronwallmag}.
	\begin{crl}[Magnus formula 2] \label{Prop:rep_form}
		Let $M\in\nn^*$, $\delta,T>0$, $f_0, f_1, f_2:B(0,2\delta)\to\rr^d$  be  analytic vector fields with $f_0(0)=0$ and $T\left\|f_0\right\|_{\infty}\leqslant\delta$. For $u,v\in L^1((0,T),\rr)$, as $\|(u,v)\|_{L^1} \to 0$,
		\begin{equation} \label{x=Z2+O}
			x(t;(u,v),0)= \mathcal{Z}_M(t;f,(u,v))(0) +\mathcal{O}\left(  \|(u,v)\|_{L^1(0,t)}^{M+1}  + \left\|x(t;u,v)\right\|^{1+\frac{1}{M}}   \right).
		\end{equation}
	\end{crl}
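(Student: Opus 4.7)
The plan is to deduce the corollary from Proposition \ref{Prop:Magnus} applied at $p=0$, by three moves: (i) remove the factor $e^{tf_0}(0)$ using $f_0(0)=0$, (ii) quantify how far the time-one flow $e^V(0)$ of the vector field $V:=\mathcal{Z}_M(t;f,(u,v))$ departs from its value $V(0)$ at the origin, and (iii) rearrange the remainder into the stated form via Young's inequality.

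Since $f_0(0)=0$, the integral curve of $f_0$ starting at $0$ is constantly $0$, so $e^{tf_0}(0)=0$. Hence Proposition \ref{Prop:Magnus} with $p=0$ reads
$$x(t;(u,v),0)=e^{\mathcal{Z}_M(t;f,(u,v))}(0)+O\bigl(\|(u,v)\|_{L^1(0,t)}^{M+1}\bigr).$$
Now let $V:=\mathcal{Z}_M(t;f,(u,v))$ and let $y$ be the solution of $y'(s)=V(y(s))$, $y(0)=0$, so that $e^V(0)=y(1)$. Because $V=\sum_{b\in\B_{\llbracket 1,M\rrbracket}}\eta_b f_b$ where the coefficients $\eta_b$ are polynomials in the iterated primitives $u_k,v_k$ of homogeneity at least one, standard estimates give $\|\mathrm{D}V\|_\infty=O(\|(u,v)\|_{L^1})$. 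A Gronwall argument — the content of Lemma \ref{gronwallmag} — then yields $|y(s)|\leqslant Cs|V(0)|$ on $[0,1]$ and, through $y(1)-V(0)=\int_0^1[V(y(s))-V(0)]\,\mathrm{d}s$, the key estimate
$$|e^V(0)-V(0)|\leqslant C\,\|\mathrm{D}V\|_\infty\,|V(0)|=O\bigl(\|(u,v)\|_{L^1}\cdot|V(0)|\bigr).$$

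Substituting into the previous display,
$$x(t;(u,v),0)=V(0)+O\bigl(\|(u,v)\|_{L^1}\cdot|V(0)|\bigr)+O\bigl(\|(u,v)\|_{L^1}^{M+1}\bigr).$$
For $\|(u,v)\|_{L^1}$ small enough that the implicit constant times $\|(u,v)\|_{L^1}$ is $\leqslant\tfrac12$, this display itself bootstraps into $|V(0)|\leqslant 2\|x(t;(u,v),0)\|+C\|(u,v)\|_{L^1}^{M+1}$. Reinjecting, the only term not already of the announced shape is $\|(u,v)\|_{L^1}\cdot\|x(t;(u,v),0)\|$, which I would handle via Young's inequality with conjugate exponents $p=M+1$ and $q=(M+1)/M$, yielding
$$\|(u,v)\|_{L^1}\cdot\|x(t;(u,v),0)\|\leqslant\frac{1}{M+1}\|(u,v)\|_{L^1}^{M+1}+\frac{M}{M+1}\|x(t;(u,v),0)\|^{1+\frac{1}{M}}.$$
This reproduces exactly the remainder in \eqref{x=Z2+O}. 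The only non-routine ingredient is this last Young step, whose conjugate exponents naturally produce the $1+\tfrac{1}{M}$ power present in the statement; everything else is bookkeeping between Gronwall, the bootstrap, and Proposition \ref{Prop:Magnus}.
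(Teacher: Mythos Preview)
Your proof is correct and follows exactly the route the paper indicates: Proposition \ref{Prop:Magnus} at $p=0$ combined with Lemma \ref{gronwallmag}, completed by the bootstrap $|V(0)|\leqslant 2\|x(t)\|+C\|(u,v)\|_{L^1}^{M+1}$ and the Young inequality with exponents $M+1$ and $(M+1)/M$ that produces the $1+\tfrac{1}{M}$ power. The paper merely says ``thanks to Lemma \ref{gronwallmag}'' and leaves these last two steps implicit; you have written them out.
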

	These representation formulas rely on \cite{Beauchard_2023}. The path for obtaining it is presented in appendix.
	\begin{rmq}[Homogeneity property]
		Let $\bar{u}, \bar{v} \in L^1(0,1)$, $\lambda,\mu \in \rr$, $T>0$, 
		$u: t \in (0,T) \mapsto \lambda \bar{u}(\frac tT)$ and  
		$v: t \in (0,T) \mapsto \mu \bar{v}(\frac tT)$. Then, for all $b\in\mathcal{B}$,
		\begin{equation}\label{homogeneous}
			\eta_b(T,(u,v))=\lambda^{n_1(b)}\mu^{n_2(b)} T^{|b|} \eta_b(1,(\bar{u},\bar{v})).
		\end{equation}
	\end{rmq}
	\begin{lm}
		For every $M\in\nn^*$, there exists $C_M > 0$ such that, for every $T> 0$, $u,v\in L^1((0,T),\rr)$, $b\in\mathcal{B}$ with $n(b)\leqslant M$ and $t\in[0,T]$,
		\begin{equation}\label{sizeetab}
			|\eta_b(t,(u,v))| \leqslant\frac{C_M}{|b|!} t^{n_0(b)}\left\|(u,v)\right\|^{n(b)}_{L^1(0,t)}.
		\end{equation}
	\end{lm}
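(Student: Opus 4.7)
\begin{sproof}
The plan is threefold: (i) normalize the bound via the homogeneity property \eqref{homogeneous}; (ii) establish the analogous estimate for the coordinates of the second kind $\xi_b$ by induction on $|b|$; and (iii) transfer the bound to $\eta_b$ using the pollution structure \eqref{pollution}.

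First, by \eqref{homogeneous}, both sides of \eqref{sizeetab} scale as $\lambda^{n_1(b)}\mu^{n_2(b)} t^{|b|}$ under $u(s)=\lambda \bar u(s/t)$, $v(s)=\mu \bar v(s/t)$. It therefore suffices to prove $|\eta_b(1,(\bar u, \bar v))| \leqslant C_M/|b|!$ uniformly over $\bar u, \bar v \in L^1(0,1)$ with unit $L^1$ norm.

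Second, I would prove the analogous estimate for $\xi_b$ by induction on $|b|$. The base case $b \in X$ is immediate from the explicit formulas for $\xi_{X_0}$, $\xi_{X_1}$ and $\xi_{X_2}$. For the inductive step, writing $b=\ad_{b_1}^m(b_2) \in \mathcal{B}$ with $b_1 < b_2 \in \mathcal{B}$ and $m$ maximal, I would iterate the defining recursion
$$\xi_b(t) = \frac{1}{m!}\int_0^t \xi_{b_1}^m(s)\,\dot\xi_{b_2}(s)\,\mathrm{d}s$$
along the ``left spine'' of $b$, unfolding $\dot\xi_{b_2} = \frac{1}{m_2!}\xi_{b_{2,1}}^{m_2}\dot\xi_{b_{2,2}}$ and so on, until the derivative of an indeterminate $\dot\xi_{X_i} \in \{1, \bar u(s), \bar v(s)\}$ is reached. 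This represents $\xi_b(1)$ as a single iterated integral of a product of factors $\xi_{c_i}$ with $|c_i|<|b|$ and a terminal factor in $\{1,\bar u,\bar v\}$, weighted by an explicit product of $1/m_i!$ collected from each step of the recursion. Plugging in the inductive bounds on the $\xi_{c_i}$'s and bounding the remaining one-dimensional integral yields the claim, the $1/|b|!$ factor arising from a multinomial identity on the decomposition tree of $b$.

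Third, I would pass from $\xi_b$ to $\eta_b$ using \eqref{pollution}. For $b \in \mathcal{B}_1$, there is nothing to do since $\eta_b = \xi_b$. For $b \in \mathcal{B}_2$, the pollution terms $\gamma^b_{j,k} u_{j+1}(t) u_{k+1}(t)$ with $j+k=n_0(b)$ satisfy $|u_{j+1}(t) u_{k+1}(t)| \leqslant t^{n_0(b)}\|u\|_{L^1}^2/(j!\,k!)$, which is of the same order as the right-hand side of \eqref{sizeetab} up to a constant depending only on $M$. For $b \in \mathcal{B}_N$ with $N\geqslant 3$, the analogous polluted expansion inherited from the Magnus formula (cf.\ the appendix and \cite{Beauchard_2023}) expresses $\eta_b - \xi_b$ as a sum of products of lower-order $\xi_{c}$'s already controlled by Step~2.

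The main obstacle will be the combinatorial identity in Step 2: the product $\prod_i (|c_i|!)^{m_i} \prod_i m_i!$ emerging from the nested integration must combine into something comparable to $|b|!$ up to a factor depending only on $M$. This is precisely where the hypothesis $n(b) \leqslant M$ becomes essential, as it bounds the number of control letters $\bar u, \bar v$ distributed across the $c_i$'s and allows the remaining multinomial growth in $|b|$ to be absorbed into the constant $C_M$.
\end{sproof}
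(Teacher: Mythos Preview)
The paper does not give its own proof: it cites \cite[Proposition~52]{Beauchard_2023} and nothing more. So there is no in-paper argument to compare your sketch against; your outline (homogeneity reduction, induction on $|b|$ for the $\xi_b$, then transfer to $\eta_b$) is in fact the standard route and is, in spirit, what the cited reference does.

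Two caveats on your sketch. First, for $n(b)\geqslant 3$ your Step~3 appeals to ``the analogous polluted expansion'' from the appendix and \cite{Beauchard_2023}---i.e.\ the same external source the paper invokes---so at that point your argument is no more self-contained than the paper's one-line citation. Second, and more substantively, the ``main obstacle'' you flag is sharper than you suggest. Already for $b=M_j^1\in\B_1$ one has $\eta_b=\xi_b=u_{j+1}$, whose optimal bound is $t^{j}\|u\|_{L^1}/j!$, whereas the stated inequality \eqref{sizeetab} would require $C_M/(j+1)!$ on the right, forcing $C_M\geqslant j+1$ for every $j\in\nn$. The estimate in \cite{Beauchard_2023} presumably carries an exponential prefactor in $|b|$ (or uses $n_0(b)!$ rather than $|b|!$); either variant suffices for all the uses of \eqref{sizeetab} in this paper (absolute convergence of $\mathcal{Z}_M$ and the bounds around \eqref{Z1}--\eqref{Z2global}), and either is what your inductive scheme will naturally produce. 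So do not expect your multinomial bookkeeping to deliver exactly $1/|b|!$ with a constant depending only on $M$---it cannot, and that is a feature of the statement as transcribed here, not of your method.
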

	\begin{proof}
		This lemma is proved in \cite[Proposition 52]{Beauchard_2023}.
	\end{proof}
	\subsection{A quadratic moment problem}
	To prove Theorem \ref{Main_DF}, we solve quadratic moment problems.
	\begin{prop}[Moment problems] \label{momentfini}
		Let $B$ be a finite subset of $\mathcal{B}_1\cup\mathcal{B}_{2,\text{good}}$ and $b_0 \in B\cap\mathcal{B}_{2,good}$.
		There exist $u,v \in\mathcal{C}^\infty_c((0,1),\rr)$ such that,
		\begin{equation}\label{momentfini1}
			\text{for every }b \in B,\qquad \eta_b(1,(u,v))=\delta_{b,b_0}.
		\end{equation}
	\end{prop}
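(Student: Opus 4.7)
The plan is to split the proof into two stages: first, reduce the $\eta_b$-moment problem to a $\xi_b$-moment problem by exploiting the triangular structure of the Magnus formula; second, solve the resulting bilinear moment problem by an explicit construction of well-separated building blocks.

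For the first stage, I would fix $N \in \nn$ large enough that $n_0(b) \leqslant N$ for every $b \in B$ and also $j \leqslant N$ for every $M_j^\ell \in B\cap\mathcal{B}_1$, and introduce the finite-codimension subspace
$$V := \left\{(u,v) \in \mathcal{C}^\infty_c((0,1),\rr)^2 : u_{j+1}(1) = v_{j+1}(1) = 0 \text{ for all } 0 \leqslant j \leqslant N\right\}.$$
By the pollution formula \eqref{pollution} of Proposition \ref{Prop:Magnus}, for each $b \in B \cap \mathcal{B}_2$ the difference $\eta_b(1,\cdot) - \xi_b(1,\cdot)$ is a polynomial of degree at least $2$ in the primitives $\{u_{j+1}(1), v_{j+1}(1)\}_{0\leqslant j \leqslant N}$ and therefore vanishes identically on $V$. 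For $b = M_j^\ell \in B \cap \mathcal{B}_1$, the formula \eqref{egbase-s1} gives $\eta_b(1,\cdot) = \xi_b(1,\cdot) \in \{u_{j+1}(1), v_{j+1}(1)\}$, which already vanishes on $V$ and matches the target value $\delta_{b,b_0} = 0$ since $b_0 \in \mathcal{B}_{2,\text{good}}$. The problem thus reduces to finding $(u,v) \in V$ such that, for every $C_{j,l} \in B \cap \mathcal{B}_{2,\text{good}}$,
$$\xi_{C_{j,l}}(1,(u,v)) = \int_0^1 \frac{(1-s)^l}{l!}u_{\lfloor j/2 \rfloor+1}(s)\,v_{\lfloor(j+1)/2\rfloor}(s)\ds = \delta_{C_{j,l},b_0}.$$

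For the second stage, I would parametrize a finite-dimensional family in $V$ by writing $u = \sum_{i=1}^m \alpha_i \phi_i$ and $v = \sum_{i=1}^m \beta_i \psi_i$ with fixed $\phi_i,\psi_i \in V$ (obtained, e.g., by subtracting from a smooth bump the appropriate linear combination of the dual polynomials $(1-s)^j/j!$ so that the linear constraints defining $V$ are satisfied). The induced map $(\alpha,\beta) \in \rr^{2m} \mapsto (\xi_b(1,(u,v)))_{b \in B \cap \mathcal{B}_{2,\text{good}}}$ is bilinear, and it suffices to show that the vector $e_{b_0}$ lies in its image. I would then construct, one bracket at a time, pairs $(u_b,v_b) \in V$ supported on disjoint subintervals $I_b \subset (0,1)$ (or at disjoint oscillation scales) so that the matrix $\bigl(\xi_{b'}(1,(u_b,v_b))\bigr)_{b,b' \in B \cap \mathcal{B}_{2,\text{good}}}$ is invertible; applying its inverse to the vector $e_{b_0}$ yields the coefficients of a linear combination of the $(u_b,v_b)$ solving the moment problem.

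The main obstacle is the surjectivity claim in the second stage: because the moment map is bilinear rather than linear, pure linear algebra does not suffice. The key difficulty is that the iterated primitives $u_{k+1}$ and $v_{k+1}$ do not have disjoint supports even when $u$ and $v$ do, so the cross-terms $\xi_{b'}(1,(u_b,v_b))$ for $b \neq b'$ do not automatically vanish. I would overcome this either by a direct multi-scale construction (choosing bumps at separated frequency scales so that the different integrands concentrate where different bilinear moments localize), or by adapting the independence of Hall coordinates on Chen-Fliess series (Lemma 57 of \cite{Beauchard_2023}) to the two-control setting, which guarantees that any finite collection of $\xi_b$-functionals can be prescribed simultaneously on a finite-dimensional space of smooth compactly supported controls.
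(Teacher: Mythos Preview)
Your first stage is correct and matches the paper's approach: both reduce to the $\xi_b$-problem by working with controls whose primitives up to order $N$ vanish at $1$ (the paper does this by writing $u=\phi^{(N)}$, $v=\psi^{(N)}$ with $\phi,\psi\in\mathcal{C}^\infty_c(0,1)$, which lands directly in your space $V$).

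The second stage is where you diverge, and where there is a genuine gap. You propose to attack the bilinear moment map head-on, building pairs $(u_b,v_b)$ on disjoint intervals and then taking a ``linear combination''. But the map $(u,v)\mapsto(\xi_b)_{b\in B\cap\mathcal{B}_{2,\text{good}}}$ is bilinear, so even if cross-terms $\xi_{b'}(1,(u_b,v_{b''}))$ vanished (which, as you note, requires the \emph{primitives} and not just $u_b,v_b$ to have disjoint supports), you would still need to show that the square matrix $\bigl(\xi_{b'}(1,(u_b,v_b))\bigr)$ is invertible. That is essentially the original problem transported to each subinterval, so the argument is circular without a further idea. Invoking a black-box lemma from \cite{Beauchard_2023} is not a proof either.

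The paper's key insight, which you are missing, is to break the symmetry: fix $u=\phi^{(N)}$ \emph{first}, so that the remaining problem is \emph{linear} in $\psi$. Concretely, integration by parts gives $\eta_{C_{\mu,\nu}}(1,(u,v))=\int_0^1\psi(t)f_{\mu,\nu}(t)\dt$ where each $f_{\mu,\nu}$ depends only on $\phi$. The whole difficulty is then pushed into choosing a single $\phi$ for which the family $(f_{\mu,\nu})_{\mu,\nu}$ is linearly independent; the paper does this explicitly with $\phi(t)=e^{t^m}\chi(t)$ for $m$ large, so that on $(\tfrac14,\tfrac34)$ each $f_{\mu,\nu}(t)e^{-t^m}$ is a polynomial of a distinct degree. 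Once independence holds, $\psi$ is obtained by solving a finite linear moment problem. This asymmetric linearization is the step your proposal lacks.
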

	\begin{proof}
		One may assume that
		$B=\left\lbrace M_{\nu}^1,\ M_{\nu}^2, \ C_{\mu,\nu}, \ (\mu,\nu)\in\llbracket 0,\mu^*\rrbracket\times\llbracket 0,\nu^*\rrbracket\right\rbrace.$ One considers $N=\mu^*+\nu^*+1$.  We are looking for $u$ and $v$ in the form of $u=\phi^{(N)}$ and $v=\psi^{(N)}$, with $\phi,\psi\in\mathcal{C}^{\infty}_c((0,1),\rr)$. One recalls that for $b\in\mathcal{B}_1$, $\eta_b=\xi_b$. Then, for every $\mu\in\llbracket 0,\mu^*\rrbracket$, $\eta_{M_{\mu}^1}(1,(u,v))=u_{\mu+1}(1)=0$ and $\eta_{M_{\mu}^2}(1,(u,v))=v_{\mu+1}(1)=0$, using \eqref{egbase-s1}. Thus, \eqref{momentfini1} is already verified for $b\in B\cap\mathcal{B}_1$. 
		
		Thanks to the choice of $N$, the equations \eqref{egbase-s2good} and  \eqref{pollution} give: for all $(\mu,\nu)\in\llbracket 0,\mu^*\rrbracket\times\llbracket 0,\nu^*\rrbracket$,  $$\eta_{C_{\mu,\nu}}(1,(u,v))=\xi_{C_{\mu,\nu}}(1,(u,v))=\int_0^1\frac{(1-t)^{\nu}}{\nu!}u_{\lfloor\frac{\mu}{2}\rfloor+1}(t)v_{\lfloor\frac{\mu+1}{2}\rfloor}(t)\dt.$$
		Using integrations by parts, one has
		$$\eta_{C_{\mu,\nu}}(1,(u,v))=(-1)^{N-\lfloor\frac{\mu+1}{2}\rfloor}\int_0^1\psi(t)\left(\frac{(1-\cdot)^{\nu}}{\nu!}\phi^{(N-\lfloor\frac{\mu}{2}\rfloor-1)}\right)^{(N-\lfloor\frac{\mu+1}{2}\rfloor)}(t)\dt.$$
		Using the Leibniz formula, one gets: $\eta_{C_{\mu,\nu}}(1,(u,v))=\displaystyle\int_0^1\psi(t)f_{\mu,\nu}(t)\dt$, with
		$$f_{\mu,\nu}:t\in(0,1)\mapsto\sum_{k=0}^{\min(N-\lfloor\frac{\mu+1}{2}\rfloor,\nu)}\binom{N-\lfloor\frac{\mu+1}{2}\rfloor}{k}\frac{(-1)^{k+N-\lfloor\frac{\mu+1}{2}\rfloor}}{(\nu-k)!}(1-t)^{\nu-k}\phi^{(2N-1-\mu-k)}(t).$$
		\textit{Step 1: There exists $\phi\in\mathcal{C}^{\infty}_c((0,1),\rr)$ s.t. the family $\mathcal{F}:=\left(f_{\mu,\nu}\right)_{\substack{0\leqslant\mu\leqslant\mu^*\\0\leqslant\nu\leqslant\nu^*}}$ is linearly independent on $(0,1)$.}  Let $m\geqslant\nu^*+2$ and $\chi\in\mathcal{C}^{\infty}_c((0,1),\rr)$ such that $\chi\equiv 1$ on $(\frac 14,\frac 34)$. One defines $\phi:t\mapsto e^{t^m}\chi(t)\in\mathcal{C}^{\infty}_c((0,1),\rr)$. By induction, one can prove that: for all $l\in\nn$, there exists $P_l\in\mathbb{R}[X]$ with $\deg(P_l)=l(m-1)$ such that, for all $t\in(\frac 14,\frac 34)$, $\phi^{(l)}(t)=e^{t^m}P_l(t)$. Then, 
		\begin{equation}\label{deg} \forall(\mu,\nu)\in\llbracket0,\mu^*\rrbracket\times\llbracket 0,\nu^*\rrbracket, \quad \deg\left(\restriction{f_{\mu,\nu}(t)e^{-t^m}}{(\frac 14,\frac 34)}\right)=\nu+(m-1)(2N-1-\mu).\end{equation}
		Thanks to the choice of $m$, all the degrees \eqref{deg} are different. Then, $\left(t\mapsto f_{\mu,\nu}(t)e^{-t^m}\right)_{\substack{0\leqslant\mu\leqslant\mu^*\\0\leqslant\nu\leqslant\nu^*}}$ is linearly independent on $(0,1)$. Consequently, one obtains the result for $\mathcal{F}$.
		\\\\\textit{Step 2: Construction of $u$ and $v$.} One chooses $u:=\phi^{(N)}$, with the function $\phi$ obtained at step $1$. By construction of $\phi$, there exists $\psi\in\mathcal{C}^\infty_c((0,1),\rr)$, solution to the following moment problems
		$$\text{for every }(\mu,\nu) \in\llbracket 0,\mu^*\rrbracket\times\llbracket 0,\nu^*\rrbracket, \quad
		\int_0^1\psi(t)f_{\mu,\nu}(t) dt = \delta_{C_{\mu,\nu},b_0}.$$
		We conclude with $v:=\psi^{(N)}$.
	\end{proof}
	\subsection{Tangent vector}
	Let $m\geqslant 0$. The purpose of this section is to prove the following proposition.
	\begin{prop}\label{vecteurtangent}Let $j\in\llbracket r+1,d\rrbracket$. Assume that \eqref{LARC} and \eqref{compensationb2badj} hold. The vector $f_{b_j}(0)$ is a small-time $W^{m,\infty}_0-$continuously approximately reachable vector associated with vector variations $e^{\frac{\cdot}{2}H_0}f_{b_j}(0)$.
	\end{prop}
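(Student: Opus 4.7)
The plan is to realize the tangent direction explicitly via a rescaled reference pair built from the quadratic moment problem (Proposition \ref{momentfini}), then to extract the target limit vector from the Magnus representation (Corollary \ref{Prop:rep_form}), with the compensation hypothesis \eqref{compensationb2bad} handling the bad brackets. Fix $j\in\llbracket r+1,d\rrbracket$, $T>0$, and let $L:=|b_j|$. Choose a finite set $B\subset\mathcal{B}_1\cup\mathcal{B}_{2,good}$ containing $b_j$, every element of $\mathcal{B}_1\cup\mathcal{B}_{2,good}$ of length at most $L$, plus sufficiently many $\mathcal{B}_1$-elements to ensure that $\{f_b(0):b\in B\cap\mathcal{B}_1\}$ spans $S_1(f)(0)$; this is possible since $S_1(f)(0)$ is finite-dimensional and $f_{M_\nu^i}(0)=H_0^\nu f_i(0)$ with $H_0:=Df_0(0)$. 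Apply Proposition \ref{momentfini} with $b_0=b_j$ to obtain $(\bar u,\bar v)\in\mathcal{C}^\infty_c((0,1),\mathbb{R})^2$ satisfying $\eta_b(1,(\bar u,\bar v))=\delta_{b,b_j}$ for all $b\in B$.

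For $z$ near $0$, set $\lambda(z):=|z|^{1/2}T^{-L/2}$ and define
$$u_z^{(0)}(t):=\lambda(z)\bar u(t/T),\qquad v_z^{(0)}(t):=\operatorname{sign}(z)\lambda(z)\bar v(t/T).$$
These functions lie in $\mathcal{C}^\infty_c((0,T),\mathbb{R})^2\subset W^{m,\infty}_0$ with norm $O(|z|^{1/2})$; the reachability exponent will be $s=1/2$. Since $n_1(b_j)=n_2(b_j)=1$, the sign flip toggles $\eta_{b_j}$ via \eqref{homogeneous} without affecting the moments of $\mathcal{B}_1$-elements or of bad brackets (which depend on $u^2$ or $v^2$ alone). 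Thus $\eta_{b_j}(T,\cdot)=z$ and $\eta_b(T,\cdot)=0$ for every $b\in B\setminus\{b_j\}$.

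Applying Corollary \ref{Prop:rep_form} with $M$ sufficiently large, expanding via \eqref{homogeneous}, and controlling higher-order brackets by the size estimate \eqref{sizeetab}, one writes
$$x(T;(u_z^{(0)},v_z^{(0)}),0)=z\,\Xi(T)+\Delta(z)+R(z),$$
where $\Xi(T)=e^{(T/2)H_0}f_{b_j}(0)$ is recovered by assembling the infinite family of brackets $\{b_j 0^k\}_{k\geqslant 0}$ via the identity $f_{b_j0^k}(0)=H_0^kf_{b_j}(0)$, the sum $\sum_k\eta_{b_j0^k}(T,(u_z^{(0)},v_z^{(0)}))H_0^kf_{b_j}(0)$ reducing to $z\,e^{(T/2)H_0}f_{b_j}(0)$ with a symmetric choice of $(\bar u,\bar v)$ within the freedom of Proposition \ref{momentfini}; the term $R(z)=O(|z|^{3/2})$ gathers the $n(b)\geqslant 3$ tail and the Magnus residual $O(\|(u,v)\|_{L^1}^{M+1}+\|x(T)\|^{1+1/M})$; and $\Delta(z)=O(|z|)$ collects the bad-bracket contributions together with the residual $\mathcal{B}_1$-contributions, which by \eqref{compensationb2bad} and the $\mathcal{B}_1$-identity lie entirely in $S_1(f)(0)$, with coefficients depending linearly on $z$ in a prescribed basis.

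Finally, cancel $\Delta(z)$ by an $O(|z|)$ correction $(\tilde u_z,\tilde v_z)\in W^{m,\infty}_0((0,T),\mathbb{R})^2$ depending continuously on $z$, whose first-order linear contribution to $x(T)$ kills $\Delta(z)$: this is solvable because $\{f_b(0):b\in B\cap\mathcal{B}_1\}$ spans $S_1(f)(0)$ and one may prescribe a finite list of iterated primitive values at $T$ via explicit smooth templates. Setting $(u_z,v_z):=(u_z^{(0)}+\tilde u_z,v_z^{(0)}+\tilde v_z)$, the cross-interactions between base and correction in the Magnus expansion contribute $O(|z|^{1/2}\cdot|z|)=O(|z|^{3/2})$, and the correction's self-quadratic contribution is $O(|z|^2)$, both absorbable within the $|z|^{1+1/2}$ error budget. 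The main obstacle will be the precise identification of the exponential factor $e^{(T/2)H_0}$ by assembling the infinite family $\{b_j0^k\}_{k\geqslant 0}$, together with the bookkeeping ensuring that the $O(|z|)$ correction does not spoil the leading term $z\,\Xi(T)$.
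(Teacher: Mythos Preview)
Your scheme has a genuine gap stemming from the fixed-time scaling $\lambda(z)=|z|^{1/2}T^{-L/2}$. With this choice, \emph{every} bracket $b\in\mathcal{B}_2$ contributes to $\mathcal{Z}_2$ at order $\lambda^2 T^{|b|}\asymp|z|$, irrespective of $|b|$. In particular the long bad brackets $W_{k,l}^i$ with $|b|>L$, for which the compensation hypothesis \eqref{compensationb2bad} says nothing, produce $|z|$-sized contributions that need not lie in $S_1(f)(0)$; and since $\xi_{W_{k,l}^i}(1,(\bar u,\bar v))=\frac{1}{2}\int_0^1\frac{(1-s)^l}{l!}\bar u_k(s)^2\,ds>0$ whenever $\bar u_k\not\equiv 0$ (which is forced by $\eta_{b_j}=1$), these cannot be annihilated by the finite moment problem of Proposition \ref{momentfini}. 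Moreover these bad contributions scale like $|z|$, not $z$, so they cannot be absorbed into a linear variation $z\,\Xi(T)$ either. The same issue affects the good brackets $C_{\mu,\nu}\neq b_j0^k$ with $|b|>L$: they also sit at order $|z|$ and are neither in your $\Xi(T)$ nor guaranteed to lie in $S_1(f)(0)$. A related problem is that the $\mathcal{B}_1$ residuals (beyond the finitely many moments you kill) are $O(|z|^{1/2})$, not $O(|z|)$; hence your additive correction $(\tilde u_z,\tilde v_z)$ must itself be $O(|z|^{1/2})$, and its self-quadratic and cross-quadratic interactions with the base control are then $O(|z|)$, again at the same scale as the target.

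The paper avoids all of this by concentrating the control on a \emph{shrinking} window of length $|z|^{s_j}$: then $\eta_b\asymp |z|^{2\alpha_j+|b|s_j}$, and since $2\alpha_j+|b_j|s_j=1$, every $b\in\mathcal{B}_2$ with $|b|>|b_j|$ automatically contributes $O(|z|^{1+s_j})$, while only finitely many $\mathcal{B}_1$ moments need to be killed to push the linear residual above $|z|^{1/2}$. The factor $e^{(T/2)H_0}$ then does \emph{not} come from summing $\eta_{b_j0^k}$ (your ``symmetric choice'' would impose infinitely many moment constraints that Proposition \ref{momentfini} cannot meet): it arises because the paper splits $[0,T]$ into $[0,T_1]\cup[T_1,T]$ with $T_1=T/2$, reaches $\mathbb{P}(x(T_1))\approx zf_{b_j}(0)$ on the first piece, and then applies a linear correction on $[T_1,T]$ during which the drift transports the direction by $e^{(T-T_1)H_0}$. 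The two mechanisms you are missing---time-shrinking to suppress long brackets, and concatenation rather than additive correction to produce the exponential transport---are precisely what make the argument close.
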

	 Taking into account the assumption \eqref{LARC}, we can consider $\mathbb{P}$, the linear projection on $\spn(f_{b_i}(0))_{r+1\leqslant i\leqslant d}$ parallel to ${S}_1(f)(0)$. The proof is divided in two steps. In the first one, we prove that the system can move in the direction lost at the linear order $f_{b_j}(0)$, in projection. 
	\begin{prop}Let $j\in\llbracket r+1,d\rrbracket$. Assume that \eqref{LARC} and \eqref{compensationb2badj} hold. Let $q_j:=\left\lfloor\frac{n_0(b_j)}{2}\right\rfloor$, $s_j:=\frac{1}{4(|b_j|+m)}$, $\alpha_j=\frac{3}{8}+\frac{m}{8(|b_j|+m)}$ and $\delta_j:=\alpha_j+(q_j+2)s_j$. For all $T_1>0$, there exist $C,\rho>0$ and a continuous map $z\in\rr\mapsto (u_z,v_z)\in W^{m,\infty}_0(0,T_1)^2$ such that
		\begin{equation}\label{dimfinieproj}\forall z\in(-\rho,\rho),\quad \left\|\mathbb{P}\left(x(T_1;(u_z,v_z),0,0)\right)-zf_{b_j}(0)\right\|\leqslant C|z|^{1+s_j}.
		\end{equation}
		The size of the controls is given by: for all $k\geqslant 0$, for all $r\in[1,+\infty]$,  there exists $C>0$, such that 
		\begin{equation}\label{dimfinicontrol}\forall z\in(-\rho,\rho), \qquad \left\|u_z\right\|_{W^{k,r}},\left\|v_z\right\|_{W^{k,r}}\leqslant C|z|^{\alpha_j+s_j\left(\frac 1r-k\right)}.\end{equation}
		Finally, for all $z\in(-\rho,\rho)$,
		\begin{equation}\label{dimfinierest}\left\|x(T_1;(u_z,v_z),0,0)\right\|\leqslant C|z|^{\delta_j}.\end{equation}
		Note that $\delta_j>\frac 12$.
	\end{prop}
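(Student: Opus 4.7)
The idea is to build the controls as a simple amplitude-and-time rescaling of a fixed smooth pair $(\bar u,\bar v)$ obtained by solving a moment problem. Set $N:=q_j+1$ and let $B\subset\mathcal{B}_1\cup\mathcal{B}_{2,good}$ be the finite set consisting of $b_j$, of every $M^{\ell}_\nu$ with $\nu\leqslant q_j$, and of every $C_{\mu,\nu}$ of length at most $|b_j|$. Proposition \ref{momentfini}, applied with $b_0=b_j$, furnishes $\bar u,\bar v\in\mathcal{C}^\infty_c((0,1),\rr)$ such that $\eta_b(1,(\bar u,\bar v))=\delta_{b,b_j}$ for every $b\in B$. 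For $|z|$ small enough that $T_z:=|z|^{s_j}<T_1$, I set
\begin{equation*}
u_z(t):=|z|^{\alpha_j}\bar u(t/T_z)\mathbb{1}_{(0,T_z)}(t),\qquad v_z(t):=\mathrm{sign}(z)\,|z|^{\alpha_j}\bar v(t/T_z)\mathbb{1}_{(0,T_z)}(t),
\end{equation*}
which belong to $W^{m,\infty}_0((0,T_1),\rr)$ since $\bar u,\bar v$ have compact support in $(0,1)$; the bound \eqref{dimfinicontrol} is a routine change of variable, and the easily checked inequality $\alpha_j-ms_j>0$ makes $z\mapsto(u_z,v_z)$ continuous at $z=0$. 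The whole construction is driven by the arithmetic identity $2\alpha_j+|b_j|s_j=1$, which is immediate from the definitions.

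\textbf{Magnus expansion.} I invoke Corollary \ref{Prop:rep_form} at time $T_1$, with an integer $M$ chosen so that, after a preliminary bootstrap giving $\|x(T_1;(u_z,v_z),0)\|=O(|z|^{\delta_j})$, both the truncation error $\|(u_z,v_z)\|_{L^1}^{M+1}=O(|z|^{(M+1)(\alpha_j+s_j)})$ and the nonlinear remainder $\|x\|^{1+1/M}=O(|z|^{\delta_j(1+1/M)})$ are absorbed into $O(|z|^{1+s_j})$; such an $M$ exists precisely because $\delta_j>\tfrac12$. This gives
\begin{equation*}
\mathbb{P}\bigl(x(T_1;(u_z,v_z),0)\bigr)=\sum_{b\in\mathcal{B}_{\llbracket1,M\rrbracket}}\eta_b(T_1,(u_z,v_z))\,\mathbb{P}f_b(0)+O(|z|^{1+s_j}).
\end{equation*}

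\textbf{Sorting the brackets.} I dispatch the Hall sum stratum by stratum. Every $b\in\mathcal{B}_1$ satisfies $\mathbb{P}f_b(0)=0$ by definition of $\mathbb{P}$, and every $b\in\mathcal{B}_{2,bad}$ with $|b|\leqslant L$ satisfies $\mathbb{P}f_b(0)=0$ by hypothesis \eqref{compensationb2bad}. Bad brackets with $|b|>L$, as well as any bracket with $n(b)\geqslant3$, have $\eta_b$ of scaling order $|z|^{n(b)\alpha_j+|b|s_j}$, an exponent strictly larger than $1+s_j$ thanks to the identity $2\alpha_j+|b_j|s_j=1$ and $|b|>|b_j|$ (resp.\ $3\alpha_j+3s_j>1+s_j$). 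Good brackets $b\in B\setminus\{b_j\}$ are killed by the homogeneity \eqref{homogeneous} combined with the moment condition: $\eta_b(T_1,(u_z,v_z))=(\mathrm{sign}\,z)^{n_2(b)}|z|^{2\alpha_j+|b|s_j}\eta_b(1,(\bar u,\bar v))=0$. Good brackets with $|b|\geqslant|b_j|+1$ contribute $O(|z|^{1+(|b|-|b_j|)s_j})\subset O(|z|^{1+s_j})$. The single surviving leading term is $\eta_{b_j}(T_1,(u_z,v_z))f_{b_j}(0)=zf_{b_j}(0)$, by the same homogeneity and $2\alpha_j+|b_j|s_j=1$, which proves \eqref{dimfinieproj}. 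The state bound \eqref{dimfinierest} is obtained from the same expansion \emph{without} projection: the bad brackets of length $2$ now contribute with exponent $2\alpha_j+2s_j\geqslant\delta_j$ (equivalent to $\alpha_j\geqslant q_js_j$, which is a direct check), and the $\mathcal{B}_1$ brackets spared by the moment problem contribute with exponent $\geqslant\alpha_j+(q_j+2)s_j=\delta_j$.

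\textbf{Main difficulty.} The bookkeeping of the third paragraph is where the real work lies: one has to verify that every Hall bracket not in $B$, after amplitude $|z|^{\alpha_j}$ and time rescaling $T_z=|z|^{s_j}$, yields an exponent strictly larger than $1+s_j$. The pollution terms $u_{j+1}u_{k+1}$ of \eqref{pollution} are handled automatically because Proposition \ref{momentfini} is stated at the level of $\eta_b$ rather than $\xi_b$, so that their cancellation is already built into the moment conditions. The delicate point is the bootstrap $\|x\|=O(|z|^{\delta_j})$ used to invoke Corollary \ref{Prop:rep_form}: it forces the precise numerical values of $q_j$, $s_j$, $\alpha_j$, $\delta_j$ featuring in the statement and, in particular, the strict inequality $\delta_j>\tfrac12$.
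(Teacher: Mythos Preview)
Your construction places the support of $(u_z,v_z)$ on $(0,T_z)$ with $T_z=|z|^{s_j}$, and then you invoke the homogeneity relation \eqref{homogeneous} as if it read
\[
\eta_b\bigl(T_1,(u_z,v_z)\bigr)=(\mathrm{sign}\,z)^{n_2(b)}|z|^{2\alpha_j+|b|s_j}\,\eta_b\bigl(1,(\bar u,\bar v)\bigr).
\]
This identity is false: homogeneity only yields $\eta_b(T_z,(u_z,v_z))$, not $\eta_b(T_1,(u_z,v_z))$. On the interval $(T_z,T_1)$ the controls vanish, so at the level of the formal series one has $\mathcal{Z}_\infty(T_1)=e^{-(T_1-T_z)\operatorname{ad}_{X_0}}\mathcal{Z}_\infty(T_z)$; in coordinates, for instance,
\[
\eta_{C_{j,l}}(T_1)=\sum_{k=0}^{l}\frac{(T_1-T_z)^{\,l-k}}{(l-k)!}\,\eta_{C_{j,k}}(T_z).
\]
In particular, writing $b_j=C_{j_0,l_0}$, every bracket $b_j0^m=C_{j_0,l_0+m}$ with $m\geqslant1$ (which lies \emph{outside} your moment set $B$) satisfies $\eta_{b_j0^m}(T_1)=\frac{(T_1-T_z)^m}{m!}z+O(|z|^{1+s_j})$. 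Since $f_{b_j0^m}(0)=H_0^mf_{b_j}(0)$ with $H_0=Df_0(0)$, summing these contributions gives
\[
\mathbb{P}\bigl(x(T_1;(u_z,v_z),0)\bigr)=z\,\mathbb{P}\,e^{(T_1-T_z)H_0}f_{b_j}(0)+O(|z|^{1+s_j}),
\]
and because $T_1-T_z\to T_1$ is a fixed nonzero constant, the leading vector is $\mathbb{P}\,e^{T_1H_0}f_{b_j}(0)$, not $f_{b_j}(0)$. So \eqref{dimfinieproj} fails as written.

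The paper avoids this by supporting the controls on $(T_1-T_z,T_1)$: since $f_0(0)=0$, the state remains at $0$ on $(0,T_1-T_z)$, and then time-translation gives exactly $\eta_b(T_1,(u_z,v_z))=\eta_b(T_z,\text{shifted controls})$, after which \eqref{homogeneous} applies verbatim. With that single change your scheme works; the paper moreover uses only $M=1$ then $M=2$ in the Magnus formula (no large-$M$ bootstrap), which spares you the bookkeeping of the $n(b)\geqslant3$ strata.
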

	\begin{proof} Let $T_1>0$ and $\rho=T_1^{\frac{1}{s_j}}$. One defines the brackets set $B=\left\lbrace M_j^1, M_j^2, \ j\in\llbracket 0,q_j\rrbracket\right\rbrace\cup\left\lbrace b\in\mathcal{B}_{2,good}, \ |b|\leqslant |b_j|\right\rbrace$.
		Thanks to Proposition \ref{momentfini}, we consider $\bar{u},\bar{v}\in\mathcal{C}^{\infty}_c(0,1)$ such that $\eta_{b}(1,(\bar{u},\bar{v}))=\delta_{b,b_j}$,  for all $b\in B$. We define, for $z\in\rr^*$, 
		$$u_z,v_z:t\in[0,T_1]\mapsto\text{sgn}(z)|z|^{\alpha_j}\bar{u}\left(\frac{|z|^{s_j}-T_1+t}{|z|^{s_j}}\right),|z|^{\alpha_j}\bar{v}\left(\frac{|z|^{s_j}-T_1+t}{|z|^{s_j}}\right).$$
		For all $z\in(-\rho,\rho)$, one has \begin{equation}\label{suppp}\text{Supp}\left(\left(u_z\right)_{q_j+1}\right),\text{Supp}\left(\left(v_z\right)_{q_j+1}\right)\subset (T_1-|z|^{s_j},T_1)\subset (0,T_1).\end{equation}
		For all $k\geqslant 0$, $ r\in [1,+\infty[$, using the Poincaré's inequality, the support condition \eqref{suppp} and a change of variables, we get for all $z\in(-\rho,\rho)\setminus\left\lbrace 0\right\rbrace$,
		$$\left\|u_z\right\|_{W^{k,r}}^r\leqslant  C\int_0^{1}\left||z|^{\alpha_j-ks_j}\bar{u}^{(k)}(\sigma)\right|^r|z|^{s_j}\mathrm{d}\sigma\leqslant C|z|^{\left(\alpha_j-ks_j\right)r+s_j}\left\|\bar{u}^{(k)}\right\|_{L^r}^r.$$ The inequality with $r=+\infty$ is proved in the same way and we get \eqref{dimfinicontrol}. 
		\\\\We prove \eqref{dimfinierest}; we estimate the linear term. Note that, for all $z\in(-\rho,\rho)$, for all $i\in\{1,2\}$, for all $k\in\nn$, the homogeneity property \eqref{homogeneous} gives
		\begin{equation}\label{etaa}\begin{split}\eta_{M_k^i}(T_1,(u_z,v_z))&=\eta_{M_k^i}\left(|z|^{s_j},\left(\text{sgn}(z)|z|^{\alpha_j}\bar{u}\left(\frac{\cdot}{|z|^{s_j}}\right),|z|^{\alpha_j}\bar{v}\left(\frac{\cdot}{|z|^{s_j}}\right)\right)\right)\\&=\text{sgn}(z)^{\delta_{1,i}}|z|^{\alpha_j+|M_k^i|s_j}\eta_{M_k^i}(1,(\bar{u},\bar{v})).\end{split} \end{equation}
		Consequently, the definition of $(\bar{u},\bar{v})$ and \eqref{etaa} gives: for all $z\in(-\rho,\rho)$, for all $i\in\{1,2\}$, for all $k\in\llbracket 0,q_j\rrbracket$, $\eta_{M_k^i}(T_1,(u_z,v_z))=0$. Using \eqref{sizeetab} and \eqref{dimfinicontrol} with $r=1$,
		\begin{equation}\label{Z1}\mathcal{Z}_1(T_1;f,(u_z,v_z))(0)=\sum_{i=1}^2\sum_{k=q_j+1}^{+\infty}\eta_{M_k^i}(T_1,(u_z,v_z))f_{M_k^i}(0)=\mathcal{O}\left(|z|^{\alpha_j+(q_j+2)s_j}\right).
		\end{equation}
		The Magnus formula \eqref{x=Z2+O} with $M=1$ leads to
		$$x(T_1;(u_z,v_z),0,0)=\mathcal{Z}_1(T_1;f,(u_z,v_z))(0)+\mathcal{O}\left(\left\|(u_z,v_z)\right\|_{L^1}^2+\left\|x(T_1;(u_z,v_z),0,0)\right\|^2\right).$$
		Using the size of the controls given by \eqref{dimfinicontrol} with $(k,r)=(0,1)$ and \eqref{Z1}, one has
		$$x(T_1;(u_z,v_z),0,0)=\mathcal{O}\left(|z|^{\delta_j}+|z|^{2\alpha_j+2s_j}+\left\|x(T_1;(u_z,v_z),0,0)\right\|^2\right).$$
		As $\frac 12<\delta_j\leqslant 2\alpha_j+2s_j$, one gets \eqref{dimfinierest}.
		\\\\Now, we prove \eqref{dimfinieproj}. By definition,
		\begin{equation}\label{Z2global}\mathcal{Z}_2(T_1;f,(u_z,v_z))(0)=\mathcal{Z}_1(T_1;f,(u_z,v_z))(0)+\sum_{b\in\mathcal{B}_2}\eta_b(T_1,(u_z,v_z))f_b(0)=\mathcal{O}\left(|z|^{\delta_j}\right),\end{equation}
		using \eqref{Z1}, the estimates \eqref{sizeetab} and \eqref{dimfinicontrol} with $(k,r)=(0,1)$. Then, using the map $\mathbb{P}$, the hypothesis \eqref{compensationb2badj} on the short brackets of $\mathcal{B}_{2,bad}$ and the homogeneity property \eqref{homogeneous}, we obtain
		\begin{equation*}\mathbb{P}\left(\mathcal{Z}_2 (T_1;f,(u_z,v_z))(0)\right)=\sum_{\substack{b\in\{\mathfrak{b}\in\mathcal{B}_{2,good},\ |\mathfrak{b}|\leqslant |b_j|\}\\ \cup\{\mathfrak{b}\in\mathcal{B}_2,\ |\mathfrak{b}|>|b_j| \}}}\text{sgn}(z)^{n_1(b)}|z|^{2\alpha_j+|b|s_j}\eta_b(1,(\bar{u},\bar{v}))\mathbb{P}\left(f_b(0)\right).\end{equation*} 
		Using the definition of $(\bar{u},\bar{v})$ and the fact that $2\alpha_j+|b_j|s_j=1$, one gets
		\begin{equation}\label{dimfiniequadpol}\mathbb{P}\left(\mathcal{Z}_2 (T_1;f,(u_z,v_z))(0)\right)=zf_{b_j}(0)+\mathcal{O}(|z|^{1+s_j}).\end{equation}
		Then, using the Magnus representation formula \eqref{x=e^Z2+O} with $M=2$, Lemma \ref{gronwallmag} and the projection $\mathbb{P}$, one has
		\begin{equation}\begin{gathered}\label{Z2finn}\mathbb{P}\left(x(T_1;(u_z,v_z),0,0)\right)=\mathbb{P}\left(\mathcal{Z}_2(T_1;f,(u_z,v_z))(0)\right)\\+\mathcal{O}\left(\left\|\mathcal{Z}_2(T_1;f,(u_z,v_z))(0)\right\|_{\infty}\left\|D\mathcal{Z}_2(T_1;f,(u_z,v_z))\right\|_{\infty}+\left\|(u_z,v_z)\right\|_{L^1}^3\right).\end{gathered}\end{equation}
		Finally, note that 
		$$D\mathcal{Z}_2(T_1;f,(u_z,v_z))=\sum_{b\in\mathcal{B}_{\llbracket 1,2\rrbracket}}\eta_b(T_1,(u_z,v_z))Df_b.$$
		Then, with the same steps than for \eqref{Z2global}, one gets
		\begin{equation}\label{DZ2}
			\left\|D\mathcal{Z}_2(T_1;f,(u_z,v_z))\right\|_{\infty}=\mathcal{O}\left(|z|^{\delta_j}\right).
		\end{equation}
		Finally, using \eqref{Z2global}, \eqref{dimfiniequadpol}, \eqref{DZ2} and \eqref{dimfinicontrol} with $(k,r)=(0,1)$ in \eqref{Z2finn}, one obtains \eqref{dimfinieproj}, as $1+s_j\leqslant 2\delta_j\leqslant 3\alpha_j+ 3s_j$.
	\end{proof}
	One defines $H_0=Df_0(0)$. 	In the second step of the proof, we correct the linear part of the solution.
	\begin{prop} Let $j\in\llbracket r+1,d\rrbracket$. Assume that \eqref{LARC} and \eqref{compensationb2badj} hold. For all $0<T_1<T<T_1+1$, there exist $C,\rho>0$ and a continuous map $z\mapsto (U_z,V_z)\in W^{m,\infty}_0(0,T)^2$ such that
		\begin{equation}\label{vecteurtangentbis}\forall z\in(-\rho,\rho), \qquad \left\|x(T;(U_z,V_z),0,0)-ze^{(T-T_1)H_0}f_{b_j}(0)\right\|\leqslant C|z|^{1+s_j},\end{equation}
		with the following estimate on the family of controls
		\begin{equation}\label{dimfiniecontrolfinal}\left\|U_z\right\|_{W^{m,\infty}},\left\|V_z\right\|_{W^{m,\infty}}\leqslant C|z|^{\frac 14}.\end{equation}
	\end{prop}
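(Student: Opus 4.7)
The plan is to concatenate the controls from the previous proposition on $(0, T_1)$ with a second block of controls on $(T_1, T)$ whose role is to absorb the component of $x(T_1)$ lying in $S_1(f)(0) = \ker \mathbb{P}$, using the linear controllability of the linearized system at the origin. First I would write
\[
x(T_1; (u_z, v_z), 0, 0) = z f_{b_j}(0) + y_z + \epsilon_z,
\]
with $\epsilon_z := \mathbb{P}\, x(T_1; (u_z, v_z), 0, 0) - z f_{b_j}(0)$ and $y_z := (\mathrm{Id} - \mathbb{P})\, x(T_1; (u_z, v_z), 0, 0) \in S_1(f)(0)$. Estimates \eqref{dimfinieproj} and \eqref{dimfinierest} then give $\|\epsilon_z\| \leqslant C |z|^{1+s_j}$ and $\|y_z\| \leqslant C |z|^{\delta_j}$.

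Next, the linearized system $\dot\xi = H_0 \xi + u f_1(0) + v f_2(0)$ has reachable space $S_1(f)(0)$, and the standard construction (Kalman-type decomposition on $S_1(f)(0)$ combined with iterated primitives to achieve $W^{m,\infty}_0$-regularity) yields a continuous linear right inverse of its control-to-state map. I use this to pick $(\tilde u_z, \tilde v_z) \in W^{m,\infty}_0((T_1, T), \rr)^2$ steering the linearized system from $\xi(T_1) = 0$ to $\xi(T) = -e^{(T-T_1) H_0} y_z$; continuity and linearity of the right inverse yield
\[
\|(\tilde u_z, \tilde v_z)\|_{W^{m,\infty}} \leqslant C \|y_z\| \leqslant C |z|^{\delta_j} \leqslant C |z|^{1/4},
\]
the last inequality because $\delta_j > \tfrac12 > \tfrac14$.

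Set $(U_z, V_z) := (u_z, v_z) \# (\tilde u_z, \tilde v_z)$. Taylor-expanding the nonlinear flow $\Phi_{T_1, T}(p, u, v)$ on $(T_1, T)$ at $(0, 0, 0)$ yields, for small $p$ and small controls,
\[
\Phi_{T_1, T}(p, u, v) = e^{(T-T_1) H_0} p + \int_{T_1}^T e^{(T-s) H_0} \bigl( u(s) f_1(0) + v(s) f_2(0) \bigr)\ds + \mathcal{O}\bigl(\|p\|^2 + \|(u,v)\|_{L^1}^2\bigr).
\]
Applied at $p = x(T_1;(u_z,v_z),0,0)$ and $(u,v) = (\tilde u_z, \tilde v_z)$, the linear integral cancels $e^{(T-T_1) H_0} y_z$ by the choice of $(\tilde u_z, \tilde v_z)$, leaving
\[
x(T; (U_z, V_z), 0, 0) = z e^{(T-T_1) H_0} f_{b_j}(0) + e^{(T-T_1) H_0} \epsilon_z + \mathcal{O}\bigl(|z|^{2 \delta_j}\bigr).
\]

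Finally I would verify $2\delta_j \geqslant 1 + s_j$. Since $b_j \in \mathcal{B}_{2,\mathrm{good}}$ satisfies $n(b_j) = 2$ so that $|b_j| = n_0(b_j) + 2$, and $q_j = \lfloor n_0(b_j)/2 \rfloor$, substitution reduces the desired inequality to the parity fact $2 \lfloor n_0(b_j)/2 \rfloor \geqslant n_0(b_j) - 1$. This gives the remainder bound, which combined with $\|\epsilon_z\| \leqslant C|z|^{1+s_j}$ produces \eqref{vecteurtangentbis}. For the control size \eqref{dimfiniecontrolfinal}, applying \eqref{dimfinicontrol} at $(k, r) = (m, \infty)$ gives $\|(u_z, v_z)\|_{W^{m,\infty}} \leqslant C |z|^{\alpha_j - m s_j}$, and $\alpha_j - m s_j = \tfrac38 - \tfrac{m}{8(|b_j|+m)} \geqslant \tfrac14$. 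The main obstacle is producing the continuous right inverse of the linearized system in the relatively narrow space $W^{m,\infty}_0$, with linear dependence on the target; the rest of the proof is exponent bookkeeping designed to keep the Taylor remainder and the correction controls below $|z|^{1+s_j}$ and $|z|^{1/4}$ respectively.
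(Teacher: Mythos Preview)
Your proof is correct and follows essentially the same two-phase strategy as the paper: apply the controls $(u_z,v_z)$ from the previous proposition on $(0,T_1)$, then append correction controls on $(T_1,T)$ to kill the $S_1(f)(0)$-component, and finally check the exponent inequality $2\delta_j\geqslant 1+s_j$.

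The only notable difference is in how the correction is performed. The paper invokes \emph{nonlinear} controllability in projection on $S_1(f)(0)$ (steering the full system so that $\mathbb{P}_{S_1(f)(0)}x(T)$ lands exactly on $\mathbb{P}_{S_1(f)(0)}\bigl(ze^{(T-T_1)H_0}f_{b_j}(0)\bigr)$), then splits the error via Lemmas \ref{gronwalldecomp} and \ref{devlpode} and the Magnus formula. You instead correct only at the \emph{linear} level---choosing $(\tilde u_z,\tilde v_z)$ so that the linearized flow cancels $e^{(T-T_1)H_0}y_z$---and absorb the discrepancy into a single Taylor remainder $\mathcal{O}(\|p\|^2+\|(\tilde u,\tilde v)\|^2)=\mathcal{O}(|z|^{2\delta_j})$. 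Your route is slightly more elementary (no inverse function theorem needed for the correction step) and more compact; the paper's route has the advantage of reusing its Magnus machinery and making the role of $\mathbb{P}$ more explicit. One point you use implicitly and should state: the target $-e^{(T-T_1)H_0}y_z$ lies in the reachable space $S_1(f)(0)$ of the linearized system because $S_1(f)(0)$ is $e^{tH_0}$-invariant (since $H_0f_b(0)=f_{b0}(0)$ for $b\in\mathcal{B}_1$); the paper makes this invariance explicit.
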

	\begin{proof} Let $0<T_1<T<T_1+1$, one defines, for $z\in\rr$,
		$$U_z,V_z:=u_z\mathbb{1}_{[0,T_1]}+\tilde{u}_z\mathbb{1}_{[T_1,T]},v_z\mathbb{1}_{[0,T_1]}+\tilde{v}_z\mathbb{1}_{[T_1,T]},$$
		where $u_z,v_z\in W^{m,\infty}_0(0,T_1)$ are the controls defined in the previous proposition and $\tilde{u}_z,\tilde{v}_z\in W^{m,\infty}_0(T_1,T)$ are the controls that correct the linear part of the solution, i.e.\ the controls that drive the solution from $x(T_1;(u_z,v_z),0,0)$ to $\mathbb{P}_{S_1(f)(0)}\left(ze^{(T-T_1)H_0}f_{b_j}(0)\right)$ in projection on ${S}_1(f)(0)$, that is to say
		$$\mathbb{P}_{{S}_1(f)(0)}\left(x(T;(\tilde{u}_z,\tilde{v}_z),x(T_1;(u_z,v_z),0,0),T_1)\right)=\mathbb{P}_{{S}_1(f)(0)}\left(ze^{(T-T_1)H_0}f_{b_j}(0)\right).$$
		Note that $\mathbb{P}_{{S}_1(f)(0)}=\operatorname{Id}-\mathbb{P}$. Then, 
		\begin{equation}\label{cont1}\left\|(\tilde{u}_z,\tilde{v}_z)\right\|_{W^{m,\infty}}\leqslant C\left(\left\|x(T_1;(u_z,v_z),0,0)\right\|+|z|\left\|e^{(T-T_1)H_0}f_{b_j}(0)\right\|\right)\leqslant C|z|^{\delta_j},\end{equation}
		the last estimate is given by \eqref{dimfinierest}.	Moreover, the control estimate \eqref{dimfinicontrol} with $(k,r)=(m,\infty)$ gives
		\begin{equation}\label{cont2}\left\|(u_z,v_z)\right\|_{W^{m,\infty}}\leqslant C|z|^{\alpha_j-ms_j}\leqslant C|z|^{\frac 14}.\end{equation}
		The equations \eqref{cont1} and \eqref{cont2} give \eqref{dimfiniecontrolfinal}, as $\delta_j>\frac 12$. Furthermore, by construction,
		\begin{equation}\begin{gathered}\label{tang1}
			\left\|x(T;(U_z,V_z),0,0)-ze^{(T-T_1)H_0}f_{b_j}(0)\right\|=\\	\left\|\mathbb{P}\left(x(T;(U_z,V_z),0,0)-ze^{(T-T_1)H_0}f_{b_j}(0)\right)\right\|.\end{gathered}
		\end{equation}
		We apply Lemma \ref{gronwalldecomp} with $p=x(T_1;(u_z,v_z),0,0)$ and $(u,v)=(\tilde{u}_z,\tilde{v}_z)$ to obtain
		\begin{equation}\begin{gathered}\label{decomtang}
			x(T;(U_z,V_z),0,0)=x\left(T;(0,0),x(T_1;(u_z,v_z),0,0),T_1\right)+x(T;(\tilde{u}_z,\tilde{v}_z),0,T_1)\\
			+\mathcal{O}\left(\left\|x(T_1;(u_z,v_z),0,0)\right\|\left\|(\tilde{u}_z,\tilde{v}_z)\right\|_{L^{\infty}}+\left\|x(T_1;(u_z,v_z),0,0)\right\|^2\right).\end{gathered}
		\end{equation}
		The equations \eqref{dimfinierest}, \eqref{cont1}, \eqref{tang1} and \eqref{decomtang} give
		\begin{equation}\begin{gathered}\label{tang2}\left\|x(T;(U_z,V_z),0,0)-ze^{(T-T_1)H_0}f_{b_j}(0)\right\|\leqslant \left\|\mathbb{P}(x(T;(\tilde{u}_z,\tilde{v}_z),0,T_1))\right\|\\+ \left\|\mathbb{P}\left(x(T;(0,0),x(T_1;(u_z,v_z),0,0),T_1)-ze^{(T-T_1)H_0}f_{b_j}(0)\right)\right\|+\mathcal{O}\left(|z|^{2\delta_j}\right).\end{gathered}
		\end{equation}
		We designate the second term of the right-hand side of \eqref{tang2} by $A$ and Lemma \ref{devlpode} gives with $p=x(T_1;(u_z,v_z),0,0)$
		$$A=\left\|\mathbb{P}\left(e^{(T-T_1)H_0}x(T_1;(u_z,v_z),0,0)-ze^{(T-T_1)H_0}f_{b_j}(0)\right)\right\|+\mathcal{O}\left(\left\|x(T_1;(u_z,v_z),0,0)\right\|^2\right).$$
		For all $b\in\mathcal{B}_1$, $H_0f_b(0)=Df_0(0)f_b(0)-Df_b(0)f_0(0)=f_{b0}(0)$, with $b0\in\mathcal{B}_1$. Then, ${S}_1(f)(0)$ is stable by $e^{(T-T_1)H_0}.$ Consequently,
		$$e^{(T-T_1)H_0}\left(\operatorname{Id}-\mathbb{P}\right)\left(x(T_1;(u_z,v_z),0,0)\right)\in{S}_1(f)(0)=\ker(\mathbb{P}),$$ and, with \eqref{dimfinierest}, we obtain
		\begin{equation*}\begin{split}A&=\left\|\mathbb{P}\left(e^{(T-T_1)H_0}\mathbb{P}\left(x(T_1;(u_z,v_z),0,0)\right)-ze^{(T-T_1)H_0}f_{b_j}(0)\right)\right\|+\mathcal{O}\left(|z|^{2\delta_j}\right),\\
				A&=\mathcal{O}\left(\left\|\mathbb{P}\left(x(T_1;(u_z,v_z),0,0)\right)-zf_{b_j}(0)\right\|+|z|^{2\delta_j}\right).\end{split}\end{equation*}
		Finally, the equation \eqref{dimfinieproj} gives
		\begin{equation}\label{tangA}A=\mathcal{O}\left(|z|^{1+s_j}\right),\end{equation}
		as $1+s_j\leqslant 2\delta_j.$ To obtain \eqref{vecteurtangentbis}, we finally estimate the first term of  the right-hand size of \eqref{tang2}, let's say $B$. 
		Using once again the Magnus formula \eqref{x=e^Z2+O} with $M=2$, Lemma \ref{gronwallmag} and the projection $\mathbb{P}$, one has
		\begin{equation}\begin{gathered}\label{Z2fin}\mathbb{P}\left(x(T;(\tilde{u}_z,\tilde{v}_z),0,T_1)\right)=\mathbb{P}\left(\mathcal{Z}_2(T;f,(\tilde{u}_z,\tilde{v}_z))(0)\right)\\+\mathcal{O}\left(\left\|\mathcal{Z}_2(T;f,(\tilde{u}_z,\tilde{v}_z))(0)\right\|_{\infty}\left\|D\mathcal{Z}_2(T;f,(\tilde{u}_z,\tilde{v}_z))\right\|_{\infty}+\left\|(\tilde{u}_z,\tilde{v}_z)\right\|_{L^1}^3\right).\end{gathered}\end{equation}
		Again, \eqref{sizeetab} and \eqref{cont1} gives
		\begin{equation*}\begin{gathered}	\left\|\mathbb{P}\left(\mathcal{Z}_2(T;f,(\tilde{u}_z,\tilde{v}_z))\right)\right\|_{\infty}=\mathcal{O}\left(\left\|(\tilde{u}_z,\tilde{v}_z)\right\|_{L^1}^2\right)=\mathcal{O}\left(\left\|(\tilde{u}_z,\tilde{v}_z)\right\|_{W^{m,\infty}}^2\right)=\mathcal{O}\left(|z|^{2\delta_j}\right),\\
				\left\|\mathcal{Z}_2(T;f,(\tilde{u}_z,\tilde{v}_z))\right\|_{\infty},\left\|D\mathcal{Z}_2(T;f,(\tilde{u}_z,\tilde{v}_z))\right\|_{\infty}=\mathcal{O}\left(\left\|(\tilde{u}_z,\tilde{v}_z)\right\|_{W^{m,\infty}}\right)=\mathcal{O}\left(|z|^{\delta_j}\right).\end{gathered}
		\end{equation*}
		Therefore, thanks to the estimates \eqref{Z2fin}, one has
		\begin{equation}\label{tangB}B=\mathcal{O}\left(|z|^{2\delta_j}+\left\|(\tilde{u}_z,\tilde{v}_z)\right\|_{L^1}^3\right)=\mathcal{O}(|z|^{1+s_j}),\end{equation} using \eqref{cont1} and $1+s_j\leqslant 2\delta_j$.
		To conclude, the equations \eqref{tang2}, \eqref{tangA} and \eqref{tangB} lead to the wondered inequality \eqref{vecteurtangentbis}.
	\end{proof}
	We are now in a position to write the proof of the main result of this section.
	\begin{proof}[Proof of Proposition \ref{vecteurtangent}]
		One considers $T>0$ and $T_1:=\frac{T}{2}$. Then, the previous proposition gives the result.
	\end{proof}
	\subsection{Proof of Theorem \ref{Main_DF}}
	Now, we can write the proof of Theorem \ref{Main_DF}.
	\begin{proof}[Proof of Theorem \ref{Main_DF}]
		We use Theorem \ref{STCLvecteurtangent}. More precisely, we consider $X=\rr^d$ and $E_T=W^{m,\infty}_0(0,T)^2$. Moreover, for all $T>0$,
		$$\mathscr{F}_T:(0,(u,v))\mapsto x(T;(u,v),0).$$ We have to check all the assumptions to obtain the controllability result.
		\begin{enumerate}
			\item[$(A_1)$] This is well-known that the end-point map is regular around the equilibrium.
			\item[$(A_2)$] The differential at $(0,(0,0))$ is given $\mathrm{d}\mathscr{F}_T(0,(0,0))(x_0,(\bar{u},\bar{v}))=Y(T)$ where $Y$ is the solution to the linearized system  
			$$\left\lbrace\begin{array}{crl}Y'(t)&=&Df_0(0)Y(t)+\bar{u}(t)f_1(0)+\bar{v}(t)f_2(0)\\Y(0)&=&x_0
			\end{array}\right..$$ Then, for all $x_0\in\rr^d$, $T\mapsto\mathrm{d}\mathscr{F}_T(0,(0,0))(x_0,(0,0))=e^{TDf_0(0)}x_0$ is continuous on $\rr$ and $\mathrm{d}\mathscr{F}_0(0,(0,0))(x_0,(0,0))=x_0$.
			\item[$(A_3)$] This point is linked to the semi-group property of the solution.
			\item[$(A_4)$] By hypothesis, $H=\text{Im}(\mathscr{F}_T(0,0)\cdot(0,\cdot))$ is a closed subspace (finite dimension), this is the reachable set of the linearized system around $0$. This space doesn't depend on $T$ and its codimension is finite.
			\item[$(A_5)$] Finally, using \eqref{compensationb2bad}, Proposition \ref{vecteurtangent} shows that, for all $j\in\llbracket r+1,d\rrbracket$, $f_{b_j}(0)$ is a small-time $W^{m,\infty}_0$-continuously approximately reachable vector. Then, as a supplementary space of $H$ is given by $\spn\left(\left(f_{b_j}(0)\right)_{r+1\leqslant j\leqslant d}\right)$, this condition is verified.
		\end{enumerate}
		By Theorem \ref{STCLvecteurtangent}, the multi-input control-affine system \eqref{affine-syst} is $W^{m,\infty}_0$-STLC. We obtain the desired result.
	\end{proof}
	\section{Preliminaries on the Schrödinger equation}
	In all the document, we will note $\omega_j:=\lambda_j-\lambda_1$ and $\nu_j:=\lambda_K-\lambda_j$, with $j\geqslant 1$, $K\geqslant 2$. For $\ph\in L^2(0,1)$, $t\in\rr$, one defines
	$$e^{-iAt}\ph=\displaystyle\sum_{k=1}^{+\infty}\langle\ph,\ph_k\rangle e^{-i\lambda_kt}\ph_k.$$
	
	One recalls that $\lambda_j$, $\ph_j$ are defined in \eqref{vp}. For the rest of this paper, $n\geqslant 1$, $p,m\geqslant 0$, $K\geqslant 2$ are fixed integers. Let us discuss about the well-posedness of \eqref{schr}.
	\subsection{Well-posedness}
	\begin{theorem}[Well-posedness] Let $T>0$, $\mu_1,\mu_2$ satisfying $\mathbf{(H)_{reg}}$, $u,v\in H^m_0((0,T),\rr)$, $\psi_0\in H^{2(p+m)+3}_{(0)}(0,1)$ and $f\in H^m_0\left((0,T),H^{2p+3}\cap H^{2p+1}_{(0)}(0,1)\right)$. There exists a unique weak solution to the following equation
		\begin{equation}\label{schro}\left\lbrace\begin{array}{lr} i\partial_t\psi=-\partial_x^2\psi-(u\mu_1+v\mu_2)\psi-f, &  (0,T)\times (0,1),\\
				\psi(\cdot,0)=\psi(\cdot,1)=0, & (0,T),\\
				\psi(0,\cdot)=\psi_0, &(0,1),\end{array}\right.
		\end{equation}
		{i.e.}\ a function $\psi\in\mathcal{C}^m\left([0,T],H^{2p+3}_{(0)}(0,1)\right)$ such that the following equality holds in $H^{2p+3}_{(0)}$ for every $t\in[0,T]$
		\begin{equation}\label{duhamel}
			\psi(t)=e^{-iAt}\psi_0+i\int_0^te^{-iA(t-s)}((u(s)\mu_1+v(s)\mu_2)\psi(s)+f(s))\mathrm{d}s.
		\end{equation}
		Moreover, $\psi(T)\in H^{2(p+m)+3}_{(0)}$ and the following estimates hold: for every $R>0$, there exists $C=C(R,\mu_1,\mu_2,T)>0$ such that, if $\left\|u\right\|_{H^m},\left\|v\right\|_{H^m}<R$, 
		\begin{equation}\label{dep-continue} 
			\left\|\psi\right\|_{\mathcal{C}^m\left([0,T],H^{2p+3}_{(0)}\right)}\leqslant C\left(\left\|\psi_0\right\|_{H^{2(p+m)+3}_{(0)}}+\left\|f\right\|_{H^m\left((0,T),H^{2p+3}\right)}\right),
		\end{equation}
		\begin{equation}\label{dep-continuebis}
			\left\|\psi(T)\right\|_{H^{2(p+m)+3}_{(0)}}\leqslant C\left(\left\|\psi_0\right\|_{H^{2(p+m)+3}_{(0)}}+\left\|f\right\|_{H^m\left((0,T),H^{2p+3}\right)}\right).
		\end{equation}
	\end{theorem}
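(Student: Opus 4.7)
The plan is to proceed by a standard Banach fixed-point argument on the Duhamel formulation, and then to boost the regularity at the endpoint $t=T$ by an integration-by-parts argument that exchanges the $H^m_0$-regularity in time of the controls for extra spatial regularity.

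First I would verify that the linear Schrödinger group $e^{-iAt}$ acts as an isometry on each $H^s_{(0)}(0,1)$ and in particular on $H^{2p+3}_{(0)}$, since these spaces are defined via the spectral decomposition of $A$. Next I would establish the key multiplication estimate: under $\mathbf{(H)_{reg}}$ the multiplication operator $\psi \mapsto \mu_\ell \psi$ maps $H^{2p+3}_{(0)}(0,1)$ continuously into $H^{2p+3}(0,1)\cap H^{2p+1}_{(0)}(0,1)$ (note the loss of two units in the boundary compatibility count, since $\mu_\ell\psi$ satisfies the Dirichlet condition and the second-derivative compatibility only up to order $2p+1$, because the condition $\mu_\ell^{(2k+1)}(0)=\mu_\ell^{(2k+1)}(1)=0$ for $k=0,\dots,p-1$ is exactly what allows the odd-order boundary traces of $\mu_\ell\psi$ to vanish up to order $2p+1$). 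This is the algebraic heart that makes the Duhamel integral $i\int_0^t e^{-iA(t-s)} \mu_\ell \psi(s)\,\mathrm{d}s$ land back in $H^{2p+3}_{(0)}$. The regularity $\mathcal{C}^m$ in time is propagated because $u,v\in H^m_0$ and one can differentiate the Duhamel formula $m$ times, each derivative hitting either the controls (which stay in $H^{m-k}$) or producing an $A$ that is absorbed via the spectral norm.

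With this in hand, define $\Phi:\psi\mapsto e^{-iAt}\psi_0+i\int_0^t e^{-iA(t-s)}\bigl((u\mu_1+v\mu_2)\psi+f\bigr)\mathrm{d}s$ on a closed ball of $\mathcal{C}^m([0,T],H^{2p+3}_{(0)})$. The multiplication estimate combined with a short-time smallness of $\int_0^t$ makes $\Phi$ a contraction on small time intervals; iterating in time gives a unique global solution on $[0,T]$, together with estimate \eqref{dep-continue}. Uniqueness on the whole interval follows from a Gronwall argument applied to the difference of two solutions in the weaker $L^2$ or $H^{2p+3}_{(0)}$ norm.

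The main obstacle, and the most delicate part, will be the regularity bootstrap \eqref{dep-continuebis}, namely $\psi(T)\in H^{2(p+m)+3}_{(0)}$. The idea is to integrate by parts $m$ times in the Duhamel integral at $t=T$: each integration by parts moves one time derivative from the integrand onto $e^{-iA(T-s)}$, producing a factor $-iA$ (hence gaining two spatial derivatives), and the boundary terms at $s=0$ and $s=T$ vanish because $u,v\in H^m_0((0,T),\rr)$ vanish to order $m$ at both endpoints (and $f\in H^m_0$ similarly). After $m$ such integrations one has expressed $\psi(T)$ as a sum of terms involving $A^k$ applied to products built from $\partial_t^j(u\mu_1+v\mu_2)\psi$ and $\partial_t^j f$ for $j\leqslant m$, which belong to $H^{2p+3-2k}$-type spaces; carefully tracking the boundary compatibility conditions (which is where assumption $\mathbf{(H)_{reg}}$ is used in full strength) shows the result lies in $H^{2(p+m)+3}_{(0)}$. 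The sharpness of the exponents is dictated exactly by the loss of two boundary compatibility orders per multiplication by $\mu_\ell$ and the gain of two spatial derivatives per integration by parts in time, which balance to $2(p+m)+3$.  The estimate \eqref{dep-continuebis} then follows from the same computation made quantitative via \eqref{dep-continue} and the multiplication estimate, bounding each factor by $\|u\|_{H^m}+\|v\|_{H^m}+\|\psi_0\|_{H^{2(p+m)+3}_{(0)}}+\|f\|_{H^m(H^{2p+3})}$.
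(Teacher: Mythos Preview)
The paper gives no proof of its own; it simply cites \cite[Theorem~2.1]{bournissou2021local}. Your outline follows the same route as that reference (itself building on \cite{beauchard2010local}): a contraction mapping on the Duhamel formula for existence and uniqueness in $\mathcal{C}^m([0,T],H^{2p+3}_{(0)})$, then integration by parts in time exploiting $u,v,f\in H^m_0$ to trade time regularity for the extra $2m$ spatial derivatives at $t=T$.

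One genuine gap deserves to be named. You correctly observe that $\psi\mapsto\mu_\ell\psi$ sends $H^{2p+3}_{(0)}$ only into $H^{2p+3}\cap H^{2p+1}_{(0)}$, since $\mu_\ell^{(2p+1)}$ need not vanish at $x=0,1$; but you then assert that this ``is the algebraic heart that makes the Duhamel integral land back in $H^{2p+3}_{(0)}$.'' That recovery of the top boundary condition is \emph{not} algebraic---it is the regularizing (or hidden-regularity) effect first isolated in \cite{beauchard2010local}, and it requires a separate oscillatory-integral lemma. The mechanism is that the offending boundary contribution to $\langle\mu_\ell\psi(s),\varphi_j\rangle$ is of order $j^{-(2p+2)}$ but carries the oscillating factor $e^{i\omega_j s}$; when integrated against $u(s)$ over $[0,t]$ this gains one further power of $j^{-1}$ via a Riemann--Lebesgue\,/\,Ingham-type bound, restoring the $j^{-(2p+3)}$ decay needed for membership in $H^{2p+3}_{(0)}$. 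Without this lemma your map $\Phi$ does not even send the ball into itself, so it should be stated and invoked explicitly rather than folded into ``algebra.''
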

	See \cite[Theorem $2.1$]{bournissou2021local} for details about the proof.
	\begin{lm}\label{bndope}
		Let $k\in\nn$ and $\mu\in W^{2k,\infty}(0,1)$. Then, $\mu$ is a bounded operator on $H^{2k}_{(0)}(0,1)$ iff all the derivatives of $\mu$ of odd order less than or equal to $2k-3$ vanish at the boundary.
	\end{lm}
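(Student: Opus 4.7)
The plan is to exploit the characterization $H^{2k}_{(0)}(0,1) = D(A^{k}) = \{\ph\in H^{2k}(0,1):\ph^{(2l)}(0)=\ph^{(2l)}(1)=0,\ l=0,\dots,k-1\}$, obtained by induction on $k$ from the definition of $D(A)$. Since $\mu\in W^{2k,\infty}(0,1)$, the Leibniz rule already gives $\|\mu\ph\|_{H^{2k}}\leqslant C_k\|\mu\|_{W^{2k,\infty}}\|\ph\|_{H^{2k}}$, so the only point to verify is whether $\mu$ preserves the even-derivative boundary conditions.

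The core identity is the Leibniz formula evaluated at the endpoints: for $\ph\in H^{2k}_{(0)}$ and $j\in\{0,\dots,k-1\}$, since $\ph^{(2r)}(0)=0$ for all $r=0,\dots,k-1$, only odd indices survive in the expansion, yielding
$$(\mu\ph)^{(2j)}(0)\;=\;\sum_{l=0}^{j-1}\binom{2j}{2l+1}\mu^{(2l+1)}(0)\,\ph^{(2j-2l-1)}(0),$$
and the analogous identity at $x=1$. The sufficient (``if'') direction is then immediate: if $\mu^{(2l+1)}(0)=\mu^{(2l+1)}(1)=0$ for $l=0,\dots,k-2$, every summand vanishes because the summation index $l$ is bounded by $j-1\leqslant k-2$; hence $\mu\ph\in H^{2k}_{(0)}$, and the Leibniz estimate above gives boundedness.

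For the necessary (``only if'') direction I would proceed by induction on $l\in\{0,\dots,k-2\}$, applying the identity to the explicit test function $\ph=\ph_1=\sqrt{2}\sin(\pi\cdot)$, which belongs to $H^{2k}_{(0)}$ for every $k$ since all its even derivatives vanish at $0$ and at $1$, while $\ph_1'(0)=\sqrt{2}\pi\neq 0$ and $\ph_1'(1)=-\sqrt{2}\pi\neq 0$. Taking $j=l+1$, the inductive hypothesis $\mu^{(2l'+1)}(0)=0$ for $l'<l$ annihilates all sub-diagonal terms, and the upper bound $l'\leqslant j-1=l$ on the summation index leaves only the term $l'=l$, giving $\binom{2l+2}{2l+1}\mu^{(2l+1)}(0)\,\ph_1'(0)=0$ and hence $\mu^{(2l+1)}(0)=0$. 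The same step at $x=1$ yields $\mu^{(2l+1)}(1)=0$, closing the induction.

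There is no serious obstacle: the statement is a bookkeeping consequence of the Leibniz rule combined with the correct description of $D(A^k)$, and the ground state $\ph_1$ conveniently serves as a single test function sufficient for the entire induction. The only minor point worth noting is that the Leibniz estimate $\|\mu\ph\|_{H^{2k}}\leqslant C_k\|\mu\|_{W^{2k,\infty}}\|\ph\|_{H^{2k}}$ relies on interpreting $W^{2k,\infty}(0,1)\hookrightarrow\mathcal{C}^{2k-1}([0,1])$, which is standard in dimension one and makes all the boundary values $\mu^{(i)}(0),\mu^{(i)}(1)$ appearing above well defined.
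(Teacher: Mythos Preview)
Your proof is correct and follows essentially the same route as the paper's: both use the Leibniz formula to reduce boundedness on $H^{2k}_{(0)}$ to the vanishing of the odd-derivative boundary terms. The only difference is in the converse direction, where the paper constructs a family of test functions $\ph_l\in H^{2k}_{(0)}$ with prescribed odd-derivative boundary data $\ph_l^{(2(k-j)-3)}(0)=\delta_{j,l}$ to isolate each $\mu^{(2l+1)}(0)$ directly, whereas you use the single explicit function $\ph_1$ together with an induction on $l$---a slight simplification that works precisely because $\ph_1'(0),\ph_1'(1)\neq 0$.
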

	\begin{proof}Let $\mu$ be such a function.
		For all $\ph\in H^{2k}_{(0)}(0,1)$, for all $l\in\{0,\cdots,k-1\}$, one has, thanks to Leibniz formula
		$$\left(\mu\ph\right)^{(2l)}=\sum_{j=0}^{l}\binom{2l}{2j}\mu^{(2j)}\ph^{(2(l-j))}+\sum_{j=0}^{l-1}\binom{2l}{2j+1}\mu^{(2j+1)}\ph^{(2(l-j)-1)}.$$
		The first sum vanishes at $x=0,1$ because $\restriction{\ph^{(2(l-j))}}{\{0,1\}}=0$. As  $\restriction{\mu^{(2j+1)}}{\{0,1\}}=0$ for $2j+1\leqslant 2l-1\leqslant 2k-3$, the second sum is $0$ at $x=0,1$. Conversely, let $\ell\in\{0,\cdots,k-2\}$. Considering $\ph_{\ell}\in H^{2k}_{(0)}(0,1)$ such that $\ph_{\ell}^{(2(k-j)-3)}(0)=\delta_{j,\ell}$ for $0\leqslant j\leqslant k-2$, we obtain
		$$0=\left(\mu\ph_{\ell}\right)^{(2k-2)}(0)=\sum_{j=0}^{k-2}\binom{2k-2}{2j+1}\mu^{(2j+1)}(0)\delta_{j,\ell}=\binom{2k-2}{2\ell+1}\mu^{(2\ell+1)}(0).$$
		Using the same strategy with $x=1$, we obtain the result.
	\end{proof}
	\begin{rmq}\label{stabilité} Let $\mu$ be a function satisfying $(\mathbf{H_{reg}})$. Then, $\ph\mapsto \mu\ph$ is a continuous mapping from $H^{2p+3}\cap H^{2p+1}_{(0)}$ to $H^{2p+3}\cap H^{2p+1}_{(0)}$. However, $\mu^{(2p+1)}(0),\mu^{(2p+1)}(1)$ \textit{a priori} don't vanish, so this application doesn't preserve $H^{2p+3}_{(0)}(0,1)$.  This problem is circumvented by \eqref{dep-continue} and \eqref{dep-continuebis}.
	\end{rmq}
The following statement gives the dependency of the solution with respect to the initial condition. This is the adaptation of Lemma \ref{gronwalldecomp}, with the Schrödinger equation.
	\begin{prop}
		Let $T> 0$, $\mu_1,\mu_2$ satisfying $\mathbf{(H)_{reg}}$ and $\psi_0\in H^{2(p+m)+3}_{(0)}(0,1)$. For all $R>0$, there exists $C=C(T,\mu_1,\mu_2,R)>0$ such that for all $u,v\in H^m_0(0,T)$ with $\left\|u\right\|_{H^m},\left\|v\right\|_{H^m}<R$, one has
		\begin{equation}\begin{gathered}\label{dep-continue2}
			\left\|\psi(T;(u,v),\psi_0+\ph_1)-\psi(T;(u,v),\ph_1)-\psi(T;(0,0),\psi_0)\right\|_{H_{(0)}^{2(p+m)+3}}\\
			\leqslant C\left\|(u,v)\right\|_{H^m}\left\|\psi_0\right\|_{H_{(0)}^{2(p+m)+3}}.\end{gathered}
		\end{equation}
	\end{prop}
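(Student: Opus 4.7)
The plan is to exploit two facts: the linearity of equation \eqref{schr} in the initial datum (for fixed controls), and the well-posedness estimate \eqref{dep-continuebis}. Because the control enters multiplicatively against $\psi$, the solution map $\psi_0 \mapsto \psi(\cdot;(u,v),\psi_0)$ is $\cc$-linear, so the difference appearing on the left-hand side of the statement equals
\begin{equation*}
\psi(\cdot;(u,v),\psi_0+\ph_1)-\psi(\cdot;(u,v),\ph_1)-\psi(\cdot;(0,0),\psi_0)
= \psi(\cdot;(u,v),\psi_0) - e^{-iA\cdot}\psi_0 =: \phi.
\end{equation*}
The statement is therefore equivalent to the bound $\|\phi(T)\|_{H^{2(p+m)+3}_{(0)}}\leqslant C\|(u,v)\|_{H^m}\|\psi_0\|_{H^{2(p+m)+3}_{(0)}}$.

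Next, subtracting the equations satisfied by $\psi(\cdot;(u,v),\psi_0)$ and by $e^{-iA\cdot}\psi_0$, I would observe that $\phi$ solves \eqref{schro} with zero initial datum and source term $f := (u\mu_1+v\mu_2)\,e^{-iA\cdot}\psi_0$. Applied to $\phi$, the well-posedness estimate \eqref{dep-continuebis} then yields
\begin{equation*}
\|\phi(T)\|_{H^{2(p+m)+3}_{(0)}} \leqslant C\,\|f\|_{H^m((0,T),H^{2p+3})}.
\end{equation*}
The whole statement thus boils down to estimating $f$ in this norm by $\|(u,v)\|_{H^m}\|\psi_0\|_{H^{2(p+m)+3}_{(0)}}$, and to verifying that $f$ lies in the functional class $H^m_0((0,T),H^{2p+3}\cap H^{2p+1}_{(0)})$ required for \eqref{dep-continuebis} to apply.

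For the spatial aspect, Remark \ref{stabilité} provides continuous multiplication by $\mu_\ell$ on $H^{2p+3}\cap H^{2p+1}_{(0)}$. For the time aspect, functional calculus gives $\partial_t^k(e^{-iAt}\psi_0)=(-iA)^k e^{-iAt}\psi_0$, so $e^{-iA\cdot}\psi_0\in\mathcal{C}^m([0,T],H^{2p+3}_{(0)})$ with norm controlled by $\|\psi_0\|_{H^{2(p+m)+3}_{(0)}}$, two spatial derivatives being paid per time-derivative. A Leibniz expansion combined with the embedding $H^m_0(0,T)\hookrightarrow L^\infty(0,T)$ for $m\geqslant 1$ (or, for $m=0$, the trivial bound $\|u g\|_{L^2_t H^{2p+3}}\leqslant\|u\|_{L^2}\|g\|_{L^\infty_t H^{2p+3}}$) then produces the desired product estimate. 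The vanishing conditions $f^{(k)}(0)=f^{(k)}(T)=0$ for $k<m$ are inherited from $u,v\in H^m_0(0,T)$. The only technical point I expect is this simultaneous bookkeeping of spatial boundary conditions (not every $H^s$ is preserved by multiplication by $\mu_\ell$) and time regularity when certifying membership of $f$ in the required class; by $\mathbf{(H)_{reg}}$ and Remark \ref{stabilité}, this reduces to a routine Leibniz computation.
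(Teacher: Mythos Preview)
Your proposal is correct and follows essentially the same route as the paper: both recognize that the three-term difference (your $\phi$, the paper's $\Lambda$) solves \eqref{schro} with zero initial datum and source $(u\mu_1+v\mu_2)e^{-iA\cdot}\psi_0$, then apply \eqref{dep-continuebis} and bound the source by the product $\|(u,v)\|_{H^m}\|\psi_0\|_{H^{2(p+m)+3}_{(0)}}$ using the algebra structure of $H^{2p+3}(0,1)$ (for $m\geqslant1$ also the algebra structure of $H^m(0,T)$, and the direct $L^2$--$L^\infty$ bound for $m=0$). Your explicit appeal to linearity in the initial datum to rewrite the difference as $\psi(\cdot;(u,v),\psi_0)-e^{-iA\cdot}\psi_0$ is a slight streamlining of the paper's presentation, but the substance is the same.
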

	\begin{proof}
		One defines for $T>0$, 
		$$\Lambda:t\in[0,T]\mapsto\psi(t;(u,v),\psi_0+\ph_1)-\psi(t;(u,v),\ph_1)-\psi(t;(0,0),\psi_0).$$
		The function $\Lambda$ is the solution to the following bilinear Schrödinger equation
		$$\left\lbrace\begin{array}{lr}
			i\partial_t\Lambda=-\partial_x^2\Lambda -(u\mu_1+v\mu_2)\Lambda - (u\mu_1+v\mu_2)\psi(\cdot;(0,0),\psi_0),&(0,T)\times (0,1),\\
			\Lambda(\cdot,0)=\Lambda(\cdot,1)=0,&(0,T),\\
			\Lambda(0,\cdot)=0,&(0,1).
		\end{array}\right.$$
		Then, the inequality \eqref{dep-continuebis} applied to this equation gives
		\begin{equation}\label{alg-m=0}\left\|\Lambda(T)\right\|_{H^{2(p+m)+3}_{(0)}}\leqslant C\left\|(u\mu_1+v\mu_2)\psi(\cdot;(0,0),\psi_0)\right\|_{H^m\left((0,T),H^{2p+3}\right)}.\end{equation}
		When $m\geqslant 1$, the Sobolev space $H^m(0,T)$ has an algebra structure and one obtains the following inequality
		$$\left\|\Lambda(T)\right\|_{H^{2(p+m)+3}_{(0)}}\leqslant C\left\|(u,v)\right\|_{H^m}\underset{i\in\llbracket 1,2\rrbracket}\max\left\|\mu_i\psi(\cdot;(0,0),\psi_0)\right\|_{H^m\left((0,T),H^{2p+3}\right)}.$$ 
		Once again, we use the algebra structure of the Sobolev space $H^{2p+3}(0,1)$ to obtain
		$$\left\|\Lambda(T)\right\|_{H^{2(p+m)+3}_{(0)}}\leqslant C\underset{i\in\llbracket 1,2\rrbracket}\max\left\|\mu_i\right\|_{H^{2p+3}}\left\|(u,v)\right\|_{H^m}\left\|\psi(\cdot;(0,0),\psi_0)\right\|_{H^m\left((0,T),H^{2p+3}_{(0)}\right)}.$$ 
		Noticing that $\psi(t;(0,0),\psi_0)=e^{-iAt}\psi_0$,
		$$\left\|\psi(\cdot;(0,0),\psi_0)\right\|_{H^m\left((0,T),H^{2p+3}_{(0)}\right)}\leqslant C\left\|\psi_0\right\|_{H^{2(p+m)+3}_{(0)}}.$$ This leads to the conclusion. Finally, let's consider the case where $m=0$. The following estimation holds
		$$\left\|u\mu_1\psi(\cdot;(0,0),\psi_0)\right\|^2_{L^2((0,T),H^{2p+3})}=\int_0^T|u(t)|^2\left\|\mu_1\psi(t;(0,0),\psi_0)\right\|_{H^{2p+3}}^2\dt.$$
		Using the algebra structure of $H^{2p+3}(0,1)$,
		$$\left\|u\mu_1\psi(\cdot;(0,0),\psi_0)\right\|^2_{L^2((0,T),H^{2p+3})}\leqslant C\left\|\mu_1\right\|^2_{H^{2p+3}} \int_0^T|u(t)|^2\left\|\psi(t;(0,0),\psi_0)\right\|_{H^{2p+3}_{(0)}}^2\dt.$$
		The same estimate can be obtained with the term in $\mu_2v$. Thus, the equation \eqref{alg-m=0} gives
		$$\left\|\Lambda(T)\right\|_{H^{2p+3}_{(0)}}\leqslant C\underset{i\in\llbracket 1,2\rrbracket}\max\left\|\mu_i\right\|_{H^{2p+3}}\left\|(u,v)\right\|_{L^2}\left\|\psi(\cdot;(0,0),\psi_0)\right\|_{L^{\infty}\left((0,T),H^{2p+3}_{(0)}\right)}.$$ 
		The inequality \eqref{dep-continue} (with $m=0$) gives the conclusion.
	\end{proof}
	\subsection{Expansion of the solution}
	We want to make an asymptotic development of the solution $\psi$, with small controls.
	Let $u,v\in H^m_0(0,T)$ be two fixed controls. The first-order term $\Psi\in\mathcal{C}^m\left([0,T],H^{2p+3}_{(0)}(0,1)\right)$ is the solution to the linearized system of \eqref{schr} around the free trajectory $(\psi_1,(u,v)\equiv 0)$, {i.e.}\
	\begin{equation}\hspace{-0.45 cm}\label{schro:lin}\left\lbrace\begin{array}{lr} i\partial_t\Psi=-\partial_x^2\Psi-(u\mu_1+v\mu_2)\psi_1, & (0,T)\times(0,1),\\
			\Psi(\cdot,0)=\Psi(\cdot,1)=0, & (0,T),\\
			\Psi(0,\cdot)=0, &(0,1).\end{array}\right.
	\end{equation}
	Using \eqref{duhamel}, the solution is given by: $\forall t\in [0,T]$, 
	\begin{equation}\label{schro:lin:exp} \Psi(t)=i\sum_{j=1}^{+\infty}\left(\left\langle\mu_1\ph_1,\ph_j\right\rangle\int_0^tu(s)e^{i\omega_js}\mathrm{d}s+\left\langle\mu_2\ph_1,\ph_j\right\rangle\int_0^tv(s)e^{i\omega_js}\mathrm{d}s\right)\psi_j(t).
	\end{equation}
	\begin{defi}[Weak norms of Sobolev spaces] For any integer $k\in\nn^*$, the negative $H^{-k}(0,T)$ space is endowed with the norm: $\forall u\in H^{-k}(0,T)$, 
		$$\left\|u\right\|_{H^{-k}(0,T)}:=|u_1(T)|+\left\|u_k\right\|_{L^2(0,T)}.$$
	\end{defi}
	
	\begin{prop}[Weak estimates] Let $T> 0$, $ k,p\in\nn$ be such that $k\leqslant p$, $\mu_1,\mu_2$ be two functions satisfying $\mathbf{(H)_{reg}}$ and $u,v\in H^m_0(0,T)$. For every $R>0$,
		there exists a constant $C=C(T,\mu_1,\mu_2,R)>0$ such that, if $\left\|u\right\|_{H^m},\left\|v\right\|_{H^m}<R$ and $u_2(T)=\cdots=u_{k+1}(T)=0$, $v_1(T)=\cdots=v_{k+1}(T)=0$, the following estimates hold: for all $l=-(k+1),\cdots,m$, 
		\begin{equation}\label{error:lin}\left\|(\psi(T;(u,v),\ph_1)-\psi_1-\Psi)(T)\right\|_{H^{2(p+l)+3}_{(0)}}\leqslant C\left\|(u,v)\right\|_{H^l}\left\|(u,v)\right\|_{H^m}.\end{equation} 
	\end{prop}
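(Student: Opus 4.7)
The plan is to introduce the error $\Delta := \psi(\cdot;(u,v),\ph_1) - \psi_1 - \Psi$ and to derive the PDE it satisfies. A direct substitution into \eqref{schro} shows that $\Delta(0,\cdot)=0$ and
\begin{equation*}
i\partial_t \Delta = -\partial_x^2 \Delta - (u\mu_1+v\mu_2)\Delta - (u\mu_1+v\mu_2)\Psi,
\end{equation*}
so $\Delta$ solves an equation of the form \eqref{schro} with source $f=(u\mu_1+v\mu_2)\Psi$ that is itself bilinear in the controls through the explicit expansion \eqref{schro:lin:exp}. I would split the range of $l$ into the \emph{strong regime} $l \in \llbracket 0, m \rrbracket$, where the well-posedness estimate applies directly, and the \emph{weak regime} $l \in \llbracket -(k+1), -1 \rrbracket$, where the hypotheses on the iterated primitives of $(u,v)$ at $T$ must be exploited.

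For $l \in \llbracket 0, m\rrbracket$, I would apply \eqref{dep-continuebis} (with $m$ replaced by $l$) to the error equation, yielding $\|\Delta(T)\|_{H^{2(p+l)+3}_{(0)}} \leqslant C \|f\|_{H^l((0,T),H^{2p+3})}$. The algebra structure of $H^l(0,T)$ (and of $H^{2p+3}(0,1)$), with the $l=0$ case handled by the same pointwise-in-time trick as in the proof of \eqref{dep-continue2}, then bounds $\|f\|_{H^l(H^{2p+3})}$ by $C\|(u,v)\|_{H^l}\|\Psi\|_{H^l((0,T), H^{2p+3})}$. A final application of \eqref{dep-continue} to \eqref{schro:lin} gives $\|\Psi\|_{H^l(H^{2p+3})}\leqslant C\|(u,v)\|_{H^l}$, and since $l\leqslant m$ the resulting $\|(u,v)\|_{H^l}^2$ is absorbed into $\|(u,v)\|_{H^l}\|(u,v)\|_{H^m}$.

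For $l\in\llbracket -(k+1), -1\rrbracket$, the source $f$ is too rough for the well-posedness theorem, so I would instead start from the Duhamel formula
\begin{equation*}
\Delta(T) = i \int_0^T e^{-iA(T-s)}(u\mu_1+v\mu_2)(\Delta+\Psi)(s)\ds.
\end{equation*}
A Gronwall-type argument, combined with the $H^m$ bound on $\Delta$ obtained in the strong regime, absorbs the $\Delta$ contribution. For the $\Psi$ contribution, inserting \eqref{schro:lin:exp} and projecting onto each $\ph_q$ produces a double series in $(j,q)$ of bilinear time integrals of the type
\begin{equation*}
\int_0^T \int_0^s w_a(\tau) e^{i\omega_j \tau}\,\mathrm d\tau \; w_b(s) e^{i(\lambda_q - \lambda_j)s}\ds, \qquad w_a, w_b \in \{u, v\},
\end{equation*}
weighted by products $\langle\mu_a\ph_1,\ph_j\rangle \langle\mu_b\ph_j, \ph_q\rangle$. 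I would then iterate integration by parts in $\tau$ and in $s$, moving derivatives onto the oscillating exponentials so as to convert the factors $w_a, w_b$ into their iterated primitives $w_{a,k+1}$ and $w_{b,m}$, at the cost of powers $\omega_j$ and $\lambda_q-\lambda_j$. The assumption $u_2(T)=\cdots=u_{k+1}(T)=v_1(T)=\cdots=v_{k+1}(T)=0$ cancels all boundary terms produced along the way, except the one involving $u_1(T)$, which is precisely the quantity captured by $\|u\|_{H^{-(k+1)}}$.

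The hard part will be the convergence of the resulting double series in $(j,q)$: the losses $j^{2(k+1)}$ and $|\lambda_q-\lambda_j|^{k+1}$ generated by integration by parts must be compensated by the Riemann--Lebesgue decay \eqref{riemann:lebesgue} of $\langle\mu_a\ph_1,\ph_j\rangle$ and $\langle\mu_b\ph_j,\ph_q\rangle$, which is exactly where the constraint $k\leqslant p$ enters. After this bookkeeping, the series converges absolutely and is bounded by $C\|(u,v)\|_{H^l}\|(u,v)\|_{H^m}$, the two factors corresponding respectively to the $w_a$-side (which carries the weak norm through $|u_1(T)|+\|w_{a,k+1}\|_{L^2}$) and to the $w_b$-side (which carries the strong norm through $\|w_{b,m}\|_{L^2}$).
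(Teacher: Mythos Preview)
The paper does not give its own proof of this proposition: it simply cites \cite[Proposition~4.5]{bournissou2021quadratic}. So the comparison is between your sketch and the technique of that reference.

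Your treatment of the \emph{strong regime} $l\in\llbracket 0,m\rrbracket$ is fine and is indeed how one proceeds: apply the well-posedness estimate \eqref{dep-continuebis} to the error equation, use the algebra property, and bound $\Psi$ by \eqref{dep-continue}.

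In the \emph{weak regime} $l<0$, however, there is a genuine gap in your handling of the $\Delta$ contribution. You propose to absorb $\int_0^T e^{-iA(T-s)}(u\mu_1+v\mu_2)\Delta(s)\,\mathrm ds$ via ``a Gronwall-type argument combined with the $H^m$ bound on $\Delta$''. The difficulty is that the vanishing conditions $u_j(T)=v_j(T)=0$ are imposed only at the final time $T$. A Gronwall argument forces you to control the $\Psi$-part (or $\Delta$ itself) at \emph{intermediate} times $s<T$, where integration by parts produces nonvanishing boundary terms $u_j(s),v_j(s)$; the resulting crude bound is of order $\|(u,v)\|_{L^2}^3$, and this is \emph{not} dominated by $\|(u,v)\|_{H^l}\|(u,v)\|_{H^m}$ when $l<0$ (take for instance $m=0$). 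The same obstruction appears if you split $\Delta=\xi+(\text{cubic})$ and try to estimate the cubic remainder directly.

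The approach in \cite{bournissou2021quadratic} avoids this by working at the level of the \emph{full nonlinear solution} rather than its Duhamel expansion: one introduces an auxiliary state (a gauge-type transformation of $\psi$, schematically $\psi\mapsto e^{-iu_1\mu_1-iv_1\mu_2}\psi$ and iterates) so that the transformed equation involves $u_1,v_1$ in place of $u,v$. Iterating $k+1$ times replaces the controls by their $(k+1)$-th primitives uniformly in time, after which the standard well-posedness estimate applies and yields \eqref{error:lin} directly. This device handles the nonlinear $\Delta$ term and the source simultaneously, without ever needing the primitive conditions at intermediate times.
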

	\begin{proof}
		This point is proved in \cite[Proposition $4.5$]{bournissou2021quadratic}.
	\end{proof}
	In the framework of this article, $\mathbf{(H)_{lin,K,1}}$ implies $\langle \Psi(T), \varphi_K\rangle \equiv 0$, for every controls $(u,v)$. To determine the evolution of $\psi$ in this direction, we refine the approximation and extend the development of the solution to the quadratic term.
	The second-order term $\xi\in\mathcal{C}^m\left([0,T],H^{2p+3}_{(0)}(0,1)\right)$ is the solution to the following system
	\begin{equation}\label{schro:lin}\left\lbrace\begin{array}{lr} i\partial_t\xi=-\partial_x^2\xi-(u\mu_1+v\mu_2)\Psi,& (0,T)\times(0,1),\\
			\xi(\cdot,0)=\xi(\cdot,1)=0, & (0,T),\\
			\xi(0,\cdot)=0, &(0,1).\end{array}\right.
	\end{equation}
	We will sometimes note $\xi(\cdot;(u,v))$ for more precision. The idea is that $\psi(T;(u,v),\ph_1)\simeq \psi_1(T)+\Psi(T)+\xi(T).$ Thus, 
	$$\left\langle\psi(T;(u,v),\ph_1),\psi_K(T)\right\rangle\simeq0+0+\left\langle\xi(T),\psi_K(T)\right\rangle,$$ the first term being $0$ thanks to orthogonality ($K\neq 1$) and the second one by hypothesis $\mathbf{(H)_{lin,K,1}}$.
	For $1\leqslant \alpha,\beta\leqslant 2$, one defines
	\begin{equation}\label{noyaux2}
		h_{\alpha,\beta}:(t,s)\in[0,T]^2\mapsto-\sum_{j=1}^{+\infty}\langle\mu_{\alpha}\ph_1,\ph_j\rangle\langle\ph_j,\mu_{\beta}\ph_K\rangle e^{i\nu_jt+i\omega_js}.
	\end{equation}
	We finally use the notation, for $u,v\in L^2((0,T),\rr)$,
	\begin{equation}\begin{gathered}\label{defft}\mathcal{F}_T^1(u):=\int_0^Tu(t)\left(\int_0^th_{1,1}(t,s)u(s)\ds\right)\dt, \\ \mathcal{F}_T^2(v):=\int_0^Tv(t)\left(\int_0^th_{2,2}(t,s)v(s)\ds\right)\dt,\end{gathered}
	\end{equation}
	\begin{equation}\label{defgt}\mathcal{G}_T(u,v):=\int_0^Tu(t)\left(\int_0^th_{2,1}(t,s)v(s)\ds\right)\dt+\\\int_0^Tv(t)\left(\int_0^th_{1,2}(t,s)u(s)\ds\right)\dt.
	\end{equation}
	The formulas \eqref{duhamel} and \eqref{schro:lin:exp} lead to
	\begin{equation}\label{schro:quad}
		\left\langle\xi(T),\psi_K(T)\right\rangle=\mathcal{F}_T^1(u)+\mathcal{G}_T(u,v)+\mathcal{F}_T^2(v).
	\end{equation}
	\begin{lm}\label{lmreste}
		Assume that $\mu_1,\mu_2$ satisfy $\mathbf{(H)_{reg}}$. Then, as $\left\|(u,v)\right\|_{L^2}\to 0$,
		\begin{equation}\label{reste}
			\left\| \psi(\cdot;(u,v),\ph_1)-\psi_1-\Psi-\xi\right\|_{L^{\infty}\left((0,T),H^{2p+3}_{(0)}\right)}= \mathcal{O}\left(\left\|u\right\|_{L^2(0,T)}^3+\left\|v\right\|_{L^2(0,T)}^3\right).
		\end{equation}
	\end{lm}
	\begin{proof}
		\textit{First step: linear remainder}. One defines $\Lambda:t\in[0,T]\mapsto \psi(t;(u,v),\ph_1)-\psi_1(t)$. The function $\Lambda$ is solution to the following PDE
		$$\left\lbrace\begin{array}{lr}
			i\partial_t\Lambda=-\partial_x^2\Lambda -(u\mu_1+v\mu_2)\Lambda - (u\mu_1+v\mu_2)\psi_1, &(0,T)\times (0,1),\\
			\Lambda(\cdot,0)=\Lambda(\cdot,1)=0, &(0,T),\\
			\Lambda(0,\cdot)=0, &(0,1).
		\end{array}\right.$$ 
		As $\psi_1(t)\in H^{2p+3}_{(0)}(0,1)$ and $\mu_1,\mu_2$ verify $\mathbf{(H)_{reg}}$, Remark \ref{stabilité} ensures that $(u(t)\mu_1+v(t)\mu_2)\psi_1(t)\in H^{2p+3}\cap H^{2p+1}_{(0)}(0,1)$. 
		Then, we can use the regularization inequality given by \eqref{dep-continue} with $m=0$ to obtain
		\begin{equation}\label{reste-fond}
			\left\|\Lambda\right\|_{L^{\infty}\left((0,T),H^{2p+3}_{(0)}\right)}\leqslant C\left\|(u\mu_1+v\mu_2)\psi_1\right\|_{L^2\left((0,T),H^{2p+3}\right)}.
		\end{equation}
		Note that 
		$$\left\|u\mu_1\psi_1\right\|_{L^2\left((0,T),H^{2p+3}\right)}^2=\int_0^T|u(t)|^2\left\|\mu_1\ph_1e^{-i\lambda_1t}\right\|^2_{H^{2p+3}}\dt=\left\|\mu_1\ph_1\right\|_{H^{2p+3}}^2\left\|u\right\|_{L^2(0,T)}^2.$$
		We can obtain the same estimate for $\left\|v\mu_2\psi_1\right\|_{L^2\left((0,T),H^{2p+3}\right)}$. Then, thanks to \eqref{reste-fond}, one gets
		$$\left\|\psi(\cdot;(u,v),\ph_1)-\psi_1\right\|_{L^{\infty}\left((0,T),H^{2p+3}_{(0)}\right)}=\mathcal{O}\left(\left\|u\right\|_{L^2(0,T)}+\left\|v\right\|_{L^2(0,T)}\right).$$
		\textit{Second step: quadratic remainder}. 
		Using the same strategy, one defines $\Lambda:t\in[0,T]\mapsto \psi(t;(u,v),\ph_1)-\psi_1(t)-\Psi(t)$. The function $\Lambda$ is solution to the following PDE
		$$\left\lbrace\begin{array}{lr}
			i\partial_t\Lambda=-\partial_x^2\Lambda -(u\mu_1+v\mu_2)\left(\psi(\cdot;(u,v),\ph_1)-\psi_1\right),& (0,T)\times(0,1), \\
			\Lambda(\cdot,0)=\Lambda(\cdot,1)=0, &(0,T),\\
			\Lambda(0,\cdot)=0, &(0,1).
		\end{array}\right.$$
		By the same way, one can use the estimate \eqref{dep-continue} with $m=0$ in this equation to obtain
		$$\left\|\Lambda\right\|_{L^{\infty}\left((0,T),H^{2p+3}_{(0)}\right)}\leqslant C\left\|(u\mu_1+v\mu_2)\left(\psi(\cdot;(u,v),\ph_1)-\psi_1\right)\right\|_{L^2((0,T),H^{2p+3})}.$$
		Computing this norm and using the algebra structure of the Sobolev space $H^{2p+3}(0,1)$,
		\begin{equation}\begin{gathered}\label{alg-sob}\left\|u\mu_1\left(\psi(\cdot;(u,v),\ph_1)-\psi_1\right)\right\|_{L^2\left((0,T),H^{2p+3}\right)}\\\leqslant C\left\|\psi(\cdot;(u,v),\ph_1)-\psi_1\right\|_{L^{\infty}\left((0,T),H^{2p+3}_{(0)}\right)}\left\|\mu_1\right\|_{H^{2p+3}} \left\|u\right\|_{L^2(0,T)}.\end{gathered}\end{equation}
		Then, using the inequality proved in the first step in \eqref{alg-sob}, we obtain 
		$$\left\|\psi(\cdot;(u,v),\ph_1)-\psi_1-\Psi\right\|_{L^{\infty}\left((0,T),H^{2p+3}_{(0)}\right)}=\mathcal{O}\left(\left\|u\right\|_{L^2(0,T)}^2+\left\|v\right\|_{L^2(0,T)}^2\right).$$
		\textit{Third step: cubic remainder}. Once again, one defines $\Lambda:t\in[0,T]\mapsto \psi(t;(u,v),\ph_1)-\psi_1(t)-\Psi(t)-\xi(t)$. The function $\Lambda$ is solution to the following PDE
		$$\left\lbrace\begin{array}{lr}
			i\partial_t\Lambda=-\partial_x^2\Lambda -(u\mu_1+v\mu_2)\left(\psi(\cdot;(u,v),\ph_1)-\psi_1-\Psi\right),& (0,T)\times(0,1), \\
			\Lambda(\cdot,0)=\Lambda(\cdot,1)=0, &(0,T),\\
			\Lambda(0,\cdot)=0, &(0,1).
		\end{array}\right.$$
		One uses the estimate \eqref{dep-continue} with $m=0$ in this equation to obtain
		$$\left\|\Lambda\right\|_{L^{\infty}\left((0,T),H^{2p+3}_{(0)}\right)}\leqslant C\left\|(u\mu_1+v\mu_2)\left(\psi(\cdot;(u,v),\ph_1)-\psi_1-\Psi\right)\right\|_{L^2((0,T),H^{2p+3})}.$$
		Using the algebra structure of the Sobolev space $H^{2p+3}(0,1)$,
		\begin{equation}\begin{gathered}\label{alg-sob2}\left\|u\mu_1\left(\psi(\cdot;(u,v),\ph_1)-\psi_1-\Psi\right)\right\|_{L^2\left((0,T),H^{2p+3}\right)}\\\leqslant C\left\|\psi(\cdot;(u,v),\ph_1)-\psi_1-\Psi\right\|_{L^{\infty}\left((0,T),H^{2p+3}_{(0)}\right)}\left\|\mu_1\right\|_{H^{2p+3}} \left\|u\right\|_{L^2(0,T)}.\end{gathered}\end{equation}
		Then, using the inequality proved in the second step in \eqref{alg-sob2}, we obtain 
		$$\left\|\psi(\cdot;(u,v),\ph_1)-\psi_1-\Psi-\xi\right\|_{L^{\infty}\left((0,T),H^{2p+3}_{(0)}\right)}=\mathcal{O}\left(\left\|u\right\|_{L^2(0,T)}^3+\left\|v\right\|_{L^2(0,T)}^3\right).$$
	\end{proof}
	\subsection{Control in projection}
	The following result is adapted from a result proved by Bournissou in \cite{bournissou2021local}. This theorem gives the controllability in projection on $\mathcal{H}$, defined in \eqref{h}. More precisely, the statement is the following.
	\begin{theorem}\label{control-projection}
		Let $p,m,k\geqslant 0$, $K\geqslant 2$, such that $k\leqslant p$, $\mu_1,\mu_2$ functions satisfying $\mathbf{(H)_{reg}}$ and $\mathbf{(H)_{lin,K,2}}$. Then, the bilinear Schrödinger equation \eqref{schr} is $H^m_0-$STLC in projection on $\mathcal{H}$ around the ground state in $H^{2(p+m)+3}_{(0)}$. More precisely, for any $T_1<T$, there exist $\delta,C>0$ and a $\mathcal{C}^1$ map $\Gamma_{T_1,T}: \mathcal{V}_{T_1}\times\mathcal{V}_{T}\to H^m_0((T_1,T),\rr)^2$, where 
		$$\mathcal{V}_{T_1}:=\left\lbrace \psi_0\in\mathcal{S}\cap H^{2(p+m)+3}_{(0)}(0,1),\quad \left\|\psi_0-\psi_1(T_1)\right\|_{H^{2(p+m)+3}_{(0)}}<\delta\right\rbrace$$
		$$\mathcal{V}_{T}:=\left\lbrace \psi_f\in\mathcal{H}\cap H^{2(p+m)+3}_{(0)}(0,1),\quad \left\|\psi_f-\psi_1(T)\right\|_{H^{2(p+m)+3}_{(0)}}<\delta\right\rbrace$$
		such that, $\Gamma_{T_1,T}(\psi_1(T_1),\psi_1(T))=(0,0)$ and for every $\psi_0,\psi_f\in\mathcal{V}_{T_1}\times\mathcal{V}_{T}$ , the solution to \eqref{schr} on $(T_1,T)$ with initial data $\psi_0$ in $t=T_1$ and controls $u,v:=\Gamma_{T_1,T}(\psi_0,\psi_f)$ satisfies $$\mathbb{P}_{\mathcal{H}}\left(\psi(T;\Gamma_{T_1,T}(\psi_0,\psi_f),\psi_0)\right)=\psi_f,$$ with the following boundary conditions 
		$$u_2(T)=\cdots=u_{k+1}(T)=0, \quad \text{and} \quad v_2(T)\cdots=v_{k+1}(T)=0.$$
		Finally, for all $l\in\left\lbrace -(k+1),\cdots,m\right\rbrace$,
		\begin{equation}\label{estim-simult}
			\left\|u\right\|_{H^l}\leqslant C\left(\left\|\psi_0-\psi_1(T_1)\right\|_{H^{2(p+l)+3}_{(0)}}+\left\|\psi_f-\psi_1(T)\right\|_{H^{2(p+l)+3}_{(0)}}\right).
		\end{equation}
		Moreover, if $\langle\mu_1\ph_1,\ph_1\rangle\neq 0$, one can ensure $v_1(T)=0$.
	\end{theorem}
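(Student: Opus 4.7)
The plan is to prove Theorem \ref{control-projection} by a classical linear-test / inverse mapping theorem argument, adapted to two controls and to the projection onto $\mathcal{H}$. I would define, for fixed $T_1 < T$, the nonlinear end-point map
\[
\Theta:(\psi_0,(u,v)) \longmapsto \mathbb{P}_{\mathcal{H}}\bigl(\psi(T;(u,v),\psi_0,T_1)\bigr),
\]
viewed from a neighborhood of $(\psi_1(T_1),0,0)$ in $\bigl(\mathcal{S}\cap H^{2(p+m)+3}_{(0)}\bigr) \times H^m_0((T_1,T),\rr)^2$ into $\mathcal{H}\cap H^{2(p+m)+3}_{(0)}$. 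The well-posedness estimates \eqref{dep-continue}--\eqref{dep-continuebis} (smoothing effect) together with \eqref{dep-continue2} yield that $\Theta$ is well-defined and of class $\mathcal{C}^1$ between these spaces. The aim is to prove its differential with respect to $(u,v)$ at the ground-state equilibrium admits a $\mathcal{C}^1$ right inverse satisfying the required simultaneous weak-norm estimates and boundary conditions; the theorem then follows from the inverse mapping theorem.

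The key computation is that, thanks to \eqref{schro:lin:exp},
\[
\mathbb{P}_{\mathcal{H}}\bigl(d\Theta_{(\psi_1(T_1),0,0)}\cdot(0,u,v)\bigr)
= i\sum_{j\in\nn^*\setminus\{K\}}\Bigl(\langle\mu_1\ph_1,\ph_j\rangle\widehat{u}_j
+\langle\mu_2\ph_1,\ph_j\rangle\widehat{v}_j\Bigr)\psi_j(T),
\]
where $\widehat{u}_j := \int_{T_1}^T u(s)e^{i\omega_j(s-T_1)}\ds$ and similarly for $v$. Producing a right inverse therefore amounts to a trigonometric moment problem with two sequences of unknowns and the single constraint that the component $j=K$ is unconstrained. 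I would partition $\nn^*\setminus\{K\}$ into $J_1\sqcup J_2$ according to which of $|\langle\mu_1\ph_1,\ph_j\rangle|$ or $|\langle\mu_2\ph_1,\ph_j\rangle|$ is larger: on each $J_\ell$ the hypothesis $\mathbf{(H)_{lin,K,2}}$ gives the sharp lower bound $C/j^{2p+3}$, reducing the problem to two decoupled scalar moment problems, one for $u$ using frequencies $(\omega_j)_{j\in J_1}$ and one for $v$ using $(\omega_j)_{j\in J_2}$. Each such scalar problem is solved by the standard Beauchard--Laurent technique (see \cite{beauchard2010local,bournissou2021local}), which produces a continuous right inverse mapping $H^{2(p+l)+3}_{(0)}$ data to $H^l$ controls for every $l\in\llbracket -(k+1),m\rrbracket$ (using $k\leqslant p$), and which allows one to impose the finite set of boundary conditions $u_2(T)=\cdots=u_{k+1}(T)=0$ and $v_2(T)=\cdots=v_{k+1}(T)=0$ by working orthogonally to a finite-dimensional subspace. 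The scale of simultaneous estimates \eqref{estim-simult} is inherited directly from the scale produced by the scalar construction.

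Once this right inverse of the linearized map is built, the classical inverse mapping theorem in Banach spaces (applied to the $\mathcal{C}^1$ map $\Theta$) provides $\delta>0$ and a $\mathcal{C}^1$ local inverse $\Gamma_{T_1,T}:\mathcal{V}_{T_1}\times\mathcal{V}_{T}\to H^m_0((T_1,T),\rr)^2$, and the estimate \eqref{estim-simult} transfers from the linearized inverse to $\Gamma_{T_1,T}$ using \eqref{error:lin} (whose $l$-dependence is exactly what makes the simultaneous weak-norm bounds survive through the fixed point, provided the radii are taken small enough). Finally, the additional boundary condition $v_1(T)=0$ when $\langle\mu_1\ph_1,\ph_1\rangle\neq 0$ is obtained by moving the unconstrained direction $j=1$ (which cannot be orthogonal to $\mu_1\ph_1$ by assumption) entirely onto $u$ in the partition above, freeing $v_1(T)$ to be prescribed; one then adds it to the list of boundary conditions already enforced on $v$.

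The main obstacle is the book-keeping needed to obtain the simultaneous estimates \eqref{estim-simult} across the whole range $l\in\llbracket -(k+1),m\rrbracket$ together with the $k+1$ boundary conditions on each control and the optional extra condition $v_1(T)=0$: one must choose the scalar right inverses on $J_1$ and $J_2$ to be compatible with all these constraints at once, which forces the moment-problem construction to be performed in a single Riesz basis framework rather than one Sobolev scale at a time. All other ingredients (well-posedness, $\mathcal{C}^1$ regularity of $\Theta$, and the partition reducing to scalar problems) are standard consequences of the material already collected in this section.
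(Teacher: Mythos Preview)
Your proposal is correct and follows the same approach the paper indicates: the paper does not give a detailed proof of Theorem~\ref{control-projection} but states that it is ``adapted from a result proved by Bournissou in \cite{bournissou2021local}'', i.e.\ precisely the linear-test / inverse-mapping-theorem strategy you outline, with the two-control moment problem handled by partitioning $\nn^*\setminus\{K\}$ according to the dominant coefficient (which is the natural way to exploit $\mathbf{(H)_{lin,K,2}}$). The only nuance worth flagging is that the paper's remark after the statement points to a Banach fixed-point iteration rather than a bare application of the inverse mapping theorem; this is what actually propagates the simultaneous scale of estimates \eqref{estim-simult} and the boundary conditions (including $v_1(T)=0$) through the nonlinear step, so in a full write-up you should make that iteration explicit rather than invoking the inverse mapping theorem as a black box.
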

	\begin{rmq}
		The hypothesis $\langle\mu_1\ph_1,\ph_1\rangle\neq 0$ allows us to solve the moment problems (given by the linear test) in the direction $\ph_1$ with the control $u$. Indeed, since $\omega_1=0$,
		$$\langle\Psi(T),\psi_1(T)\rangle=i\langle\mu_1\ph_1,\ph_1\rangle u_1(T)+i\langle\mu_2\ph_1,\ph_1\rangle v_1(T).$$
		Thus, one can impose the border condition $v_1(T)=0$. This property is also verified by the nonlinear system thanks to the iteration on the Banach fixed-point theorem.
	\end{rmq}
	\section{Proof of the main theorem}\label{proofmain}
	The proof of the main theorem is divided in the following steps.
	\begin{enumerate}
		\item[1.] In the first step, we use the quadratic expansion of the solution in the direction $\psi_K(T)$ (given by Proposition \ref{quad-fini}) to move along the real direction lost $i^n\ph_K$.
		\item[2.] In the second step, we correct linearly the system in the other directions $\mathcal{H}$, thanks to Theorem \ref{control-projection}. Then, we examine the impact of this second step on the correction made in the first one. A priori, the second step can destroy the work of the first and we use weak estimates to prove that this is not the case.
		\item[3.] We use a trick introduced by Kawski in \cite{hermeskawski} and executed by Bournissou in \cite{bournissou2022smalltime} to move along the other real direction lost $i^{n+1}\ph_K$.
		\item[4.] We apply Theorem \ref{STCLvecteurtangent} (based on the Brouwer fixed-point theorem) to obtain STLC.
	\end{enumerate}
	
	In order to extract the leading terms of the dynamic of the system (as in the finite-dimensional case, with the Magnus-type representation formula), we manipulate the expression given by \eqref{schro:quad}. This is the purpose of the following subsection. 
	\subsection{Asymptotic estimates on the quadratic term of the solution}
	\begin{nota}
		For  $T>0$, $j\geqslant 1$ and $u,v\in L^2(0,T)$, we use the notation $$I^j_T(u,v):=\int_0^Tu(t)e^{i\nu_jt}\left(\int_0^tv(s)e^{i\omega_js}\ds\right)\dt, \qquad \mathcal{I}_T(u,v):=\int_0^Tu(t)v(t)e^{i\omega_Kt}\dt.$$
	\end{nota}
	We define $q:=\lfloor\frac n2\rfloor$. This notation will be used throughout the rest of the article. As a remainder, the sequence $(c_j)_{j\geqslant 1}$ is defined in Section \ref{dipolar} as $c_j=\left\langle\mu_1\ph_1,\ph_j\right\rangle\left\langle \mu_1\ph_j,\ph_K\right\rangle$. The goal of this subsection is to prove the following proposition.
	\begin{prop}\label{quad-fini} Let $u,v\in L^2((0,T),\rr)$ be such that $u_i(T)=v_i(T)=0$ for $1\leqslant i\leqslant q+1$.  Furthermore, assume that $\mathbf{(H)_{reg}}$, $\mathbf{(H)_{quad,K,1}}$, $\mathbf{(H)_{quad,K,2}}$ and $\mathbf{(H)_{quad,K,3}}$ hold. Then,
		\begin{equation}\begin{gathered}\label{equationclef}\left\langle\xi(T),\psi_K(T)\right\rangle=-iA_{q+1}^1\int_0^Tu_{q+1}^2(t)e^{i\omega_Kt}\dt+i^n\gamma_n\int_0^Tu_{q+1}(t)v_{n-q}(t)e^{i\omega_Kt}\dt\\-iA_{q+1}^2\int_0^Tv_{q+1}^2(t)e^{i\omega_Kt}\dt+R_{T,1}^{q+1}(u)+\rho_T^{q+1}(u,v)+R_{T,2}^{q+1}(v).\end{gathered}\end{equation}
		with 
		$$R_{T,1}^{q+1}(u):=\displaystyle(-1)^q\sum_{j=1}^{+\infty}c_j\omega_j^{q+1}\nu_j^{q+1}I^j_T(u_{q+1},u_{q+1}),$$ $$R_{T,2}^{q+1}(v):=\displaystyle(-1)^q\sum_{j=1}^{+\infty}\tilde{c_j}\omega_j^{q+1}\nu_j^{q+1}I^j_T(v_{q+1},v_{q+1}),$$
		$$\rho_T^{q+1}(u,v):=(-i)^{n-1}\left(\sum_{j=1}^{+\infty}d_j\omega_j^{n-q}\nu_j^{q+1}I^j_T(u_{q+1},v_{n-q})+\sum_{j=1}^{+\infty}\tilde{d_j}\omega_j^{q+1}\nu_j^{n-q}I^j_T(v_{n-q},u_{q+1})\right).$$
	\end{prop}
	The proof is divided into several lemmas. The first one, strongly inspired by \cite[Proposition $5.1$]{bournissou2021quadratic}, focuses on the first term on the right-hand side of the decomposition given in \eqref{schro:quad}.
	\begin{lm}\label{propdvlpft}
		Let $T>0$, $0\leqslant l\leqslant q$ and assume that $u\in L^2((0,T),\rr)$ such that $u_i(T)=0$ for $2\leqslant i\leqslant l+1$. If $\mathbf{(H)_{reg}}$ holds, then 
		\begin{equation}\begin{gathered}\label{dev1}
			\mathcal{F}_T^1(u)=-u_1(T)\sum_{j=1}^{+\infty}c_je^{i\nu_jT}\left(\int_0^Tu(s)e^{i\omega_js}\ds\right)+\frac{u_1(T)^2}{2}e^{i\omega_KT}\sum_{j=1}^{+\infty}c_j\\-i\sum_{k=1}^{l+1}A_k^1\int_0^Tu_k^2(t)e^{i\omega_Kt}\dt+R_{T,1}^{l+1}(u).
	\end{gathered}	\end{equation}
	\end{lm}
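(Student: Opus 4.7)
The plan is to prove \eqref{dev1} by induction on $l\in\{0,\ldots,q\}$, via an integration-by-parts recursion that progressively trades $I_T^j(u_k,u_k)$ for $I_T^j(u_{k+1},u_{k+1})$. Since $\mu_1$ is real-valued, the kernel definition \eqref{noyaux2} yields $\mathcal{F}_T^1(u)=-\sum_{j\geqslant 1}c_j\,I_T^j(u,u)$, which is the starting point. All the series manipulations below are justified by Remark \ref{asymptotique}: under $\mathbf{(H)_{reg}}$ and $l\leqslant q\leqslant p$, one has $\sum_j|c_j|\nu_j^a\omega_j^b<+\infty$ for all $a,b\leqslant l+1$, so termwise integration by parts and sum/integral swaps are legitimate.

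The core algebraic step is a double integration by parts on $I_T^j(u_k,u_k)$. Writing $u_k=u_{k+1}'$, using $u_{k+1}(0)=0$, and repeatedly exploiting the resonance $\nu_j+\omega_j=\omega_K$, an IBP of the outer integral followed by an IBP of the inner integral in the resulting $I_T^j(u_{k+1},u_k)$ and of $u_{k+1}u_k=\tfrac12(u_{k+1}^2)'$ leads, after using $\tfrac{\omega_K}{2}-\nu_j=\tfrac{\omega_j-\nu_j}{2}$, to
\begin{align*}
I_T^j(u_k,u_k) &= u_{k+1}(T)e^{i\nu_jT}\int_0^T u_k(s) e^{i\omega_js}\ds - \tfrac{u_{k+1}^2(T)}{2}e^{i\omega_KT} \\
&\quad + i\tfrac{\omega_j-\nu_j}{2}\int_0^T u_{k+1}^2(t) e^{i\omega_Kt}\dt - \nu_j\omega_j I_T^j(u_{k+1},u_{k+1}).
\end{align*}
Since $\tfrac{\omega_j-\nu_j}{2}=\lambda_j-\tfrac{\lambda_1+\lambda_K}{2}$, multiplying by $(-1)^{k-1}c_j\nu_j^k\omega_j^k$ and summing over $j$ turns the local term into $-iA_{k+1}^1\int_0^T u_{k+1}^2e^{i\omega_Kt}\dt$ by the definition of $A_{k+1}^1$, while the last term becomes $R_T^{k+1}(u)$.

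The case $k=0$ of this identity is exactly \eqref{dev1} at $l=0$: the two boundary contributions reproduce the surface terms of \eqref{dev1}, and the local-plus-remainder part gives $-iA_1^1\int u_1^2 e^{i\omega_Kt}\dt+R_T^1(u)$. For the induction step, applying the same identity inside $R_T^k(u)$ under the hypothesis $u_{k+1}(T)=0$ (valid for $k+1\in\{2,\ldots,l+1\}$) kills both boundary terms, giving $R_T^k(u)=-iA_{k+1}^1\int_0^T u_{k+1}^2(t) e^{i\omega_Kt}\dt+R_T^{k+1}(u)$, and iterating from $k=1$ up to $k=l$ telescopes into \eqref{dev1}. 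The main technical point, and the only use of the regularity hypothesis, is the termwise IBP/summation exchange: by \eqref{riemann:lebesgue}, $c_j=O(j^{-4p-6})$ while $|\nu_j|,|\omega_j|=O(j^2)$, so $|c_j|\nu_j^{l+1}\omega_j^{l+1}=O(j^{-(4(p-l)+2)})$, summable precisely under $l\leqslant p$.
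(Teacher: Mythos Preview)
Your proof is correct and takes essentially the same approach as the paper: both argue by induction on $l$, starting from $\mathcal{F}_T^1(u)=-\sum_j c_j I_T^j(u,u)$ and using the same pair of integrations by parts (outer variable first, then inner, together with $u_{k+1}u_k=\tfrac12(u_{k+1}^2)'$), the resonance $\nu_j+\omega_j=\omega_K$, and the identity $\tfrac{\omega_K}{2}-\nu_j=\tfrac{\omega_j-\nu_j}{2}$ to recognize the coefficient $A_{k+1}^1$; the only cosmetic difference is that you package the two IBPs into a single recursion for $I_T^j(u_k,u_k)$, whereas the paper splits the computation into two terms $A$ and $B$ before recombining. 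The convergence justification via Remark~\ref{asymptotique} is also the same.
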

	\begin{proof} We prove the statement by a finite induction on $l\in\llbracket 0,q\rrbracket$.
		If the formula is true for a fixed $l\leqslant q-1$, then, for all $u\in L^2((0,1),\rr)$ such that $u_i(T)=0$ for $2\leqslant i\leqslant l+2$,  an integration by parts gives
		\begin{equation}\label{u-u1}(-1)^lR_{T,1}^{l+1}(u)=A+B,\end{equation}
		with
		$$A=-i\sum_{j=1}^{+\infty}c_j\omega_j^{l+1}\nu_j^{l+2}I^j_T(u_{l+2},u_{l+1}), \hspace{1 cm} B:=-\sum_{j=1}^{+\infty}c_j\omega_j^{l+1}\nu_j^{l+1}\mathcal{I}_T(u_{l+2},u_{l+1}).$$
		Then, with one integration by parts, we obtain
		\begin{equation}\label{u-u2}A=-i\sum_{j=1}^{+\infty}c_j\omega_j^{l+1}\nu_j^{l+2}\int_0^Tu_{l+2}^2(t)e^{i\omega_Kt}\dt+(-1)^lR_{T,1}^{l+2}(u).
		\end{equation}
		\begin{equation}\label{u-u3}B=\frac{i\omega_K}{2}\sum_{j=1}^{+\infty}c_j\omega_j^{l+1}\nu_j^{l+1}\int_0^Tu_{l+2}^2(t)e^{i\omega_Kt}\dt.\end{equation}
		Using \eqref{u-u1}, \eqref{u-u2}, \eqref{u-u3} and the induction hypothesis, we conclude. The initialization is proved by the same manipulations, but there are boundary terms. The starting point is the equality given by \eqref{noyaux2} and \eqref{defft},
		$$\mathcal{F}_T^1(u)=-\sum_{j=1}^{+\infty}c_jI^j_T(u,u).$$ 	All series converge thanks to the hypothesis $\lfloor\frac{n}{2}\rfloor\leqslant p$ (see Remark \ref{asymptotique}). 
	\end{proof}
	One recalls that, for any integer $k\in\nn^*$, the negative $H^{-k}(0,T)$ space is endowed with the norm: $\forall u\in H^{-k}(0,T)$, 
	$\left\|u\right\|_{H^{-k}(0,T)}:=|u_1(T)|+\left\|u_k\right\|_{L^2(0,T)}.$ Using the expression of Lemma \ref{propdvlpft}, one can obtain the following estimate.
	\begin{crl} Let $T>0$ and assume that $u\in L^2((0,T),\rr)$ such that $u_i(T)=0$ for $2\leqslant i\leqslant q+1$. If $\mathbf{(H)_{reg}}$ and $\mathbf{(H)_{quad,K,1}}$ hold, then 
		\begin{equation}\label{ineq1}
			\left|\mathcal{F}_T^1(u)\right| =\mathcal{O}\left(\left|u_1(T)\right|^2+\left\|u_{q+1}\right\|_{L^2}^2\right)=\mathcal{O}\left(\left\|u\right\|_{H^{-(q+1)}}^2\right).
		\end{equation}
	\end{crl}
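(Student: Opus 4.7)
The plan is to start from the closed-form expansion of $\mathcal{F}_T^1(u)$ provided by Lemma \ref{propdvlpft} applied with $l=q$, and then to estimate each of the four types of terms that appear. Writing this expansion out, we have
\begin{multline*}
\mathcal{F}_T^1(u)=-u_1(T)\sum_{j=1}^{+\infty}c_je^{i\nu_jT}\int_0^Tu(s)e^{i\omega_js}\ds+\frac{u_1(T)^2}{2}e^{i\omega_KT}\sum_{j=1}^{+\infty}c_j\\-i\sum_{k=1}^{q+1}A_k^1\int_0^Tu_k^2(t)e^{i\omega_Kt}\dt+R_T^{q+1}(u).
\end{multline*}
The strategy is to show that every summand is bounded by $|u_1(T)|^2+\|u_{q+1}\|_{L^2}^2$.

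First I would handle the middle sum. Since $q=\lfloor n/2\rfloor$, we have $q\leqslant \lfloor(n+1)/2\rfloor$, so $\mathbf{(H)_{quad,K,1}}$ cancels all $A_k^1$ for $1\leqslant k\leqslant q$. When $n$ is odd, it also cancels $A_{q+1}^1$, and the entire middle sum vanishes. When $n$ is even, only $A_{q+1}^1$ survives, and its contribution is bounded by $|A_{q+1}^1|\,\|u_{q+1}\|_{L^2}^2$ via Cauchy--Schwarz applied to $\int_0^T u_{q+1}^2 e^{i\omega_K t}\dt$. Either way, this term is $O(\|u_{q+1}\|_{L^2}^2)$.

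Next I would treat the first boundary term. Iterating integration by parts and using $u_i(T)=0$ for $2\leqslant i\leqslant q+1$ yields
$$\int_0^Tu(s)e^{i\omega_js}\ds=u_1(T)e^{i\omega_jT}+(-i\omega_j)^{q+1}\int_0^Tu_{q+1}(s)e^{i\omega_js}\ds.$$
Substituting and using $\nu_j+\omega_j=\omega_K$, the piece proportional to $u_1(T)^2$ combines with $\frac{u_1(T)^2}{2}e^{i\omega_KT}\sum_jc_j$ into $-\frac{u_1(T)^2}{2}e^{i\omega_KT}\sum_jc_j$, which is $O(|u_1(T)|^2)$ provided $\sum|c_j|<\infty$. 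The cross piece is controlled via Cauchy--Schwarz by
$$|u_1(T)|\,\|u_{q+1}\|_{L^2}\sqrt{T}\sum_{j=1}^{+\infty}|c_j|\,|\omega_j|^{q+1},$$
and then absorbed by Young's inequality $|u_1(T)|\,\|u_{q+1}\|_{L^2}\leqslant\tfrac12(|u_1(T)|^2+\|u_{q+1}\|_{L^2}^2)$. Finally, for the remainder $R_T^{q+1}(u)$, two applications of Cauchy--Schwarz give $|I_T^j(u_{q+1},u_{q+1})|\leqslant T\,\|u_{q+1}\|_{L^2}^2$, hence $|R_T^{q+1}(u)|\leqslant T\,\|u_{q+1}\|_{L^2}^2\sum_j|c_j|\,|\nu_j\omega_j|^{q+1}$.

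The only point requiring care is absolute convergence of the three numerical series $\sum|c_j|$, $\sum|c_j|\,|\omega_j|^{q+1}$, $\sum|c_j|\,|\nu_j\omega_j|^{q+1}$. This is where the hypothesis $\lfloor n/2\rfloor\leqslant p$ enters decisively: the asymptotics recalled in Remark \ref{asymptotique} give $|c_j|\leqslant C j^{-(4p+6)}$ while $|\omega_j|,|\nu_j|=O(j^2)$, so each series has general term $O(j^{-(4p+6-4(q+1))})=O(j^{-(4(p-q)+2)})$, which is summable as soon as $p\geqslant q$. This is the only mildly delicate part of the argument; everything else is elementary manipulation.
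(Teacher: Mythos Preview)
Your proof is correct and follows essentially the same route as the paper: both start from Lemma \ref{propdvlpft} with $l=q$, use $\mathbf{(H)_{quad,K,1}}$ to kill the $A_k^1$ terms (keeping track of the parity of $n$ for $A_{q+1}^1$), integrate by parts $q+1$ times in $\int_0^T u(s)e^{i\omega_j s}\ds$ to reduce the boundary term to expressions in $u_1(T)$ and $u_{q+1}$, and bound $R_T^{q+1}(u)$ by $\|u_{q+1}\|_{L^1}^2$ (or $\|u_{q+1}\|_{L^2}^2$). Your explicit justification of the convergence of the three numerical series via $q\leqslant p$ is a welcome elaboration of what the paper leaves to Remark~\ref{asymptotique}.
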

	\begin{proof}
		Using Lemma \ref{propdvlpft} with $l=q$, we obtain
		\begin{equation*}\begin{gathered}		\mathcal{F}_T^1(u)=-u_1(T)\sum_{j=1}^{+\infty}c_je^{i\nu_jT}\left(\int_0^Tu(s)e^{i\omega_js}\ds\right)+\frac{u_1(T)^2}{2}e^{i\omega_KT}\sum_{j=1}^{+\infty}c_j\\-iA_{q+1}^1\int_0^Tu_{q+1}^2(t)e^{i\omega_Kt}\dt+R_{T,1}^{q+1}(u).
	\end{gathered}	\end{equation*}
		Note that $A_{q+1}^1=0$ when $n$ is odd, but not necessarily when $n$ is even. We can manipulate the first term: using integrations by parts,
		$$\sum_{j=1}^{+\infty}c_je^{i\nu_jT}\left(\int_0^Tu(s)e^{i\omega_js}\ds\right)=\sum_{j=1}^{+\infty}c_je^{i\nu_jT}\left(u_1(T)e^{i\omega_jT}+(-i\omega_j)^{q+1}\int_0^Tu_{q+1}(s)e^{i\omega_js}\ds\right).$$
		Finally, the term $R_{T,1}^{q+1}(u)$ is estimated by $\left\|u\right\|_{H^{-(q+1)}}^2$ because, for all $j\geqslant 1$, 
		$$\left| I_T^j(u_{q+1},u_{q+1})\right|\leqslant \left\|u_{q+1}\right\|_{L^1}^2.$$ 
		This formula leads to the result. 
	\end{proof}
	Now, we can follow the same approach with the crossed terms. More precisely, we focus on the second term on the right-hand side of the decomposition given by \eqref{schro:quad}.
	\begin{lm}\label{propdvlpgt} 
		Let $T>0$ and assume that $u,v\in L^2((0,T),\rr)$ such that $u_i(T)=0$ for $2\leqslant i\leqslant q+1$ and  $v_i(T)=0$ for $1\leqslant i\leqslant q+1$. If $\mathbf{(H)_{reg}}$ holds, then 
		\begin{equation}\begin{gathered}\label{dev}
			\mathcal{G}_T(u,v)=\sum_{k=1}^{q}(-1)^k\gamma_{2k}\mathcal{I}_T(u_{k+1},v_k)+i\sum_{k=0}^{n-q-1}(-1)^k\gamma_{2k+1}\mathcal{I}_T(u_{k+1},v_{k+1})\\-u_1(T)\sum_{j=1}^{+\infty}d_je^{i\nu_jT}\left(\int_0^Tv(s)e^{i\omega_js}\ds\right)+\rho_T^{q+1}(u,v).\end{gathered}
		\end{equation}
	\end{lm}
	\begin{proof} 
		We deal with the case where $n$ is odd. Thanks to \eqref{noyaux2} and \eqref{defgt}, we obtain $\mathcal{G}_T(u,v)=-(A+B)$ with
		$$A=\sum_{j=1}^{+\infty}d_jI^j_T(u,v), \qquad\qquad B=\sum_{j=1}^{+\infty}\tilde{d_j}I^j_T(v,u).$$
		First, we prove by a (finite) induction on $0\leqslant l\leqslant q$ that: for all $u,v\in L^2((0,T),\rr)$ such that $u_i(T)=0$ for $2\leqslant i\leqslant l+1$,
		\begin{equation}\begin{gathered}\label{A}
			A=u_1(T)\sum_{j=1}^{+\infty}d_je^{i\nu_jT}\left(\int_0^Tv(s)e^{i\omega_js}\ds\right)+(-1)^{l+1}\sum_{j=1}^{+\infty}d_j\omega_j^{l+1}\nu_j^{l+1}I^j_T(u_{l+1},v_{l+1})\\+\sum_{k=0}^l(-1)^{k+1}\sum_{j=1}^{+\infty}d_j\omega_j^k\nu_j^k\mathcal{I}_T(u_{k+1},v_k)+i\sum_{k=0}^l(-1)^{k+1}\sum_{j=1}^{+\infty}d_j\omega_j^k\nu_j^{k+1}\mathcal{I}_T(u_{k+1},v_{k+1}).
		\end{gathered}\end{equation}
		For $l=0$, with an integration by parts on $A$, we get
		$$A=u_1(T)\sum_{j=1}^{+\infty}d_je^{i\nu_jT}\left(\int_0^Tv(s)e^{i\omega_js}\ds\right)-\sum_{j=1}^{+\infty}d_j\mathcal{I}_T(u_1,v)-i\sum_{j=1}^{+\infty}d_j\nu_jI^j_T(u_1,v).$$
		Another integration on $v$ by parts gives
		\begin{equation*}\begin{gathered}
			A=u_1(T)\sum_{j=1}^{+\infty}d_je^{i\nu_jT}\left(\int_0^Tv(s)e^{i\omega_js}\ds\right)-\sum_{j=1}^{+\infty}d_j\omega_j\nu_jI^j_T(u_1,v_1)\\-\sum_{j=1}^{+\infty}d_j\mathcal{I}_T(u_1,v)-i\sum_{j=1}^{+\infty}d_j\nu_j\mathcal{I}_T(u_1,v_1).
		\end{gathered}\end{equation*}
		For the induction step, it suffices to do two integrations by parts in the formula given by the induction hypothesis. Similarly, we prove that for $0\leqslant l\leqslant q$, for all $u,v\in L^2((0,T),\rr)$ such that $v_i(T)=0$ for $1\leqslant i\leqslant l+1$,
		\begin{equation}\begin{gathered}\label{B}\hspace{-0.14 cm}
			B=\sum_{k=0}^l(-1)^k\sum_{j=1}^{+\infty}\tilde{d_j}\omega_j^k\nu_j^k\mathcal{I}_T(u_{k+1},v_k)+i\sum_{k=0}^l(-1)^k\sum_{j=1}^{+\infty}\tilde{d_j}\omega_j^{k+1}\nu_j^k\mathcal{I}_T(u_{k+1},v_{k+1})\\+(-1)^{l+1}\sum_{j=1}^{+\infty}\tilde{d_j}\omega_j^{l+1}\nu_j^{l+1}I^j_T(v_{l+1},u_{l+1}).
			\end{gathered}	\end{equation}
		Using $\mathcal{G}_T(u,v)=-(A+B)$, \eqref{A} and \eqref{B} with $l=q$, we conclude. To obtain the result in the even case, we manipulate the expression in the same way, with $v_q$ instead of $v_{q+1}$. Once again, all the series converge thanks to the hypothesis $\lfloor\frac{n}{2}\rfloor\leqslant p$ (see Remark \ref{asymptotique}).
	\end{proof}
	Using the expression given by Lemma \ref{propdvlpgt}, we get the following estimate.
	\begin{crl}
		Let $T>0$, $u,v\in L^2((0,T),\rr)$ be such that, $u_i(T)=0$ for $2\leqslant i\leqslant q+1$, $v_i(T)=0$ for $1\leqslant i\leqslant q+1$.  If $\mathbf{(H)_{reg}}$ and $\mathbf{(H)_{quad,K,3}}$ hold,
		\begin{equation}\label{ineq3}\left|\mathcal{G}_T(u,v)\right|=\mathcal{O}\left(\left\|u\right\|_{H^{-(q+1)}}\left\|v\right\|_{H^{-(n-q)}}\right).\end{equation}
	\end{crl}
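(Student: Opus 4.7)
My plan is to start from the explicit formula for $\mathcal{G}_T(u,v)$ given by Lemma \ref{propdvlpgt}, which decomposes $\mathcal{G}_T(u,v)$ as a boundary term at $t=T$, the remainder $\rho_T^{q+1}(u,v)$, and two finite sums involving $(\gamma_{2k})_{1\leqslant k\leqslant q}$ and $(\gamma_{2k+1})_{0\leqslant k\leqslant n-q-1}$. The key reduction is that $\mathbf{(H)_{quad,K,3}}$ kills $\gamma_k$ for $1 \leqslant k \leqslant n-1$, so in each of the two finite sums only a single index survives. A parity check shows that the sole surviving coefficient is $\gamma_n$ in both cases: when $n$ is even it appears as $\gamma_{2q}$ with $k=q$ in the first sum and is paired with $\mathcal{I}_T(u_{q+1},v_q)$; when $n$ is odd it appears as $\gamma_{2q+1}$ with $k=q$ in the second sum and is paired with $\mathcal{I}_T(u_{q+1},v_{q+1})$. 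In either case, using $n-q \in \{q, q+1\}$, the surviving term is $\gamma_n \cdot \mathcal{I}_T(u_{q+1}, v_{n-q})$ (up to a power of $i$ and sign), which is controlled by $\|u_{q+1}\|_{L^2} \|v_{n-q}\|_{L^2}$ via Cauchy--Schwarz.

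Next I would estimate the three remaining pieces. For $\rho_T^{q+1}(u,v)$, the bound $|I_T^j(f,g)| \leqslant \|f\|_{L^1}\|g\|_{L^1} \leqslant C \|f\|_{L^2}\|g\|_{L^2}$ together with absolute convergence of $\sum_j |d_j| \nu_j^{q+1} \omega_j^{n-q}$ and its symmetric counterpart---justified by Remark \ref{asymptotique} under $\lfloor n/2 \rfloor \leqslant p$---gives $|\rho_T^{q+1}(u,v)| \leqslant C \|u_{q+1}\|_{L^2} \|v_{n-q}\|_{L^2}$. For the boundary term $u_1(T) \sum_j d_j e^{i\nu_j T} \int_0^T v(s) e^{i\omega_j s}\ds$, I would integrate by parts $(n-q)$ times in the inner $s$-integral; this is legal because the vanishing conditions $v_i(T)=0$ for $1\leqslant i \leqslant q+1$ cover all the required boundary evaluations, since $n-q \leqslant q+1$. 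The resulting expression is bounded by $C\,|u_1(T)|\,\|v_{n-q}\|_{L^2} \sum_j |d_j|\,\omega_j^{n-q}$, which converges by the same asymptotic argument.

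Combining the three bounds with $\|u\|_{H^{-(q+1)}} = |u_1(T)| + \|u_{q+1}\|_{L^2}$ and the obvious inequalities $|u_1(T)| \leqslant \|u\|_{H^{-(q+1)}}$, $\|u_{q+1}\|_{L^2} \leqslant \|u\|_{H^{-(q+1)}}$ (and symmetrically for $v$), every term is $\mathcal{O}\bigl(\|u\|_{H^{-(q+1)}} \|v\|_{H^{-(n-q)}}\bigr)$, and this closes the estimate.

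The main obstacle I anticipate is organizational rather than analytical: one must carefully track the parity of $n$ to identify which of $\gamma_{2q}$ or $\gamma_{2q+1}$ corresponds to $\gamma_n$, and verify that the weights $\omega_j^{n-q}, \nu_j^{q+1}$ appearing in all the series stay within the regime $\lfloor n/2 \rfloor \leqslant p$ guaranteeing absolute convergence via \eqref{riemann:lebesgue}. Once this bookkeeping is settled, the estimates themselves are routine applications of Cauchy--Schwarz and integration by parts, closely mirroring the proof of the earlier estimate \eqref{ineq1} for $\mathcal{F}_T^1$.
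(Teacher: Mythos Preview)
Your proposal is correct and follows essentially the same approach as the paper: start from Lemma~\ref{propdvlpgt}, use $\mathbf{(H)_{quad,K,3}}$ to reduce the two finite sums to the single surviving term $\gamma_n\,\mathcal{I}_T(u_{q+1},v_{n-q})$ (with the parity bookkeeping exactly as you describe), integrate by parts $n-q$ times in the boundary term, and bound $\rho_T^{q+1}$ and $\mathcal{I}_T$ by Cauchy--Schwarz together with the absolute convergence from Remark~\ref{asymptotique}. The paper only writes out the odd case explicitly and dismisses the even case as analogous; your treatment of both parities at once is slightly more thorough but otherwise identical in substance.
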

	\begin{proof}
		We deal with the case where $n$ is odd, so $n=2q+1$. First, using Lemma \ref{propdvlpgt} and $\mathbf{(H)_{quad,K,3}}$, we obtain
		\begin{equation*}\begin{gathered}
			\mathcal{G}_T(u,v)=-u_1(T)\sum_{j=1}^{+\infty}d_je^{i\nu_jT}(-i\omega_j)^{q+1}\left(\int_0^Tv_{q+1}(s)e^{i\omega_js}\ds\right)+\rho_T^{q+1}(u,v)\\+i(-1)^q\gamma_n\mathcal{I}_T(u_{q+1},v_{q+1}).\end{gathered}
		\end{equation*}
		This equality leads to the result, by definition of the norm in a negative Sobolev space, as 
		$$\left|\rho_T^{q+1}(u,v)\right|\leqslant C\left\|u\right\|_{H^{-(q+1)}}\left\|v\right\|_{H^{-(q+1)}},\quad \left|\mathcal{I}_T(u_{q+1},v_{q+1})\right|\leqslant C\left\|u\right\|_{H^{-(q+1)}}\left\|v\right\|_{H^{-(q+1)}}.$$ We can prove this statement in the same way when $n$ is even.
	\end{proof}
With the two Lemmas \ref{propdvlpft} and \ref{propdvlpgt}, we are now able to prove the main result of this subsection.
	\begin{proof}[Proof of Proposition \ref{quad-fini}]
		Once again, we deal with the case where $n$ is odd. Then, using the hypothesis $\mathbf{(H)_{quad,K,1}}$, Lemma \ref{propdvlpft} with $l=q$ gives
		\begin{equation}\label{ftu}\mathcal{F}_T^1(u)=R_{T,1}^{q+1}(u).
		\end{equation}
		Similarly, the hypothesis $\mathbf{(H)_{quad,K,2}}$ leads to
		\begin{equation}\label{ftv}\mathcal{F}_T^2(v)=R_{T,2}^{q+1}(v).
		\end{equation}
		Using Lemma \ref{propdvlpgt} and $\mathbf{(H)_{quad,K,3}}$, one has
		\begin{equation}\label{gtuv}\mathcal{G}_T(u,v)=i(-1)^q\gamma_n\int_0^Tu_{q+1}(t)v_{q+1}(t)e^{i\omega_Kt}\dt+\rho_T^{q+1}(u,v).
		\end{equation}
		Using the equations \eqref{schro:quad}, \eqref{ftu}, \eqref{ftv} and \eqref{gtuv}, we obtain \eqref{equationclef}.
	\end{proof}
	\subsection{A concatenation lemma}
	For all the rest of the document, we assume that $\mathbf{(H)_{reg}}$, $\mathbf{(H)_{lin,K,1}}$,  $\mathbf{(H)_{lin,K,2}}$,  $\mathbf{(H)_{quad,K,1}}$,  $\mathbf{(H)_{quad,K,2}}$,  $\mathbf{(H)_{quad,K,3}}$ and  $\mathbf{(H)_{quad,K,4}}$ hold. The goal of the following lemma is to examine the interaction between the first and the second step. 
	\begin{lm}[A composition lemma] Assume that $\langle\mu_1\ph_1,\ph_1\rangle\neq 0$.
		Let $0<T_1<T$, $u,v\in L^2(0,T_1)$ be such that $u_i(T_1)=v_i(T_1)=0$ for $1\leqslant i\leqslant q+1$. Let $(\tilde{u},\tilde{v})=\Gamma_{T_1,T}(\psi(T_1;(u,v),\ph_1),\psi_1(T))$, where $\Gamma$ is defined in Theorem \ref{control-projection}. Finally, one defines $U=u\mathbb{1}_{(0,T_1)}+\tilde{u}\mathbb{1}_{(T_1,T)}$ and $V=v\mathbb{1}_{(0,T_1)}+\tilde{v}\mathbb{1}_{(T_1,T)}$. Then,
		\begin{equation}\begin{gathered}
			\label{oddeven}
			\left\langle\xi(T;(U,V)),\psi_K(T)\right\rangle-\left\langle\xi(T_1;(u,v)),\psi_K(T_1)\right\rangle=\mathcal{O}\Big(\left\|(\tilde{u},\tilde{v})\right\|_{H^{-(q+1)}}^2\\+\left\|(u,v)\right\|_{H^{-(q+1)}}\left\|(\tilde{u},\tilde{v})\right\|_{H^{-(q+1)}}+\left\|\tilde{u}\right\|_{H^{-(q+1)}}\left\|\tilde{v}\right\|_{H^{-(n-q)}}\Big).\end{gathered}
		\end{equation}
	\end{lm}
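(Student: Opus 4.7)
The plan is to use the explicit quadratic formula \eqref{schro:quad}, which gives $\langle\xi(T),\psi_K(T)\rangle = \mathcal{F}_T^1(U) + \mathcal{G}_T(U,V) + \mathcal{F}_T^2(V)$, and to exploit the disjoint-support structure $U = u\mathbb{1}_{(0,T_1)} + \tilde{u}\mathbb{1}_{(T_1,T)}$, $V = v\mathbb{1}_{(0,T_1)} + \tilde{v}\mathbb{1}_{(T_1,T)}$. First I would split each of the three double integrals according to whether $t,s$ lie in $(0,T_1)$ or in $(T_1,T)$; this yields a clean decomposition of the form
$$\mathcal{F}_T^1(U) = \mathcal{F}_{T_1}^1(u) + \mathcal{F}_{T_1,T}^1(\tilde{u}) + C_1,\quad \mathcal{F}_T^2(V) = \mathcal{F}_{T_1}^2(v) + \mathcal{F}_{T_1,T}^2(\tilde{v}) + C_2,\quad \mathcal{G}_T(U,V) = \mathcal{G}_{T_1}(u,v) + \mathcal{G}_{T_1,T}(\tilde{u},\tilde{v}) + C_G,$$
where $\mathcal{F}_{T_1,T}^\alpha$ and $\mathcal{G}_{T_1,T}$ denote the analogs of these bilinear forms with both time integrals running over $(T_1,T)$, and $C_1, C_2, C_G$ collect the "cross" contributions in which exactly one time variable lies in $(0,T_1)$ and the other in $(T_1,T)$. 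Recognizing that $\mathcal{F}_{T_1}^1(u) + \mathcal{F}_{T_1}^2(v) + \mathcal{G}_{T_1}(u,v) = \langle\xi(T_1;(u,v)),\psi_K(T_1)\rangle$ by \eqref{schro:quad} applied on $(0,T_1)$, the difference to estimate is exactly the sum of the six tail-and-cross pieces.

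The "pure tail" pieces $\mathcal{F}_{T_1,T}^\alpha(\tilde{u})$ etc.\ are handled by applying the bounds \eqref{ineq1} and \eqref{ineq3} (which are translation-invariant) to $(\tilde{u},\tilde{v})$ on $(T_1,T)$. The boundary hypotheses required by those estimates, namely $\tilde{u}_i(T)=0$ for $2\leqslant i\leqslant q+1$, $\tilde{v}_i(T)=0$ for $1\leqslant i\leqslant q+1$, and $\tilde{u}_i(T_1)=\tilde{v}_i(T_1)=0$, are delivered by Theorem \ref{control-projection} provided we invoke it with $k=q$ (permissible since $q\leqslant p$), \emph{and} by the hypothesis $\langle\mu_1\varphi_1,\varphi_1\rangle\neq 0$ which is precisely what allows to enforce $\tilde{v}_1(T)=0$. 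This step yields $|\mathcal{F}_{T_1,T}^1(\tilde{u})| + |\mathcal{F}_{T_1,T}^2(\tilde{v})| = O(\|(\tilde{u},\tilde{v})\|_{H^{-(q+1)}}^2)$ and $|\mathcal{G}_{T_1,T}(\tilde{u},\tilde{v})| = O(\|\tilde{u}\|_{H^{-(q+1)}}\|\tilde{v}\|_{H^{-(n-q)}})$, which account for the first and third terms on the right-hand side of \eqref{oddeven}.

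The cross terms $C_1,C_2,C_G$ all reduce, upon expanding the kernels \eqref{noyaux2}, to sums of products
$$\sum_{j\geqslant 1} \text{(coef)}_j \Big(\int_0^{T_1} X(s)e^{i\omega_j s}\ds\Big)\Big(\int_{T_1}^T \tilde{Y}(t)e^{i\nu_j t}\dt\Big)$$
with $X\in\{u,v\}$, $\tilde{Y}\in\{\tilde{u},\tilde{v}\}$. I would integrate by parts $(q+1)$ times in each factor: in the $s$-integral, all boundary terms at $0$ and $T_1$ vanish by hypothesis $X_i(T_1)=0$ for $i\leqslant q+1$, giving $(-i\omega_j)^{q+1}\int_0^{T_1}X_{q+1}e^{i\omega_j s}\ds$; in the $t$-integral, the boundary terms at $T_1$ vanish because $(\tilde{u},\tilde{v})\in H^m_0((T_1,T),\rr)^2$, and those at $T$ either vanish entirely (for $\tilde{v}$) or reduce to $\tilde{u}_1(T)e^{i\nu_j T}$. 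Cauchy--Schwarz then produces a bound $\lesssim \|X\|_{H^{-(q+1)}(0,T_1)}\|\tilde{Y}\|_{H^{-(q+1)}(T_1,T)}$ once the prefactors $|\omega_j|^{q+1}|\nu_j|^{q+1}|c_j|$ (and analogs with $d_j,\tilde{d}_j,\tilde{c}_j$) are summed over $j$, which converges absolutely thanks to the condition $q\leqslant p$ (Remark \ref{asymptotique}). This yields the middle term of \eqref{oddeven}.

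The main obstacle is bookkeeping all the boundary terms produced by iterated integration by parts, in particular ensuring that the only survivor on the $(T_1,T)$ side is the ``free'' term $\tilde{u}_1(T)$, which is harmless because it is built into the negative Sobolev norm $\|\tilde{u}\|_{H^{-(q+1)}}$. The hypothesis $\langle\mu_1\varphi_1,\varphi_1\rangle\neq 0$ is essential here: without it one cannot kill $\tilde{v}_1(T)$, and the cross term $C_2$ would pick up a contribution that does not fit into any of the three error terms of \eqref{oddeven}.
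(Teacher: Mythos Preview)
Your proposal is correct and follows essentially the same approach as the paper's proof: split the quadratic form along $(0,T_1)$ versus $(T_1,T)$, identify the pure $(0,T_1)$ piece as $\langle\xi(T_1;(u,v)),\psi_K(T_1)\rangle$, bound the pure tail piece via \eqref{ineq1} and \eqref{ineq3} using the boundary conditions supplied by Theorem~\ref{control-projection} (with $k=q$ and the hypothesis $\langle\mu_1\ph_1,\ph_1\rangle\neq 0$ to secure $\tilde v_1(T)=0$), and handle the cross terms by factoring the kernel, integrating by parts $q+1$ times in each factor, and applying Cauchy--Schwarz together with the summability from Remark~\ref{asymptotique}. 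One small remark: your phrase ``translation-invariant'' is a slight abuse, since the kernels $h_{\alpha,\beta}$ are not translation-invariant; what actually matters is that extending $(\tilde u,\tilde v)$ by zero on $(0,T_1)$ leaves all the primitives and boundary values unchanged, so \eqref{ineq1} and \eqref{ineq3} apply verbatim on $(0,T)$.
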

	\begin{proof}Using the formula \eqref{schro:quad}, we get
		\begin{equation}\label{lemmecompo}	\hspace{-0.1 cm}\langle\xi(T;(U,V)),\psi_K(T)\rangle-\langle\xi(T_1;(u,v)),\psi_K(T_1)\rangle=\langle\xi(T;(\tilde{u},\tilde{v})),\psi_K(T)\rangle+G_{T_1,T}\left(U,V\right),
		\end{equation}
		where $G_{T_1,T}$ is the bilinear form given by
		\begin{equation}\begin{gathered}\label{lemmecompo2}
			G_{T_1,T}(U,V)=\int_{T_1}^T\tilde{u}(t)\left(\int_0^{T_1}h_{1,1}(t,s)u(s)\ds+\int_0^{T_1}h_{2,1}(t,s)v(s)\ds\right)\dt\\+\int_{T_1}^T\tilde{v}(t)\left(\int_0^{T_1}h_{1,2}(t,s)u(s)\ds+\int_0^{T_1}h_{2,2}(t,s)v(s)\ds\right)\dt.\end{gathered}
		\end{equation}
		Each term of \eqref{lemmecompo2} can be written as a sum of product of two integrals, for example
		\begin{equation*}\int_{T_1}^T\tilde{u}(t)\left(\int_0^{T_1}h_{1,1}(t,s)u(s)\ds\right)\dt=-\sum_{j=1}^{+\infty}c_j\left(\int_{T_1}^T\tilde{u}(t)e^{i\nu_jt}\dt\right)\left(\int_0^{T_1}u(s)e^{i\omega_js}\ds\right).\end{equation*}
		Using integrations par parts, one can write the expression as
		$$-\sum_{j=1}^{+\infty}c_j\left(\tilde{u}_1(T)e^{i\nu_jT}+(-i\nu_j)^{q+1}\int_{T_1}^T\tilde{u}_{q+1}(t)e^{i\nu_jt}\dt\right)(-i\omega_j)^{q+1}\int_0^{T_1}u_{q+1}(s)e^{i\omega_js}\ds.$$
		By Cauchy--Schwarz's inequality,
		$$\left|\int_{T_1}^T\tilde{u}(t)\left(\int_0^{T_1}h_{1,1}(t,s)u(s)\ds\right)\dt\right|\leqslant C\left\|(u,v)\right\|_{H^{-(q+1)}}\left\|(\tilde{u},\tilde{v})\right\|_{H^{-(q+1)}}.$$
		We obtain the same estimation with the other terms of \eqref{lemmecompo2}. Thus, 
		\begin{equation}\label{lemmecompo7}
			G_{T_1,T}(U,V)=\mathcal{O}\left(\left\|(u,v)\right\|_{H^{-(q+1)}}\left\|(\tilde{u},\tilde{v})\right\|_{H^{-(q+1)}}\right).
		\end{equation}
		Moreover,
		\begin{equation}\label{lemmecompo8}\langle\xi(T;(\tilde{u},\tilde{v})),\psi_K(T)\rangle=\mathcal{F}_T^1(\tilde{u})+\mathcal{F}_T^2(\tilde{v})+\mathcal{G}_T(\tilde{u},\tilde{v}).
		\end{equation}
		Using \eqref{lemmecompo}, \eqref{lemmecompo7},  \eqref{lemmecompo8}, \eqref{ineq1} and \eqref{ineq3} (thanks to the boundary conditions given by the STLC in projection Theorem \ref{control-projection}), we obtain the result. The assumption $\lfloor\frac{n}{2}\rfloor\leqslant p$ gives the convergence of all the series written (see Remark \ref{asymptotique}).
	\end{proof}
	\subsection{Motion along $\ph_K$ and $i\ph_K$}
	The next proposition implements step $1$ of the proof strategy explained at the beginning of Section \ref{proofmain}.
	\begin{prop} We denote $s_n:=\frac{1}{4(n+m+2)}$ and $\alpha_n:=\frac{3}{8}+\frac{m}{8(n+m+2)}$ 
		For all $T_1>0$, there exist $C,\rho>0$ and a continuous map $z\mapsto (u_z,v_z)$ from $\rr$ to $H^m_0(0,T_1)^2$ such that
		\begin{equation}\label{expansion}
			\forall z\in(-\rho,\rho), \qquad \left|\left\langle\psi(T_1;(u_z,v_z),\ph_1),\psi_K(T_1)\right\rangle-i^nz\right|\leqslant C |z|^{1+s_n}.
		\end{equation}
		The size of the controls is given by: for all $k\in\zz_{\geqslant -(q+1)}$, for all $ r\in [1,+\infty]$, there exists $C>0$ such that
		\begin{equation}\label{control:size}
			\forall z\in(-\rho,\rho), \qquad  \left\|u_z\right\|_{W^{k,r}},\left\|v_z\right\|_{W^{k,r}}\leqslant C|z|^{\alpha_n+s_n(\frac 1r-k)}.
		\end{equation}
		Finally, for all $\ep\in(0,\frac 34)$ , there exists $C>0$ so that for all $z\in(-\rho,\rho)$,
		\begin{equation}\label{point:size}\left\|\psi(T_1;(u_z,v_z),\ph_1)-\psi_1(T_1)\right\|_{H^{2(p+l)+3}_{(0)}}\leqslant C|z|^{\tau_l}, \qquad l=-(q+1),\cdots,m,
		\end{equation}
		with $\tau_l:=\alpha_n+s_n\left(\frac{3}{4}-\ep-l\right)>\frac{1}{4}.$ Note that $\tau_{-(q+1)}>\frac 12$ if $\ep<\frac 14$.
	\end{prop}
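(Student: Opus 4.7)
The strategy mirrors the finite-dimensional construction behind Proposition \ref{vecteurtangent}, using Proposition \ref{quad-fini} as the infinite-dimensional substitute for the Magnus representation formula. Starting from a profile $(\bar u, \bar v) \in \mathcal{C}^\infty_c((0,1),\rr)^2$ obtained from an appropriate variant of the moment-problem construction of Proposition \ref{momentfini}, I would define the scaled family
\[
u_z(t) := \text{sgn}(z)\,|z|^{\alpha_n}\,\bar u\!\left(\tfrac{|z|^{s_n}-T_1+t}{|z|^{s_n}}\right), \qquad v_z(t) := |z|^{\alpha_n}\,\bar v\!\left(\tfrac{|z|^{s_n}-T_1+t}{|z|^{s_n}}\right),
\]
for $|z| < \rho := T_1^{1/s_n}$, both supported in $(T_1-|z|^{s_n}, T_1)$. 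The boundary conditions $(u_z)_i(T_1) = (v_z)_i(T_1) = 0$ for $1 \leqslant i \leqslant q+1$ required to invoke Proposition \ref{quad-fini} follow automatically from the compact support of $\bar u, \bar v$. The exponents $s_n, \alpha_n$ are tuned so that $2\alpha_n + (n+2)s_n = 1$, which makes the leading bilinear integral $\int_0^{T_1} u_{q+1}(t) v_{n-q}(t)\,e^{i\omega_K t}\dt$ scale exactly as $z$.

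The control-size estimate \eqref{control:size} follows from a direct change of variable in the $W^{k,r}$-norm together with the Poincaré inequality in the negative regimes. For \eqref{point:size}, I would decompose $\psi - \psi_1 = \Psi + (\psi - \psi_1 - \Psi)$: the quadratic remainder is bounded by \eqref{error:lin}, and $\|\Psi(T_1)\|_{H^{2(p+l)+3}_{(0)}}$ is estimated from the series \eqref{schro:lin:exp} by integrating by parts $q+1$ times in each coefficient (using the vanishing iterated primitives at $T_1$), producing powers $\omega_j^{-(q+1)}$ which, balanced with the decay \eqref{riemann:lebesgue} of $\langle \mu_\ell\varphi_1,\varphi_j\rangle$, provide the extra gain $|z|^{s_n(1/4-\epsilon)}$ that defines $\tau_l = \alpha_n + s_n(3/4 - \epsilon - l)$.

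Then, Proposition \ref{quad-fini} yields
\[
\langle \xi(T_1),\psi_K(T_1)\rangle = i^n\gamma_n \int_0^{T_1} u_{z,q+1}(t)\, v_{z,n-q}(t)\, e^{i\omega_K t}\dt + \mathrm{Err},
\]
where $\mathrm{Err}$ collects $\rho_{T_1}^{q+1}(u_z,v_z)$, $R_{T_1}^{q+1}(u_z)$, $R_{T_1}^{q+1}(v_z)$, and the two $A^\ell_{q+1}$-terms (present only when $n$ is even). Each piece of $\mathrm{Err}$ involves one or two additional primitives of $u_z$ or $v_z$; a direct scaling count, combined with the absolute convergence of the involved series guaranteed by $\lfloor n/2\rfloor \leqslant p$ (Remark \ref{asymptotique}), yields $|\mathrm{Err}| \leqslant C|z|^{1+s_n}$. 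Since $\langle\psi_1(T_1),\psi_K(T_1)\rangle = 0$ by orthogonality and $\langle\Psi(T_1),\psi_K(T_1)\rangle = 0$ by $\mathbf{(H)_{lin,K,1}}$, the quantity $\langle\psi(T_1),\psi_K(T_1)\rangle$ reduces to the quadratic contribution up to a cubic remainder $O(\|(u_z,v_z)\|_{L^2}^3)$ controlled by \eqref{reste}, also $O(|z|^{1+s_n})$.

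The main obstacle is the choice of $(\bar u,\bar v)$ ensuring that, after the change of variable $\tau=(t-T_1+|z|^{s_n})/|z|^{s_n}$, the leading constant matches $i^n$: the naive candidate $i^n\gamma_n\,e^{i\omega_K T_1}\int_0^1 \bar u_{q+1}\bar v_{n-q}\dd\tau$ is a real multiple of the phase $e^{i\omega_K T_1}$ and cannot in general hit the prescribed direction $i^n$. To resolve this, one solves a two-parameter variant of the moment problem of Proposition \ref{momentfini} using two independent admissible profiles whose real linear combinations span $\cc$; equivalently, an additional time shift of $\bar v$ rotates the effective phase of the leading moment while keeping $\bar v$ real. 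Continuity of $z \mapsto (u_z,v_z)$ is then immediate from the explicit scaling, closing the proof.
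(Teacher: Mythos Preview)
Your overall strategy matches the paper's, and most of the estimates (control sizes, error terms from Proposition~\ref{quad-fini}, cubic remainder via~\eqref{reste}) are handled correctly. However, there is one genuine gap and two secondary issues.

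\textbf{The phase problem and its non-fix.} You correctly identify that supporting the controls near $T_1$ produces a leading term $i^n\gamma_n\,e^{i\omega_K T_1}\,z\int_0^1\phi\psi\,\dd\sigma$, which lies along the fixed complex direction $i^n e^{i\omega_K T_1}$ rather than $i^n$. Your proposed remedies do not work: any second profile $(\bar u',\bar v')$ with the \emph{same} support location yields a leading term along the \emph{same} direction $i^n e^{i\omega_K T_1}$, so real linear combinations stay on that line and cannot span $\cc$. A time shift of $\bar v$ by a fixed amount makes the supports of $u_{z,q+1}$ and $v_{z,n-q}$ disjoint for small $z$ (killing the integral), and a shift of order $|z|^{s_n}$ only perturbs the phase by $O(|z|^{s_n})$. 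The paper's fix is much simpler: support the controls near $t=0$ instead, taking
\[
u_z(t)=\mathrm{sgn}(z)|z|^{\alpha_n}\bar u^{(q+1)}\!\left(\tfrac{t}{|z|^{s_n}}\right),\qquad v_z(t)=|z|^{\alpha_n}\bar v^{(q+1)}\!\left(\tfrac{t}{|z|^{s_n}}\right),
\]
so that after the change of variable the phase factor is $e^{i\omega_K|z|^{s_n}\sigma}=1+O(|z|^{s_n})$ and the leading term is exactly $i^n z$ once $\int_0^1\bar u\,\bar v^{(2q+1-n)}=1/\gamma_n$.

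\textbf{Boundary conditions.} Compact support of $\bar u,\bar v$ alone only gives $(u_z)_1(T_1)=0$; the higher vanishing $(u_z)_i(T_1)=0$ for $2\leqslant i\leqslant q+1$ requires $\bar u$ to have $q$ vanishing moments. This is why the paper writes $u_z$ as the $(q{+}1)$-st derivative of a compactly supported profile, which you should do as well.

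\textbf{The linear estimate.} For $\|\Psi(T_1)\|_{H^{2(p+l)+3}_{(0)}}$, ``integrating by parts $q{+}1$ times'' is not the right count: to make the series $\sum_j j^{4l-4k}$ converge you need $k>l+\tfrac14$, i.e.\ a number of integrations by parts that \emph{depends on $l$} and is generally non-integer. The paper first proves the integer bound $|\int_0^{T_1}u_z e^{i\omega_j t}\dt|\leqslant C\omega_j^{-k}|z|^{\alpha_n+s_n(1-k)}$ for $k\in\llbracket -(q{+}1),m{+}1\rrbracket$, then interpolates to real $k=l+\tfrac14+\ep$, which is exactly what produces the exponent $\tau_l=\alpha_n+s_n(\tfrac34-\ep-l)$.
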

	\begin{proof}
		Let $T_1>0$ and  $\rho=T_1^{\frac{1}{s_n}}$. Thanks to $\mathbf{(H)_{quad,K,4}}$, we considers $\bar{u},\bar{v}\in\mathcal{C}^{\infty}_c(\rr,\rr)^2$ such that $\text{Supp}(\bar{u}),\text{Supp}(\bar{v})\subset (0,1)$ and  
		$\displaystyle\int_0^1\bar{u}(t)\bar{v}^{(2q+1-n)}(t)\mathrm{d}t=\frac{1}{\gamma_n}.$
		Then, we define for $z\in\rr^*$, 
		$$u_z,v_z:t\in[0,T_1]\mapsto \text{sgn}(z)|z|^{\alpha_n}\bar{u}^{(q+1)}\left(\frac{t}{|z|^{s_n}}\right),|z|^{\alpha_n}\bar{v}^{(q+1)}\left(\frac{t}{|z|^{s_n}}\right).$$
		For all $z\in(-\rho,\rho)$, one has \begin{equation}\label{support}\text{Supp}\left(\left(u_z\right)_{q+1}\right),\text{Supp}\left(\left(v_z\right)_{q+1}\right)\subset (0,|z|^{s_n})\subset (0,T_1).\end{equation} By definition, 
		\begin{equation}\label{derivee:nulle}\text{for all }z\in(-\rho,\rho), \ \text{for all }i\in\llbracket 1,q+1\rrbracket, \quad \left(u_z\right)_i(T_1)=\left(v_z\right)_i(T_1)=0.\end{equation}
		Then, we can compute the norm of the controls.
		For all $k\in\zz_{\geqslant -(q+1)}$, $ r\in [1,+\infty[$, using, the notation $u^{(k)}=u_{-k}$ if $k<0$ and the Poincaré's inequality, we get for all $z\in(-\rho,\rho)\setminus\left\lbrace 0\right\rbrace$,
		$$\left\|u_z\right\|_{W^{k,r}}^r\leqslant C\left\|u_z^{(k)}\right\|^r_{L^r}=C\int_0^{1}\left||z|^{\alpha_n-ks_n}\bar{u}^{(q+1+k)}\left(\frac{t}{|z|^{s_n}}\right)\right|^r\dt.$$
		Then, using a change of variables, one obtains
		$$\left\|u_z\right\|_{W^{k,r}}\leqslant C|z|^{\left(\alpha_n-ks_n\right)+\frac{s_n}{r}}\left\|\bar{u}^{(q+1+k)}\right\|_{L^r}.$$ The inequality with $r=+\infty$ is similarly proved. We prove the inequality for $v_z$ in the same way to get \eqref{control:size}. Then, the mapping can be extended to $0$. The continuity is given by the previous inequality. We use the expansion \eqref{equationclef}, the support condition \eqref{support} and \eqref{derivee:nulle}  to obtain, for every $z\in(-\rho,\rho)$,
		\begin{equation*}\begin{gathered}\left\langle\xi(T_1),\psi_K(T_1)\right\rangle=i^n\gamma_n\int_0^{|z|^{s_n}}\left(u_z\right)_{q+1}(t)\left(v_z\right)_{n-q}(t)e^{i\omega_Kt}\dt+\rho_{T_1}^{q+1}(u_z,v_z)+R_{T_1,1}^{q+1}(u_z)\\+R_{T_1,2}^{q+1}(v_z)-iA_{q+1}^1\int_0^{|z|^{s_n}}\left(u_z\right)_{q+1}(t)^2e^{i\omega_Kt}\dt-iA_{q+1}^2\int_0^{|z|^{s_n}}\left(v_z\right)_{q+1}(t)^2e^{i\omega_Kt}\dt.\end{gathered}\end{equation*}
		Note that, for all $k\in\llbracket 0,q+1\rrbracket$, one has  $\left(u_z\right)_k=\text{sgn}(z)|z|^{\alpha_n+ks_n}\bar{u}^{(q+1-k)}\left(\frac{\cdot}{|z|^{s_n}}\right)$ and $\left(v_z\right)_k=|z|^{\alpha_n+ks_n}\bar{v}^{(q+1-k)}\left(\frac{\cdot}{|z|^{s_n}}\right)$. Then, with the change of variables $t=|z|^{s_n}\sigma$, one has
		\begin{equation}\begin{gathered}\label{motionn}
				\left\langle\xi(T_1),\psi_K(T_1)\right\rangle=i^n\gamma_n\text{sgn}(z)|z|^{2\alpha_n+(n+2)s_n}\int_0^1\bar{u}(\sigma)\bar{v}^{(2q+1-n)}(\sigma)e^{i\omega_K|z|^{s_n}\sigma}\mathrm{d}\sigma+\rho_{T_1}^{q+1}(u_z,v_z)\\+R_{T_1,1}^{q+1}(u_z)+R_{T_1,2}^{q+1}(v_z)-i|z|^{2\alpha_n+(2q+3)s_n}\int_0^1\left(A_{q+1}^1\bar{u}(\sigma)^2+A_{q+1}^2\bar{v}(\sigma)^2\right)e^{i\omega_K|z|^{s_n}\sigma}\mathrm{d}\sigma.\end{gathered}
		\end{equation}
		Note that, if $n$ is even then $n=2q$.
		Else, $n$ is odd, then, $\mathbf{(H)_{quad,K,1}}$ and $\mathbf{(H)_{quad,K,2}}$ gives $A_{q+1}^1=A_{q+1}^2=0$. In all the cases, we obtain
		\begin{equation}\label{rtt}
			|z|^{2\alpha_n+(2q+3)s_n}\left|\int_0^1\left(A_{q+1}^1\bar{u}(\sigma)^2+A_{q+1}^2\bar{v}(\sigma)^2\right)e^{i\omega_K|z|^{s_n}\sigma}\mathrm{d}\sigma\right|=\mathcal{O}\left(|z|^{2\alpha_n+(n+3)s_n}\right).
		\end{equation}
		By definition of $R_{T_1,1}^{q+1}$, $\left|R_{T_1,1}^{q+1}(u_z)\right|\leqslant \displaystyle\sum_{j=1}^{+\infty}\left|c_j\nu_j^{q+1}\omega_j^{q+1}\right|\left|I^j_{T_1}\left(\left(u_z\right)_{q+1},\left(u_z\right)_{q+1}\right)\right|$.
		Then, using the explicit formulation of $I_{T_1}^j$ and \eqref{control:size} with $(k,r)=(-(q+1),1)$, 
		\begin{equation}\label{rtu}\left|R_{T_1,1}^{q+1}(u_z)\right|\leqslant C\left\|u_z\right\|_{W^{-(q+1),1}}^2\leqslant C|z|^{2\alpha_n+2(q+2)s_n}=\mathcal{O}\left(|z|^{2\alpha_n+(n+3)s_n}\right).\end{equation}
		Similarly, 
		\begin{equation}\label{rtv}\left|R_{T_1,2}^{q+1}(v_z)\right|=\mathcal{O}\left(|z|^{2\alpha_n(n+3)s_n}\right).\end{equation}
		Finally, one obtains with the same arguments
		\begin{equation}\label{rhotuv}
			\left|\rho_{T_1}^{q+1}(u_z,v_z)\right|\leqslant C\left\|u_z\right\|_{W^{-(q+1),1}}\left\|v_z\right\|_{W^{-(n-q),1}}=\mathcal{O}\left(|z|^{2\alpha_n+(n+3)s_n}\right).\end{equation}
		Using \eqref{rtt}, \eqref{rtu}, \eqref{rtv} and \eqref{rhotuv} in \eqref{motionn}, noticing that $2\alpha_n+(n+2)s_n=1$ and using the expansion $e^{i\omega_K|z|^{s_n}\sigma}=1+\mathcal{O}(|z|^{s_n})$, one has
		$$\left\langle\xi(T_1),\psi_K(T_1)\right\rangle=i^n\gamma_nz\int_0^1\bar{u}(\sigma)\bar{v}^{(2q+1-n)}(\sigma)\mathrm{d}\sigma+\mathcal{O}\left(|z|^{1+s_n}\right)=i^nz+\mathcal{O}\left(|z|^{1+s_n}\right),$$
		by definition of $\bar{u},\bar{v}$. Thus, using the error estimates \eqref{reste} and the hypothesis $\mathbf{(H)_{lin,K,1}}$, we obtain
		$$|\left\langle\psi(T_1;(u_z,v_z),\ph_1),\psi_K(T_1)\right\rangle-i^nz|=\mathcal{O}\left(|z|^{1+s_n}+\left\|(u_z,v_z)\right\|_{L^2}^3\right).$$
		Using \eqref{control:size} with $(k,r)=(0,2)$, we obtain \eqref{expansion}, as $1+s_n\leqslant 3\alpha_n+\frac{3}{2}s_n$.
		
		For the estimate \eqref{point:size}, we use \eqref{error:lin} with $k=q$ and \eqref{control:size} with $(k,r)\in\{(l,2),(m,2)\}$ to obtain the existence of $C>0$ such that, for all $l=-(q+1),\cdots,m$, 
		\begin{equation}\label{error}\left\|\psi(T_1;(u_z,v_z),\ph_1)-\psi_1(T_1)-\Psi(T_1)\right\|_{H^{2(p+l)+3}_{(0)}}\leqslant C|z|^{2\alpha_n+s_n(1-m-l)}.\end{equation} 
		Then, we estimate the linear term in weak norms. For all $j\geqslant 2$, for all $k\geqslant -(q+1)$, integrations by parts give (the same notation as previously is used for $k<0$) gives
		\begin{equation}\label{ipplim}\left|\int_0^{T_1}u_z(t)e^{i\omega_j(t-T_1)}\dt\right|= \left|\omega_j^{-k}\int_0^{T_1}u_z^{(k)}(t)e^{i\omega_j(t-T_1)}\dt\right|\leqslant C_k\omega_j^{-k}|z|^{\alpha_n+s_n(1-k)},\end{equation}
		thanks the inequality \eqref{control:size}, with $(k,r)=(k,1)$. This inequality is true with $j=1$ because the left-hand size is zero. Let $k\in\llbracket -(q+1),m\rrbracket$ and $r\in[k,k+1]$. There exists $\theta\in[0,1]$ such that $r=k+\theta$. Then, using \eqref{ipplim}
		\begin{equation*}\begin{split}\left|\int_0^{T_1}u_z(t)e^{i\omega_j(t-T_1)}\dt\right|&=\left|\int_0^{T_1}u_z(t)e^{i\omega_j(t-T_1)}\dt\right|^{1-\theta}\left|\int_0^{T_1}u_z(t)e^{i\omega_j(t-T_1)}\dt\right|^{\theta}
				\\&\leqslant\left(C_k\omega_j^{-k}|z|^{\alpha_n+s_n(1-k)}\right)^{1-\theta}\left(C_{k+1}\omega_j^{-(k+1)}|z|^{\alpha_n+s_n(1-(k+1))}\right)^{\theta}\\&\leqslant \underset{k\in\llbracket -(q+1),m\rrbracket}\max\left(\max(1,C_k)\max(1,C_{k+1})\right)\omega_j^{-r}|z|^{\alpha_n+s_n(1-r)}.
		\end{split}\end{equation*}
		Then, the inequality \eqref{ipplim} is true for $k$ real; there exists $C>0$, uniform in $k$, such that, for every $j\in\nn^*$ and $k\in [-(q+1),m+1]$,
		\begin{equation}\label{lininter}\left|\int_0^{T_1}u_z(t)e^{i\omega_j(t-T_1)}\dt\right|\leqslant C\omega_j^{-k}|z|^{\alpha_n+s_n(1-k)}.\end{equation}
		This inequality is also true with $v_z$. We want to apply this inequality with $k=l$. Nevertheless, the series diverges; we introduce a non-integer perturbation: let $\ep\in(0,3/4)$ and $l\in\left\lbrace-(q+1),\cdots,m\right\rbrace$. With $k =\varepsilon+l+\frac 14$, one obtains
		\begin{equation}\label{linear:proof}
			\left\|\Psi(T_1)\right\|_{H^{2(p+l)+3}_{(0)}}\leqslant C\left(\sum_{j=1}^{+\infty}\frac{1}{j^{1+4\varepsilon}}\right)^{\frac 12}|z|^{\tau_l},
		\end{equation}
		thanks to Remark \ref{asymptotique} and the estimate \eqref{lininter}. Using the equation \eqref{error}  and \eqref{linear:proof}, we obtain the desired inequality, because $\tau_l\leqslant 2\alpha_n+s_n(1-m-l)$.
	\end{proof}
	The following statement represents step $2$ of the proof strategy explained at the beginning of Section \ref{proofmain}.
	\begin{prop}\label{tangentiphk} Assume that $\left\langle\mu_1\ph_1,\ph_1\right\rangle\neq0$.
		The vector $i^n\ph_K$ is a small-time $H^m_0$-continuously approximately reachable vector associated with vector variations $i^n\psi_K(T)$. More precisely, for all $T > 0$, there exist $C,\rho> $0 and a continuous map $z\in\rr\mapsto(U_z,V_z)\in H_0^m(0,T)^2$ such that
		\begin{equation}
			\forall z\in(-\rho,\rho), \qquad \left\|\psi(T;(U_z,V_z),\ph_1)-\psi_1(T)-i^nz\psi_K(T)\right\|_{H^{2(p+m)+3}_{(0)}}\leqslant C|z|^{1+\frac{1}{5}s_n},
		\end{equation}
		with the following size estimate on the family of controls
		\begin{equation}\label{control:size2}
			\left\|U_z\right\|_{H^m},\left\|V_z\right\|_{H^m}\leqslant C|z|^{\frac 14}.
		\end{equation}
	\end{prop}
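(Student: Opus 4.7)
The plan is to implement the two-step strategy outlined at the beginning of Section \ref{proofmain}: perform the quadratic motion constructed in the preceding proposition on a first time interval, then erase the $\mathcal{H}$-components on a second one via Theorem \ref{control-projection}, and verify that this linear correction perturbs the $\ph_K$-component only by an acceptable amount.

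Fix $T_1:=T/2$ and take $(u_z,v_z)\in H^m_0(0,T_1)^2$ from the preceding proposition. By \eqref{derivee:nulle}, these satisfy $(u_z)_i(T_1)=(v_z)_i(T_1)=0$ for $1\leqslant i\leqslant q+1$, exactly as required by the concatenation lemma. On $[T_1,T]$, apply Theorem \ref{control-projection} with initial datum $\psi(T_1;(u_z,v_z),\ph_1)$ and target $\psi_1(T)$; the assumption $\langle\mu_1\ph_1,\ph_1\rangle\neq 0$ permits the extra boundary condition $(\tilde v_z)_1(T)=0$ required by the lemma. Set $U_z:=u_z\#\tilde u_z$ and $V_z:=v_z\#\tilde v_z$. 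By construction $\mathbb{P}_{\mathcal{H}}(\psi(T;(U_z,V_z),\ph_1))=\psi_1(T)$, and the continuity of $z\mapsto(U_z,V_z)$ follows from that of $z\mapsto(u_z,v_z)$, of the flow, and of $\Gamma_{T_1,T}$.

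The bound \eqref{control:size2} comes from \eqref{control:size} at $(k,r)=(m,2)$, giving $\|u_z\|_{H^m}\leqslant C|z|^{\alpha_n-ms_n}$, together with $\|\tilde u_z\|_{H^m}\leqslant C|z|^{\tau_m}$ obtained by feeding \eqref{point:size} into \eqref{estim-simult} with $l=m$; a direct calculation using the explicit values of $\alpha_n$ and $s_n$ shows $\alpha_n-ms_n\geqslant\tfrac14$ and $\tau_m\geqslant\alpha_n-ms_n$. For the approximation inequality, since $\mathbb{P}_{\mathcal{H}}$ annihilates everything except the $\ph_K$-component,
\begin{equation*}
    \psi(T;(U_z,V_z),\ph_1)-\psi_1(T)-i^nz\psi_K(T)=e^{-i\lambda_KT}\bigl(\langle\psi(T),\psi_K(T)\rangle-i^nz\bigr)\ph_K,
\end{equation*}
so the proof reduces to bounding the scalar $\langle\psi(T;(U_z,V_z),\ph_1),\psi_K(T)\rangle-i^nz$. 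The orthogonality $\langle\psi_1(T),\psi_K(T)\rangle=0$, the linear cancellation $\langle\Psi(T),\psi_K(T)\rangle=0$ inherited from $\mathbf{(H)_{lin,K,1}}$, and the cubic remainder \eqref{reste} reduce this scalar further to $\langle\xi(T;(U_z,V_z)),\psi_K(T)\rangle+\mathcal{O}(\|(U_z,V_z)\|_{L^2}^3)$. Combining the concatenation lemma \eqref{oddeven} with the first-step estimate \eqref{expansion}, this becomes $i^nz+\mathcal{O}(|z|^{1+s_n})$ plus the three composition errors of \eqref{oddeven}, each controlled via \eqref{control:size} at $(k,r)=(-(q+1),2)$ for $(u_z,v_z)$ and via \eqref{estim-simult} at $l\in\{-(q+1),-(n-q)\}$ for $(\tilde u_z,\tilde v_z)$; note $-(n-q)\in\{-(q+1),\ldots,m\}$ because $n-q\leqslant q+1$.

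The main obstacle is the accounting of the exponents: every contribution must stay above $1+\tfrac{s_n}{5}$. Using the identity $2\alpha_n+(n+2)s_n=1$, the inequality $2q-n\geqslant -1$, and choosing the auxiliary parameter $\varepsilon$ in \eqref{point:size} sufficiently small, a mechanical computation checks that each of the three composition errors and the cubic remainder $\|(U_z,V_z)\|_{L^2}^3\leqslant C|z|^{3\alpha_n+3s_n/2}$ is $\mathcal{O}(|z|^{1+s_n/5})$, which completes the proof.
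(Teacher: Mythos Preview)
Your proposal is correct and follows essentially the same two-step strategy as the paper: quadratic motion on $[0,T_1]$ via the preceding proposition, linear correction on $[T_1,T]$ via Theorem \ref{control-projection}, and the concatenation lemma \eqref{oddeven} to control the interaction. The only differences are cosmetic: you compress the case-by-case exponent check (the paper fixes $\varepsilon<1/48$ and verifies four terms separately), and you quote the slightly weaker exponent $\alpha_n-ms_n$ instead of $\alpha_n+s_n(\tfrac12-m)$ from \eqref{control:size} at $(k,r)=(m,2)$, but since $\alpha_n-ms_n>\tfrac14$ this does not affect the conclusion.
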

	\begin{proof}
		Let $0<T_1<T$, one defines controls that allow to move along $i^n\ph_K$ and correcting the linear part, thanks to hypothesis $\mathbf{(H)_{lin,K,2}}$. More precisely, for $z\in\rr$, 
		$$U_z,V_z:=u_z\mathbb{1}_{[0,T_1]}+\tilde{u}_z\mathbb{1}_{[T_1,T]},v_z\mathbb{1}_{[0,T_1]}+\tilde{v}_z\mathbb{1}_{[T_1,T]},$$
		where $u_z,v_z$ are the controls defined by the previous proposition and $\tilde{u}_z,\tilde{v}_z$ are the controls given by the control in projection Theorem \ref{control-projection} with $k=q$, {i.e.}\
		$$\tilde{u}_z,\tilde{v}_z=\Gamma_{T_1,T}(\psi(T_1;(u_z,v_z),\ph_1),\psi_1(T)).$$
		Then, for all $z\in(-\rho,\rho)$ (with $\rho=\min(\rho_1,1)$, where $\rho_1$ is given by the previous proposition with $T_1$),
		$$\left\|U_z\right\|_{H^m},\left\|V_z\right\|_{H^m}\leqslant \left\|(u_z,v_z)\right\|_{H^m}+\left\|(\tilde{u}_z,\tilde{u}_z)\right\|_{H^m}.$$ The first term is estimated by \eqref{control:size} with $(k,r)=(m,2)$. For the second one, we use the simultaneous estimates \eqref{estim-simult} and \eqref{point:size} to obtain, for all $l\in\llbracket -(q+1),m\rrbracket$, 
		\begin{equation}\label{control:size:tilde}
			\left\|(\tilde{u}_z,\tilde{v}_z)\right\|_{H^l}\leqslant C\left\|\psi(T_1;(u_z,v_z),\ph_1)-\psi_1(T_1)\right\|_{H^{2(p+l)+3}_{(0)}} \leqslant C|z|^{\tau_l}.
		\end{equation}
		Noticing that $\tau_m> \frac 14$ and $\alpha_n+s_n(\frac 12-m)>\frac{1}{4}$, one has \eqref{control:size2}. For the motion along $i^n\ph_K$, by construction $$\mathbb{P}_{\mathcal{H}}(\psi(T;(U_z,V_z),\ph_1))=\psi_1(T)=\mathbb{P}_{\mathcal{H}}(\psi_1(T)+i^nz\psi_K(T)),$$  where $\mathcal{H}$ is defined in \eqref{h}. Thus, we just estimate $|\left\langle\psi(T;(U_z,V_z),\ph_1),\psi_K(T)\right\rangle-i^nz|$. Using the triangular inequality,
		\begin{equation}\begin{gathered}
			|\left\langle\psi(T;(U_z,V_z),\ph_1),\psi_K(T)\right\rangle-i^nz|\leqslant |\left\langle\psi(T_1;(u_z,v_z),\ph_1),\psi_K(T_1)\right\rangle-i^nz|\\+|\left\langle\psi(T;(U_z,V_z),\ph_1),\psi_K(T)\right\rangle-\left\langle\psi(T_1;(u_z,v_z),\ph_1),\psi_K(T_1)\right\rangle|.\end{gathered}
		\end{equation}
		The first term is estimated by \eqref{expansion}. To study the second one, we use the equations \eqref{oddeven} and \eqref{reste}. Then, we get
		\begin{equation}
			\begin{gathered}
				|\left\langle\psi(T;(U_z,V_z),\ph_1),\psi_K(T)\right\rangle-\left\langle\psi(T_1;(u_z,v_z),\ph_1),\psi_K(T_1)\right\rangle|=\mathcal{O}\left(\left\|(\tilde{u}_z,\tilde{v}_z)\right\|_{H^{-(q+1)}}^2\right.\\\left.+\left\|(u_z,v_z)\right\|_{H^{-(q+1)}}\left\|(\tilde{u}_z,\tilde{v}_z)\right\|_{H^{-(q+1)}}+\left\|\tilde{u}_z\right\|_{H^{-(q+1)}}\left\|\tilde{v}_z\right\|_{H^{-(n-q)}}+\left\|(U_z,V_z)\right\|_{L^2}^3\right).\\
			\end{gathered}
		\end{equation}
		We choose $0<\varepsilon<\frac{1}{48}$. We estimate these terms.
		\begin{enumerate}
			\item[1.]The error term: using \eqref{control:size} with $(k,r)=(0,2)$ and \eqref{control:size:tilde} with $l=0$ and because $\frac{1}{5}s_n\leqslant\frac{1}{16}$,
			\begin{equation*}\begin{split}\left\|(U_z,V_z)\right\|_{L^2}^3&\leqslant C|z|^{3\alpha_n+\frac{3}{2}s_n}+C|z|^{3\tau_0}\leqslant C|z|^{1+\frac{1}{5}s_n}+C|z|^{\frac{17}{16}+\left(\frac{1}{16}-3\varepsilon s_n\right)+\frac{9}{4}s_n}\\
					&\leqslant  C|z|^{1+\frac{1}{5}s_n}+C|z|^{\frac{17}{16}+\left(\frac{1}{16}-3\varepsilon\right)}\leqslant C|z|^{1+\frac{1}{5}s_n}+C|z|^{\frac{17}{16}}\leqslant C|z|^{1+\frac{1}{5}s_n}.
				\end{split}
			\end{equation*}
			\item[2.] Using \eqref{control:size:tilde} with $l=-(q+1)$,
			$\left\|(\tilde{u}_z,\tilde{v}_z)\right\|_{H^{-(q+1)}}^2\leqslant C|z|^{2\tau_{-(q+1)}}.$
			We obtain the result because
			\begin{enumerate}
				\item[a.] if $n$ is odd, 
				$2\tau_{-(q+1)}=2\alpha_n+s_n\left(n+2+\frac 12-2\varepsilon\right)= 1+s_n\left(\frac{1}{2}-2\varepsilon\right)\geqslant 1+\frac{1}{5}s_n.$
				\item[b.] if $n$ is even, 
				$2\tau_{-(q+1)}=2\alpha_n+s_n\left(n+2+\frac 32-2\varepsilon\right)\geqslant 1+\frac{1}{5}s_n.$
			\end{enumerate}
			\item[3.] Once again, using \eqref{control:size} with $(k,r)=(-(q+1),2)$ and \eqref{control:size:tilde} with $l=-(q+1)$,
			$$\left\|(u_z,v_z)\right\|_{H^{-(q+1)}}\left\|(\tilde{u}_z,\tilde{v}_z)\right\|_{H^{-(q+1)}}\leqslant C|z|^{\alpha_n+s_n(\frac{1}{2}+q+1)+\tau_{-(q+1)}}.$$
			With the same arguments, we obtain the result.
			\item[4.] If $n$ is even, there is a last term: using \eqref{control:size:tilde} with $l=-q$ and $l=-(q+1)$, $$\left\|\tilde{u}_z\right\|_{H^{-(q+1)}}\left\|\tilde{v}_z\right\|_{H^{-q}}\leqslant C|z|^{\tau_{-(q+1)}+\tau_{-q}}.$$ Furthermore,
			$\tau_{-(q+1)}+\tau_{-q}=2\alpha_n+s_n\left(n+2+\frac 12 -2\varepsilon\right)=1+s_n\left(\frac 12-2\ep\right)\geqslant 1+\frac 15 s_n.$
		\end{enumerate}
		Finally, the continuity of $z\in\rr\mapsto (u_z,v_z)\in H^m_0(0,T_1)^2$ is given by the previous proposition and the continuity of $z\in\rr\mapsto (\tilde{u}_z,\tilde{v}_z)\in H^m_0(T_1,T)^2$ results from the regularity of $\Gamma_{T_1,T}$ and the regularity of the Schrödinger equation, with respect to the controls.
	\end{proof}
	Now, we have to show that it is possible to move in the direction $i^{n+1}\ph_K$: this is the step $3$ of the proof strategy explained at the beginning of Section \ref{proofmain}.
	\begin{prop}\label{tangentphk} Assume that $\left\langle\mu_1\ph_1,\ph_1\right\rangle\neq0$.
		The vector $i^{n+1}\ph_K$ is a small-time $H^m_0$-continuously approximately reachable vector associated with vector variations $i^{n+1}\psi_K(T)$. More precisely, there exists $T^*>0$, such that, for all $T\in(0,T^*)$, there exist $C,\rho> $0 and a continuous map $z\mapsto(U_z,V_z)$ from $\rr$ to $H_0^m(0,T)^2$ such that,
		$$\forall z\in(-\rho,\rho), \qquad \left\|\psi(T;(U_z,V_z),\ph_1)-\psi_1(T)-i^{n+1}z\psi_K(T)\right\|_{H^{2(p+m)+3}_{(0)}}\leqslant C|z|^{1+\frac{1}{5}s_n},$$
		with the following size estimate on the family of controls
		$$\left\|U_z\right\|_{H^m},\left\|V_z\right\|_{H^m}\leqslant C|z|^{\frac 14}.$$
	\end{prop}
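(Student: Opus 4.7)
The strategy follows the Kawski time-shift trick of \cite{hermeskawski}, implemented for the bilinear Schrödinger equation by Bournissou in \cite{bournissou2022smalltime}. The key observation is that shifting the motion-creating controls of Proposition \ref{tangentiphk} in time by an amount $\tau$ multiplies the leading contribution to $\langle\xi(T),\psi_K(T)\rangle$ by a phase factor $e^{i\omega_K\tau}$; choosing $\tau_*:=\pi/(2\omega_K)$ makes this factor exactly $i$, thereby converting motion in direction $i^n\ph_K$ into one in direction $i^{n+1}\ph_K$. The constant $T^*$ should be taken comparable to $\tau_*$, so that the available time interval contains both the shift $\tau_*$ and the subsequent correction step.

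Concretely, the plan is to define $(\hat u_z,\hat v_z)$ as the time-shifted copies of the controls $(u_z,v_z)$ from Proposition \ref{tangentiphk}, supported in $(\tau_*,\tau_*+|z|^{s_n})$, and then to apply on some interval $(T_1,T)$ with $T_1\in(\tau_*+|z|^{s_n},T)$ the projection-correction controls $(\tilde u_z,\tilde v_z):=\Gamma_{T_1,T}\bigl(\psi(T_1;(\hat u_z,\hat v_z),\ph_1),\psi_1(T)\bigr)$ from Theorem \ref{control-projection}; set $(U_z,V_z)$ to be the concatenation. Since the iterated primitives of $(\hat u_z,\hat v_z)$ coincide with the shifted iterated primitives of $(u_z,v_z)$ on the support and vanish outside (because $(u_z)_k(|z|^{s_n})=0$ for $1\leqslant k\leqslant q+1$), the change of variable $t=\tau_*+\sigma$ in the leading quadratic term of Proposition \ref{quad-fini} yields
\begin{equation*}
i^n\gamma_n\int_0^T(\hat u_z)_{q+1}(t)(\hat v_z)_{n-q}(t)e^{i\omega_Kt}\dt
= e^{i\omega_K\tau_*}\cdot i^n\gamma_n\int_0^{|z|^{s_n}}(u_z)_{q+1}(\sigma)(v_z)_{n-q}(\sigma)e^{i\omega_K\sigma}\ds,
\end{equation*}
which equals $i\cdot i^nz+\mathcal{O}(|z|^{1+s_n})=i^{n+1}z+\mathcal{O}(|z|^{1+s_n})$ by the same computation as in \eqref{motionn}. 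The terms $A^1_{q+1},A^2_{q+1}$, the remainders $R^{q+1}_T,\rho^{q+1}_T$, and the cubic Schrödinger remainder from \eqref{reste} are bounded by $|z|^{1+s_n/5}$ exactly as in Proposition \ref{tangentiphk}, since their magnitudes depend only on the $|z|$-dependent norms of the controls and are insensitive to the time shift.

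The main obstacle, as in the previous proposition, is to verify that the projection-correction step does not destroy the motion produced on $(\tau_*,\tau_*+|z|^{s_n})$. This is handled via the composition lemma \eqref{oddeven}, using the fact that the weak-norm estimates on $(\tilde u_z,\tilde v_z)$ given by \eqref{estim-simult} depend only on the analogue of \eqref{point:size} for $\psi(T_1;(\hat u_z,\hat v_z),\ph_1)-\psi_1(T_1)$, which holds with the same exponents since the free evolution on $(\tau_*+|z|^{s_n},T_1)$ is unitary on the relevant Sobolev spaces and does not affect the order of magnitude of the deviation from the ground state. The size estimate $\|U_z\|_{H^m},\|V_z\|_{H^m}\leqslant C|z|^{1/4}$ and the error bound $C|z|^{1+s_n/5}$ on the motion then follow exactly as in Proposition \ref{tangentiphk}, completing the construction.
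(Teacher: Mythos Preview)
Your time-shift idea is on the right track conceptually, but the execution has a genuine gap. You fix the shift to be exactly $\tau_*=\pi/(2\omega_K)$ so that the induced phase $e^{i\omega_K\tau_*}$ equals $i$; this forces your construction to require $T>\tau_*$ (to accommodate the shifted support $(\tau_*,\tau_*+|z|^{s_n})$ plus the correction step). But the statement asks for the property for \emph{all} $T\in(0,T^*)$, in particular for $T$ arbitrarily close to $0$. Your sentence ``$T^*$ should be taken comparable to $\tau_*$, so that the available time interval contains both the shift $\tau_*$ and the subsequent correction step'' reveals the misreading: you are implicitly restricting to $T>\tau_*$, which the quantifier does not allow.

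The paper's proof handles small $T$ differently: it concatenates \emph{two} copies of the full controls from Proposition~\ref{tangentiphk} (separated by a null segment), with independent real amplitudes $\alpha,\beta$, on the interval $[0,3T]$. Using the linearity $\psi(\cdot;(u,v),\lambda\psi_0)=\lambda\psi(\cdot;(u,v),\psi_0)$ and the initial-condition estimate~\eqref{dep-continue2}, one obtains
\[
\langle\psi(3T),\psi_K(3T)\rangle\;\approx\;i^n\bigl(\alpha+\beta\,e^{2i\omega_KT}\bigr),
\]
so the two contributions carry phases $1$ and $e^{2i\omega_KT}$. For any $T\in(0,\pi/(2\omega_K))$ these are $\rr$-linearly independent in $\cc$, hence one can solve $\alpha+\beta e^{2i\omega_KT}=iz$ with $\beta=z/\sin(2\omega_KT)$, $\alpha=-\beta\cos(2\omega_KT)$. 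This works for arbitrarily small final time $3T$, whereas a single rigid shift by $\tau_*$ cannot. If you want to salvage your approach, you would need to replace the fixed shift by a $T$-dependent one and then combine it $\rr$-linearly with the unshifted motion from Proposition~\ref{tangentiphk}, which is essentially the paper's two-piece construction.
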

	\begin{proof} We consider $(u_z,v_z)_{z\in\rr}$ the family of controls associated with $i^n\ph_K$ in Proposition \ref{tangentiphk}.  
		First, we show that there exists $C>0$ such that for all $(\alpha,\beta)\in\rr^2$, small enough,
		\begin{equation}\label{keypoint}\left\|\psi(3T;(u_{\alpha,\beta},v_{\alpha,\beta}),\ph_1)-\psi_1(3T)-i^n\left(\beta e^{2i\omega_KT}+\alpha\right)\psi_K(3T)\right\|_{H^{2(p+m)+3}_{(0)}}\leqslant C|(\alpha,\beta)|^{1+\frac{1}{5}s_n},
		\end{equation}
		with $u_{\alpha,\beta}=u_{\alpha}\#0_{[0,T]}\#u_{\beta}$ and $v_{\alpha,\beta}=v_{\alpha}\#0_{[0,T]}\#v_{\beta}$. 
		Indeed, by Proposition \ref{tangentiphk}, there exist $C,\rho>0$ such that, for all $\alpha\in(-\rho,\rho)$,
		\begin{equation}
			\label{motionphin}\left\|\psi(T;(u_{\alpha},v_{\alpha}),\ph_1)-\psi_1(T)-i^n\alpha\psi_K(T)\right\|_{H^{2(p+m)+3}_{(0)}}\leqslant C|\alpha|^{1+\frac{1}{5}s_n},
		\end{equation}
		\begin{equation}\label{motionphin2}\left\|(u_{\alpha},v_{\alpha})\right\|_{H^m}\leqslant C|\alpha|^{\frac 14}.\end{equation}
		In the interval $[T,2T]$, the evolution of the system is free ($(u,v)\equiv 0$). Consequently, the solution is given by $\psi(2T)=e^{-iA(2T-T)}\psi(T)$. Using that $e^{-iAT}$ is an isometry from $H^{2(p+m)+3}_{(0)}$ to $H^{2(p+m)+3}_{(0)}$, we obtain
		\begin{equation}\label{motionphkint}\left\|\psi(2T;(u_{\alpha}\#0_{[0,T]},v_{\alpha}\#0_{[0,T]}),\ph_1)-\psi_1(2T)-i^n\alpha\psi_K(2T)\right\|_{H^{2(p+m)+3}_{(0)}}\leqslant C|\alpha|^{1+\frac{1}{5}s_n}.
		\end{equation}
		Let $\lambda\in\cc$, $\psi_0\in H^{2(p+m)+3}_{(0)}(0,1)$ and $u,v\in H^m_0((0,T),\rr)$. Using the uniqueness of the solution to \eqref{schro} (with $f=0$), one has: $\psi(\cdot;(u,v),\lambda\psi_0) =\lambda\psi(\cdot;(u,v),\psi_0).$
		This property, applied with $\lambda:=e^{2i\lambda_1T}$ and the semi-group property of the bilinear Schrödinger equation lead to $$\psi(3T;(u_{\alpha,\beta},v_{\alpha,\beta}),\ph_1)e^{2i\lambda_1T}=\psi\left(T;(u_{\beta},v_{\beta}),\psi(2T;(u_{\alpha}\#0_{[0,T]},v_{\alpha}\#0_{[0,T]}),\ph_1)e^{2i\lambda_1T}\right).$$ 
		Then, the estimate \eqref{dep-continue2} with  $\psi_0=\psi(2T;(u_{\alpha}\#0_{[0,T]},v_{\alpha}\#0_{[0,T]}),\ph_1)e^{2i\lambda_1T}-\ph_1$ gives
		\begin{equation*}\begin{gathered}\left\|\psi(3T;(u_{\alpha,\beta},v_{\alpha,\beta}),\ph_1)e^{2i\lambda_1T}-\psi(T;(u_{\beta},v_{\beta}),\ph_1)-\psi(T;(0,0),\psi_0)\right\|_{H^{2(p+m)+3}_{(0)}} \\\leqslant C\left\|(u_{\beta},v_{\beta})\right\|_{H^m}\left\|\psi(2T;(u_{\alpha}\#0_{[0,T]},v_{\alpha}\#0_{[0,T]}),\ph_1)-\psi_1(2T)\right\|_{H^{2(p+m)+3}_{(0)}}.\end{gathered}
		\end{equation*}
		Using the estimation \eqref{motionphin2} with $\alpha=\beta$ and  \eqref{motionphkint}, one gets
		\begin{equation}\label{compo-flot}\left\|\psi(3T;(u_{\alpha,\beta},v_{\alpha,\beta}),\ph_1)e^{2i\lambda_1T}-\psi(T;(u_{\beta},v_{\beta}),\ph_1)-e^{-iAT}\left(\psi_0\right)\right\|_{H^{2(p+m)+3}_{(0)}}\leqslant C|\beta|^{\frac 14}|\alpha|.
		\end{equation}
		Using \eqref{motionphkint} in \eqref{compo-flot}, we obtain
		\begin{equation}\hspace{-0.123 cm}
			\begin{gathered}\label{compo-flot2}\left\|\psi(3T;(u_{\alpha,\beta},v_{\alpha,\beta}),\ph_1)-\psi(T;(u_{\beta},v_{\beta}),\ph_1)e^{-2i\lambda_1T}-e^{-iAT}\left(i^n\alpha\psi_K(2T)\right)\right\|_{H^{2(p+m)+3}_{(0)}}\\\leqslant C\left(|\beta|^{\frac 14}|\alpha|+|\alpha|^{1+\frac{1}{5}s_n}\right).
			\end{gathered}
		\end{equation}
		Finally, as $e^{-iAT}(i^n\alpha\psi_K(2T))=i^n\alpha\psi_K(3T)$, using \eqref{motionphin} with $\beta$ instead of $\alpha$, we obtain \eqref{keypoint}. Thus, for $T \in\left(0,\frac{\pi}{2\omega_K}\right)$ and $z\in(-\rho,\rho)$, taking $\beta=\frac{z}{\sin(2\omega_KT)}$ and $\alpha=-\beta\cos(2\omega_kT)$, we conclude.
	\end{proof}
	\subsection{Conclusion}
	Now, we can easily write the proof of the main theorem of this article; this is the last step of the proof strategy explained at the beginning of Section \ref{proofmain}.
	\begin{proof}[Proof of Theorem \ref{maintheorem}]
		We use Theorem \ref{STCLvecteurtangent}. More precisely, we define $X=H^{2(p+m)+3}_{(0)}(0,1)$ and $E_T=H^m_0(0,T)^2$. Moreover, for all $T>0$,
		$$\mathscr{F}_T:(\psi_0,(u,v))\mapsto\psi(T;(u,v),\psi_0).$$ We have to check all the assumptions of Theorem \ref{STCLvecteurtangent} to obtain the controllability result. 
		\begin{enumerate}
			\item[$(A_1)$] This is known that the end-point map is regular around the equilibrium $(\ph_1,
			(0,0))$ (see \cite[Proposition $4.2$]{bournissou2021local} for $\mathcal{C}^1$). 
			\item[$(A_2)$] This point is given by \cite[Proposition $4.2$]{bournissou2021local}.
			\item[$(A_3)$] Using the uniqueness of the solution to \eqref{schro}, one can prove the following semi-group property: for all $T_1,T_2>0$, $\psi_0\in H^{2(p+m)+3}_{(0)}(0,1)$, $u,v\in H^m_0((0,T_1),\rr)$ and $\tilde{u},\tilde{v}\in H^m_0((0,T_2),\rr)$, 
			$$\psi(T_1+T_2; (u\#\tilde{u},v\# \tilde{v}), \psi_0) = \psi(T_2;(\tilde{u},\tilde{v}), \psi(T_1; (u,v), \psi_0)).$$
			\item[$(A_4)$] By hypothesis $\mathbf{(H)_{lin,K,2}}$ and by Theorem \ref{control-projection}, $H=\mathcal{H}$ is a closed subspace, this is the reachable set of the linearized system around the ground state. This space doesn't depend on $T$ and its real codimension in $L^2(0,1)$ is equal to $2$.
			\item[$(A_5)$] Finally, as $\langle\mu_1\ph_1,\ph_1\rangle\neq 0$, Propositions \ref{tangentiphk} and \ref{tangentphk} show that $\ph_K$ and $i\ph_K$ are small-time $H_0^m$-continuously approximately reachable vectors. Then, as a supplementary space of $H$ is given by $\spn_{\rr}(\ph_K,i\ph_K)$, this condition is verified.
		\end{enumerate}
		By Theorem \ref{STCLvecteurtangent}, the multi-input bilinear Schrödinger equation \eqref{schr} is $H^m_0$-Small-Time Locally Controllable around the ground state in $H^{2(p+m)+3}_{(0)}(0,1)$.
	\end{proof}
	\appendix
	\section{Postponed proofs}	\subsection{Existence of $\mu_1,\mu_2$ verifying the hypotheses}\label{existe}
	This section is inspired by \cite{bournissou2022smalltime,bournissou2021quadratic}. We recall that the operator $A$ is defined in \eqref{laplacien}.
	\begin{lm}\label{crochetlie}
		Let $\mu\in\mathcal{C}_c^{\infty}(0,1)$. For all $n\in\nn$, for all $f\in\mathcal{C}^{\infty}([0,1])$, 
		\begin{equation}\underline{\ad}^n_A(\mu)f=\sum_{k=0}^n\alpha_k^n\mu^{(2n-k)}f^{(k)},\end{equation}
		where $(\alpha_k^n)_{k\in\llbracket 0,n\rrbracket}$ are defined by induction as, $\alpha_0^0:=1$ and, 
		$$\forall k\in\llbracket 1,n\rrbracket, \quad \alpha^{n+1}_k:=2\alpha_{k-1}^n+\alpha_k^n, \qquad \alpha_0^{n+1}:=\alpha_0^n,\qquad \alpha^{n+1}_{n+1}:=2\alpha_n^n.$$
		Moreover, $\displaystyle\sum_{k=0}^n\alpha_k^n(-1)^k=(-1)^n$ and $\displaystyle\sum_{k=0}^nk\alpha_k^n(-1)^k=2n(-1)^n$.
	\end{lm}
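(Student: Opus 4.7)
My plan is to prove both claims by a direct induction on $n$, the recursion on the $\alpha_k^n$ falling out of a single Leibniz computation.

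For the main identity, I would argue by induction on $n \geqslant 0$. The base case $n=0$ is immediate since $\underline{\ad}_A^0(\mu)f = \mu f = \alpha_0^0\, \mu^{(0)} f^{(0)}$. For the induction step, I would start from
$$\underline{\ad}_A^{n+1}(\mu)f = \underline{\ad}_A^n(\mu)(Af) - A\bigl(\underline{\ad}_A^n(\mu)f\bigr),$$
insert the induction hypothesis on both sides, and use $Af = -f''$ together with Leibniz's formula for $\partial_x^2\bigl(\mu^{(2n-k)} f^{(k)}\bigr)$. The latter produces three types of contributions: $\mu^{(2(n+1)-k)} f^{(k)}$, $2\,\mu^{(2(n+1)-(k+1))} f^{(k+1)}$, and $\mu^{(2n-k)} f^{(k+2)}$. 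The last family cancels against $\underline{\ad}_A^n(\mu)(Af) = -\sum_k \alpha_k^n \mu^{(2n-k)} f^{(k+2)}$, leaving two sums which I would merge after the index shift $j=k+1$ in the middle one. Reading off the coefficient of each $\mu^{(2(n+1)-k)} f^{(k)}$ yields exactly the recurrence in the statement, including the boundary cases $\alpha_0^{n+1}=\alpha_0^n$ (only the first sum contributes when $k=0$) and $\alpha_{n+1}^{n+1}=2\alpha_n^n$ (only the shifted sum contributes when $k=n+1$).

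For the alternating sum identity, I would set $S_n:=\sum_{k=0}^n (-1)^k \alpha_k^n$ and show $S_{n+1}=-S_n$: substituting the recurrence and reindexing, the $\alpha_k^n(-1)^k$ terms reconstitute $S_n$, while the $2\alpha_{k-1}^n(-1)^k$ terms contribute $-2 S_n$. Combined with $S_0=1$ from the base case, this gives $S_n=(-1)^n$ as claimed.

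The argument is entirely mechanical, and I do not expect a real obstacle beyond careful bookkeeping of the boundary indices $k=0$ and $k=n+1$ in the recursion, where only one of the two sums contributes. No analytical subtlety enters, since $\mu$ is compactly supported in $(0,1)$ so no boundary term from $A$ ever appears.
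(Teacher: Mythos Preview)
Your proof is correct and follows exactly the approach the paper indicates (the paper itself simply refers to \cite[Proposition A.3, Step 1]{bournissou2021quadratic}, which is also an induction on $n$). The Leibniz computation, the cancellation of the $f^{(k+2)}$ terms, and the alternating-sum recursion $S_{n+1}=-S_n$ are all carried out correctly.
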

	This lemma is proved by induction in \cite[Proposition A.3, \textit{Step} $1$]{bournissou2021quadratic}.
	\begin{lm}\label{lmcroc} For all $n\in\nn$, there exist a constant $C>0$ and a quadratic form $Q_n$ such that, for every $\mu_1,\mu_2\in\mathcal{C}^{\infty}_c(0,1)$,
		\begin{enumerate}
			\item[1.] The odd good quadratic brackets are estimated as
			$$\gamma_{2n+1}=-\langle [\underline{\ad}_A^{n+1}(\mu_1),\underline{\ad}_A^n(\mu_2)]\ph_1,\ph_K\rangle= 2(-1)^n\langle\mu_1^{(4n+2)}\mu_2\ph_1,\ph_K\rangle+Q_n(\mu_1,\mu_2),$$
			with $\left|Q_n(\mu_1,\mu_2)\right|\leqslant\left\|\mu_1\right\|_{H^{4n+1}}\left\|\mu_2\right\|_{L^2}.$
			\item[2.] The even good quadratic brackets are estimated as $$\gamma_{2n}=\langle [\underline{\ad}_A^n(\mu_1),\underline{\ad}_A^n(\mu_2)]\ph_1,\ph_K\rangle= 4n(-1)^n\langle \mu_1^{(4n-2)}\mu'_2,\ph_1'\ph_K-\ph_K'\ph_1\rangle+Q_n(\mu_1,\mu_2),$$
			with $\left|Q_n(\mu_1,\mu_2)\right|\leqslant\left\|\mu_1\right\|_{H^{4n-3}}\left\|\mu_2\right\|_{H^1}.$
		\end{enumerate}
	\end{lm}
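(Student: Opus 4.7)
The plan is to expand both operators $\underline{\ad}_A^n(\mu_2)$ and $\underline{\ad}_A^{n+1}(\mu_1)$ (respectively $\underline{\ad}_A^n(\mu_1)$ and $\underline{\ad}_A^n(\mu_2)$ for the even case) using Lemma \ref{crochetlie}, compose them via the Leibniz rule, subtract to form the commutator applied to $\ph_1$, pair with $\ph_K$, and then integrate by parts repeatedly to identify the leading-order contribution. Because $\mu_1, \mu_2 \in \mathcal{C}_c^\infty(0,1)$, all these integrations by parts are free of boundary terms, and the surviving expression splits naturally into the advertised leading monomial plus a quadratic remainder amenable to Hölder's inequality with the smooth fixed factors $\ph_1, \ph_K$.

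For the odd case, Lemma \ref{crochetlie} yields $\underline{\ad}_A^{n+1}(\mu_1) f = \sum_k \alpha_k^{n+1} \mu_1^{(2n+2-k)} f^{(k)}$ and similarly for $\underline{\ad}_A^n(\mu_2)$. After composition and subtraction, $[\underline{\ad}_A^n(\mu_2), \underline{\ad}_A^{n+1}(\mu_1)]\ph_1$ becomes a finite linear combination of trilinears $\mu_1^{(a)} \mu_2^{(b)} \ph_1^{(c)}$ with $a+b+c = 4n+2$. I would then pair with $\ph_K$ and integrate by parts so as to move every derivative of $\mu_2$ and every derivative of $\ph_1$ onto $\mu_1$, obtaining integrals of the shape $\int \mu_1^{(a')} \mu_2 \cdot (\text{smooth factor built from }\ph_1, \ph_K)\,\mathrm{d}x$ with $a' \le 4n+2$. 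The contribution $a' = 4n+2$ produces the leading term; its coefficient, which is a sum over all multi-indices weighted by products $\alpha_j^n \alpha_k^{n+1} \binom{j}{l}(-1)^{\ldots}$, collapses via the identity $\sum_{k=0}^m \alpha_k^m (-1)^k = (-1)^m$ of Lemma \ref{crochetlie} to $2(-1)^{n+1}$ (the factor $2$ coming from the fact that the two orderings in the commutator produce equal leading contributions with opposite signs, doubled by the subtraction). All remaining terms have $a' \le 4n+1$, and Hölder's inequality with $\ph_1, \ph_K \in L^\infty$ gives $|Q_n(\mu_1, \mu_2)| \leq C \|\mu_1\|_{H^{4n+1}} \|\mu_2\|_{L^2}$. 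The base case $n=0$ serves as a sanity check: direct computation yields $[\mu_2, \underline{\ad}_A(\mu_1)]\ph_1 = -2\mu_1' \mu_2' \ph_1$, hence $\gamma_1 = 2\int \mu_1' \mu_2' \ph_1 \ph_K\,\mathrm{d}x = -2\langle \mu_1'' \mu_2 \ph_1, \ph_K\rangle + Q_0$, in agreement with $2(-1)^{0+1} = -2$ and $|Q_0| \le C\|\mu_1\|_{H^1} \|\mu_2\|_{L^2}$.

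The even case runs in parallel, with two essential changes. First, since the two operators in $[\underline{\ad}_A^n(\mu_1), \underline{\ad}_A^n(\mu_2)]$ share the same order, the antisymmetry of the commutator forces all contributions with $\mu_2$ undifferentiated (or with $\ph_1^{(2k)}$) to cancel upon subtraction; consequently the leading term naturally carries the factor $\mu_2' \ph_1'$. The coefficient $4n(-1)^n$ is obtained by the same combinatorial argument as in the odd case. Second, the remainder analysis is more delicate because, after IBP, certain intermediate terms a priori carry up to $4n-1$ derivatives of $\mu_1$; these contributions pair up into Wronskian-type combinations $\mu_1^{(a+1)} \mu_2^{(b)} - \mu_1^{(a)} \mu_2^{(b+1)} = (\mu_1^{(a)} \mu_2^{(b)} - \mu_1^{(a-1)} \mu_2^{(b+1)})' + \ldots$, again reflecting the antisymmetry of the commutator, and after one further IBP the surviving derivatives are absorbed into the smooth factors $\ph_1, \ph_K$, producing integrals $\int \mu_1^{(\alpha)} \mu_2^{(\beta)} (\text{smooth})\,\mathrm{d}x$ with $\alpha \le 4n-3$ and $\beta \le 1$, whence the bound $|Q_n(\mu_1, \mu_2)| \leq \|\mu_1\|_{H^{4n-3}} \|\mu_2\|_{H^1}$. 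The main obstacle is precisely this bookkeeping of the Wronskian-type cancellations in the even case, where one has to verify that the algebraic pairing of high-derivative remainder terms does occur with the right signs; the base case $n=1$ provides a consistency check, recovering $\gamma_2 = -4\langle \mu_1'' \mu_2' \ph_1', \ph_K\rangle + Q_1$.
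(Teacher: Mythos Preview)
Your proposal is correct and follows essentially the same approach as the paper: expand both operators via Lemma \ref{crochetlie}, form the commutator as a finite combination of trilinears $\mu_1^{(a)}\mu_2^{(b)}\ph_1^{(c)}$, isolate the terms with maximal total order on $(\mu_1,\mu_2)$, then integrate by parts to push the derivatives onto $\mu_1$ and collapse the resulting coefficients via the identities $\sum_k(-1)^k\alpha_k^n=(-1)^n$, $\alpha_0^n=1$, and (for the even case) $\alpha_1^n=2n$. The paper is somewhat terser in the even case---it writes the three residual sums directly and then invokes ``same estimates'' to reach the final form---whereas you articulate the mechanism behind the extra cancellation as Wronskian-type pairings; this is the same phenomenon described in different language.
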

	\begin{proof} The equalities between the brackets and the series defined by $\gamma_i$ are proved in Proposition \ref{crochet3}.
		Let $\mu_1,\mu_2\in\mathcal{C}^{\infty}_c(0,1)$. We start with the odd case: for all $f\in\mathcal{C}^{\infty}([0,1])$, $n\geqslant 0$, using \eqref{crochetlie}, one has
		\begin{equation}\begin{gathered}\label{liedvlp}[\underline{\ad}_A^n(\mu_2),\underline{\ad}_A^{n+1}(\mu_1)]f=\sum_{k=0}^{n+1}\sum_{i=0}^n\sum_{l=0}^i\binom{i}{l}\alpha_k^{n+1}\alpha_i^n\mu_1^{(2(n+1)+i-k-l)}\mu_2^{(2n-i)}f^{(k+l)}\\-\sum_{k=0}^n\sum_{i=0}^{n+1}\sum_{l=0}^i\binom{i}{l}\alpha_k^n\alpha_i^{n+1}\mu_2^{(2n+i-k-l)}\mu_1^{(2(n+1)-i)}f^{(k+l)}.\end{gathered}
		\end{equation} Moreover, for all $a,b\in\nn$, for all $f\in\mathcal{C}^{\infty}([0,1])$, there exists $C>0$, such that
		$$\left|\langle \mu_1^{(a)}\mu_2^{(b)},f\rangle\right|\leqslant\left\|\mu_1\right\|_{H^{a+b}}\left\|\mu_2\right\|_{L^2}.$$
		Thus, thanks to \eqref{liedvlp}, one obtains
		\begin{multline}\label{lieerr}\left|\langle[\underline{\ad}_A^n(\mu_2),\underline{\ad}_A^{n+1}(\mu_1)]\ph_1,\ph_K\rangle-\alpha_0^{n+1}\sum_{i=0}^n\alpha_i^n\langle\mu_1^{(2(n+1)+i)}\mu_2^{(2n-i)}\ph_1,\ph_K\rangle\right.\\\left.+\alpha_0^n\sum_{i=0}^{n+1}\alpha_i^{n+1}\langle\mu_1^{(2(n+1)-i)}\mu_2^{(2n+i)}\ph_1,\ph_K\rangle\right|\leqslant \left\|\mu_1\right\|_{H^{4n+1}}\left\|\mu_2\right\|_{L^2}.
		\end{multline}
		Moreover, using integrations by parts, we get for all $i\in\llbracket 0,n\rrbracket$,
		\begin{equation}\label{lieerr2}\left|\langle\mu_1^{(2(n+1)\pm i)}\mu_2^{(2n\mp i)}\ph_1,\ph_K\rangle-(-1)^i\langle \mu_1^{(4n+2)}\mu_2\ph_1,\ph_K\rangle\right|\leqslant  \left\|\mu_1\right\|_{H^{4n+1}}\left\|\mu_2\right\|_{L^2}.
		\end{equation}
		Using \eqref{lieerr} and \eqref{lieerr2}, one has
		\begin{multline*}\left|\gamma_{2n+1}-\left(\alpha_0^{n+1}\sum_{i=0}^n\alpha_i^n(-1)^i-\alpha_0^n\sum_{i=0}^{n+1}\alpha_i^{n+1}(-1)^i\right)\langle\mu_1^{(4n+2)}\mu_2\ph_1,\ph_K\rangle\right|\\\leqslant \left\|\mu_1\right\|_{H^{4n+1}}\left\|\mu_2\right\|_{L^2}.\end{multline*}
		Finally, as $\displaystyle\sum_{i=0}^n\alpha_i^n(-1)^i=(-1)^n$ and $\alpha_0^n=1$, one obtains the result. Let us extract the leading term of the even good brackets. With same manipulations, one obtains for all $n\geqslant 1$,
		\begin{multline*}
			$$	\left|\gamma_{2n}-\alpha_0^n\displaystyle\sum_{i=0}^n\alpha_i^n\left\langle\left(\mu_1^{(2n-i)}\mu_2^{(2n+i)}-\mu_2^{(2n-i)}\mu_1^{(2n+i)}\right)\ph_1,\ph_K\right\rangle-\right.\\\left.\alpha_0^n\sum_{i=1}^ni\alpha_i^n\left\langle\left(\mu_1^{(2n-i)}\mu_2^{(2n+i-1)}-\mu_2^{(2n-i)}\mu_1^{(2n+i-1)}\right),\ph_1'\ph_K\right\rangle-\right.\\\left.\alpha_1^n\sum_{i=0}^n\alpha_i^n\left\langle\left(\mu_1^{(2n-i)}\mu_2^{(2n+i-1)}-\mu_2^{(2n-i)}\mu_1^{(2n+i-1)}\right),\ph_1'\ph_K\right\rangle\right|\leqslant\left\|\mu_1\right\|_{H^{4n-3}}\left\|\mu_2\right\|_{H^1}.$$
		\end{multline*}
	 Hence, using similar estimates, we obtain the simpler expression
		\begin{multline*}\left|\gamma_{2n}+2\alpha_0^n\left(\sum_{i=0}^ni\alpha_i^n(-1)^i\right)\langle \mu_1^{(4n-2)}\mu_2',(\ph_1\ph_K)'\rangle\right.\\\left.-\left(2\alpha_0^n\sum_{i=0}^ni\alpha_i^n(-1)^i+2\alpha_1^n\sum_{i=0}^n\alpha_i^n(-1)^i\right)\langle \mu_1^{(4n-2)}\mu_2',\ph_1'\ph_K\rangle\right|\leqslant\left\|\mu_1\right\|_{H^{4n-3}}\left\|\mu_2\right\|_{H^1}.\end{multline*}
		By induction, we obtain $\alpha_1^n=2n.$  The sums calculated in Lemma \ref{crochetlie} lead to the result.
	\end{proof}
	\begin{lm}\label{muint} Let $K\in\nn$, $K\geqslant 2$. There exist $\mu_1^n,\mu_{2}^n\in\mathcal{C}_c^{\infty}(0, 1)$ such that
		$$\langle\mu_1^n\ph_1,\ph_K\rangle=\langle\mu_2^n\ph_1,\ph_K\rangle=\gamma_1(\mu_1^n,\mu_2^n)=\cdots=\gamma_{n-1}(\mu_1^n,\mu_2^n)=0,$$
		and
		$$\gamma_n(\mu_1^n,\mu_2^n)=1.$$
	\end{lm}
	\begin{rmq}
		Choosing $\tilde{\mu_1^n}=-\mu_1^n$, one has $\gamma_n(\tilde{\mu_1^n},\mu_2^n)=-1$.
	\end{rmq}
	The proof of this lemma is the same as the proof of \cite[Theorem A.$4$]{bournissou2021quadratic}, using Lemma \ref{lmcroc} instead of \cite[Proposition A.$3$]{bournissou2021quadratic}. 
	
	The proof of the following theorem is mainly the same given by Bournissou in \cite[Theorem  A.$2$]{bournissou2022smalltime}.
	\begin{theorem} Let $n\geqslant 1$, $K\geqslant2 $, $p,m\geqslant 0$, such that $n\leqslant p+1$. There exist $\mu_1,\mu_2$ satisfying $\mathbf{(H)_{reg}}$,  $\mathbf{(H)_{lin,K,1}}$,  $\mathbf{(H)_{lin,K,2}}$,  $\mathbf{(H)_{quad,K,1}}$,  $\mathbf{(H)_{quad,K,2}}$,  $\mathbf{(H)_{quad,K,3}}$ and $\mathbf{(H)_{quad,K,4}}$.
	\end{theorem}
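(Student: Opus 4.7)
The proof adapts that of \cite[Theorem A.2]{bournissou2022smalltime} — which handled the single-input analog — by using Lemma \ref{muint} above as the multi-input replacement of the key building block. The idea is to write $(\mu_1, \mu_2)$ as a smooth background pair plus finitely many local corrections whose supports are pairwise disjoint and located at zeros of $\sin(K\pi \cdot)$, so that the contribution of each correction to the various bracket coefficients is controlled.

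Concretely, I would first fix a smooth background pair $(\tilde\mu_1, \tilde\mu_2)$ satisfying $\mathbf{(H)_{reg}}$ and a strict form of $\mathbf{(H)_{lin,K,2}}$. Polynomials with prescribed vanishing odd-derivative boundary values up to order $2p-1$ and non-vanishing $(2p+1)$-th derivative at the endpoints are a standard choice: by the integration-by-parts identity \eqref{riemann:lebesgue} they produce the sharp two-sided decay $|\langle \tilde\mu_\ell \ph_1, \ph_j \rangle| \asymp j^{-(2p+3)}$. Then I would select pairwise disjoint small intervals $J_1, \ldots, J_n \subset (0,1)$, each centered at a point $\bar x_k$ with $\sin(K\pi \bar x_k) = 0$ and $\sin(\pi \bar x_k) \neq 0$, and in each $J_k$ invoke Lemma \ref{muint} to produce local pairs $(\mu_{1,\pm}^k, \mu_{2,\pm}^k)$ whose crossed brackets $\gamma_1, \ldots, \gamma_{k-1}$ all vanish and whose $\gamma_k$ is exactly $\pm 1$.

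The final candidates would be formed as
\[
\mu_\ell = \tilde\mu_\ell - c_\ell\, \rho_0 + \sum_{k=1}^{n-1} \alpha_{k,\ell}\, \sigma_{k,\ell} + \beta_\ell\, \mu_{\ell,+}^n,
\]
where $\rho_0$ is a local perturbation used to impose $\mathbf{(H)_{lin,K,1}}$, and the $\sigma_{k,\ell}$ are the order-$k$ building blocks — drawn from Lemma \ref{muint} for the crossed brackets $\gamma_k$, or from the single-input analog \cite[Theorem A.4]{bournissou2021quadratic} for the diagonal obstructions $A_k^\ell$. The scalar coefficients $c_\ell, \alpha_{k,\ell}, \beta_\ell$ are determined recursively so as to cancel successively $\langle \mu_\ell \ph_1, \ph_K\rangle$, then $A_1^1, A_1^2, \gamma_1$, then $A_2^1, A_2^2, \gamma_2$, and so on up to order $n-1$; the non-vanishing of $\gamma_n$ is then guaranteed by the final contribution $\beta_\ell\, \mu_{\ell,+}^n$ for any $\beta_\ell > 0$.

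The main obstacle is to certify the triangular structure of this cancellation system: an order-$k$ correction must not alter any bracket of strictly lower order. This is precisely the built-in property of Lemma \ref{muint} for the crossed brackets and of \cite[Theorem A.4]{bournissou2021quadratic} for the diagonal ones. A secondary subtlety is that a perturbation of $\mu_1$ alone couples to the crossed brackets $\gamma_j$ through interaction with $\tilde\mu_2$; this is handled by the localization at a zero of $\sin(K\pi\cdot)$, which forces the leading term in the asymptotic expansion of $\gamma_j$ to vanish — the very mechanism exploited in Lemma \ref{muint}. Finally, taking all correcting amplitudes sufficiently small preserves $\mathbf{(H)_{lin,K,2}}$ by continuity, since the Fourier coefficients of the compactly supported perturbations can be made arbitrarily small relative to the background's uniform lower bound $C/j^{2p+3}$.
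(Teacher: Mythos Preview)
Your sketch has a genuine gap in the decoupling step. The quantities $\gamma_k$ are \emph{bilinear} in $(\mu_1,\mu_2)$: when you add a single-input correction $\sigma$ to $\mu_1$ alone (to adjust some $A_k^1$), the perturbation to the crossed bracket is
\[
\gamma_j(\tilde\mu_1+\sigma,\tilde\mu_2)-\gamma_j(\tilde\mu_1,\tilde\mu_2)=\gamma_j(\sigma,\tilde\mu_2),
\]
and this is nonzero for \emph{every} $j$, including the $j<k$ you have already fixed. Your proposed remedy --- localization of $\sigma$ near a zero of $\sin(K\pi\cdot)$ --- only controls the leading term in the expansion of Lemma~\ref{lmcroc}; the remainder $Q_n(\sigma,\tilde\mu_2)$, bounded by $\|\sigma\|_{H^{4n+1}}\|\tilde\mu_2\|_{L^2}$, has no reason to vanish (and your background $\tilde\mu_2$, being a polynomial, is nonzero on the support of $\sigma$). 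So the recursion is not triangular in the sense you claim, and the building-block vanishing of Lemma~\ref{muint} and \cite[Theorem~A.4]{bournissou2021quadratic} --- which concerns only the brackets of the building block \emph{with itself} --- does not transfer to the cross terms with the background.

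The paper resolves this by a structural support-separation trick that you are missing. It arranges $\text{Supp}(\mu_1)=I\cup I_1$ and $\text{Supp}(\mu_2)=I\cup I_2$ with $I_1\cap I_2=\emptyset$; the corrections for $A_k^1$ are supported in $I_1$, where $\mu_2\equiv 0$. By Lemma~\ref{crochetlie} (and Proposition~\ref{crochet3}), $\gamma_k$ is the $\ph_K$-component of a differential operator built from \emph{pointwise products} of derivatives of $\mu_1$ and $\mu_2$ applied to $\ph_1$; hence any perturbation of $\mu_1$ supported where $\mu_2$ and all its derivatives vanish leaves every $\gamma_k$ \emph{exactly} unchanged. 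Symmetrically for $A_k^2$ on $I_2$. This gives exact decoupling, not an asymptotic one. Note also that this scheme is incompatible with your polynomial background: a polynomial with nonvanishing $(2p+1)$-th boundary derivative cannot vanish on an interval. The paper instead obtains $\mathbf{(H)_{lin,K,2}}$, $\mathbf{(H)_{quad,K,4}}$ and the nonvanishing of $\mu_\ell^{(2p+1)}(0)$ generically via a Baire argument on a space of functions supported in $[0,\eta/2]$, which leaves room for the disjoint intervals $I_1,I_2$ elsewhere in $(0,1)$.
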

	In order to prove this theorem, we reformulate some hypotheses. More precisely, we will note
	\begin{equation}\label{hypref1}
		\text{Supp}(\mu_1),\text{Supp}(\mu_2)\subset [0,1), \end{equation}
	\begin{equation}\label{hypref2} \forall j\in\nn^*\setminus\left\lbrace K\right\rbrace,\quad \langle \mu_1\ph_1,\ph_j\rangle\langle\mu_2\ph_1,\ph_j\rangle\neq0,\end{equation}
	\begin{equation}\label{hypref3}\mu_1^{(2p+1)}(0)\mu_2^{(2p+1)}(0)\neq 0.\end{equation}
	Note that, if $\mu_1,\mu_2\in H^{2(p+m)+3}\cap H^{2p+1}_0((0,1),\rr)$ are two functions, one has
	$$\eqref{hypref1}, \eqref{hypref2},  \eqref{hypref3} \Rightarrow \mathbf{(H)_{reg}},\mathbf{(H)_{lin,K,2}}.$$
	Consequently, we prove the existence of $\mu_1,\mu_2\in H^{2(p+m)+3}\cap H^{2p+1}_0((0,1),\rr)$ verifying  $\mathbf{(H)_{lin,K,1}}$,  $\mathbf{(H)_{quad,K,1}}$,  $\mathbf{(H)_{quad,K,2}}$,  $\mathbf{(H)_{quad,K,3}}$, $\mathbf{(H)_{quad,K,4}}$, $\eqref{hypref1}$, $\eqref{hypref2}$ and $\eqref{hypref3}$.

	\begin{proof}[Idea of proof] Let $K\in\nn$, $K\geqslant 2$ and $\bar{x}\in(0,1)$ such that $\ph_K(\bar{x})=0$. As $\ph_1>0$ on $(0,1)$ and $\ph_K'(\bar{x})>0$, one may assume the existence of $\delta>0$ such that $\ph_1\ph_K>0$ on $(\bar{x},\bar{x}+\delta)$ and $\ph_1\ph_K<0$ on $(\bar{x}-\delta,\bar{x})$. Let $\eta\in(0,\bar{x}-\delta)$ be such that $\ph_1\ph_K\neq 0$ on $(0,\eta)$.
		\vspace{0.5 cm}\\\textit{Step 1: We prove the existence of functions $\mu_1,\mu_2\in H^{2(p+m)+3}\cap H^{2p+1}_0((0,1),\rr)$ satisfying $\mathbf{(H)_{lin,K,1}}$, $\mathbf{(H)_{quad,K,4}}$, $\eqref{hypref1}$, $\eqref{hypref2}$ and $\eqref{hypref3}$.} We use the same method than Bournissou in \cite{bournissou2022smalltime}; we consider
		$$\mathcal{E}:=\left\lbrace \mu_1,\mu_2\in H^{2(p+m)+3}(0,1),  \mu_1\equiv\mu_2\equiv 0 \text{ on }[\frac{\eta}{2},1], \text{ satisfying }\mathbf{(H)_{lin,K,1}}\right\rbrace\cap H^{2p+1}_0(0,1),$$
		$$\mathcal{U}:=\left\lbrace \mu_1,\mu_2\in\mathcal{E}, \quad\mu_1,\mu_2\text{ satisfy } \mathbf{(H)_{quad,K,4}}, \eqref{hypref2}\text{ and }\eqref{hypref3}\right\rbrace.$$		
		The set $\mathcal{E}$ is not empty. The purpose of the step is to prove that $\mathcal{U}$ is not empty. For that, we use the Baire theorem to prove that $\mathcal{U}$ is dense in $\mathcal{E}$.
		\vspace{0.5 cm}\\\textit{Step 2: We prove the existence of functions $\mu_1,\mu_2\in H^{2(p+m)+3}\cap H^{2p+1}_0((0,1),\rr)$ satisfying $\mathbf{(H)_{lin,K,1}}$, $\mathbf{(H)_{quad,K,3}}$, $\mathbf{(H)_{quad,K,4}}$, $\eqref{hypref1}$, $\eqref{hypref2}$ and $\eqref{hypref3}$.}
		We divide the support of $\mu_1$ and $\mu_2$, in two intervals: Supp$(\mu_1)=I\cup I_1$ and Supp$(\mu_2)=I\cup I_2$, with $I_1\cap I_2=\emptyset.$ We considers two functions $\mu_1,\mu_2$ as given in the previous step. If $\mathbf{(H)_{quad,K,3}}$ is not true, we construct, as in \cite{bournissou2022smalltime}, a perturbation, supported on $I$, satisfying $\mathbf{(H)_{quad,K,3}}$, without any consequence on the other properties.
		\vspace{0.5 cm}\\\textit{Step 3: Conclusion} we prove the theorem: we need to take into account the hypotheses $\mathbf{(H)_{quad,K,1}}$ and $\mathbf{(H)_{quad,K,2}}$. For example, if $\mathbf{(H)_{quad,K,1}}$ is not true, we construct, as in \cite{bournissou2022smalltime}, a perturbation, supported on $I_1$, satisfying $\mathbf{(H)_{quad,K,1}}$, without any consequence on the other properties (in particular, the properties $\mathbf{(H)_{quad,K,3}}$ and $\mathbf{(H)_{quad,K,4}}$ are not affected, because $\mu_2\equiv 0$ on $I_1$ (and $\mu_1\equiv 0$ on $I_2$)).
	\end{proof}
	\subsection{Lie brackets on the Schrödinger equation}
	\subsubsection{Definition of Lie brackets in infinite dimension}
	\begin{lm}\label{crochetdiminf}
		Let $k\in\nn$, $\mu_1\in W^{2k,\infty}(0,1)$ with all its derivatives of odd order less than or equal to $2k-3$ that vanish at the boundary. Then, for all $\ph\in H^{2k}_{(0)}(0,1)$, $\underline{\ad}_A^k(B_1)\ph$ is well-defined in $L^2(0,1)$. Moreover, 
		$$\underline{\ad}_A^k(B_1)\ph=\sum_{l=0}^k\alpha_l^k\mu_1^{(2k-l)}\ph^{(l)},$$ with the definitions of the scalar $(\alpha_l^k)_{0\leqslant l\leqslant k}$ given in \eqref{crochetlie}.
	\end{lm}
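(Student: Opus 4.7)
The plan is to proceed by induction on $k$, extending the Leibniz-style computation already performed in the smooth setting of Lemma \ref{crochetlie} to the present Sobolev framework. The base case $k=0$ is immediate: $\underline{\ad}_A^0(B_1)\ph = \mu_1 \ph$ lies in $L^2$ since $\mu_1 \in L^\infty$ and $\ph \in L^2$, and the formula reduces to $\alpha_0^0 = 1$.

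For the inductive step, given $\mu_1 \in W^{2(k+1),\infty}$ with odd derivatives up to order $2k-1$ vanishing at $\{0,1\}$ and $\ph \in H^{2(k+1)}_{(0)}$, I would unwind the defining identity $\underline{\ad}_A^{k+1}(B_1)\ph = \underline{\ad}_A^k(B_1) A\ph - A\, \underline{\ad}_A^k(B_1)\ph$. Since $\ph \in D(A^{k+1})$ gives $A\ph \in H^{2k}_{(0)}$, the induction hypothesis applies to $A\ph$ and yields $\underline{\ad}_A^k(B_1) A\ph = -\sum_{l=0}^k \alpha_l^k \mu_1^{(2k-l)} \ph^{(l+2)}$. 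For the second term, one substitutes the inductive formula for $\underline{\ad}_A^k(B_1)\ph$ and applies $-\partial_x^2$ via Leibniz. The $\ph^{(l+2)}$ contributions cancel exactly in the subtraction; after reindexing the remaining families of terms, the coefficients match $\alpha_l^{k+1}$ through the Pascal-like recursion of Lemma \ref{crochetlie}, with its boundary cases $\alpha_0^{k+1} = \alpha_0^k$ and $\alpha_{k+1}^{k+1} = 2\alpha_k^k$, which closes the induction.

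The main obstacle is ensuring that $\underline{\ad}_A^k(B_1)\ph$ lies in $D(A) = H^2 \cap H^1_0$, so that $A$ can legitimately be applied to it at the next step. The $H^2$-regularity is easy: it follows by Leibniz from $\mu_1 \in W^{2(k+1),\infty}$ and $\ph \in H^{2(k+1)}$. The Dirichlet boundary condition is where the hypothesis on $\mu_1$ is crucial. I would examine each term $\mu_1^{(2k-l)}(x^*) \ph^{(l)}(x^*)$ at $x^* \in \{0,1\}$ separately: when $l$ is even, $\ph^{(l)}(x^*) = (-1)^{l/2}(A^{l/2}\ph)(x^*) = 0$ since $\ph \in D(A^{k+1})$ and $l/2 \leq k$; when $l$ is odd, the exponent $2k-l$ is odd and bounded by $2k-1$, so the boundary hypothesis on $\mu_1$ (equivalent to the preservation of $H^{2(k+1)}_{(0)}$ by multiplication, per Lemma \ref{bndope}) forces $\mu_1^{(2k-l)}(x^*) = 0$. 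Thus each summand vanishes at the endpoints. As a cleaner alternative, if one wishes to avoid this case analysis, the whole statement can instead be proved by approximating $(\mu_1,\ph)$ in their respective norms by smooth functions sharing the same endpoint vanishing, applying Lemma \ref{crochetlie}, and passing to the $L^2$-limit using the continuity of the right-hand side of the formula in those data.
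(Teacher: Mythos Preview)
Your proposal is correct and follows essentially the same induction as the paper: verify that $A\ph\in H^{2k}_{(0)}$ so the hypothesis applies, then check that $\underline{\ad}_A^k(B_1)\ph\in H^2\cap H^1_0$ by the same parity case analysis on the terms $\mu_1^{(2k-l)}\ph^{(l)}$ at the boundary. Your write-up is in fact slightly more explicit than the paper's, since you also spell out the Leibniz cancellation yielding the recursion for $\alpha_l^{k+1}$, whereas the paper leaves this implicit via Lemma~\ref{crochetlie}.
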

	\begin{proof} We prove this lemma by induction on $k$. The case $k=0$ is immediate, because $B_1$ is an operator on $L^2(0,1)$ when $\mu_1\in L^{\infty}(0,1)$. Assume that the result is true for a fixed $k$. We consider $\mu_1\in W^{2k+2,\infty}(0,1)$ such that $\restriction{{\mu_1}^{(2l+1)}}{\{0,1\}}=0$ for $0\leqslant l\leqslant k-1$. Let $\ph\in H^{2k+2}_{(0)}(0,1)$. Then, 
		\begin{enumerate}
			\item[1.] $A\ph\in H^{2k}_{(0)}(0,1)$,  so $\underline{\ad}_A^k(B_1)(A\ph)$ is well-defined, by the induction hypothesis.
			\item[2.] We now prove that $\underline{\ad}_A^k(B_1)\ph\in H^2_{(0)}(0,1)$. By induction hypothesis, one has the equality
			$\underline{\ad}_A^k(B_1)\ph=\displaystyle\sum_{l=0}^k\alpha_l^k\mu_1^{(2k-l)}\ph^{(l)}.$ Then, one has $\underline{\ad}_A^k(B_1)\ph\in H^2(0,1)$.
			Moreover, for all $l\in\llbracket 0,k\rrbracket$, 
			\begin{enumerate}
				\item[a.] if $l$ is odd, $l=2i+1$ and $\restriction{\mu_1^{(2k-2i-1)}\ph^{(l)}}{\{0,1\}}=0$ because $2k-2i-1\leqslant 2k-1$ so $\restriction{\mu_1^{(2k-2i-1)}}{\{0,1\}}=0$,
				\item[b.] otherwise, $l$ is even, $\restriction{\mu_1^{(2k-l)}\ph^{(l)}}{\{0,1\}}=0$ because $l\leqslant k$ and $\ph\in H^{2k+2}_{(0)}(0,1)$.
			\end{enumerate}
			Then, the bracket is well-defined in $L^2(0,1)$. By induction, we obtain the result.
		\end{enumerate}
	\end{proof}
	\begin{prop}\label{crochetdiminf1}
		Let $k\in\nn^*$, $\mu_1\in W^{4k-2,\infty}(0,1)$ with all its derivatives of odd order less than or equal to $4k-5$ that vanish at the boundary. Then, for all $\ph\in H^{3k-1}_{(0)}(0,1)$ the operator $[\underline{\ad}_A^{k-1}(B_1),\underline{\ad}_A^k(B_1)]\ph$ is well-defined in $L^2(0,1)$.
	\end{prop}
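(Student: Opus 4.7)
The plan is to give meaning to the commutator $[P,Q]\ph := P(Q\ph) - Q(P\ph)$ in $L^2(0,1)$, where $P := \underline{\ad}_A^{k-1}(B_1)$ and $Q := \underline{\ad}_A^k(B_1)$. Since $\ph\in H^{3k-1}_{(0)}\subset H^{2k}_{(0)}$, Lemma \ref{crochetdiminf} already ensures that $P\ph$ and $Q\ph$ are defined in $L^2$; the substance is to upgrade their regularity so that a second application of the same lemma produces $P(Q\ph)$ and $Q(P\ph)$ in $L^2$. Concretely, I would prove $Q\ph \in H^{2(k-1)}_{(0)}(0,1)$ and $P\ph \in H^{2k}_{(0)}(0,1)$. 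The underlying Sobolev regularities $Q\ph \in H^{2(k-1)}$ and $P\ph \in H^{2k}$ are immediate from the explicit formula $Q\ph = \sum_{l=0}^k \alpha_l^k\, \mu_1^{(2k-l)}\ph^{(l)}$ given by Lemma \ref{crochetdiminf} (and the analogue for $P\ph$) together with the product rule, using $\ph\in H^{3k-1}_{(0)}$ and $\mu_1 \in W^{4k-2,\infty}$.

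The real work lies in the boundary conditions defining $H^{2(k-1)}_{(0)}$ and $H^{2k}_{(0)}$, namely the vanishing of $(Q\ph)^{(2j)}$ at $\{0,1\}$ for $0\leqslant j\leqslant k-2$ and of $(P\ph)^{(2j)}$ at $\{0,1\}$ for $0\leqslant j\leqslant k-1$. Expanding $(Q\ph)^{(2j)}|_{\{0,1\}}$ by Leibniz produces a finite sum of terms of the form $\binom{2j}{i}\mu_1^{(2k-l+i)}\ph^{(l+2j-i)}|_{\{0,1\}}$. The key observation is that both exponents $2k-l+i$ and $l+2j-i$ carry the parity of $l-i$, so two complementary mechanisms kill each boundary value: if $l-i$ is odd then $\mu_1^{(2k-l+i)}$ is an odd-order derivative of $\mu_1$, vanishing at the boundary by hypothesis (provided the order is $\leqslant 4k-5$); if $l-i$ is even then $\ph^{(l+2j-i)}$ is an even-order derivative of $\ph$, vanishing at the boundary by $\ph\in H^{3k-1}_{(0)}$. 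The same dichotomy will handle $(P\ph)^{(2j)}$.

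The main technical obstacle is the bookkeeping needed to verify that these two rules cover every term that appears, i.e.\ that the orders invoked stay within the admissible ranges. The critical case is the inequality $2k-l+i\leqslant 4k-5$: from $i\leqslant 2j\leqslant 2(k-2)$ and $l\geqslant 0$ one obtains $2k-l+i\leqslant 4k-4$, and the parity remark above (the left-hand side is odd) forces the strict bound $\leqslant 4k-5$ automatically. The parallel verification for the $\ph$-branch uses that even derivatives of $\ph$ of order at most $l+2j\leqslant 3k-3$ vanish at the boundary by $\ph \in H^{3k-1}_{(0)}$. The computation for $P\ph$ is entirely analogous, with $j$ ranging up to $k-1$ and $l$ up to $k-1$. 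Once these boundary conditions are established, a second application of Lemma \ref{crochetdiminf} produces both $P(Q\ph)$ and $Q(P\ph)$ in $L^2(0,1)$, so that $[P,Q]\ph$ is well-defined in $L^2(0,1)$.
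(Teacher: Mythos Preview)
Your proposal is correct and follows essentially the same route as the paper: show $\underline{\ad}_A^{k-1}(B_1)\ph\in H^{2k}_{(0)}$ and $\underline{\ad}_A^{k}(B_1)\ph\in H^{2(k-1)}_{(0)}$ via the explicit expansion of Lemma \ref{crochetdiminf}, Leibniz, and the parity dichotomy on $l-i$ to force either $\mu_1^{(\text{odd})}$ or $\ph^{(\text{even})}$ to vanish at the boundary. Your explicit remark that the naive bound $2k-l+i\leqslant 4k-4$ drops to $4k-5$ by oddness is exactly what the paper uses (implicitly) when it writes $2k-l-2+i\leqslant 2k-2+2j-1$; the two arguments are the same.
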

	\begin{proof}
		Let $\ph\in H^{3k-1}_{(0)}(0,1)$. Then $\ph\in H^{2k}_{(0)}(0,1)$ so $\underline{\ad}_A^k(B_1)\ph$ and $\underline{\ad}_A^{k-1}(B_1)\ph$ are well-defined by Proposition \ref{crochetdiminf} and one has to prove that
		\begin{enumerate}
			\item[1.] $\underline{\ad}_A^{k-1}(B_1)\ph\in H^{2k}_{(0)}(0,1)$. One recalls that $\underline{\ad}_A^{k-1}(B_1)\ph=\displaystyle\sum_{l=0}^{k-1}\alpha_l^{k-1}\mu_1^{(2k-l-2)}\ph^{(l)}$. According to the regularity of $\ph$ and $\mu_1$, this expansion gives $\underline{\ad}_A^{k-1}(B_1)\ph \in H^{2k}(0,1)$. Then, using Leibniz formula, for all $j\in\llbracket 0,k-1\rrbracket$, $l\in\llbracket 0,k-1\rrbracket$, $$\left(\mu_1^{(2k-l-2)}\ph^{(l)}\right)^{(2j)}=\displaystyle\sum_{i=0}^{2j}\binom{2j}{i}\mu_1^{(2k-l-2+i)}\ph^{(l+2j-i)}.$$ Each term of this sum vanish at $x=0,1$. Indeed,
			\begin{enumerate}
				\item[a.] if $i$ and $l$ have the same parity, $l+2j-i$ is even and $l+2j-i\leqslant k-1+2(k-1)=3k-3$ so $\restriction{\ph^{(l+2j-i)}}{\{0,1\}}=0$,
				\item[b.] otherwise, $2k-l-2+i$ is odd and $2k-l-2+i\leqslant 2k-2+2j-1\leqslant 4k-5$ so $\restriction{\mu_1^{(2k-l-2+i)}}{\{0,1\}}=0$.
			\end{enumerate}
			\item[2.] $\underline{\ad}_A^k(B_1)\ph\in H^{2k-2}_{(0)}(0,1)$. This is proven in the same way as point $1$.
		\end{enumerate}
		Consequently, the operator $\left[\underline{\ad}_A^{k-1}(B_1),\underline{\ad}_A^k(B_1)\right]$ is well-defined in $L^2(0,1)$.
	\end{proof}
	Then, if $2\lfloor\frac{n+1}{2}\rfloor-3\leqslant p-1$, Proposition \ref{crochetdiminf1} ensures that the brackets in \eqref{sommecrochet1} and \eqref{sommecrochet2} are well-defined in $L^2(0,1)$.
	\begin{prop}\label{crochetdiminf2}
		Let $k\in\nn$, $\mu_1,\mu_2\in W^{2k,\infty}(0,1)$  with all their derivatives of odd order less than or equal to $2k-3$ that vanish at the boundary Then, for all $\ph\in H^{k+\lfloor\frac{k+1}{2}\rfloor}_{(0)}(0,1)$, $\left[\underline{\ad}_A^{\lfloor\frac{k+1}{2}\rfloor}(B_1),\underline{\ad}_A^{\lfloor\frac k2\rfloor}(B_2)\right]\ph$  is well-defined in $L^2(0,1)$.
	\end{prop}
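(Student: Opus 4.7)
The plan is to mirror the strategy used in Proposition \ref{crochetdiminf1}. Set $a := \lfloor(k+1)/2\rfloor$ and $b := \lfloor k/2\rfloor$, so that $a+b=k$, $|a-b|\le 1$, and the hypothesis $\ph \in H^{k+a}_{(0)}$ reads $\ph \in H^{2a+b}_{(0)}$. Expanding the commutator as $\underline{\ad}_A^a(B_1)\underline{\ad}_A^b(B_2)\ph - \underline{\ad}_A^b(B_2)\underline{\ad}_A^a(B_1)\ph$, it suffices, thanks to Lemma \ref{crochetdiminf} applied to the inner operator, to establish the two inclusions
\begin{enumerate}
\item[(i)] $\underline{\ad}_A^b(B_2)\ph \in H^{2a}_{(0)}(0,1)$, so that $\underline{\ad}_A^a(B_1)$ can be applied afterwards;
\item[(ii)] $\underline{\ad}_A^a(B_1)\ph \in H^{2b}_{(0)}(0,1)$, so that $\underline{\ad}_A^b(B_2)$ can be applied afterwards.
\end{enumerate}

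For (i), I first use Lemma \ref{crochetdiminf} to write $\underline{\ad}_A^b(B_2)\ph = \sum_{l=0}^{b}\alpha_l^b \mu_2^{(2b-l)}\ph^{(l)}$. The $H^{2a}(0,1)$ regularity is immediate from $\mu_2 \in W^{2k,\infty}$ and $\ph \in H^{2a+b}$ since $l \le b$. For the boundary conditions, I apply Leibniz to $\bigl(\mu_2^{(2b-l)}\ph^{(l)}\bigr)^{(2j)}$ for $0\le j\le a-1$, obtaining a sum of terms of the form $\binom{2j}{i}\mu_2^{(2b-l+i)}\ph^{(l+2j-i)}$, and I split according to parity:
\begin{itemize}
\item if $i$ and $l$ have the same parity, then $l+2j-i$ is even with $l+2j-i \le b+2(a-1)=2a+b-2$, so $\ph^{(l+2j-i)}|_{\{0,1\}}=0$ because $\ph \in H^{2a+b}_{(0)}$;
\item otherwise, $2b-l+i$ is odd with $2b-l+i \le 2b+2(a-1)=2k-2$, hence $\le 2k-3$ by parity, so $\mu_2^{(2b-l+i)}|_{\{0,1\}}=0$ by the hypothesis on $\mu_2$.
\end{itemize}

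For (ii), the argument is perfectly symmetric with the roles of $a$ and $b$ exchanged. Expanding $\underline{\ad}_A^a(B_1)\ph=\sum_{l=0}^{a}\alpha_l^a \mu_1^{(2a-l)}\ph^{(l)}$, the Sobolev regularity $H^{2b}(0,1)$ uses $l\le a \le 2a-b$ (since $b\le a$). The boundary analysis with $0\le j\le b-1$ yields terms $\mu_1^{(2a-l+i)}\ph^{(l+2j-i)}$ satisfying: same parity gives $l+2j-i$ even and $\le a+2b-2 \le 2a+b-2$, so $\ph^{(l+2j-i)}$ vanishes at the endpoints; different parity gives $2a-l+i$ odd and $\le 2a+2(b-1)=2k-2$, hence $\le 2k-3$, so $\mu_1^{(2a-l+i)}$ vanishes at the endpoints. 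Once (i) and (ii) are in hand, one further application of Lemma \ref{crochetdiminf} shows that each of the two terms in the commutator is well-defined in $L^2(0,1)$.

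The only delicate point I expect is the bookkeeping of the regularity thresholds: the identity $a+b=k$ is exactly what makes the two parity bounds $l+2j-i \le 2a+b-2$ and $2a-l+i \le 2k-2$ align precisely with the assumption $\ph \in H^{2a+b}_{(0)}$ and with the oddness cutoff $2k-3$ on the derivatives of $\mu_1$ and $\mu_2$; any weaker hypothesis on $\ph$ or on the dipolar moments would break one of the two inclusions. No new ideas beyond those already used in Proposition \ref{crochetdiminf1} are required, only a symmetric version of the same parity argument.
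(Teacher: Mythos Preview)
Your proposal is correct and follows exactly the approach the paper intends: the paper's proof consists of the single sentence ``This proposition can be demonstrated in the same way as the previous one,'' and your argument is precisely that symmetric adaptation of the proof of Proposition~\ref{crochetdiminf1}, with the parity/regularity bookkeeping carried out carefully.
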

	This proposition can be proved in the same way as the previous one. Similarly, if $n-2\leqslant p-1$, the brackets in \eqref{sommecrochet3} are well-defined, thanks to Proposition \ref{crochetdiminf2}.
	\subsubsection{Computation of Lie brackets for the Schrödinger equation}
	\begin{lm}\label{lmcrochet} Let $k\in\nn$ and  $\mu\in W^{2k,\infty}(0,1)$ be such that $\restriction{\mu^{(2l+1)}}{\{0,1\}}=0$ for $0\leqslant l\leqslant k-2$. Then, for all $p,q\in\nn^*$, 
		$$\left\langle\underline{\ad}_{A}^k(\mu)\ph_p,\ph_q\right\rangle=(\lambda_p-\lambda_q)^k\left\langle \mu\ph_p,\ph_q\right\rangle.$$
	\end{lm}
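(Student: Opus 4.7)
The plan is to proceed by induction on $k$, with $k=0$ trivial. For the inductive step from $k-1$ to $k$ with $k\geqslant 1$, I would apply the recursion $\underline{\ad}_A^k(\mu) = \underline{\ad}_A^{k-1}(\mu)A - A\underline{\ad}_A^{k-1}(\mu)$ to $\ph_p$ and pair against $\ph_q$, using $A\ph_p = \lambda_p\ph_p$ to obtain
\begin{equation*}
\langle\underline{\ad}_A^k(\mu)\ph_p,\ph_q\rangle = \lambda_p \langle\underline{\ad}_A^{k-1}(\mu)\ph_p,\ph_q\rangle - \langle A\underline{\ad}_A^{k-1}(\mu)\ph_p,\ph_q\rangle.
\end{equation*}
The hypothesis at level $k$ is strictly stronger than at level $k-1$, so the induction hypothesis applies to the first term, turning it into $\lambda_p(\lambda_p-\lambda_q)^{k-1}\langle\mu\ph_p,\ph_q\rangle$. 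If one can transfer $A$ onto $\ph_q$ in the second term without boundary contributions, it becomes $\lambda_q(\lambda_p-\lambda_q)^{k-1}\langle\mu\ph_p,\ph_q\rangle$, and subtracting produces the desired factor $(\lambda_p-\lambda_q)^k$.

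The heart of the argument is therefore the vanishing of the boundary terms. Setting $u := \underline{\ad}_A^{k-1}(\mu)\ph_p$, integration by parts gives
\begin{equation*}
\langle Au,\ph_q\rangle - \langle u,A\ph_q\rangle = -[u'\ph_q]_0^1 + [u\,\ph_q']_0^1;
\end{equation*}
the first bracket vanishes since $\ph_q \in H^1_0$, and the second vanishes provided $u(0)=u(1)=0$. The explicit expansion from Lemma \ref{crochetdiminf},
\begin{equation*}
u = \sum_{l=0}^{k-1}\alpha_l^{k-1}\mu^{(2k-2-l)}\ph_p^{(l)},
\end{equation*}
reduces this to a parity check: terms with $l$ even contribute zero because every even-order derivative of $\ph_p = \sqrt{2}\sin(p\pi\,\cdot)$ vanishes at $\{0,1\}$; for $l$ odd, the exponent $2k-2-l$ is odd and lies in $\{1,3,\dots,2k-3\}$, precisely the range where the hypothesis forces $\mu^{(2j+1)}$ to vanish at the endpoints.

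The main obstacle is purely combinatorial: confirming that the stated hypothesis is exactly tight for this integration by parts (one more odd derivative needing to vanish would spoil the induction). No additional analytic work is required, as Lemma \ref{crochetdiminf} already furnishes well-definedness of $u$ in $L^2(0,1)$ and of the pairings against $\ph_q$.
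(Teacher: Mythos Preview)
Your proof is correct and follows the same inductive approach as the paper: apply the recursion $\underline{\ad}_A^k(\mu) = [\underline{\ad}_A^{k-1}(\mu),A]$, use $A\ph_p=\lambda_p\ph_p$, and transfer $A$ onto $\ph_q$. Your explicit verification that the boundary terms vanish (via the parity argument on the expansion from Lemma \ref{crochetdiminf}) is in fact more careful than the paper, which simply invokes ``the symmetry property of $A$'' without justifying that $\underline{\ad}_A^{k-1}(\mu)\ph_p$ lies in $H^1_0(0,1)$.
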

	\begin{proof} Let $k\in\nn$. As $\ph_p,\ph_q\in H^{2k+2}_{(0)}(0,1)$, one has
		$$\left\langle\underline{\ad}_A^{k+1}(\mu)\ph_p,\ph_q\right\rangle=\left\langle[\underline{\ad}_A^k(\mu),A]\ph_p,\ph_q\right\rangle=\left\langle(\lambda_p I_d-A)\underline{\ad}_A^k(\mu)\ph_p,\ph_q\right\rangle.$$
		Using the symmetry property of $A$,
		$$\left\langle\underline{\ad}_A^{k+1}(\mu)\ph_p,\ph_q\right\rangle=\left\langle\underline{\ad}_A^k(\mu)\ph_p,(\lambda_p I_d-A)\ph_q\right\rangle=(\lambda_p-\lambda_q)\left\langle\underline{\ad}_A^k(\mu)\ph_p,\ph_q\right\rangle.$$
		Moreover, the result in true for $k=0$. An induction concludes the proof.
	\end{proof}
	\begin{prop}\label{crochet}
		Let $k\in\nn^*$, $K\geqslant 2$ and $\mu_1\in W^{4k-2,\infty}(0,1)$ be such that $\restriction{{\mu_1}^{(2l+1)}}{\{0,1\}}=0$ for $0\leqslant l\leqslant 2k-3$. Then,
		\begin{equation*}\left\langle[\underline{\ad}_A^{k-1}(\mu_1),\underline{\ad}_A^k(\mu_1)]\ph_1,\ph_K\right\rangle=2(-1)^kA_k^1.
		\end{equation*}
	\end{prop}
	\begin{proof} By definition,
		$$2(-1)^kA_k^1=\sum_{j=1}^{+\infty}c_j(\lambda_j-\lambda_1)^{k-1}(\lambda_K-\lambda_j)^k-\sum_{j=1}^{+\infty}c_j(\lambda_j-\lambda_1)^{k}(\lambda_K-\lambda_j)^{k-1}.$$
		Using Lemma \ref{lmcrochet}, we obtain
		$$\sum_{j=1}^{+\infty}\left(\left\langle\underline{\ad}_A^{k-1}(\mu_1)\ph_j,\ph_1\right\rangle\left\langle\underline{\ad}_A^k(\mu_1)\ph_K,\ph_j\right\rangle-\left\langle\underline{\ad}_A^k(\mu_1)\ph_j,\ph_1\right\rangle\left\langle\underline{\ad}_A^{k-1}(\mu_1)\ph_K,\ph_j\right\rangle\right).$$
		Using the symmetry of the operators, we get
		$$2(-1)^kA_k^1=(-1)^{k-1}\left\langle\underline{\ad}_A^{k-1}(\mu_1)\ph_1,\underline{\ad}_A^k(\mu_1)\ph_K\right\rangle+(-1)^{k-1}\left\langle\underline{\ad}_A^{k}(\mu_1)\ph_1,\underline{\ad}_A^{k-1}(\mu_1)\ph_K\right\rangle.$$
		Then, 
		$$2(-1)^kA_k^1=-\left\langle\underline{\ad}_A^k(\mu_1)\left(\underline{\ad}_A^{k-1}(\mu_1)\ph_1\right),\ph_K\right\rangle+\left\langle\underline{\ad}_A^{k-1}(\mu_1)\left(\underline{\ad}_A^{k}(\mu_1)\ph_1\right),\ph_K\right\rangle.$$
		Finally, 
		$$2(-1)^kA_k^1=\left\langle[\underline{\ad}_A^{k-1}(\mu_1),\underline{\ad}_A^{k}(\mu_1)]\ph_1,\ph_K\right\rangle.$$
	\end{proof}
	\begin{prop}\label{crochet3}
		Let $k\in\nn$, $K\geqslant 2$ and $\mu_1,\mu_2\in W^{2k,\infty}(0,1)$ such that $\restriction{{\mu_i}^{(2l+1)}}{\{0,1\}}=0$ for $0\leqslant l\leqslant k-2$ and $i\in\{1,2\}$. Then,
		\begin{equation*}\left\langle\left[\underline{\ad}_A^{\lfloor\frac{k+1}{2}\rfloor}(\mu_1),\underline{\ad}_A^{\lfloor\frac{k}{2}\rfloor}(\mu_2)\right]\ph_1,\ph_K\right\rangle=(-1)^k\gamma_k.
		\end{equation*}
	\end{prop}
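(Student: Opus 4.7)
The plan is to mimic the strategy used for Proposition \ref{crochet}, expanding the commutator via a resolution of the identity on the orthonormal basis $(\ph_j)_{j\geqslant1}$, and then converting each application of $\underline{\ad}_A^{\ell}(\mu_i)$ into a scalar factor using Lemma \ref{lmcrochet}. Setting $a:=\lfloor\frac{k+1}{2}\rfloor$ and $b:=\lfloor\frac{k}{2}\rfloor$, one has $a+b=k$, and the regularity hypothesis on $\mu_1,\mu_2$ ensures that Lemma \ref{lmcrochet} applies to $\underline{\ad}_A^a(\mu_1)$ and $\underline{\ad}_A^b(\mu_2)$ on the eigenfunctions $\ph_1$ and $\ph_K$.

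The first step is to write, using the fact that $(\ph_j)$ is a Hilbert basis of $L^2(0,1)$,
\[
\left\langle\underline{\ad}_A^a(\mu_1)\underline{\ad}_A^b(\mu_2)\ph_1,\ph_K\right\rangle
=\sum_{j=1}^{+\infty}\left\langle\underline{\ad}_A^b(\mu_2)\ph_1,\ph_j\right\rangle\left\langle\underline{\ad}_A^a(\mu_1)\ph_j,\ph_K\right\rangle,
\]
and analogously for the other term of the commutator with the roles of $\mu_1$ and $\mu_2$ exchanged. Then Lemma \ref{lmcrochet} gives
\[
\left\langle\underline{\ad}_A^b(\mu_2)\ph_1,\ph_j\right\rangle=(\lambda_1-\lambda_j)^b\langle\mu_2\ph_1,\ph_j\rangle,\qquad \left\langle\underline{\ad}_A^a(\mu_1)\ph_j,\ph_K\right\rangle=(\lambda_j-\lambda_K)^a\langle\mu_1\ph_j,\ph_K\rangle,
\]
so that the first term equals $(-1)^{a+b}\sum_j(\lambda_K-\lambda_j)^a(\lambda_j-\lambda_1)^b d_j$. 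The second term, treated the same way, equals $(-1)^{a+b}\sum_j(\lambda_K-\lambda_j)^b(\lambda_j-\lambda_1)^a \tilde d_j$.

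Subtracting the two and using $a+b=k$, $a=\lfloor\frac{k+1}{2}\rfloor$, $b=\lfloor\frac{k}{2}\rfloor$ gives exactly $(-1)^k\gamma_k$, which is the claim. The convergence of the series inserted by the resolution of the identity poses no issue because $\underline{\ad}_A^b(\mu_2)\ph_1$ and $\underline{\ad}_A^a(\mu_1)^*\ph_K$ lie in $L^2(0,1)$ under the stated regularity assumptions on $\mu_1,\mu_2$; this well-posedness is precisely the content of Propositions \ref{crochetdiminf} and \ref{crochetdiminf2}. The main (and really only) technical point to check is therefore the self-adjointness property used to move $\underline{\ad}_A^a(\mu_1)$ from the left factor to $\ph_j$ in the inner product: since $\mu_1$ is real-valued and $A$ is self-adjoint with real eigenfunctions, the iterated bracket $\underline{\ad}_A^a(\mu_1)$ is formally symmetric on the smooth subspace where all computations take place, which combined with the boundary-vanishing of the odd derivatives of $\mu_1$ allows the repeated integrations by parts behind Lemma \ref{lmcrochet} to be carried out without boundary contributions.
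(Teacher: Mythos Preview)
Your proof is correct and follows essentially the same approach as the paper: both arguments hinge on Lemma \ref{lmcrochet}, the expansion in the orthonormal basis $(\ph_j)_{j\geqslant1}$, and the (skew-)symmetry of $\underline{\ad}_A^\ell(\mu)$; the only cosmetic difference is the direction of the computation (you go from the commutator towards $\gamma_k$, the paper goes from $\gamma_k$ towards the commutator).
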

	\begin{proof} 
		Using Lemma \ref{lmcrochet}, we notice that
		\begin{equation*}\begin{gathered}
			\gamma_{k}= \displaystyle\sum_{j=1}^{+\infty}\left(\left\langle\underline{\ad}_A^{\lfloor\frac{k+1}{2}\rfloor}(\mu_1)\ph_K,\ph_j\right\rangle\left\langle\underline{\ad}_A^{\lfloor\frac{k}{2}\rfloor}(\mu_2)\ph_j,\ph_1\right\rangle\right.\\\left.-\left\langle\underline{\ad}_A^{\lfloor\frac{k}{2}\rfloor}(\mu_2)\ph_K,\ph_j\right\rangle\left\langle\underline{\ad}_A^{\lfloor\frac{k+1}{2}\rfloor}(\mu_1)\ph_j,\ph_1\right\rangle\right)\end{gathered}
		\end{equation*}
		As $(\ph_j)_{j\geqslant 1}$ is an orthonormal basis of $L^2$ and thanks to the symmetry/skew-symmetry of the operator,
		\begin{equation*}\begin{gathered}
			\gamma_k=(-1)^{\lfloor\frac{k}{2}\rfloor} \left\langle\underline{\ad}_A^{\lfloor\frac{k+1}{2}\rfloor}(\mu_1)\ph_K,\underline{\ad}_A^{\lfloor\frac{k}{2}\rfloor}(\mu_2)\ph_1\right\rangle\\-(-1)^{\lfloor\frac{k+1}{2}\rfloor}\left\langle\underline{\ad}^{\lfloor\frac{k}{2}\rfloor}_A(\mu_2)\ph_K,\underline{\ad}^{\lfloor\frac{k+1}{2}\rfloor}_A(\mu_1)\ph_1\right\rangle.\end{gathered}
		\end{equation*}
		Once again, using the symmetry/skew-symmetry of the operator,
		$$\gamma_k=(-1)^{\lfloor\frac{k}{2}\rfloor+\lfloor\frac{k+1}{2}\rfloor} \left\langle\left[\underline{\ad}_A^{\lfloor\frac{k+1}{2}\rfloor}(\mu_1),\underline{\ad}_A^{\lfloor\frac{k}{2}\rfloor}(\mu_2)\right]\ph_1,\ph_K\right\rangle.$$
		This equality completes the proof.
	\end{proof}
	
	\subsection{Some elements of proof in finite dimension} 
	\subsubsection{A proof of Theorem \ref{Main_DF} via Sussmann's $S(\theta)$-condition}\label{sussman}
	In this section, we present a proof of Theorem \ref{Main_DF} relying on Sussmann's $S(\theta)$-sufficient condition 
	(see \cite[Theorem 7.3]{doi:10.1137/0325011}), recalled in Proposition \ref{Prop:Sussmann} below (see \cite[Theorem 3.29]{coronbook}), 
	for which we need the following definition.
	
	\begin{defi}
		The map $\sigma: \Br(X) \mapsto \mathcal{L}(X)$ is defined by $\sigma(b)=\ev(b)+\pi(\ev(b))$, where
		$\pi:\mathcal{L}(X) \mapsto \mathcal{L}(X)$ is the unique morphism of Lie algebra such that $\pi(X_0)=X_0$, $\pi(X_1)=X_2$ and $\pi(X_2)=X_1$.
	\end{defi}
	
	For instance, if $b=(X_1,(X_1,X_0))$ then $\sigma(b)=[X_1,[X_1,X_0]]+[X_2,[X_2,X_0]]$.
	
	\begin{prop}[Sussmann's $S(\theta)$-condition] \label{Prop:Sussmann}
		Let $f_0, f_1, f_2$ be analytic vector fields on a neighborhood of $0$ in $\rr^d$ 
		that satisfy the Lie algebra rank condition, Lie$(f_0,f_1,f_2)(0)=\rr^d$.
		If there exists $\theta\in[0,1]$ such that, for every $\mathfrak{b} \in \Br(X)$ with $n_0(\mathfrak{b})$ odd and both $n_1(\mathfrak{b})$ and $n_2(\mathfrak{b})$ even,
		we have
		\begin{equation} \label{Hyp_Sussm}
			f_{\sigma(\mathfrak{b})}(0) \in \spn\{ f_b(0); \ b \in \Br(X), n(b)+\theta n_0(b)<n(\mathfrak{b}) +\theta n_0(\mathfrak{b})\},
		\end{equation}
		then the system $x'=f_0(x)+uf_1(x)+v f_2(x)$ is $L^\infty$-STLC.
	\end{prop}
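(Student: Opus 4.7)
The plan is to follow Sussmann's original strategy, combining a Chen--Fliess/Magnus representation of the end-point map with a symmetrization procedure exploiting the weighted degree $\|b\|_\theta := n(b) + \theta n_0(b)$. The starting point is an expansion of $x(T;(u,v),0)$ along the Hall basis $\B$, analogous to Proposition~\ref{Prop:rep_form}, of the form $x(T;(u,v),0) = \sum_{b\in\B} \xi_b(T,(u,v)) f_b(0) + \text{remainder}$, truncated at any prescribed order. Under the rescaling $u(t) \mapsto \lambda\, u(\lambda^\theta t)$, $v(t) \mapsto \lambda\, v(\lambda^\theta t)$ on a time interval of length $\lambda^{-\theta} T$, the homogeneity property \eqref{homogeneous} yields $\xi_b \mapsto \lambda^{\|b\|_\theta} \xi_b$, so the dominant terms in the small-$\lambda$ limit are those of minimal weighted degree.

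The heart of the proof is a symmetrization argument. For a fixed base control $(u,v)$, I would consider the four sign-variants $(\pm u, \pm v)$ and form an average of the corresponding end-points. Under this averaging only brackets $\mathfrak{b}$ with both $n_1(\mathfrak{b})$ and $n_2(\mathfrak{b})$ even survive, while brackets with at least one of $n_1,n_2$ odd are killed by cancellation. Among the survivors, those with $n_0(\mathfrak{b})$ even can further be eliminated by a time-reversal/time-concatenation trick (or equivalently by symmetrizing the controls in time), leaving as potential drifts exactly the brackets $\mathfrak{b}$ with $n_0(\mathfrak{b})$ odd, $n_1,n_2$ even. These are precisely the $\sigma(\mathfrak{b})$-brackets singled out by the hypothesis \eqref{Hyp_Sussm}.

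The next step is the cancellation of these remaining obstructions. The $S(\theta)$-condition asserts that each such $f_{\sigma(\mathfrak{b})}(0)$ lies in the span of $f_b(0)$ for brackets of strictly smaller weighted degree $\|b\|_\theta$. By the Chen--Fliess expansion, one can therefore construct a lower-order correction of the controls whose leading contribution is exactly $-f_{\sigma(\mathfrak{b})}(0)\cdot\xi_{\mathfrak{b}}$, neutralizing the drift. Iterating this compensation over the finite list of obstructing brackets up to a sufficiently large length, and combining with the Lie algebra rank condition to supply a full basis of reachable directions from the non-obstructed brackets, produces a continuous map $z \in B(0,\rho) \subset \rr^d \mapsto (u_z,v_z)$ whose associated end-points are a small perturbation of $z \mapsto \sum_i z_i \xi_i$ for a chosen basis $(\xi_i)$ of $\rr^d$. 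A Brouwer fixed-point argument, in the spirit of Theorem~\ref{STCLvecteurtangent}, then yields local surjectivity of the end-point map in arbitrarily small time.

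The main difficulty, as I see it, is the bookkeeping of the induction on the weighted degree. One must verify that the controls introduced at step $k$ to compensate a bad bracket of weight $w_k$ do not themselves reintroduce bad brackets of weight $\leqslant w_k$, which forces a simultaneous control of the $L^\infty$-size of the correcting controls (typically of the form $\|u_z\|_\infty = O(|z|^{\alpha})$ with $\alpha$ tuned so that higher-order Chen--Fliess residuals are absorbed in the error term). A secondary technical point is the justification that $\theta \in [0,1]$ rather than just $\theta = 0$ or $\theta = 1$ is admissible: this requires allowing a genuine time-rescaling (not just amplitude rescaling) in the homogeneity argument, which is what makes the interpolated weighted degree $n(b) + \theta n_0(b)$ natural.
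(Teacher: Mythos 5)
The paper does not prove this proposition. It is stated as a recollection of Sussmann's result and cited to \cite[Theorem 7.3]{doi:10.1137/0325011} and to \cite[Theorem 3.29]{coronbook}; the appendix then \emph{uses} it as a black box to give an alternative proof of Theorem~\ref{Main_DF}. So there is no internal proof against which to compare your attempt: the paper's "approach" is simply to invoke the literature. Your sketch attempts a from-scratch reproof, which is a genuinely different (and much longer) route; if you really want to rederive the $S(\theta)$-condition, you are essentially reproducing Sussmann's 1987 paper.

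As a free-standing sketch it has two concrete gaps. First, the averaging step: averaging over the four sign variants $(\pm u,\pm v)$ only kills brackets with $n_1$ or $n_2$ odd, but it does not by itself produce the symmetrization $\sigma(\mathfrak{b})=\ev(\mathfrak{b})+\pi(\ev(\mathfrak{b}))$ that appears in \eqref{Hyp_Sussm}; you must also average over the input permutation $u\leftrightarrow v$, which is where the $X_1\leftrightarrow X_2$ swap $\pi$ comes from. Without that, the hypothesis that $f_{\sigma(\mathfrak{b})}(0)$ (rather than $f_{\mathfrak{b}}(0)$) lies in the span never gets engaged. Second, the claim that brackets with $n_0(\mathfrak{b})$ even can be "eliminated by a time-reversal/time-concatenation trick" is asserted but not justified, and it is precisely this parity structure that Sussmann's argument handles carefully; likewise the inductive neutralization "over the finite list of obstructing brackets" needs a genuine bookkeeping lemma showing that the correcting controls, introduced at each step, contribute only at strictly higher $\theta$-weight, uniformly in the scaling parameter, so the Chen--Fliess tail is absorbed. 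You flag this difficulty at the end, which is honest, but it is the heart of the proof rather than a side issue. For the purposes of this paper the reasonable move is to do exactly what the author does: state the proposition with the two references and move on.
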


	\begin{proof}[Proof of Theorem \ref{Main_DF} thanks to Proposition \ref{Prop:Sussmann}]
		We prove by induction on $m \in \mathbb{N}$ that for every $d \in \nn^*$, the assumptions of Theorem \ref{Main_DF} imply $W^{m,\infty}$-STLC.
		
		Initialization $(m=0)$: we prove $L^\infty$-STLC by applying Proposition \ref{Prop:Sussmann}. 
		First \eqref{LARC} gives the Lie algebra rank condition $\spn(f_{b_1}(0),\cdots f_{b_d}(0))=\rr^d,$ with $b_1,\cdots,b_r\in\mathcal{B}_1$ such that $\spn(f_{b_1}(0),\cdots f_{b_r}(0))=S_1(f)(0)$ and $b_{r+1},\cdots,b_d\in\mathcal{B}_{2,good}$. One considers $$0<\theta\leqslant \frac{1}{\underset{i\in\llbracket 1,d\rrbracket}\max |b_i|}.$$ \\
		Let $L=\underset{i\in\llbracket r+1,d\rrbracket}\max |b_i|$. Let $\mathfrak{b} \in \Br(X)$ be such that $n_0(\mathfrak{b})$ is odd and both $n_1(\mathfrak{b})$ and $n_2(\mathfrak{b})$ are even. Then,
		\begin{enumerate}
			\item either $n(\mathfrak{b})\geqslant 4$ and then \eqref{LARC} gives the compensation because, for all $i\in\llbracket 1,d\rrbracket$,
			$$n(b_i)+\theta n_0(b_i)\leqslant 2+\theta \left(\underset{i\in\llbracket 1,d\rrbracket}\max |b_i|-1\right)<3<4+\theta n_0(\mathfrak{b})\leqslant n(\mathfrak{b})+\theta n_0(\mathfrak{b}),$$
			\item or $n(\mathfrak{b})=2$ i.e.\ $(n_1(\mathfrak{b}) ,n_2(\mathfrak{b}) )\in \{ (2,0) , (0,2) \}$ i.e.\
			$\sigma(\mathfrak{b}) \in \spn\{ \B_{2,bad} \}$. Thus
			\begin{enumerate} 
				\item[a.] if $|\mathfrak{b}|\leqslant L$, \eqref{compensationb2bad} gives, for all $i\in\llbracket 1,r\rrbracket$,
				$$n(b_i)+\theta n_0(b_i)= 1+\theta n_0(b_i)< 2\leqslant n(\mathfrak{b})+\theta n_0(\mathfrak{b}),$$
				\item[b.] if $|\mathfrak{b}|> L$,  the LARC \eqref{LARC} gives once again the compensation because, 
			\end{enumerate}
			$$\begin{array}{rl}
				n(b_i)+\theta n_0(b_i)\leqslant 1+\theta \left(|b_i|-1\right)<2+\theta n_0(\mathfrak{b})=n(\mathfrak{b})+\theta n_0(\mathfrak{b}), &\text{ for }i\in\llbracket 1,r\rrbracket,\\
				n(b_i)+\theta n_0(b_i)\leqslant 2+\theta \left(L-2\right)<2+\theta(L-1)\leqslant n(\mathfrak{b})+\theta n_0(\mathfrak{b}), &\text{ for }i\in\llbracket r+1,d\rrbracket.
			\end{array}$$
		\end{enumerate}
			Theses points prove \eqref{Hyp_Sussm}. The system $x'=f_0(x)+uf_1(x)+vf_2(x)$ is $L^{\infty}-$STLC.
			\par
		Heredity: we assume that the result is proved up to some $m\in\nn$.
		We consider the extended system
		$$\left\lbrace \begin{array}{l}
			\dot x = f_0(x) + u f_1(x) + v f_2(x) \\
			\dot{u}=w \\
			\dot{v}=\omega
		\end{array}\right.,$$
		with state $Y=(x,u,v) \in \mathbb{R}^{d+2}$ and control $(w,\omega)$, i.e.\ $Y'=F_0(Y)+wF_1(Y)+\omega F_2(Y)$ with
		$$
		F_0(x,u,v)=\begin{pmatrix}
			f_0(x) + u f_1(x) + v f_2(x) \\ 0 \\ 0
		\end{pmatrix}, \quad
		F_1(x,u,v)=\begin{pmatrix}
			0 \\ 1 \\ 0
		\end{pmatrix}, \quad
		F_2(x,u,v)=\begin{pmatrix}
			0 \\ 0 \\ 1
		\end{pmatrix}.
		$$
		Then, for every $i \in\llbracket1,2\rrbracket$,
		$$F_{M_1^i}(Y)=[F_i,F_0](Y)=\begin{pmatrix}
			f_i(x) \\ 0 \\ 0
		\end{pmatrix}.$$
		Thus, when computing $F_{M_j^i}$ for $j\geqslant 2$,
		the derivatives with respect to $u$ and $v$ never come into play and we obtain
		$$\forall i \in\llbracket1,2\rrbracket,\ \forall j \geqslant 1, \quad 
		F_{M_j^i}(Y)=\begin{pmatrix}
			\underline{\ad}_{f_0+uf_1+vf_2}^{j-1}(f_i)(x)
			\\ 0 \\ 0
		\end{pmatrix}.$$
		Thus, for every $i \in\llbracket1,2\rrbracket$ and $l \in \nn$, $F_{W_{1,l}^i} = 0$ 
		and for every $i \in\llbracket1,2\rrbracket$, $j \geqslant 2$, $l\in\nn$,
		$$F_{W_{j,l}^i} (Y) =
		\begin{pmatrix}
			\underline{\ad}_{f_0+uf_1+vf_2}^l [\underline{\ad}_{f_0+uf_1+vf_2}^{j-2} (f_i) , \underline{\ad}_{f_0+uf_1+vf_2}^{j-1}(f_i)] (x)
			\\ 0 \\ 0
		\end{pmatrix}.$$
		Moreover, for every $l\in\nn$, $F_{C_{0,l}} = F_{C_{1,l}}=0$ and, for every $j\geqslant 2$, $l\in\nn$,
		$$F_{C_{j,l}}(Y) =
		\begin{pmatrix}
			(-1)^j\underline{\ad}_{f_0+uf_1+vf_2}^l \left[\underline{\ad}_{f_0+uf_1+vf_2}^{\left\lfloor\frac{j+1}{2}\right\rfloor-1} (f_1) , \underline{\ad}_{f_0+uf_1+vf_2}^{\left\lfloor\frac{j}{2}\right\rfloor-1}(f_2)\right] (x)
			\\ 0 \\ 0
		\end{pmatrix}.$$
		In particular, for every $i\in\llbracket1,2\rrbracket$, $j \geqslant 1$,
		$F_{M_j^i}(0)=\begin{pmatrix}
			f_{M_{j-1}^i}(0)
			\\ 0 \\ 0
		\end{pmatrix}.$
		\\Moreover, for every  $j \geqslant 2$, $l\in\nn$,
		$$F_{W_{j,l}^i} (0)
		= \begin{pmatrix}
			f_{W_{j-1,l}^i}(0)
			\\ 0 \\ 0
		\end{pmatrix}, \qquad F_{C_{j,l}}(0) = 
		\begin{pmatrix}
			f_{C_{j-2,l}}(0)	\\ 0 \\ 0
		\end{pmatrix}.$$
		Thus, the extended system satisfies the assumptions of Theorem \ref{Main_DF} with dimension $d+2$.
		By the induction hypothesis, the extended system is $W^{m,\infty}$-STLC.
		By choosing trajectories of the extended system starting and finishing at $u,v=0$, 
		one concludes that the initial system $x'=f_0(x)+uf_1(x)+v f_2(x)$ is $W^{m+1,\infty}$-STLC.
	\end{proof}
	\subsubsection{Proof of the representation formula of Proposition \ref{Prop:rep_form}}
		\begin{defi}[Support]
		Let $a\in\mathcal{L}(X)$. For $b\in\mathcal{B}$, a Hall set, we denote by $\langle a, b\rangle$ the coefficient of $\ev(b)$ in the expansion of $a$ on the basis $\ev(\mathcal{B})$. We define
		$$\text{supp}(a) := \{b\in\mathcal{B}, \ \langle a,b\rangle\neq 0\}.$$  If $A \subset\Br(X)$, we let supp$(A) := \bigcup_{a\in A}\text{supp}(a)$. 
	\end{defi}
	\begin{defi}We consider the (unital associative) algebra $\widehat{\mathcal{A}}(X)$ of formal power series generated by $\mathcal{A}(X)$. An element $a\in\widehat{\mathcal{A}}(X)$ is a sequence $a=(a_n)_{n\in\nn}$ written $a =\sum_{n\in\nn}a_n$, where $a_n\in\mathcal{A}_n(X)$ with, in particular, $a_0\in\mathbb{R}$ being its constant term. We also define the Lie algebra of formal Lie series $\widehat{\mathcal{L}}(X)$ as the Lie algebra of formal power series $a\in\widehat{\mathcal{A}}(X)$ for which $a_n\in\mathcal{L}(X)$ for each $n\in\nn$.
	\end{defi}
	We consider the formal differential equation
	\begin{equation} \label{formalODE}
		\left\{
		\begin{aligned}
			\dot{y}(t) & = y(t)(X_0+u(t) X_1 + v(t) X_2), \\
			y(0) & = 1,
		\end{aligned}
		\right.
	\end{equation}
	whose solution is a formal series valued function $y:\rr_+ \rightarrow \widehat{\mathcal{A}}(X)$ (see \cite[Section $2.2$]{Beauchard_2023}).
	By \cite[Section $2.4$, Theorem 41]{Beauchard_2023}, for every $t>0$, there exists $\mathcal{Z}_\infty(t;X,(u,v)) \in \widehat{\mathcal{L}}(X)$ such that
	\begin{equation} \label{Magnus1.1}
		y(t) = \exp (t X_0) \exp \left( \mathcal{Z}_\infty(t;X,(u,v)) \right).
	\end{equation}
	If $\B \subset \Br(X)$ is a Hall set, there exists a unique family $(\eta_b)_{b \in \B}$ of maps $\rr_+ \times L^1_{loc}(\rr_+,\rr)^2 \rightarrow \rr$,
	called coordinates of the pseudo-first kind associated with $\B$ and they satisfy the announced homogeneity properties in \eqref{homogeneous}.
	Moreover, by \cite[Section 2.5]{Beauchard_2023}, for every $t>0$ and $u,v \in L^1((0,t),\rr)$,
	\begin{equation} \label{Prod_Inf}
		y(t) = \underset{b \in \mathcal{B}}{\overset{\leftarrow}{\prod}} e^{\xi_b(t,(u,v))b}. 
	\end{equation}
	Let $\B$ be a Hall set as in Proposition \ref{Prop:Bs}. We deduce from (\ref{Magnus1.1}), (\ref{Prod_Inf}) and the maximality of $X_0$ in $\B$ that
	$$\exp \left( \mathcal{Z}_\infty(t;X,(u,v)) \right) = \underset{b \in \mathcal{B} \setminus \{X_0\}}{\overset{\leftarrow}{\prod}} e^{\xi_b(t,(u,v))b} .$$
	By applying  the multivariate CBHD formula \cite[Proposition 34]{Beauchard_2023} to this expression, one obtains
	$\eta_b=\xi_b$ for every $b \in \B_1$ and for every $b \in \B_2$,
	$$\eta_{b} (t,(u,v)) = \xi_{b} (t,(u,v))
	+ \frac{1}{2} \sum \delta_{(b_1,b_2),b} \xi_{b_1}(t,(u,v)) \xi_{b_2}(t,(u,v)) $$
	where the sum is indexed by the set $\{ b_1>b_2 \in \B_1; \ b \in \text{supp}(b_1,b_2) \}$
	and $\delta_{(b_1,b_2),b}$ denotes the coefficient of $b$ in the expansion of $(b_1,b_2)$ on the basis $\B$. 
	In particular this sum is finite and involves only elements $b_1, b_2 \in \B_1$ such that $n_0(b_1)+n_0(b_1)=n_0(b)$. 
	
	The estimate (\ref{x=Z2+O}) is proved in \cite[Proposition 161]{Beauchard_2023}.
	The absolute convergence is proved in \cite[Proposition 103]{Beauchard_2023} and relies on \cite[Theorem 1.9]{Beauchard_2022}. 
	\subsection{Some lemmas about ODEs}
	\begin{lm}\label{gronwallmag}
		Let $\delta>0$ and $z\in\mathcal{C}^1(B(0,{\delta}),\rr^d)$ be such that $\left\|z\right\|_{\infty}\leqslant\delta$. We note $x(\cdot;z,0)$ the solution to the system:
		$\left\lbrace\begin{matrix}x'&=&z(x)\\x(0)&=&0\end{matrix}\right.$. Then,
		$$\left\|x(1;z,0)-z(0)\right\|\leqslant\left\|z(0)\right\|\left\|Dz\right\|_{\infty} e^{\left\|Dz\right\|_{\infty}}.$$
	\end{lm}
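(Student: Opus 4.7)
The strategy is straightforward Gronwall-type control on the integral equation associated with the ODE, so I do not expect any genuinely hard step; everything reduces to two clean estimates that I will combine at the end.

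First, I would rewrite the conclusion as an integral identity. Since $x(0)=0$ and $x'=z(x)$, one has $x(1)=\int_0^1 z(x(s))\dd s$, so that
\[
x(1;z,0)-z(0)=\int_0^1\bigl(z(x(s))-z(0)\bigr)\dd s.
\]
The mean value inequality, applied between $x(s)$ and $0$ inside the ball $B(0,\delta)$ (which the trajectory stays in, thanks to the assumption $\|z\|_\infty\leqslant\delta$ on the time interval $[0,1]$), gives the pointwise bound $\|z(x(s))-z(0)\|\leqslant \|Dz\|_\infty\,\|x(s)\|$. So the whole question reduces to controlling $\|x(s)\|$ on $[0,1]$ in terms of $\|z(0)\|$ and $\|Dz\|_\infty$.

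Next, I would derive a Gronwall bound for $\|x(s)\|$. Writing $\|z(x(s))\|\leqslant \|z(0)\|+\|Dz\|_\infty\|x(s)\|$ and integrating, I obtain $\|x(t)\|\leqslant \|z(0)\|t+\|Dz\|_\infty\int_0^t\|x(s)\|\dd s$; the standard (integral) Gronwall lemma then yields, for all $t\in[0,1]$,
\[
\|x(t)\|\leqslant \|z(0)\|\,\frac{e^{\|Dz\|_\infty t}-1}{\|Dz\|_\infty}.
\]
(One could alternatively differentiate, comparing with the scalar ODE $y'=\|z(0)\|+\|Dz\|_\infty y$, $y(0)=0$, which I find slightly cleaner.)

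Finally, I would plug this back into the integral identity from the first step:
\[
\|x(1;z,0)-z(0)\|\leqslant \|Dz\|_\infty\int_0^1\|x(s)\|\dd s
\leqslant \|z(0)\|\int_0^1\bigl(e^{\|Dz\|_\infty s}-1\bigr)\dd s.
\]
The elementary inequality $\int_0^1(e^{as}-1)\dd s=\frac{e^a-1-a}{a}\leqslant a\,e^a$ (valid for $a\geqslant 0$, since each Taylor coefficient on the right dominates the corresponding one on the left) then closes the estimate as $\|z(0)\|\,\|Dz\|_\infty\,e^{\|Dz\|_\infty}$, which is exactly the advertised bound. The only point that requires a minor check is that the trajectory remains in $B(0,\delta)$ up to time $1$ so that $Dz$ and the Lipschitz estimate are legitimately usable; this follows from the a~priori bound $\|x(t)\|\leqslant \|z\|_\infty\cdot t\leqslant \delta$ for $t\leqslant 1$.
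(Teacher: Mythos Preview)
Your proof is correct. The paper does not supply its own argument for this lemma; it simply cites \cite[Lemma~160]{Beauchard_2023}, and your direct Gronwall computation is exactly the kind of elementary argument one expects behind that citation.
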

	\begin{proof}
		This lemma is proved in \cite[Lemma 160]{Beauchard_2023}.
	\end{proof}
	\begin{lm}\label{devlpodeprerequis} Let $\delta>0$ and $f_0,f_1,f_2:B(0,2\delta)\to\rr^d$ be real continuous functions, with $f_0(0)=0$. There exists $r(\delta)>0$, such that, for all $T_1<T<T_1+1$, $p\in B\left(0,r(\delta)\right)$ and $u,v\in L^{\infty}(T_1,T)$ with $\left\|u\right\|_{L^{\infty}},\left\|v\right\|_{L^{\infty}}<r(\delta)$, one has
		\begin{equation}\label{boulestable}\forall t\in [T_1,T], \quad x(T;(u,v),p,T_1)\in B(0,\delta).\end{equation}
	\end{lm}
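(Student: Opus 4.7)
The plan is a standard Gronwall-plus-continuation argument, exploiting the hypothesis $f_0(0)=0$ to obtain a linear differential inequality on $\|x(t)\|$ and the hypothesis $T-T_1<1$ to bound all exponential factors uniformly.

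First I would fix constants adapted to the ball $B(0,\delta)$. Since $f_0,f_1,f_2$ are real-analytic on $B(0,2\delta)$, they are bounded and of class $\mathcal{C}^1$ on the compact $\overline{B(0,\delta)}$, so I set
\begin{equation*}
M:=\max_{i\in\llbracket 0,2\rrbracket}\left\|f_i\right\|_{L^{\infty}(B(0,\delta))}, \qquad L:=\left\|\mathrm{D}f_0\right\|_{L^{\infty}(B(0,\delta))}.
\end{equation*}
Because $f_0(0)=0$, the mean value inequality yields $\|f_0(x)\|\leqslant L\|x\|$ for every $x\in B(0,\delta)$. As long as the trajectory $x(t):=x(t;(u,v),p,T_1)$ remains in $B(0,\delta)$, this gives the pointwise bound
\begin{equation*}
\|x'(t)\|\leqslant L\|x(t)\|+M\bigl(\|u\|_{L^\infty}+\|v\|_{L^\infty}\bigr).
\end{equation*}

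Next I would apply Gronwall's lemma on any subinterval $[T_1,t]$ on which $x$ stays in $B(0,\delta)$, using $t-T_1<1$ to bound $e^{L(t-T_1)}\leqslant e^L$:
\begin{equation*}
\|x(t)\|\leqslant e^{L}\|p\|+\frac{e^{L}-1}{L}\,M\bigl(\|u\|_{L^\infty}+\|v\|_{L^\infty}\bigr)\leqslant C(\delta)\bigl(\|p\|+\|u\|_{L^\infty}+\|v\|_{L^\infty}\bigr),
\end{equation*}
where $C(\delta)$ depends only on $\delta$ (through $L$ and $M$). I then choose
\begin{equation*}
r(\delta):=\frac{\delta}{4\,C(\delta)},
\end{equation*}
so that under the smallness assumption $\|p\|,\|u\|_{L^\infty},\|v\|_{L^\infty}<r(\delta)$, the right-hand side is strictly less than $3\delta/4<\delta$.

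The only point that deserves care is to close the argument by a continuation (bootstrap) step, since the differential inequality above was only valid while $x$ stayed in $B(0,\delta)$. I would introduce
\begin{equation*}
t^{\star}:=\sup\bigl\{t\in[T_1,T]\,:\,x(s)\in B(0,\delta)\text{ for all }s\in[T_1,t]\bigr\},
\end{equation*}
which is well defined and positive since $\|p\|<r(\delta)<\delta$. If $t^{\star}<T$, then by continuity $\|x(t^{\star})\|=\delta$, but the Gronwall estimate above (valid on $[T_1,t^{\star}]$) combined with the choice of $r(\delta)$ forces $\|x(t^{\star})\|<3\delta/4$, a contradiction. Hence $t^{\star}=T$, which gives \eqref{boulestable}. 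This bootstrap is the only nontrivial step; everything else is a routine Gronwall estimate exploiting the hypotheses $f_0(0)=0$ and $T-T_1<1$.
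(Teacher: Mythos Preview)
Your proof is correct and complete. The paper itself offers only a one-line justification (``This result is linked to continuity with respect to the initial condition''), so you have supplied precisely the Gronwall-plus-continuation details that the paper leaves implicit; the underlying idea is the same.
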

	\begin{proof}
		This result is linked to continuity with respect to the initial condition.
	\end{proof}
	\begin{crl}\label{devlpodeprerequis2} Let $\delta>0$ and $f_0,f_1,f_2:B(0,2\delta)\to\rr^d$ be real continuous functions, with $f_0(0)=0$. There exists $r(\delta),C(\delta)>0$, such that, for all $T_1<T<T_1+1$, $p\in B\left(0,r(\delta)\right)$ and $u,v\in L^{\infty}(T_1,T)$ with $\left\|u\right\|_{L^{\infty}},\left\|v\right\|_{L^{\infty}}<r(\delta)$, one has
		$$\left\|x(T;(u,v),p,T_1)\right\|\leqslant C(\delta)\left(\left\|p\right\|+\left\|(u,v)\right\|_{L^{\infty}}\right).$$
	\end{crl}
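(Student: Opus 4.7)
The plan is to combine Lemma~\ref{devlpodeprerequis} with a standard Gronwall argument, exploiting the hypothesis $f_0(0)=0$ to control $\|f_0(x)\|$ linearly by $\|x\|$ near the origin, which is exactly what prevents any additive constant from appearing in the bound.

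First, I would fix $\delta>0$ and invoke Lemma~\ref{devlpodeprerequis} to obtain $r(\delta)>0$ such that, under the stated smallness assumptions on $p$ and $(u,v)$, the trajectory $t\mapsto x(t;(u,v),p,T_1)$ remains in $B(0,\delta)$ for every $t\in[T_1,T]$. On this compact set, set $M_i:=\sup_{B(0,\delta)}\|f_i\|$ for $i\in\{1,2\}$ and $L:=\sup_{B(0,\delta)}\|Df_0\|$, both finite by real-analyticity on $B(0,2\delta)$. Since $f_0(0)=0$, the mean value inequality yields $\|f_0(x)\|\leqslant L\|x\|$ for every $x\in B(0,\delta)$.

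Next, I would write the mild formulation
$$x(t)=p+\int_{T_1}^t\bigl(f_0(x(s))+u(s)f_1(x(s))+v(s)f_2(x(s))\bigr)\ds,$$
take norms, and use $T-T_1<1$ to absorb $\int_{T_1}^t|u(s)|\ds$ and $\int_{T_1}^t|v(s)|\ds$ by $\|(u,v)\|_{L^\infty}$. This gives
$$\|x(t)\|\leqslant\|p\|+(M_1+M_2)\|(u,v)\|_{L^\infty}+L\int_{T_1}^t\|x(s)\|\ds.$$
Applying Gronwall's lemma then produces $\|x(t)\|\leqslant e^{L(t-T_1)}\bigl(\|p\|+(M_1+M_2)\|(u,v)\|_{L^\infty}\bigr)$, and the constraint $t-T_1<1$ yields the conclusion with $C(\delta):=e^L\max(1,M_1+M_2)$, possibly after shrinking $r(\delta)$ to ensure the hypothesis of the previous lemma is still met.

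There is no real obstacle here; the argument is a textbook Gronwall estimate, and the main conceptual point worth highlighting is that the equilibrium assumption $f_0(0)=0$ is precisely what enables the homogeneous (rather than affine) Gronwall bound and therefore the linear dependence on $(\|p\|,\|(u,v)\|_{L^\infty})$.
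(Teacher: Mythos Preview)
Your argument is correct and follows exactly the approach the paper indicates: invoke the previous lemma to confine the trajectory, then apply Gr\"onwall using $f_0(0)=0$ to get the linear bound. You have simply made explicit the details that the paper leaves to the reader.
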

	\begin{proof}
		This statement is based on the previous lemma and the Grönwall's inequality.
	\end{proof}
	\begin{lm}\label{devlpode} Let $f_0:B(0,2\delta)\to\rr^d$, be $\mathcal{C}^2$ function, for $\delta>0$, with $f_0(0)=0$. There exist $C,\ep>0$ such that, for all $T_1<T<T_1+1$ and $p\in B\left(0,\ep\right)$,
		$$\left\|x(T;(0,0),p,T_1)-e^{(T-T_1)Df_0(0)}p\right\|\leqslant C\left\|p\right\|^2.$$
	\end{lm}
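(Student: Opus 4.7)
The plan is to linearize the vector field around the equilibrium and invoke a Duhamel--Grönwall argument combined with Corollary \ref{devlpodeprerequis2}.

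First, I would write $H_0 := Df_0(0)$ and decompose $f_0(x) = H_0 x + g(x)$ on $B(0,2\delta)$, where $g(x) := f_0(x) - Df_0(0)x$. Since $f_0$ is real-analytic with $f_0(0)=0$ and $Dg(0)=0$, a Taylor expansion with integral remainder yields a constant $L>0$ such that $\|g(x)\|\leqslant L\|x\|^2$ for every $x\in B(0,\delta)$. Set $y(t):=x(t;(0,0),p,T_1)-e^{(t-T_1)H_0}p$, so that $y(T_1)=0$ and
\begin{equation*}
y'(t)=f_0(x(t))-H_0 e^{(t-T_1)H_0}p = H_0 y(t)+g(x(t)).
\end{equation*}
Duhamel's formula then gives
\begin{equation*}
y(t)=\int_{T_1}^t e^{(t-s)H_0}\, g(x(s))\,\ds.
\end{equation*}

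Next, I would shrink $\ep$ so that Corollary \ref{devlpodeprerequis2} applies with $u=v=0$: there exist $\ep>0$ and $C_0(\delta)>0$ such that for every $p\in B(0,\ep)$ and every $t\in[T_1,T]$ with $T-T_1\leqslant 1$, we have $\|x(t;(0,0),p,T_1)\|\leqslant C_0\|p\|$, and in particular $x(t)\in B(0,\delta)$ as soon as $\ep$ is chosen small enough that $C_0 \ep\leqslant\delta$. Thus $\|g(x(s))\|\leqslant L C_0^2\|p\|^2$ for every $s\in[T_1,T]$. Combining this with the elementary bound $\|e^{(t-s)H_0}\|\leqslant e^{\|H_0\|}$ (valid since $t-s\leqslant 1$) yields
\begin{equation*}
\|y(t)\|\leqslant e^{\|H_0\|} L C_0^2\|p\|^2,
\end{equation*}
which is the desired estimate with $C:=e^{\|H_0\|} L C_0(\delta)^2$.

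This argument is essentially routine; there is no real obstacle. The only subtlety is to ensure that the \emph{a priori} smallness of $x(t)$ on $[T_1,T]$---needed so that the quadratic remainder bound on $g$ is valid along the whole trajectory---is provided uniformly by Corollary \ref{devlpodeprerequis2}, which is why I shrink $\ep$ at the outset so that the trajectory stays in $B(0,\delta)$.
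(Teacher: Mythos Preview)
Your proof is correct and follows essentially the same approach as the paper's: linearize $f_0$ around $0$, use the Duhamel formula for the difference $x(t)-e^{(t-T_1)H_0}p$, bound the quadratic remainder via the Taylor estimate, and invoke the a~priori smallness of the trajectory furnished by Corollary~\ref{devlpodeprerequis2} (the paper also cites Lemma~\ref{devlpodeprerequis} to guarantee the trajectory stays in the ball where the Taylor bound holds, which you handle by shrinking $\ep$ so that $C_0\ep\leqslant\delta$).
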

	\begin{proof}
		By definition, there exists $h>0$, s.t. for all $x\in B(0,h)$, 
		\begin{equation}\label{diff2}\left\|f_0(x)-f_0(0)-Df_0(0)x\right\|\leqslant\left(\frac 12\left\|D^2f_0(0)\right\|+1\right)\left\|x\right\|^2.\end{equation}
		Let $p\in B(0,\ep)$ with $\ep=\min\left(r(\delta),r(h)\right)$. Using the Duhamel's principle,
		\begin{equation*}\begin{gathered}\left\|x(T;(0,0),p,T_1)-e^{(T-T_1)Df_0(0)}p\right\|\\\leqslant\int_{T_1}^T\left\|e^{(T-s)Df_0(0)}\right\|\left\|f_0(x(s,(0,0),p,T_1))-Df_0(0)x(s,(0,0),p,T_1)\right\|\ds.\end{gathered}\end{equation*}
		Thank to Lemma \ref{devlpodeprerequis}, one can use the inequality \eqref{diff2} and Corollary \ref{devlpodeprerequis2} gives the result.
	\end{proof}
	\begin{lm}\label{gronwalldecomp} Let $\delta>0$ and $f_0,f_1,f_2:B(0,2\delta)\to\rr^d$ be $\mathcal{C}^2$ functions, with $f_0(0)=0$. There exist $C,\ep>0$, such that, for all $T_1<T<T_1+1$, $p\in B\left(0,\ep\right)$ and $u,v\in L^{\infty}(T_1,T)$ with $\left\|u\right\|_{L^{\infty}},\left\|v\right\|_{L^{\infty}}<\ep$, one has
		$$\left\|x(T;(u,v),p,T_1)-x(T;(u,v),0,T_1)-x(T;(0,0),p,T_1)\right\|\leqslant C\left(\left\|p\right\|\left\|(u,v)\right\|_{L^{\infty}}+\left\|p\right\|^2\right).$$
	\end{lm}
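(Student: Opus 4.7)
The plan is to introduce the error function
$$\Lambda(t):=x(t;(u,v),p,T_1)-x(t;(u,v),0,T_1)-x(t;(0,0),p,T_1),$$
which satisfies $\Lambda(T_1)=0$, then write down the ODE it solves, Taylor-expand the nonlinearities around the correct base points, and close the estimate by Gr\"onwall's inequality. To keep everything inside a neighborhood where $f_0,f_1,f_2$ are analytic, I first shrink $\varepsilon$ so that Lemma \ref{devlpodeprerequis} guarantees the three trajectories $X_1:=x(\cdot;(u,v),p,T_1)$, $X_2:=x(\cdot;(u,v),0,T_1)$, $X_3:=x(\cdot;(0,0),p,T_1)$ all remain in $B(0,\delta)$ on $[T_1,T]$; by Corollary \ref{devlpodeprerequis2} I then have the a priori bounds $\|X_2\|\leqslant C\|(u,v)\|_{L^\infty}$ and $\|X_3\|\leqslant C\|p\|$. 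A similar Gr\"onwall argument applied to $X_1-X_2$, which solves a linear inhomogeneous equation with initial data $p$, yields $\|X_1-X_2\|\leqslant C\|p\|$.

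Next I compute
$$\Lambda'=[f_0(X_1)-f_0(X_2)-f_0(X_3)]+u(t)[f_1(X_1)-f_1(X_2)]+v(t)[f_2(X_1)-f_2(X_2)].$$
The control terms are straightforward: by the mean-value theorem, $f_i(X_1)-f_i(X_2)=O(\|X_1-X_2\|)=O(\|p\|)$ for $i=1,2$, so after multiplication by $u$ or $v$ they contribute $O(\|p\|\,\|(u,v)\|_{L^\infty})$ to $\Lambda'$. The delicate piece is the drift term, for which I use the algebraic rearrangement
$$f_0(X_1)-f_0(X_2)-f_0(X_3)=\bigl[f_0(X_1)-f_0(X_2+X_3)\bigr]+\bigl[f_0(X_2+X_3)-f_0(X_2)-f_0(X_3)\bigr].$$
The first bracket equals $Df_0(\zeta)(X_1-X_2-X_3)=Df_0(\zeta)\Lambda$, hence is $O(\|\Lambda\|)$. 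For the second bracket, since $f_0(0)=0$, a second-order Taylor expansion of $f_0$ at $X_2$ and at $0$ gives
$$f_0(X_2+X_3)-f_0(X_2)-f_0(X_3)=\bigl(Df_0(X_2)-Df_0(0)\bigr)X_3+O\bigl(\|X_3\|^2\bigr)=O\bigl(\|X_2\|\,\|X_3\|+\|X_3\|^2\bigr),$$
which by the a priori bounds is $O(\|(u,v)\|_{L^\infty}\|p\|+\|p\|^2)$.

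Combining these estimates yields a pointwise differential inequality of the form
$$\|\Lambda'(t)\|\leqslant C\|\Lambda(t)\|+C\bigl(\|p\|\,\|(u,v)\|_{L^\infty}+\|p\|^2\bigr)$$
on $[T_1,T]$, with $T-T_1\leqslant 1$. Since $\Lambda(T_1)=0$, Gr\"onwall's inequality gives $\|\Lambda(T)\|\leqslant C(\|p\|\,\|(u,v)\|_{L^\infty}+\|p\|^2)$, which is the claim. The only real subtlety is the Taylor-expansion bookkeeping of the second bracket above; everything else is Gr\"onwall plus the preliminary size estimates already encapsulated in Lemma \ref{devlpodeprerequis} and Corollary \ref{devlpodeprerequis2}.
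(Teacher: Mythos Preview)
Your proof is correct and follows essentially the same approach as the paper: you introduce the same error function (the paper calls it $z$), use the identical algebraic splitting of the drift term $f_0(X_1)-f_0(X_2)-f_0(X_3)$ into the two brackets, and close with Gr\"onwall. The only cosmetic difference is that for the control terms you invoke a preliminary bound $\|X_1-X_2\|\leqslant C\|p\|$, whereas the paper writes $X_1-X_2=\Lambda+X_3$ and absorbs the resulting $\|(u,v)\|_{L^\infty}\|\Lambda\|$ piece into the Gr\"onwall coefficient; both are equally valid.
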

	\begin{proof}
		Let $p\in B\left(0,\varepsilon\right)$, $u,v\in L^{\infty}(T_1,T)$ for $\ep$ small enough. For simplicity, we use the notations: $x:=x(\cdot;(u,v),p,T_1)$, $x_{u,v}:=x(\cdot;(u,v),0,T_1)$ and $x_p:=x(\cdot;(0,0),p,T_1)$. Let $z:=x-x_{u,v}-x_p$. By definition, $z$ is solution to the following control-affine system
		\begin{equation*}\begin{gathered}\label{diffz}z'=\left(f_0(z+x_{u,v}+x_p)-f_0(x_{u,v}+x_p)\right)+\left(f_0(x_{u,v}+x_p)-f_0(x_{u,v})-f_0(x_p)\right)\\+u\left(f_1(z+x_{u,v}+x_p)-f_1(x_{u,v})\right)+v\left(f_2(z+x_{u,v}+x_p)-f_2(x_{u,v})\right),\end{gathered}\end{equation*}
		with $z(T_1)=0$. We estimate each term of this decomposition:
		\begin{enumerate}
			\item[1.] the first one scales like $\left\|z\right\|$,
			\item[2.] the second one is shaped like $\left\|p\right\|\left(\left\|p\right\|+\left\|(u,v)\right\|_{\infty}\right)$,
			\item[3.] the other terms are like $\left\|(u,v)\right\|_{\infty}\left(\left\|z\right\|+\left\|p\right\|\right)$.
		\end{enumerate}
		The Grönwall's lemma gives the result.
	\end{proof}
	\section*{Acknowledgement}
	
	The author would like to take particular care in thanking Karine Beauchard and Frédéric Marbach for the many discussions that brought this article to life.
	
	The author acknowledges support from grants ANR-20-CE40-0009 (Project TRECOS) and ANR-11-LABX-0020 (Labex Lebesgue), as well as from the Fondation Simone et Cino Del Duca – Institut de France.
	\nocite{*}
	\bibliographystyle{plain}
	\bibliography{mabiblio}

\begin{thebibliography}{10}

\bibitem{doi:10.1137/0320042}
{J}.~{M}. {B}all, {J}.~{E}. {M}arsden, and {M}. {S}lemrod.
\newblock {C}ontrollability for {D}istributed {B}ilinear {S}ystems.
\newblock {\em {SIAM} {J}ournal on {C}ontrol and {O}ptimization},
  20(4):575--597, 1982.

\bibitem{BEAUCHARD2005851}
{K}arine {B}eauchard.
\newblock {L}ocal controllability of a 1-{D} {S}chrödinger equation.
\newblock {\em {J}ournal de {M}athématiques {P}ures et {A}ppliquées},
  84(7):851--956, 2005.

\bibitem{Beauchard2010}
{K}arine {B}eauchard.
\newblock Controllablity of a quantum particle in a 1{D} variable domain.
\newblock {\em ESAIM: {C}ontrol, {O}ptimisation and {C}alculus of
  {V}ariations}, 14(1):105--147, 3 2010.

\bibitem{BEAUCHARD2006328}
Karine Beauchard and Jean-Michel Coron.
\newblock Controllability of a quantum particle in a moving potential well.
\newblock {\em Journal of Functional Analysis}, 232(2):328--389, 2006.

\bibitem{beauchard2014minimaltimebilinearcontrol}
Karine Beauchard, Jean-Michel Coron, and Holger Teismann.
\newblock Minimal time for the bilinear control of {S}chr\"odinger equations,
  2014.

\bibitem{articletkc}
Karine Beauchard, Jean-Michel Coron, and Holger Teismann.
\newblock Minimal time for the approximate bilinear control of {S}chrödinger
  equations.
\newblock {\em Mathematical Methods in the Applied Sciences}, 41, 01 2018.

\bibitem{beauchard2010local}
{K}arine {B}eauchard and {C}amille {L}aurent.
\newblock {L}ocal controllability of 1d linear and nonlinear {S}chrödinger
  equations with bilinear control.
\newblock {\em {J}ournal de {M}athématiques {P}ures et {A}ppliquées},
  94(5):520--554, 2010.

\bibitem{Beauchard_2022}
{K}arine {B}eauchard, {J}érémy {L}e {B}orgne, and {F}rédéric {M}arbach.
\newblock {G}rowth of structure constants of free lie algebras relative to hall
  bases.
\newblock {\em {J}ournal of {A}lgebra}, 612:281–378, December 2022.

\bibitem{Beauchard_2023}
Karine Beauchard, Jérémy Le~Borgne, and Frédéric Marbach.
\newblock {O}n expansions for nonlinear systems error estimates and convergence
  issues.
\newblock {\em Comptes Rendus. Mathématique}, 361(G1):97–189, January 2023.

\bibitem{beauchard2017quadratic}
Karine Beauchard and Frédéric Marbach.
\newblock Quadratic obstructions to small-time local controllability for
  scalar-input systems.
\newblock {\em Journal of Differential Equations}, 264(5):3704--3774, 2018.

\bibitem{BEAUCHARD202022}
Karine Beauchard and Frédéric Marbach.
\newblock Unexpected quadratic behaviors for the small-time local null
  controllability of scalar-input parabolic equations.
\newblock {\em Journal de Mathématiques Pures et Appliquées}, 136:22--91,
  2020.

\bibitem{beauchard2024unified}
Karine Beauchard and Frédéric Marbach.
\newblock {A} unified approach of obstructions to small-time local
  controllability for scalar-input systems.
\newblock 2024.

\bibitem{beauchard2013local}
Karine Beauchard and Morgan Morancey.
\newblock {L}ocal controllability of 1{D} {S}chrödinger equations with
  bilinear control and minimal time.
\newblock {\em {M}athematical {C}ontrol and {R}elated {F}ields}, 4(2):125--160,
  2014.

\bibitem{Boscain_2012}
U.~Boscain, M.~Caponigro, T.~Chambrion, and M.~Sigalotti.
\newblock A weak spectral condition for the controllability of the bilinear
  {S}chrödinger equation with application to the control of a rotating planar
  molecule.
\newblock {\em Communications in Mathematical Physics}, 311(2):423–455, March
  2012.

\bibitem{boscain:hal-04496433}
Ugo Boscain, K{\'e}vin Le~Balc'h, and Mario Sigalotti.
\newblock {Schr{\"o}dinger eigenfunctions sharing the same modulus and
  applications to the control of quantum systems}.
\newblock 2024.
\newblock working paper or preprint.

\bibitem{bournissou2022smalltime}
{M}égane {B}ournissou.
\newblock {S}mall-time local controllability of the bilinear {S}chr\"odinger
  equation, despite a quadratic obstruction, thanks to a cubic term.
\newblock 2022.

\bibitem{bournissou2021local}
{M}égane {B}ournissou.
\newblock {L}ocal controllability of the bilinear 1{D} {S}chrödinger equation
  with simultaneous estimates.
\newblock {\em {M}athematical {C}ontrol and {R}elated {F}ields},
  13(3):1047--1080, 2023.

\bibitem{bournissou2021quadratic}
{M}égane {B}ournissou.
\newblock {Q}uadratic behaviors of the 1{D} linear {S}chrödinger equation with
  bilinear control.
\newblock {\em {J}ournal of {D}ifferential {E}quations}, 351:324--360, 2023.

\bibitem{BOUSSAID2020108412}
Nabile Boussaïd, Marco Caponigro, and Thomas Chambrion.
\newblock Regular propagators of bilinear quantum systems.
\newblock {\em Journal of Functional Analysis}, 278(6):108412, 2020.

\bibitem{RogerBrockett2013}
Roger Brockett.
\newblock Controllability with quadratic drift.
\newblock {\em Mathematical Control and Related Fields}, 3(4):433--446, 2013.

\bibitem{doi:10.1137/06065369X}
{E}duardo {C}erpa.
\newblock {E}xact controllability of a {N}onlinear {K}orteweg–de {V}ries
  {E}quation on a {C}ritical {S}patial {D}omain.
\newblock {\em {SIAM} {J}ournal on {C}ontrol and {O}ptimization},
  46(3):877--899, 2007.

\bibitem{Cerpa2009}
{E}duardo {C}erpa and {E}mmanuelle {C}répeau.
\newblock Boundary controllability for the nonlinear {K}orteweg-de {V}ries
  equation on any critical domain.
\newblock {\em {E}nnales de l'I.H.P. {A}nalyse non linéaire}, 26(2):457--475,
  2009.

\bibitem{Chambrion2008ControllabilityOT}
Thomas Chambrion, Paolo Mason, Mario Sigalotti, and Ugo~V. Boscain.
\newblock Controllability of the discrete-spectrum {S}chr{\"o}dinger equation
  driven by an external field.
\newblock {\em Annales De L Institut Henri Poincare-analyse Non Lineaire},
  26:329--349, 2008.

\bibitem{chambrion2022smalltime}
{T}homas {C}hambrion and {E}ugenio {P}ozzoli.
\newblock {S}mall-time bilinear control of {S}chrödinger equations with
  application to rotating linear molecules.
\newblock {\em {A}utomatica}, 153:111028, 2023.

\bibitem{doi:10.1137/18M1215207}
Thomas Chambrion and Laurent Thomann.
\newblock A topological obstruction to the controllability of nonlinear wave
  equations with bilinear control term.
\newblock {\em SIAM Journal on Control and Optimization}, 57(4):2315--2327,
  2019.

\bibitem{chambrion:hal-01901819}
Thomas Chambrion and Laurent Thomann.
\newblock {On the bilinear control of the Gross-Pitaevskii equation}.
\newblock {\em {Annales de l'Institut Henri Poincar{\'e} C, Analyse non
  lin{\'e}aire}}, 37(3):605--626, 2020.

\bibitem{CORON2006103}
Jean-Michel Coron.
\newblock On the small-time local controllability of a quantum particle in a
  moving one-dimensional infinite square potential well.
\newblock {\em Comptes Rendus Mathematique}, 342(2):103--108, 2006.

\bibitem{coronbook}
{J}ean-{M}ichel Coron.
\newblock {\em {C}ontrol and {N}onlinearity}.
\newblock {M}athematical {S}urveys and {M}onographs, vol. 136, 2007.

\bibitem{Coron2004}
Jean-Michel Coron and Emmanuelle Crépeau.
\newblock Exact boundary controllability of a nonlinear {K}d{V} equation with
  critical lengths.
\newblock {\em Journal of the European Mathematical Society}, 006(3):367--398,
  2004.

\bibitem{coron2020smalltime}
Jean-Michel Coron, Armand Koenig, and Hoai-Minh Nguyen.
\newblock {O}n the small-time local controllability of a {K}d{V} system for
  critical lengths.
\newblock {\em Journal of the European Mathematical Society}.

\bibitem{duca2021bilinearcontrolgrowthsobolev}
Alessandro Duca and Vahagn Nersesyan.
\newblock Bilinear control and growth of {S}obolev norms for the nonlinear
  {S}chr\"odinger equation.
\newblock 2021.

\bibitem{duca2022local}
{A}lessandro {D}uca and {V}ahagn {N}ersesyan.
\newblock {L}ocal exact controllability of the 1{D} nonlinear {S}chr\"odinger
  equation in the case of {D}irichlet boundary conditions.
\newblock 2022.

\bibitem{duca2024smalltimecontrollabilitynonlinearschrodinger}
Alessandro Duca and Eugenio Pozzoli.
\newblock Small-time controllability for the nonlinear {S}chr\"odinger equation
  on $\mathbb{R}^n$ via bilinear electromagnetic fields.
\newblock 2024.

\bibitem{ervedozapuel}
Sylvain Ervedoza and Jean-Pierre Puel.
\newblock Approximate controllability for a system of {S}chrödinger equations
  modeling a single trapped ion.
\newblock {\em Annales de l'Institut Henri Poincare (C) Non Linear Analysis},
  26:2111--2136, 11 2009.

\bibitem{hermeskawski}
{H}enry {H}ermes and {M}atthias {K}awski.
\newblock {L}ocal controllability of a single input, affine system.
\newblock {\em Nonlinear analysis and application (Arlington, Tex.)}, pages
  235--248, 1987.

\bibitem{kawski2}
Matthias Kawski.
\newblock High-order small-time local controllability.
\newblock {\em Nonlinear Controllability and Optimal Control}, 133, 03 2000.

\bibitem{Krob1987}
{D}aniel {K}rob.
\newblock Codes limites et factorisations finies du monoïde libre.
\newblock {\em RAIRO - Theoretical Informatics and Applications - Informatique
  Théorique et Applications}, 21(4):437--467, 1987.

\bibitem{Fr_d_ric_Marbach_2018}
Frédéric Marbach.
\newblock An obstruction to small time local null controllability for a viscous
  burgers’ equation.
\newblock {\em Annales scientifiques de l’École normale supérieure},
  51(5):1129–1177, 2018.

\bibitem{rouchon}
Mazyar Mirrahimi and Pierre Rouchon.
\newblock Controllability of quantum harmonic oscillators.
\newblock {\em Automatic Control, IEEE Transactions on}, 49:745 -- 747, 06
  2004.

\bibitem{AIHPC_2014__31_3_501_0}
{M}organ {M}orancey.
\newblock {S}imultaneous local exact controllability of {1D} bilinear
  {Schr\"odinger} equations.
\newblock {\em {A}nnales de l'{I.H.P.} {A}nalyse non lin\'eaire},
  31(3):501--529, 2014.

\bibitem{morancey2013globalexactcontrollability1d}
Morgan Morancey and Vahagn Nersesyan.
\newblock Global exact controllability of a {1D} {S}chr\"odinger equations with
  a polarizability term.
\newblock 2013.

\bibitem{nersesyan}
Vahagn Nersesyan.
\newblock Global approximate controllability for {S}chrödinger equation in
  higher {S}obolev norms and applications.
\newblock {\em Annales de l'Institut Henri Poincaré C, Analyse non linéaire},
  27(3):901--915, 2010.

\bibitem{article}
Vahagn Nersesyan and Hayk Nersisyan.
\newblock Global exact controllability in infinite time of {S}chrödinger
  equation.
\newblock {\em Journal de Mathématiques Pures et Appliqués}, 06 2010.

\bibitem{nguyen2023localcontrollabilitykortewegdevries}
Hoai-Minh Nguyen.
\newblock Local controllability of the {K}orteweg-de {V}ries equation with the
  right {D}irichlet control.
\newblock 2023.

\bibitem{Sigalotti2009GenericCP}
Mario Sigalotti, Paolo Mason, Ugo~V. Boscain, and Thomas Chambrion.
\newblock Generic controllability properties for the bilinear {S}chr{\"o}dinger
  equation.
\newblock {\em Proceedings of the 48h IEEE Conference on Decision and Control
  (CDC) held jointly with 2009 28th Chinese Control Conference}, pages
  3799--3804, 2009.

\bibitem{doi:10.1137/0325011}
H.~J. Sussmann.
\newblock A general theorem on local controllability.
\newblock {\em SIAM Journal on Control and Optimization}, 25(1):158--194, 1987.

\bibitem{turinici:hal-00536518}
{G}abriel {T}urinici.
\newblock {On the controllability of bilinear quantum systems}.
\newblock pages 75--92, 2000.

\end{thebibliography}
\end{document}